\def\codim{\mathalpha{\rm codim}}
\numberwithin{equation}{section}
\newtheorem{theorem}[equation]{Theorem}
\newtheorem{lemma}[equation]{Lemma}
\newtheorem{corollary}[equation]{Corollary}
\newtheorem{prop}[equation]{Proposition}
\newtheorem{question}[equation]{Question}
\theoremstyle{definition}
\newtheorem{definition}[equation]{Definition}
\newtheorem{example}[equation]{Example}
\theoremstyle{remark}
\newtheorem{remark}[equation]{Remark}
\begin{document}

\title{On the capability of finite groups of class two and prime exponent}
\author{Arturo Magidin}
\address{Mathematics Dept. University of Louisiana at Lafayette,
  217 Maxim Doucet Hall, P.O.~Box 41010, Lafayette LA 70504-1010}
\email{magidin@member.ams.org}

\subjclass[2000]{Primary 20D15, Secondary 20F12, 15A04}

\begin{abstract}
We consider the capability of $p$-groups of class two and odd prime
exponent. The question of capability is shown to be equivalent to a
statement about vector spaces and linear transformations, and using
the equivalence we give proofs of some old results and several new
ones. In particular, we establish a number of new necessary and new sufficient 
conditions for capability, including a sufficient condition based only on the
ranks of $G/Z(G)$ and $[G,G]$. Finally, we characterise the capable
groups among the $5$-generated groups in this class. 
\end{abstract}

\maketitle

\section{Introduction.}\label{sec:intro}

In his landmark paper~\cite{hallpgroups} on the classification of
finite
$p$-groups, P.~Hall remarked:
\begin{quote}
The question of what conditions a group $G$ must fulfill in order that
it may be the central quotient group of another group $H$, $G\cong
H/Z(H)$, is an interesting one. But while it is easy to write down a
number of necessary conditions, it is not so easy to be sure that they
are sufficient.
\end{quote}
Following 
\cite{hallsenior}, we make the following
definition:
\begin{definition} A group $G$ is said to be \textit{capable} if and
  only if
there exists a group $H$ such that $G\cong H/Z(H)$.
\end{definition}

Capability of groups was first studied 
in~\cite{baer},
where, as a corollary of deeper investigations, he characterised the
capable groups that are direct sums of cyclic groups.  Capability of
groups has received renewed attention in recent years, thanks to
results 
in \cite{beyl} characterising the
capability of a group in terms of its epicenter; and more recently to
work of 
\cite{ellis} that describes the epicenter in
terms of the nonabelian tensor square of the group.

We will consider here the special case of nilpotent groups of class
two and exponent an odd prime~$p$. This case was studied
in~\cite{heinnikolova}, and also addressed elsewhere (e.g., Prop.~9
in~\cite{ellis}). As noted in the final paragraphs
of~\cite{baconkappe}, currently available techniques seem insufficient
for a characterisation of the capable finite $p$-groups of class~$2$,
but a characterisation of the capable finite groups of class~$2$ and
exponent~$p$ seems a more modest and possibly attainable
goal. The present work is a contribution towards achieving that goal.
We began to study this situation in \cite{capablep}; here we will
introduce what I believe is clearer notation as well as a general
setting to frame the discussion. We will also be able to use our
methods to extend the necessary condition from~\cite{heinnikolova} to
include groups that do not satisfy $Z(G)=[G,G]$,
and to provide a short new proof of the sufficient condition
from~\cite{ellis}. We will also prove a sufficient condition which is
closer in flavor to the necessary condition of Heineken and~Nikolova.

In the remainder of this section we will give basic definitions and
our notational conventions. In Section~\ref{sec:witness} we will
obtain a necessary and sufficient condition for the capability of a
given group $G$ of class at most two and exponent~$p$ in terms of a
``canonical witness.'' In Section~\ref{sec:linalg} we discuss the
general setting in which we will work from the point of view of Linear
Algebra, and the specific instance of that general setting that occurs
in this work is introduced. We proceed in Section~\ref{sec:basics} to
obtain several easy consequences of this set-up, and their equivalent
statements in terms of capability. In Section~\ref{sec:dimcounting} we
use a counting argument to give a sufficient condition for the
capability of~$G$ that depends only on the ranks of $G/Z(G)$ and
$[G,G]$.  Next, in Section~\ref{sec:neccond}, we prove a slight
strengthening of the necessary condition first proven
in~\cite{heinnikolova}, which also depends only on the ranks of $G/Z(G)$
and $[G,G]$.

In Section~\ref{sec:applications} we characterise the capable groups
among the $5$-generated $p$-groups of prime exponent and class at most
two. We also give an alternative geometric proof for a key part of the
classification in the $4$-generated case, since it highlights the way
in which the set-up using linear algebra allows us to invoke other tools
(in this case, algebraic geometry) to study our problem.  We should mention that
the approach using linear algebra and geometry has been used before in
the study of groups of class two and exponent~$p$; in particular, the
work of Brahana~\cites{brahanalines,brahanaplucker} exploits geometry
in a very striking fashion to classify certain groups of class two
and exponent~$p$ in terms of points, lines, planes, and spaces in a
projective space over~$\mathbb{F}_p$. This
classification, found in~\cite{brahanalines}, will also play a role in
our classification in the $5$-generated case, allowing us to deal with
certain groups of order $p^{8}$ and $p^{9}$.

Finally, in Section~\ref{sec:finalsec} we discuss some of the limits
of our results so far, and state some questions. 

Throughout the paper $p$ will be an odd prime, and $\mathbb{F}_p$ will
denote the field with $p$ elements. All groups will be
written multiplicatively, and the identity element will be denoted by
$e$; if there is danger of ambiguity or confusion, we will use $e_G$
to denote the identity of the group~$G$. The center of $G$ is denoted
by $Z(G)$. Recall that if $G$ is a group, and $x,y\in G$, the
commutator of $x$ and $y$ is defined to be $[x,y]=x^{-1}y^{-1}xy$; we use $x^y$ to
denote the conjugate $y^{-1}xy$. We write commutators left-normed, so
that $[x,y,z] = [[x,y],z]$.  Given subsets $A$ and $B$ of~$G$ we
define $[A,B]$ to be the subgroup of $G$ generated by all elements of
the form $[a,b]$ with $a\in A$, $b\in B$.  The terms of the lower
central series of $G$ are defined recursively by letting $G_1=G$, and
$G_{n+1}=[G_n,G]$.  A group is \textit{nilpotent of class at most~$k$}
if and only if $G_{k+1}=\{e\}$, if and only if $G_k\subset Z(G)$. We
usually drop the ``at most'' clause, it being understood. The class of
all nilpotent groups of class at most~$k$ is denoted
by~$\mathfrak{N}_k$. Though we will sometimes use indices to denote
elements of a family of groups, it will be clear from context that we
are not refering to the terms of the lower central series in those
cases. 

The following commutator identities are well known, and may be
verified by direct calculation:
\begin{prop} Let $G$ be any group. Then for all $x,y,z\in G$,
\begin{itemize}
\item[(a)] $[xy,z] = [x,z][x,z,y][y,z]$.
\item[(b)] $[x,yz] = [x,z][z,[y,x]][x,y]$.
\item[(c)] $[x,y,z][y,z,x][z,x,y] \equiv e \pmod{G_4}$.
\item[(d)] $[x^r,y^s] \equiv
  [x,y]^{rs}[x,y,x]^{s\binom{r}{2}}[x,y,y]^{r\binom{s}{2}}
  \pmod{G_4}$.
\item[(e)] $[y^r,x^s] \equiv
  [x,y]^{-rs}[x,y,x]^{-r\binom{s}{2}}[x,y,y]^{-s\binom{r}{2}}
  \pmod{G_4}$.
\end{itemize}
Here, $\binom{n}{2} = \frac{n(n-1)}{2}$ for all integers~$n$.
\label{prop:commident}
\end{prop}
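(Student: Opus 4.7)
The plan is to verify each identity by direct manipulation of commutators, treating (a)--(c) as identities in a general group and then bootstrapping (d) and~(e) from them by induction. For~(a) and~(b), I would first establish the standard conjugation formulas
\[
[xy,z]=[x,z]^y[y,z], \qquad [x,yz]=[x,z][x,y]^z,
\]
either by writing both sides as words in $x^{\pm 1},y^{\pm 1},z^{\pm 1}$ and cancelling, or by quoting them as well known. The stated forms then follow after substituting $[a,b]^c = [a,b][a,b,c]$, which is just the definition of the commutator rearranged. For~(b), one additionally needs the rewriting $[x,y]^z = [z,[y,x]][x,y]$, which is immediate from $[z,[y,x]] = z^{-1}[y,x]^{-1}z[y,x] = z^{-1}[x,y]z[y,x]$ together with $[y,x][x,y]=e$.

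For~(c), I would start from the classical Hall--Witt identity,
\[
[x,y^{-1},z]^y \cdot [y,z^{-1},x]^z \cdot [z,x^{-1},y]^x = e,
\]
which is itself a consequence of direct calculation in a general group. Each factor lies in $G_3$, so modulo $G_4$ the outer conjugations act trivially (as $[G_3,G]\subseteq G_4$). Moreover, $[x,y^{-1}]\equiv [x,y]^{-1}\pmod{G_3}$ (from~(a) applied to $[y\cdot y^{-1},x]=e$), and a second application of~(a) gives $[x,y^{-1},z]\equiv [x,y,z]^{-1}\pmod{G_4}$, with analogous identities for the other two factors. The Hall--Witt identity thus collapses modulo~$G_4$ to the stated cyclic relation.

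For~(d), I would induct first on $s$ and then on $r$. The key remark is that for $a\in G_2$, the map $w\mapsto [a,w]$ is a homomorphism $G\to G_3/G_4$, because $[a,yz]=[a,z][a,y]^z \equiv [a,z][a,y]\pmod{G_4}$, since $[a,y]\in G_3$ is fixed by conjugation modulo $G_4$; the analogous statement holds for the first argument. Applying this and~(b) repeatedly yields $[x,y^s] \equiv [x,y]^s [x,y,y]^{\binom{s}{2}}\pmod{G_4}$, with the binomial coefficient arising as the telescoping sum $0+1+\cdots+(s-1)$. A parallel induction via~(a) in the form $[x^r,y^s] = [x,y^s][x,y^s,x^{r-1}][x^{r-1},y^s]$ produces the additional factor $[x,y,x]^{s\binom{r}{2}}$, using the identity $(r-1)+\binom{r-1}{2} = \binom{r}{2}$. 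Finally~(e) follows from~(d) by inverting the product $[y^r,x^s]=[x^s,y^r]^{-1}$ and using that $G_3/G_4$ is central in $G/G_4$, so the order of factors is immaterial and inversion simply negates each exponent. The main obstacle is purely bookkeeping: isolating the quadratic $\binom{r}{2}$ and $\binom{s}{2}$ terms requires tracking several nested corrections, but every stray term lies in $G_4$ and so vanishes modulo~$G_4$.
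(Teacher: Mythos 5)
Your proposal is correct, and it is essentially the argument the paper intends: the paper offers no proof at all, simply asserting that these identities "are well known, and may be verified by direct calculation," and your verification of (a)--(b) by expansion, (c) via Hall--Witt reduced modulo $G_4$, and (d)--(e) by the standard double induction with centrality of $G_3$ modulo $G_4$ is exactly that direct calculation, carried out correctly (including the telescoping $(r-1)+\binom{r-1}{2}=\binom{r}{2}$).
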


As in~\cite{capable}, our starting tool will be the nilpotent product of
groups, specifically the $2$-nilpotent and $3$-nilpotent product of
cyclic groups. We restrict Golovin's original definition
\cite{golovinnilprods} to the situation we will consider:

\begin{definition} Let $A_1,\ldots,A_n$ be nilpotent groups of class
  at most~$k$. The
  $k$-nilpotent product of $A_1,\ldots,A_n$, denoted by
  $A_1\amalg^{\germ N_k}\cdots \amalg^{\germ N_k} A_n$, is defined to
  be the group $G=F/F_{k+1}$, where $F$ is the free product of the
  $A_i$, $F=A_1*\cdots*A_n$, and $F_{k+1}$ is the $(k+1)$-st term of
  the lower central series of~$F$.
\end{definition}

From the definition it is clear that the $k$-nilpotent product is the
coproduct in the variety~${\germ N}_k$, so it will have the usual
universal property. Note that if the $A_i$ lie in ${\germ N}_k$, and
$G$ is the $(k+1)$-nilpotent product of the $A_i$, then $G\in{\germ
N}_{k+1}$ and $G/G_{k+1}$ is the $k$-nilpotent product of the $A_i$.

When we take the $k$-nilpotent product of cyclic $p$-groups, with
$p\geq k$, we may write each element uniquely as a product of basic
commutators of weight at most~$k$ on the generators, as shown in
in~\cite{struikone}*{Theorem~3}; see~\cite{hall}*{\S 12.3} for the
definition of basic commutators which we will use. In our
applications, where each cyclic group is of order~$p$, the order of
each basic commutator is likewise equal to~$p$.

Finally, when we say that a group is \textit{$k$-generated} we mean
that it can be generated by $k$ elements, but may in fact need
less. If we want to say that it can be generated by $k$ elements, but
not by $m$ elements for some $m<k$, we will say that it is
\textit{minimally $k$-generated}, or \textit{minimally generated by
$k$ elements}.

\section{A canonical witness.}\label{sec:witness}

The idea behind our development is the following: given a group~$G$, we
attempt to construct a witness for the capability of~$G$; meaning a
group $H$ such that $H/Z(H)\cong G$. The relations among the elements
of~$G$ force in turn relations among the elements of~$H$. When $G$ is
not capable, this will manifest itself as undesired relations among
the elements of~$H$, forcing certain elements whose image should not
be trivial in~$G$ to be central in~$H$. 

When $G$ is a group of class two, this can be achieved by starting
from the relatively free group of class three in an adequate number of
generators. However, any further reductions that can be done in the
starting potential witness group~$H$ will yield dividends of
simplicity later on; this is the main goal of the following result; 
the argument for condition (ii) appears \textit{en passant} in the proof of
\cite{heinnikolova}*{Theorem~1}. 

\begin{theorem} Let $G$ be a group, generated by
  $g_1,\ldots,g_n$. If $G$ is capable, then there exists a group
  $H$, such that $H/Z(H)\cong G$, and elements $h_1,\ldots,h_n\in H$
  which map onto $g_1,\ldots,g_n$, respectively, under the isomorphism such that:
\begin{itemize}
\item[(i)] $H=\langle h_1,\ldots,h_n\rangle$, and
\item[(ii)] The order of $h_i$ is the same as the order of $g_i$,
  $i=1,\ldots,n$.
\end{itemize}
Moreover, if $G$ is finite, then $H$ can be chosen to be finite as well.
\label{th:specialwitness}
\end{theorem}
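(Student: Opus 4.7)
The plan is to proceed in two stages: starting from any given witness $K$ of the capability of $G$, first replace it by the subgroup generated by chosen lifts of the $g_i$'s to obtain a witness satisfying (i); then build a larger witness via a direct-product-and-quotient construction to additionally obtain (ii).

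For stage 1, let $K$ be a group with $K/Z(K)\cong G$ and let $k_i\in K$ be preimages of $g_i$ under the quotient; set $H_0=\langle k_1,\ldots,k_n\rangle$. Since the images of the $k_i$'s generate $G$ we have $K=H_0\cdot Z(K)$, which combined with the fact that any $z\in Z(H_0)$ commutes with both factors of a decomposition $k=hc$ ($h\in H_0$, $c\in Z(K)$) gives $Z(H_0)=H_0\cap Z(K)$. The second isomorphism theorem then yields $H_0/Z(H_0)\cong G$, establishing (i).

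For stage 2, observe that for each $i$ with $m_i:=|g_i|<\infty$ the power $k_i^{m_i}$ lies in $Z(H_0)$; let $d_i$ denote its order. Introduce an abelian group $A$ with independent generators $b_i$ (one for each such $i$) of order $m_id_i$ (infinite cyclic when $d_i=\infty$), and let $N=\langle(k_i^{m_i},b_i^{-m_i})\rangle\leq Z(H_0)\times A$, a central subgroup of $H_0\times A$. Form $\tilde H=(H_0\times A)/N$; define $\tilde h_i=(k_i,b_i^{-1})N$ when $m_i<\infty$ and $\tilde h_i=(k_i,1)N$ otherwise; and let $H=\langle\tilde h_1,\ldots,\tilde h_n\rangle\leq\tilde H$. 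By construction $\tilde h_i^{m_i}=(k_i^{m_i},b_i^{-m_i})N$ is trivial in $\tilde H$ when $m_i<\infty$, and since $\tilde h_i$ maps to $g_i\in G$ its order is forced to be exactly $m_i$ (and $\infty$ in the remaining case). Applying the stage 1 argument to $\tilde H$ with these new generators then gives $H/Z(H)\cong G$ and secures (ii).

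The crux is the calculation $Z(\tilde H)=(Z(H_0)\times A)/N$, on which the applicability of stage 1 to $\tilde H$ depends. Given $(h,a)N\in Z(\tilde H)$, the commutator $[(h,a),(h',1)]=([h,h'],1)$ must lie in $N$ for every $h'\in H_0$. Writing a generic element of $N$ as $\bigl(\prod_i k_i^{m_ic_i},\prod_i b_i^{-m_ic_i}\bigr)$, the requirement that the $A$-coordinate be trivial forces $c_i\equiv 0\pmod{d_i}$, and then the $H_0$-coordinate automatically vanishes because $(k_i^{m_i})^{d_i}=e$. Hence $[h,h']=e$ for every $h'\in H_0$ and $h\in Z(H_0)$, as wanted; the particular choice $|b_i|=m_id_i$ is precisely what makes this go through. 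For the ``moreover'' clause, one may take $K$ to be a Schur covering group of $G$, which is finite whenever $G$ is; then $H_0$, the $d_i$, $A$, and $H$ are all finite.
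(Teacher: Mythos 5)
Your proof is correct and follows essentially the same strategy as the paper's: first pass to the subgroup $H_0$ generated by lifts of the $g_i$ (using $K=H_0Z(K)$ to get $Z(H_0)=H_0\cap Z(K)$), then kill the excess order of each lift by forming a quotient of a direct product with an abelian group, the key point being that the amalgamating subgroup meets the commutator subgroup of the product trivially, so that centrality in the quotient still forces centrality in $H_0$. The differences are minor: the paper adjusts one generator at a time, taking a single cyclic factor $C=\langle x\rangle$ with $|x|=|k_{i_0}|$ (which coincides with your $m_id_i$) and iterating over $i_0$, whereas you handle all generators simultaneously with one abelian group $A$; both computations of the center are the same in substance. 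For the ``moreover'' clause the paper cites Isaacs's elementary Lemma~2.1 to get a finite witness, whereas you assert that a Schur covering group $K$ of a capable finite $G$ satisfies $K/Z(K)\cong G$; that is true, but it rests on the Beyl--Felgner--Schmid description of the epicenter as the image of the center of a representation group, so it should be cited (or replaced by the Isaacs reference) rather than stated without justification.
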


\begin{proof} If $G$ is capable, then there exists a group $K$ such
  that $K/Z(K)\cong G$; if $G$ is finite, then by~\cite{isaacs}*{Lemma~2.1}
we may choose $K$ to be finite. 

Pick $k_1,\ldots,k_n\in K$ mapping to
$g_1,\ldots,g_n$, respectively, and let $M$ be the subgroup of $K$
generated by $k_1,\ldots,k_n$. Since $MZ(K)=K$, it follows that
$Z(M)=M\cap Z(K)$, hence $M/Z(M)\cong K/Z(K)\cong G$. Thus, replacing
$K$ by $M$ if necessary, we may assume that $K$ is generated by
$k_1,\ldots,k_n$, mapping onto $g_1,\ldots,g_n$, respectively.

Fix $i_0\in\{1,\ldots,n\}$; we show that we can replace $K$ with a
group $H$ with generators $h_1,\ldots,h_n$, such that $H/Z(H)\cong
G$, where $h_i$ maps to $g_i$ for each $i$, the order of $h_{i_0}$
is the same as the order of $g_{i_0}$, and for all $i\neq i_0$, the
order of $h_{i}$ is the same as the order of $k_i$.  Repeating the
construction for $i_0=1,\ldots,n$ will yield the desired group~$H$.

Let $C=\langle x\rangle$ be a cyclic group, with $x$ of the same order
as $k_{i_0}$, and consider $K\times C$. Let $m$ be the order of
$g_{i_0}$ (set $m=0$ if $g_{i_0}$ is not torsion), and consider the
group $M= (K\times C)/\langle (k_{i_0}^m,x^{-m})\rangle$. Since the
intersection of the subgroup generated by
$(k_{i_0}^m,x^{-m})$ with the commutator subgroup of $K\times C$ is
trivial, it follows that if $(k,x^a)$ maps to the center of $M$, then
$[(k,x^a),K\times C]$ must be trivial, so $k\in Z(K)$. 
That is, $Z(M)$ is the image of $Z(K)\times C$.
Therefore,
$M/Z(M) \cong (K\times C)/(Z(K)\times C) \cong K/Z(K)\cong G$. Note
that the isomorphism identifies the image of $(k_{j},x^a)$ with $g_j$
for all $j$ and all integers~$a$.

For $i\neq i_0$, let $h_i$ be the image of $(k_i,e)$ in $M$; and let
$h_{i_0}$ be the image of $(k_{i_0},x^{-1})$ in $M$. Finally, let $H$
be the subgroup of $M$ generated by $h_1,\ldots,h_n$. Then $HZ(M)=M$,
so once again we have $H/Z(H)\cong M/Z(M)\cong G$, and the map $H\to
H/Z(H)\cong G$ sends $h_i$ to $g_i$. Moreover, the order of $h_{i_0}$
is equal to the order of $g_{i_0}$. This finishes the construction.
\end{proof}

This result now allows us to give a very specific ``canonical witness''
to the capability of~$G$.

\begin{theorem} Let $G$ be a finite noncyclic group of class at most
  two and exponent an odd prime~$p$. Let $g_1,\ldots,g_n$ be elements
  of $G$ that project onto a basis for $G^{\rm ab}$, and let $F$ be
  the $3$-nilpotent product of $n$ cyclic groups of order~$p$
  generated by $x_1,\ldots,x_n$, respectively. Let $N$ be the kernel
  of the morphism $\psi\colon F\to G$ induced by mapping $x_i\mapsto g_i$,
  $i=1,\ldots,n$. Then $G$ is capable if and only if
\[ G \cong \left(F/[N,F]\right)\bigm/ Z\left(F/[N,F]\right).\]
\label{th:canonicalwitness}
\end{theorem}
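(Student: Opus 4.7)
The reverse implication is immediate: if $G\cong (F/[N,F])/Z(F/[N,F])$, then $H:=F/[N,F]$ is itself a witness to the capability of $G$.

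For the forward direction, my strategy is to apply Theorem~\ref{th:specialwitness} to produce a very specific finite witness $H$, and then compare $H$ with $F/[N,F]$ via the universal property of the $3$-nilpotent product. By Theorem~\ref{th:specialwitness}, there exists a finite group $H$ with $H/Z(H)\cong G$, generated by elements $h_1,\ldots,h_n$ that map to $g_1,\ldots,g_n$ under the isomorphism and with $|h_i|=|g_i|=p$; here $|g_i|=p$ exactly because the $g_i$ project to a basis of the $\mathbb{F}_p$-vector space $G^{\mathrm{ab}}$ and $G$ has exponent~$p$. Since $H/Z(H)$ has class two, $H_4=[H_3,H]\subseteq [Z(H),H]=\{e\}$, so $H$ has class at most three. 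The coproduct property of $F$ in $\mathfrak{N}_3$ then yields a surjection $\varphi\colon F\to H$ with $\varphi(x_i)=h_i$; let $K=\ker\varphi$. The composite $F\xrightarrow{\varphi}H\twoheadrightarrow H/Z(H)\cong G$ agrees with $\psi$ on the generators $x_i$, so it \emph{is} $\psi$; hence $N/K=Z(F/K)=Z(H)$, and in particular $[N,F]\subseteq K$, so $\varphi$ factors through $\pi\colon F/[N,F]\to H$.

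Writing $\widetilde{F}=F/[N,F]$, the image $\widetilde{N}$ of $N$ in $\widetilde{F}$ is central by construction, so there is a natural surjection $G=F/N\twoheadrightarrow \widetilde{F}/Z(\widetilde{F})$ sending the class of $g_i$ to the class of $x_i$. The surjection $\pi$ carries $Z(\widetilde{F})$ into $Z(H)$ and so induces a surjection $\bar\pi\colon \widetilde{F}/Z(\widetilde{F})\to H/Z(H)\cong G$. Tracking a generator $g_i$ through the composite
\[
G \cong F/N \twoheadrightarrow \widetilde{F}/Z(\widetilde{F}) \xrightarrow{\bar\pi} H/Z(H) \cong G
\]
shows that it is the identity on each $g_i$, hence on all of $G$; since $G$ is finite, both surjections must actually be isomorphisms, which gives the required identification.

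The real work is the bookkeeping in the middle step: identifying the composite $F\to H\to H/Z(H)$ with $\psi$ (so that $N/K$ equals rather than merely contains $Z(H)$) and thereby squeezing $[N,F]$ into $K$. The hypotheses on $G$ intervene in specific places: exponent $p$ and the basis assumption are what force $|h_i|=p$ so that $\varphi$ can be built from the universal property; class at most two is what keeps $H$ inside $\mathfrak{N}_3$; and finiteness of $G$ is what lets the final generator-chase upgrade two surjections into isomorphisms.
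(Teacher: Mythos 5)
Your proof is correct and takes essentially the same route as the paper's: invoke Theorem~\ref{th:specialwitness} to get a finite witness $H$ of class at most three with generators of order $p$, use the universal property of the $3$-nilpotent product to obtain $\varphi\colon F\to H$ with $[N,F]\subseteq\ker\varphi$, and play the two resulting surjections against each other. Your final step (observing that the composite $G\to\widetilde{F}/Z(\widetilde{F})\to G$ is the identity on generators) is a slightly sharper finish than the paper's appeal to finiteness, but the argument is otherwise the same.
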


\begin{proof}  Sufficiency is immediate. For the necessity, assume
  that $G$ is capable, and let $H$ be the group guaranteed by
  Theorem~\ref{th:specialwitness} such that $G\cong H/Z(H)$. Note that
  $H$ is of class at most three. Let $\theta\colon H/Z(H)\to G$ be an
  isomorphism that maps $h_iZ(H)$ to $g_i$.

Since $h_1,\ldots,h_n$ are of order~$p$, there exists a (unique
surjective) morphism $\varphi\colon F\to H$ induced by mapping $x_i$
to~$h_i$, $i=1,\ldots,n$. If $\pi\colon H\to H/Z(H)$ is the
canonical projection, then we must have $\theta\pi\varphi=\psi$ by the
universal property of the coproduct. Thus,
$\varphi(N)=\ker(\pi)=Z(H)$, so~$[N,F]\subset\ker(\varphi)$,
and~$\varphi$ factors through $F/[N,F]$; 
surjectivity of $\varphi$ implies that $\varphi(Z(F/[N,F]))\subset
Z(H)$, hence $G\cong H/Z(H)$ is a quotient of
$(F/[N,F])\bigm/Z(F/[N,F])$.

On the other hand, $N[N,F]\subseteq Z(F/[N,F])$, so $G\cong
F/N=F/N[N,F]$ has $(F/[N,F])\bigm/Z(F/[N,F])$ as a quotient.

Thus we have that $G$ has $(F/[N,F])\bigm/ Z(F/[N,F])$ as a quotient,
which in turn has $G$ as a quotient.  Since $G$ is finite, the only
possibility is that the central quotient of $F/[N,F]$ is isomorphic
to~$G$, as claimed.
\end{proof}

\begin{corollary} Let $G$ be a finite noncyclic group of class at most
  two and exponent an odd prime~$p$. Let $g_1,\ldots,g_n$ be elements
  of~$G$ that project onto a basis for $G^{\rm ab}$, and let $F$ be
  the $3$-nilpotent product of $n$ cyclic groups of order~$p$
  generated by $x_1,\ldots,x_n$, respectively. Let $\psi\colon F\to G$
  be the map induced by sending $x_i$ to $g_i$, $i=1,\ldots,n$.
Finally, let $C$ be the subgroup
  of~$F$ generated by the commutators $[x_j,x_i]$, $1\leq i<j\leq n$.
If $X$ is the subgroup of~$C$ such that ${\rm ker}(\psi)=X\oplus F_3$,
  then $G$ is capable if and only if
$\bigl\{c \in C\,|\, [c,F]\subset [X,F]\bigr\} = X$.
\label{cor:simplecondition}
\end{corollary}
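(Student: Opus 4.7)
The plan is to unfold Theorem~\ref{th:canonicalwitness} in the specific setup of the corollary, and then reduce the resulting equality of quotients of~$F$ to a condition inside the subgroup~$C$.

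First, $F$ is the $3$-nilpotent product of cyclic groups, hence has class at most three, so $F_4=\{e\}$ and $F_3\subseteq Z(F)$. Because $N=X\oplus F_3\subseteq F_2$ and $F_3$ is central, a direct computation gives $[N,F]=[X,F]$. Theorem~\ref{th:canonicalwitness} then becomes: $G$ is capable if and only if $G\cong(F/[X,F])\big/Z(F/[X,F])$. Setting $Z^{\ast}=\{c\in F:[c,F]\subseteq[X,F]\}$, the right-hand side equals $F/Z^{\ast}$. Since $X\subseteq Z^{\ast}$ trivially and $F_3\subseteq Z^{\ast}$ because $[F_3,F]=\{e\}$, we have $N\subseteq Z^{\ast}$, so $G=F/N$ canonically surjects onto $F/Z^{\ast}$. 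Because $G$ is finite, the isomorphism $G\cong F/Z^{\ast}$ is equivalent to the equality $N=Z^{\ast}$.

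Second, I would push the condition $N=Z^{\ast}$ down inside~$C$. Any $c\in Z^{\ast}$ satisfies $[c,F]\subseteq[X,F]\subseteq F_3$, so $cF_3$ lies in the centre of the class-two group $F/F_3$, which is $F_2/F_3$; thus $c\in F_2$ and so $Z^{\ast}\subseteq F_2$. In a $3$-nilpotent product of cyclic groups of order $p$ with $p$ odd, basic commutators of weights $2$ and $3$ form a basis of the elementary abelian group $F_2$, so $F_2=C\oplus F_3$. Combining $F_3\subseteq Z^{\ast}$ with $Z^{\ast}\subseteq F_2=C\oplus F_3$ yields $Z^{\ast}=(Z^{\ast}\cap C)\oplus F_3$. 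Comparing with $N=X\oplus F_3$, the equality $N=Z^{\ast}$ is equivalent to $X=Z^{\ast}\cap C=\{c\in C:[c,F]\subseteq[X,F]\}$, which is exactly the condition in the statement.

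The only real work consists of the two structural facts used in the last step, namely $Z^{\ast}\subseteq F_2$ and $F_2=C\oplus F_3$. The first is immediate from $[X,F]\subseteq F_3$ together with the description of the centre of $F/F_3$; the second follows from the basic commutator basis for the $3$-nilpotent product of cyclic groups of order~$p$ cited in the introduction. Both observations are routine given the earlier setup, so the corollary reduces to a formal manipulation of Theorem~\ref{th:canonicalwitness}; no genuinely new obstacle arises.
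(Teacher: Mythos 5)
Your argument is correct and follows essentially the same route as the paper: unfold Theorem~\ref{th:canonicalwitness}, observe that $[N,F]=[X,F]\subseteq F_3$ so that the central elements of $F/[N,F]$ are exactly the classes of those $h\in F_2$ with $[h,F]\subseteq[X,F]$, and then use the decomposition $F_2=C\oplus F_3$ to reduce the equality of the centre with $N/[N,F]$ to the stated condition on~$C$. The only cosmetic difference is that you package the central elements into the subgroup $Z^{\ast}$ and compare orders, whereas the paper writes $h=cf$ with $c\in C$, $f\in F_3$ and argues elementwise; both rest on the same two facts ($Z(F/F_3)=F_2/F_3$ for $n\geq 2$, and the basic-commutator basis giving $F_2=C\oplus F_3$).
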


\begin{proof} Let $N={\rm ker}(\psi)$. By
  Theorem~\ref{th:canonicalwitness}, $G$ is capable if and only if $G$
 is isomorphic to the central quotient of $F/[N,F]$. Thus, $G$ is
 capable if and only if the center of $F/[N,F]$ is~$N/[N,F]$, and no
 larger . 

An element $h[N,F]\in F/[N,F]$ lies in $Z(F/[N,F])$ if and only if
$[h,F]\subseteq [N,F]$. Since $G$ is of exponent~$p$, $F_3\subseteq
N\subseteq F_2$ and so $[N,F]=[X,F]\subseteq F_3$. In particular, we deduce
that if $h[N,F]$ is central, then $h$ must lie in $F_2$. Write $h=cf$,
with $c\in C$ and $f\in F_3$. Then $[h,F]=[c,F]$, so $h[N,F]$ is
central if and only if $[c,F]\subset [X,F]$. 

If $\bigl\{c \in C\,\bigm|\, [c,F]\subset [X,F]\bigr\}=X$, then it
follows that $h[N,F]$ is central if and only if $h=cf$ with $c\in X$
and $f\in F_3$, which means that $h[N,F]$ is central if and only if
$h\in N$. Hence, the center of $F/[N,F]$ is $N/[N,F]$, and $G$ is
capable.

Conversely, assume that $G$ is capable. Then the center of $F/[N,F]$
is equal to $N/[N,F]$. Therefore, $X\subseteq\bigl\{c\in C\,\bigm|\,
[c,f]\subset [X,F]\bigr\}\subseteq N\cap C = X$,
giving equality and
establishing the corollary.
\end{proof}

One advantage of the description just given is the following: both
$F_2$ and $F_3$ are vector spaces over $\mathbb{F}_p$, and the maps
$[-,f]\colon F_2\to F_3$ are linear transformations for each $f\in F$;
hence, the condition just described can be restated in terms of vector
spaces, subspaces, and linear transformations.
While all the work can still be done at the level of groups and
commutators, the author, at any rate, found it easier to think in
terms of linear algebra. In addition, once the problem has been cast
into linear algebra terms, there is a host of tools (such as
geometric arguments) that can be brought to bear on the issue.

We will discuss this translation and more results on capability below,
after a brief abstract interlude on linear algebra.

\section{Some linear algebra.}\label{sec:linalg}

We set aside groups and capability temporarily to describe the general
construction that we will use in our analysis.

\begin{definition} Let $V$ and $W$ be vector spaces over the same field, and let
$\{\ell_i\}_{i\in I}$ be a nonempty family of linear transformations
  from $V$ to~$W$.  Given a subspace $X$ of~$V$, let $X^*$ be the
  subspace of $W$ defined by:
\[ X^* = {\rm span}\bigl(\ell_i(X)\,|\, i\in I\bigr).\]
Given a subspace $Y$ of $W$, let $Y^*$ be the subspace of $V$ defined by:
\[ Y^* = \bigcap_{i\in I}\ell_i^{-1}(Y).\]
It will be clear from context whether we are talking about subspaces
of $V$ or~$W$.
\end{definition}
It is clear that $X\subset X'\Rightarrow X^*\subset X'^*$ for all
subspaces $X$ and~$X'$ of~$V$, and likewise
$Y\subset Y' \Rightarrow Y^*\subset Y'^*$ for all subspaces $Y,Y'$ of~$W$. 

\begin{theorem}
Let $V$ and $W$ be vector spaces over the same field and let
  $\{\ell_i\}_{i\in I}$ be a nonempty family of linear transformations
  from $V$ to~$W$. The operator on
  subspaces of $V$ defined by $X\mapsto X^{**}$ is a closure operator;
  that is, it is increasing, isotone, and idempotent. Moreover,
  $(X^{**})^* = (X^*)^{**}=X^*$ for all subspaces $X$ of~$V$.
\label{th:closureop}
\end{theorem}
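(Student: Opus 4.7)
My plan is to reduce everything to the single star operation together with the easy observation noted just before the theorem: the single-star map is isotone both from subspaces of $V$ to subspaces of $W$ and from subspaces of $W$ to subspaces of $V$. Once that is in hand, the entire statement will follow from one key identity, namely $(X^{**})^* = X^*$.

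First I verify that $X \subseteq X^{**}$ (extensive / increasing). For any $x \in X$ and any $i \in I$, we have $\ell_i(x) \in \ell_i(X) \subseteq X^*$, so $x \in \ell_i^{-1}(X^*)$. Intersecting over $i$ gives $x \in X^{**}$. Next, isotone: if $X \subseteq X'$, then $X^* \subseteq X'^*$ by the remark preceding the theorem, and applying the same remark in the other direction gives $X^{**} \subseteq X'^{**}$.

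Now the key step: I show $(X^{**})^* = X^*$. The inclusion $\supseteq$ is immediate from the extensive property applied to the single $*$: since $X \subseteq X^{**}$, isotonicity of the single star gives $X^* \subseteq (X^{**})^*$. For $\subseteq$, unwind the definition of $X^{**}$: $X^{**} = \bigcap_i \ell_i^{-1}(X^*)$, so $\ell_j(X^{**}) \subseteq X^*$ for every $j \in I$; therefore $(X^{**})^* = \mathrm{span}\bigl(\ell_j(X^{**}) : j \in I\bigr) \subseteq X^*$. This is the only nontrivial step, and I expect no real obstacle—just the careful unpacking of the two definitions in opposite directions.

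Idempotence of $X \mapsto X^{**}$ now follows by applying the key identity to the subspace $X^*$ of $W$ (the identity is stated for subspaces of $V$, but the same argument—reading $*$ as the appropriate one of the two operators at each step—works verbatim for subspaces of $W$); alternatively, I can just compute
\[
X^{****} = \bigl((X^{**})^*\bigr)^* = (X^*)^* = X^{**}.
\]
Finally, the equality $(X^{**})^* = (X^*)^{**}$ is a triviality: both expressions denote the same three-fold composition $((X^*)^*)^*$. This completes all the assertions of the theorem.
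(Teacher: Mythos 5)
Your proof is correct and follows essentially the same route as the paper: extensivity and isotonicity from the definitions, the key identity $(X^{**})^*=X^*$ via the two inclusions $\ell_j(X^{**})\subseteq X^*$ and $X^*\subseteq (X^{**})^*$, and idempotence by the computation $X^{****}=((X^{**})^*)^*=(X^*)^*=X^{**}$. No issues.
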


\begin{proof}
Since $\ell_i(X)\subseteq X^*$ for all $i$, it follows that $X\subset
X^{**}$, so the operator is increasing. If $X\subset X'$, then
$X^*\subset X'^*$, hence $X^{**}\subset X'^{**}$, and the operator is
isotone. The equality of $(X^{**})^*$ and $(X^*)^{**}$ is immediate. 
Since $X\subset X^{**}$, we have
$X^*\subset (X^{**})^*$. And by construction $\ell_i(X^{**})\subset
X^*$ for each $i$, so $(X^{**})^*\subset X^*$ giving equality. 

Thus, $(X^{**})^{**} = (X^{***})^* = (X^{*})^{*} = X^{**}$, so the
operator is idempotent, finishing the proof. 
\end{proof}

It may be worth noting that while this closure operator is algebraic
(the closure of a subspace $X$ is the union of the closures of all
finitely generated subspaces $X'$ contained in~$X$), it is not
topological (in general, the closure of the subspace generated by $X$
and $X'$ is not equal to the subspace generated by $X^{**}$ and
$X'^{**}$). 

The dual result holds for subspaces of~$W$:

\begin{theorem}
Let $V$ and $W$ be vector spaces over the same field, and let $\{\ell_i\}_{i\in
  I}$ be a nonempty family of linear transformations from $V$ to~$W$. The operator on
  subspaces of $W$ defined by $Y\mapsto Y^{**}$ is an interior
  operator; that is, it is decreasing, isotone, and
  idempotent. Moreover, 
  $(Y^{**})^*=(Y^*)^{**}=Y^*$ for all subspaces $Y$ of~$W$. 
\label{th:interiorop}
\end{theorem}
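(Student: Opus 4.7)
The plan is to mirror, almost verbatim, the proof of Theorem~\ref{th:closureop}, exploiting the fact that the two $(-)^*$ maps are adjoint in the obvious sense: one goes from subspaces of $V$ to subspaces of $W$ and the other goes back, with each alternating application landing on the opposite side. So we will show the three interior-operator properties first, then the auxiliary equality $(Y^{**})^*=(Y^*)^{**}=Y^*$, and then deduce idempotence from that equality exactly as in the dual case. No new ideas are needed; the only thing to be careful about is keeping straight which superspace any particular $(-)^*$ lives in.

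First I would observe that $Y^{**}\subseteq Y$ directly from the definition: for each $i\in I$ we have $Y^*\subseteq \ell_i^{-1}(Y)$, so $\ell_i(Y^*)\subseteq Y$, and therefore $Y^{**}=\operatorname{span}(\ell_i(Y^*)\mid i\in I)\subseteq Y$. This establishes that the operator is decreasing. Isotony is immediate from the two isotony statements already recorded before Theorem~\ref{th:closureop}: if $Y\subseteq Y'$ then $Y^*\subseteq Y'^*$, and hence $Y^{**}\subseteq Y'^{**}$.

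Next I would establish the auxiliary equality $(Y^{**})^*=(Y^*)^{**}=Y^*$. The equality of $(Y^{**})^*$ and $(Y^*)^{**}$ is just the symbolic identity $((Y^*)^*)^*=((Y^*)^*)^*$ rewritten on the two sides. For the content, apply the already proven Theorem~\ref{th:closureop} to the subspace $Y^*$ of $V$: the closure operator is increasing, so $Y^*\subseteq (Y^*)^{**}$. Combined with the fact, shown in the first step, that $Y^{**}\subseteq Y$ implies $(Y^{**})^*\subseteq Y^*$, this sandwiches $Y^*$ between $(Y^*)^{**}$ and itself, yielding equality.

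Finally I would derive idempotence from the auxiliary equality in one line:
\[
 (Y^{**})^{**} \;=\; \bigl((Y^{**})^*\bigr)^* \;=\; (Y^*)^* \;=\; Y^{**},
\]
where the middle equality is the one just established. The only mildly delicate point anywhere in the argument — and so the ``main obstacle,'' such as it is — is keeping track of which side of $V,W$ each $(-)^*$ maps into, so that when one invokes Theorem~\ref{th:closureop} for the auxiliary inclusion $Y^*\subseteq (Y^*)^{**}$ one is legitimately applying it to a subspace of $V$.
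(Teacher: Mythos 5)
Your proof is correct and follows essentially the same route as the paper's: decreasing and isotone are handled identically, and the equality $(Y^{**})^*=(Y^*)^{**}=Y^*$ is obtained exactly as in the paper by combining $Y^*\subseteq (Y^*)^{**}$ from Theorem~\ref{th:closureop} with $(Y^{**})^*\subseteq Y^*$ from the decreasing property. The only (immaterial) difference is that the paper proves idempotence directly first, via $Y^*\subseteq (Y^{**})^*$ "by construction," whereas you deduce it from the auxiliary equality afterward, mirroring the order used in the closure-operator theorem.
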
 

\begin{proof}
That the operator is isotone follows as it did in the previous
  theorem. Since $\ell_i(Y^{*})\subset Y$ for each $i$, it follows
  that $Y^{**}\subset Y$, showing the operator is decreasing. Set
  $Z=Y^{**}$; by construction, $Y^*\subset \ell_i^{-1}(Z)$ for each
  $i$, so $Y^*\subset Z^*$. Therefore, $Z=Y^{**}\subset Z^{**}\subset Z$. Thus
  $Z=Z^{**}$, proving the operator is idempotent.

Again, the equality of $(Y^{**})^*$ and $(Y^*)^{**}$ is immediate. To
finish we only need to show that $Y^*$ is a closed subspace of
$V$. From Theorem~\ref{th:closureop} we know that $Y^*\subset
(Y^*)^{**}$; since $Y^{**}\subset Y$, it follows that
$(Y^*)^{**}=(Y^{**})^*\subset Y^{*}$, giving equality.
\end{proof}

As above, the interior operator is algebraic but in general not
topological.  However, we do have the following result:

\begin{lemma}
Let $V$ and~$W$ be vector spaces over the same field, and let
$\{\ell_i\}_{i\in I}$ be a nonempty family of linear transformations from~$V$ to~$W$. If
$A$ and~$B$ are subspaces of~$V$, then $(A+B)^*=A^*+B^*$. 
\label{lemma:sumsforstar}
\end{lemma}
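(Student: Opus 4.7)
The plan is to prove the equality by double containment, with one direction coming essentially for free from the isotonicity already noted after the definition, and the other following directly from the fact that each $\ell_i$ is a linear transformation.

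For the containment $A^*+B^*\subseteq (A+B)^*$, I would simply observe that $A\subseteq A+B$ and $B\subseteq A+B$, so by the isotone property of the $*$ operator we have $A^*\subseteq (A+B)^*$ and $B^*\subseteq (A+B)^*$. Since $(A+B)^*$ is a subspace, it contains the sum $A^*+B^*$.

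For the reverse containment $(A+B)^*\subseteq A^*+B^*$, I would unfold the definition: $(A+B)^*$ is spanned by the vectors $\ell_i(v)$ with $v\in A+B$ and $i\in I$. Any such $v$ can be written as $v=a+b$ with $a\in A$ and $b\in B$, and by linearity of $\ell_i$,
\[ \ell_i(v) = \ell_i(a) + \ell_i(b). \]
By definition $\ell_i(a)\in A^*$ and $\ell_i(b)\in B^*$, so $\ell_i(v)\in A^*+B^*$. As $A^*+B^*$ is a subspace and contains every spanning vector of $(A+B)^*$, the containment follows.

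There is no real obstacle here; the argument is a one-step unpacking of the definitions once linearity of each $\ell_i$ is invoked. The only thing worth flagging is that the dual statement for preimages of subspaces of $W$ does \emph{not} hold in general (preimages do not commute with sums), which is precisely why the analogous lemma is asserted only on the $V$ side.
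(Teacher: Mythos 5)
Your proof is correct and follows essentially the same route as the paper's: one inclusion from monotonicity of $*$, and the other by writing each element of $A+B$ as $a+b$ and using linearity of the $\ell_i$ (the paper spells out the spanning linear combination explicitly, but the argument is identical). Your closing remark about why the dual statement fails on the $W$ side is a correct and worthwhile observation, consistent with the paper's note that $(A\oplus B)^*$ need not be a direct sum.
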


\begin{proof} Since $A$ and $B$ are contained in $A+B$, we have
  $A^*,B^*\subseteq (A+B)^*$, and therefore $A^*+B^*\subseteq
  (A+B)^*$. Conversely, if $\mathbf{w}\in(A+B)^*$, then we can express
  $\mathbf{w}$ as a linear combination
$\mathbf{w} = \ell_{i_1}(a_1+b_1) + \cdots +
  \ell_{i_k}(a_k+b_k)$, with $a_i\in A$, $b_i\in B$. This gives
$\mathbf{w} = \Bigl( \ell_{i_1}(a_1) + \cdots +
  \ell_{i_k}(a_k)\Bigr) + \Bigl( \ell_{i_1}(b_1) + \cdots +
  \ell_{i_k}(b_k)\Bigr)\in A^*+B^*$,
proving the equality.
\end{proof}
The lemma implies that $(A\oplus B)^* = A^*+B^*$; however, in
general we cannot replace the sum on the right hand side with a direct
sum. 

Given a family of linear transformations
$\{\ell_i\colon V\to W\}_{i\in I}$, we will say a subspace $X$ of $V$
is \textit{$\{\ell_i\}_{i\in I}$-closed} (or simply \textit{closed} if
the family is understood from context) if and only if
$X=X^{**}$. Likewise, we will say a subspace $Y$ of $W$ is
\textit{$\{\ell_i\}_{i\in I}$-open} (or simply \textit{open}) if and only if $Y=Y^{**}$.

It is easy to verify that the closure and interior operators
determined by a nonempty family $\{\ell_i\}_{i\in I}$ of linear
transformations is the same as the closure operator determined by the
subspace of $\mathcal{L}(V,W)$ (the space of all linear
transformations from $V$ to~$W$) spanned by the $\ell_i$. 
Likewise, the following observation is straightforward:

\begin{prop}
Let $V$ and $W$ be vector spaces, and $X$ be a subspace of~$V$. Let
$\{\ell_i\}_{i\in I}$ be a nonempty family of linear
transformations from $V$ to~$W$, and let $\psi\in {\rm Aut}(V)$. If we use ${}^{**}$
to denote the $\{\ell_i\}_{i\in I}$ closure operator, then the
$\{\ell_i\psi^{-1}\}_{i\in I}$-closure of $\psi(X)$ is $\psi(X^{**})$.
In particular, $X$ is $\{\ell_i\}$-closed if and only if $\psi(X)$ is
$\{\ell_i\psi^{-1}\}$-closed. If $\{\ell_i\}$ and
$\{\ell_i\psi^{-1}\}$ span the same subspace of $\mathcal{L}(V,W)$,
then $X$ is closed if and only if $\psi(X)$ is closed.
\label{prop:symmetry}
\end{prop}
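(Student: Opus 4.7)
The plan is to work directly from the definitions. Write $\,{}^{*'}$ for the star operation induced by the family $\{\ell_i\psi^{-1}\}_{i\in I}$ (on subspaces of either $V$ or $W$, as appropriate). I would establish two basic identities relating $\,{}^{*}$ and $\,{}^{*'}$, from which everything follows.

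First, for any subspace $X$ of~$V$, the definition of $\,{}^{*'}$ together with $\psi^{-1}(\psi(X))=X$ gives
\[\psi(X)^{*'} \;=\; \mathrm{span}\bigl(\ell_i\psi^{-1}(\psi(X))\,\bigm|\, i\in I\bigr) \;=\; \mathrm{span}\bigl(\ell_i(X)\,\bigm|\, i\in I\bigr) \;=\; X^{*}.\]
Second, for any subspace $Z$ of~$W$, since $\psi$ is a bijection of $V$ with itself one has $(\ell_i\psi^{-1})^{-1}(Z) = \psi(\ell_i^{-1}(Z))$, and bijections commute with arbitrary intersections, so
\[Z^{*'} \;=\; \bigcap_{i\in I}(\ell_i\psi^{-1})^{-1}(Z) \;=\; \bigcap_{i\in I}\psi(\ell_i^{-1}(Z)) \;=\; \psi\Bigl(\bigcap_{i\in I}\ell_i^{-1}(Z)\Bigr) \;=\; \psi(Z^{*}).\]

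Combining the two identities, $\psi(X)^{*'*'} = (X^{*})^{*'} = \psi\bigl((X^{*})^{*}\bigr) = \psi(X^{**})$, which is the main assertion. The first ``in particular'' then follows at once, since $\psi$ is injective: $X = X^{**}$ if and only if $\psi(X) = \psi(X^{**}) = \psi(X)^{*'*'}$, i.e.\ if and only if $\psi(X)$ is $\{\ell_i\psi^{-1}\}$-closed. For the final statement I would invoke the observation made just before the proposition, namely that the closure operator on subspaces of~$V$ depends only on the subspace of~$\mathcal{L}(V,W)$ spanned by the family. Under the hypothesis that $\{\ell_i\}$ and $\{\ell_i\psi^{-1}\}$ span the same subspace, the two closure operators coincide, so ``$\{\ell_i\}$-closed'' and ``$\{\ell_i\psi^{-1}\}$-closed'' are the same condition, and the equivalence just proved becomes the desired one.

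There is no real obstacle here; the whole argument is a short unpacking of the definitions. The only step that merits a moment's care is the identity $(\ell_i\psi^{-1})^{-1}(Z) = \psi(\ell_i^{-1}(Z))$, but this just uses that $\psi$ is a bijection, and a one-line preimage chase confirms it.
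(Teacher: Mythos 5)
Your proof is correct; the paper states this proposition without proof, calling it straightforward, and your unpacking via the two identities $\psi(X)^{*'}=X^{*}$ and $Z^{*'}=\psi(Z^{*})$ is exactly the intended verification.
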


\subsection*{Back to capability}\label{subsec:backtocap}

To tie the construction above back to the problem of capability, we introduce specific vector
spaces and linear transformations based on
Corollary~\ref{cor:simplecondition}. We fix an odd prime $p$ throughout.

\begin{definition} Let $n>1$. We let $U(n)$ denote a vector space over
  $\mathbb{F}_p$ of dimension~$n$. We let $V(n)$ denote the vector
  space $U(n)\wedge U(n)$ of dimension $\binom{n}{2}$. Finally, we let
  $W(n)$ be the quotient $(V(n)\otimes U(n))/J$, where $J$ is the
  subspace spanned by all elements of the form
\[ (\mathbf{a}\wedge\mathbf{b})\otimes\mathbf{c} +
  (\mathbf{b}\wedge\mathbf{c})\otimes\mathbf{a} +
  (\mathbf{c}\wedge\mathbf{a})\otimes \mathbf{b},\] with
  $\mathbf{a},\mathbf{b},\mathbf{c}\in U$.
  The vector space $W(n)$ has dimension $2\binom{n+1}{3}$. 
  If there is no danger
  of ambiguity and $n$ is understood from context, we will simply
  write $U$, $V$, and~$W$ to refer to these vector spaces.
\label{defn:defofUVW}
\end{definition}

The following notation will be used only in the context where there is
a single specified basis for $U$, to avoid any possibility of ambiguity:

\begin{definition} Let $n>1$, and let $U$, $V$, and~$W$ be as
  above. If $u_1,\ldots,u_n$ is a given basis for $U$, and $i$, $j$,
  and~$k$ are integers, $1\leq i,j,k\leq n$, then we let $v_{ji}$
  denote the vector $u_j\wedge u_i$ of $V$, and $w_{jik}$ denote
  vector of~$W$ which is the image of $v_{ji}\otimes u_k$.
  The ``prefered basis'' for $V$ (relative to $u_1,\ldots,u_n$) will
  consist of the vectors $v_{ji}$ with $1\leq i<j\leq n$. The
  ``prefered basis'' for $W$ will consist of the vectors $w_{jik}$
  with $1\leq i<j\leq n$ and $i\leq k\leq n$. 
\end{definition}

To specify our closure and interior operators on $V$ and~$W$, we
define the following family of linear transformations:

\begin{definition} Let $n>1$. We embed $U$ into $\mathcal{L}(V,W)$
  as follows:
  given $\mathbf{u}\in U$ and $\mathbf{v}\in V$, we let
  $\varphi_{\mathbf{u}}(\mathbf{v})=\overline{\mathbf{v}\otimes
  \mathbf{u}}$, where $\overline{\mathbf{x}}$ denotes the image in $W$
  of a vector $\mathbf{x}\in V\otimes U$. If $u_1,\ldots,u_n$
  is a given basis for $U$ and $i$ is an integer, $1\leq i\leq n$,
  then we will use $\varphi_i$ to denote the linear transformation
  $\varphi_{u_i}$. 
\end{definition}

The closure operator we will consider is determined by the
family $\{\varphi_{\mathbf{u}}\,|\, \mathbf{u}\in U\}$. As noted
above, if $u_1,\ldots,u_n$ is a basis for $U$, then this closure
operator is also determined by the family
$\{\varphi_1,\ldots,\varphi_n\}$.

Going back to the problem of capability,
let $F$ be the $3$-nilpotent product of cyclic groups of
order~$p$ generated by $x_1,\ldots,x_n$. We can identify $F_2$ with
$V\oplus W$ by identifying $v_{ji}$ with $[x_j,x_i]$ and $w_{jik}$
with $[x_j,x_i,x_k]$; this also identifies $W$ with~$F_3$. 

Let $G$ be a noncyclic group of class at most two and exponent~$p$,
and let $g_1,\ldots,g_n$ be elements of $G$ that project onto a basis
for $G^{\rm ab}$. If we let $\psi\colon F\to G$ be the map induced by
mapping $x_i\mapsto g_i$ and $N={\rm ker}(\psi)$, then as above we can
write $N=X\oplus F_3$, where $X$ is a subgroup of $C=\langle
[x_j,x_i]\,\bigm|\, 1\leq i<j\leq n\rangle$. Thus, we can identify $X$
with a subspace of $V$ by identifying the latter with the subgroup~$C$; abusing notation
somewhat, we call this subspace $X$ as well.

\begin{theorem}
Let $G$, $F$, $C$, and~$X$ be as in the preceding two paragraphs. Then $G$
is capable if and only if $X$ is $\{\varphi_{\mathbf{u}}\,|\,
\mathbf{u}\in U\}$-closed.
\label{th:capasclosure}
\end{theorem}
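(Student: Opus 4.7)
The plan is to read off the theorem directly from Corollary~\ref{cor:simplecondition} by re-expressing its hypothesis in the language of Section~\ref{sec:linalg}. The identifications $C\cong V$ (via $[x_j,x_i]\leftrightarrow v_{ji}$) and $F_3\cong W$ (via $[x_j,x_i,x_k]\leftrightarrow w_{jik}$) have already been set up in the paragraph preceding the theorem; all that remains is to check that the commutator pairing $C\times F\to F_3$ corresponds, under these identifications, to the evaluation pairing $(c,\mathbf u)\mapsto\varphi_{\mathbf u}(c)$ from $V\times U$ to~$W$.

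For fixed $c\in C\subseteq F_2$, the map $[c,-]\colon F\to F_3$ is a homomorphism: applying Proposition~\ref{prop:commident}(b) with $x=c$ gives $[c,y_1y_2]=[c,y_2]\,[y_2,[y_1,c]]\,[c,y_1]$, the middle factor lies in $[F,F_3]\subseteq F_4=\{e\}$, and $F_3$ is abelian. Since $[F_2,F_2]\subseteq F_4=\{e\}$, this homomorphism kills $F_2$ and so factors through $F/F_2\cong U$. A parallel computation using Proposition~\ref{prop:commident}(a) shows that $[-,f]$ is linear on $F_2$ for each fixed $f$. The resulting bilinear maps $C\times F/F_2\to F_3$ and $V\times U\to W$ agree on the basis pairs $([x_j,x_i],\,x_kF_2)\leftrightarrow(v_{ji},u_k)$, so $[c,f]=\varphi_{\mathbf u}(c)$ whenever $f$ projects to $\mathbf u\in U$.

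Consequently $[X,F]$ corresponds to $X^{*}=\sum_{i=1}^{n}\varphi_i(X)\subseteq W$, and
\[ \bigl\{c\in C\,\bigm|\,[c,F]\subseteq[X,F]\bigr\} \;=\; \bigcap_{\mathbf u\in U}\varphi_{\mathbf u}^{-1}(X^{*}) \;=\; X^{**}.\]
Corollary~\ref{cor:simplecondition} then asserts capability of~$G$ precisely when this set equals~$X$, i.e., when $X^{**}=X$, which is the definition of $X$ being $\{\varphi_{\mathbf u}\}_{\mathbf u\in U}$-closed.

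The main obstacle is, strictly speaking, not in any of the steps above but in justifying the identification $F_3\cong W$ invoked at the outset: one must check that the defining relation of the subspace~$J\subseteq V\otimes U$ is nothing other than the Hall--Witt identity of Proposition~\ref{prop:commident}(c) interpreted in the class-three, exponent-$p$ group~$F$, and that no further relations hold among the triple commutators. This is confirmed by the basic-commutator basis for $F_3$ together with the dimension check $2\binom{n+1}{3}=\tfrac{n(n-1)(n+1)}{3}$, equal to the number of basic commutators of weight three on $n$ generators.
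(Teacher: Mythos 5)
Your proposal is correct and takes essentially the same route as the paper: both reduce the theorem to Corollary~\ref{cor:simplecondition} via the identifications $C\cong V$, $F_3\cong W$, matching $[-,x_i]$ with $\varphi_i$ so that $[X,F]$ corresponds to $X^*$ and $\{c\in C\mid [c,F]\subseteq[X,F]\}$ to $X^{**}$. You merely supply more detail than the paper does (the bilinearity checks via Proposition~\ref{prop:commident} and the dimension count confirming $F_3\cong W$), which the paper treats as part of the already-established identification in the preceding paragraphs.
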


\begin{proof} We know that $G$ is capable if and only if 
$\bigl\{c \in C\,|\, [c,F]\subset [X,F]\bigr\} = X$.
Identifying $C$ with~$V$ and $F_3$ with $W$, note that
$\varphi_i$ is a map from $C$ to~$F_3$,
corresponding to $[-,x_i]$. Thus, $X^*\subseteq W$ corresponds to
$[X,F]\subseteq F_3$, and
$X^{**}$ corresponds to the set
$\bigl\{c \in C\,|\, [c,F]\subset [X,F]\bigr\}$.
Therefore, $G$ is capable if and only if 
\[X = \bigl\{ \mathbf{v}\in V\,\bigm|\,
\varphi_{\mathbf{u}}(\mathbf{v})\in X^*\mbox{\ for all $\mathbf{u}\in
  U$}\bigr\} = X^{**},\] as claimed. 
\end{proof}

In other words, the closure operator codifies exactly the condition we
want to check to test the capability of $G$.
Thus the question
\textit{``What $n$-generated $p$-groups of class two and exponent~$p$
  are capable?''} is equivalent to the question \textit{``What
  subspaces of $V(n)$ are $\{\varphi_{\mathbf{u}}\,|\,\mathbf{u}\in
  U\}$-closed?''}  

Of course, different subspaces may yield isomorphic groups. In
particular, if we let ${\rm GL}(n,p)$ act on $U$, then this action
induces an action of ${\rm GL}(n,p)$ on $V=U\wedge U$; if $X$ and $X'$
are on the same orbit relative to this action, then the groups $G$
and~$H$ that correspond to $X$ and~$X'$, respectively, are
isomorphic. By Proposition~\ref{prop:symmetry} the closures of $X$ and
$X'$ will also be in the same orbit under the action and $G$ will
be capable if and only if $H$ is capable.

Also of interest is the description of the closure of $X$ when $G$ is
not capable. It is clear that the quotient of $G$ determined by
$X^{**}$ is the largest quotient of~$G$ that is capable. That is,
$X^{**}/X$ is isomorphic to the \textit{epicenter} of~$G$, the
smallest normal subgroup $N\triangleleft G$ such that $G/N$ is
capable.  In most cases where a subspace $X$ is not closed, therefore,
we will attempt to give an explicit description of $X^{**}$ rather
than simply prove $X$ is not closed.

The following explicit descriptions of the linear transformations
$\varphi_{\mathbf{u}}$, relative to a given basis, will also be useful
and are straightforward:

\begin{lemma}
Fix $n>1$, let $u_1,\ldots,u_n$ be a basis for $U$, and let
$v_{ji}$, $w_{jik}$ be the corresponding bases for $V$ and~$W$.
For all integers $i$, $j$, and~$k$, $1\leq i<j\leq n$, $1\leq k\leq
n$, the image of $v_{ji}$ under $\varphi_k$ in terms of the prefered
basis of $W$ is:
\[
\varphi_k(v_{ji}) = \left\{\begin{array}{ll}
w_{jik} & \mbox{if $k\geq i$,}\\
w_{jki} - w_{ikj} & \mbox{if $k<i$.}
\end{array}\right.\]
\label{lemma:explicitformulas}
\end{lemma}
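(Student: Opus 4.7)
The plan is to prove the lemma by direct computation from the definition $\varphi_k(v_{ji}) = \overline{v_{ji}\otimes u_k}$, splitting into the two cases depending on whether $w_{jik}$ already lies in the preferred basis of $W$.

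First, I note that the preferred basis of $W$ consists of vectors $w_{abc}$ with $a > b$ and $b \leq c$. In the case $k \geq i$, the vector $w_{jik}$ (where $i < j$ and $i \leq k$) already satisfies both conditions, so the first case of the formula is just the definition $\varphi_k(v_{ji}) = w_{jik}$ with no further work.

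The interesting case is $k < i < j$, where $w_{jik}$ is not in preferred form because its second index exceeds its third. Here I would use the defining relation of the subspace $J$: applied to the triple $(a,b,c) = (j,i,k)$, the relation
\[ (u_j\wedge u_i)\otimes u_k + (u_i\wedge u_k)\otimes u_j + (u_k\wedge u_j)\otimes u_i \equiv 0 \pmod{J} \]
yields $w_{jik} + w_{ikj} + w_{kji} = 0$ in $W$, so $w_{jik} = -w_{ikj} - w_{kji}$. To put this in preferred form, I then apply antisymmetry of the wedge product to the last term: since $k < j$, we have $v_{kj} = u_k\wedge u_j = -u_j\wedge u_k = -v_{jk}$, so $w_{kji} = -w_{jki}$. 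Substituting gives $\varphi_k(v_{ji}) = w_{jik} = w_{jki} - w_{ikj}$, and one checks that both $w_{jki}$ (with $k < j$, $k \leq i$) and $w_{ikj}$ (with $k < i$, $k \leq j$) are preferred basis vectors.

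There is no real obstacle here; the only thing to be careful about is tracking the signs produced by antisymmetry when converting $w_{kji}$ into $-w_{jki}$, and matching the hypothesis $k < i$ against the index constraints $a > b$ and $b \leq c$ that define the preferred basis. The entire argument is a two-line calculation once the Jacobi-type relation in $J$ is written down with the correct substitution.
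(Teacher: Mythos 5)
Your computation is correct: the case $k\geq i$ is immediate from the definitions, and in the case $k<i$ the defining relation of $J$ applied to $(j,i,k)$ together with antisymmetry of the wedge gives exactly $w_{jik}=w_{jki}-w_{ikj}$, with both terms in preferred form. The paper states this lemma without proof as "straightforward," and your argument is precisely the intended direct verification.
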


\section{Basic applications.}\label{sec:basics}

In this section, we obtain some consequences of our set-up so far. We
assume throughout that we have a specified ``preferred basis''
$\{u_i\}$ for $U$, from which we obtain the corresponding basis
$\{v_{ji}\,|\, 1\leq i<j\leq n\}$ for~$V$, and likewise the basis $\{w_{jik}\,|\, 1\leq i<j\leq n,
i\leq k < n\}$ for~$W$. 

The following observations follow immediately from the definitions:

\begin{lemma} Fix $n>1$, and let $k$ be an integer, $1\leq k\leq n$. 
\begin{itemize}
\item[(i)] $\varphi_k$ is one-to-one, and $W = \langle
  \varphi_1(V),\ldots,\varphi_n(V)\rangle$. 
\item[(ii)] The trivial and total subspaces of~$V$ are closed.
\item[(iii)] The trivial and total subspaces of~$W$ are open.
\end{itemize}
\end{lemma}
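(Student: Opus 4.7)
My plan is to prove part~(i) first and then obtain (ii) and (iii) from it in essentially one line each. Part~(i) itself splits into a spanning assertion and an injectivity assertion, and the spanning assertion is immediate from Lemma~\ref{lemma:explicitformulas}: every preferred basis vector $w_{jik}$ of~$W$ (with $1\leq i<j\leq n$ and $i\leq k\leq n$) equals $\varphi_k(v_{ji})$, so $\bigcup_{k=1}^n\varphi_k(V)$ already contains every basis vector of~$W$.

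The injectivity of a fixed $\varphi_k$ is where any real work lies. I would take an arbitrary element $\sum_{i<j}a_{ji}v_{ji}\in\ker\varphi_k$ and apply the explicit formulas of Lemma~\ref{lemma:explicitformulas} to obtain
\[
\sum_{\substack{i<j\\ i\leq k}}a_{ji}\,w_{jik}\;+\;\sum_{k<i<j}a_{ji}\bigl(w_{jki}-w_{ikj}\bigr)\;=\;0
\]
in~$W$. The key structural observation is that the basis vectors appearing in the first sum all have \emph{third} index~$k$, whereas those in the second sum all have \emph{second} index~$k$ together with third index strictly greater than~$k$; these two families of preferred basis vectors of~$W$ are therefore disjoint. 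Reading off the coefficient of $w_{jik}$ for $i\leq k$ then kills every $a_{ji}$ with $i\leq k$, and reading off the coefficient of basis vectors of the form $w_{\alpha k\gamma}$ with $\alpha,\gamma>k$ kills the remaining $a_{ji}$ with $k<i<j$.

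With (i) in hand, (ii) and (iii) are formal. On the $V$~side, $\{0\}^{*}=\{0\}\subseteq W$ and then $\{0\}^{**}=\bigcap_k\ker\varphi_k=\{0\}$ by injectivity, while $V^{*}=\sum_k\varphi_k(V)=W$ by the spanning assertion and then $W^{*}=V$ trivially, yielding (ii). The mirror computation on the $W$~side uses exactly the same two ingredients to yield (iii).

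The only real obstacle is the careful bookkeeping in the injectivity step: the formula for $\varphi_k(v_{ji})$ bifurcates according as $i\leq k$ or $i>k$, and one must check that the basis vectors produced in the two cases never coincide. This disjointness is evident once one observes that $k$ appears as the third index in the first case and as the second index (with third index strictly larger) in the second case.
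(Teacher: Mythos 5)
Your proof is correct, and it is just the direct verification that the paper omits (the paper merely asserts these facts "follow immediately from the definitions"). The one point that needs care — that in the expansion of $\varphi_k(\mathbf{v})$ the basis vectors $w_{jik}$ (third index $k$) arising from $i\leq k$ never collide with the vectors $w_{jki}$, $w_{ikj}$ (second index $k$, third index $>k$) arising from $i>k$, and that within each family the coefficient of a given basis vector is a single $a_{ji}$ — is exactly the bookkeeping you carry out, so the argument is complete.
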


\begin{definition} Let $i,j,k$ be integers, $1\leq i<j\leq n$, 
$i\leq k\leq n$. We let
  $\pi_{ji}\colon V\to \langle v_{ji}\rangle$ and
  $\pi_{jik}\colon W\to\langle w_{jik}\rangle$ be
  the canonical projections.
\end{definition}

\begin{lemma} Let $\mathbf{w}\in\varphi_k(V)$. If
  $\pi_{rst}(\mathbf{w})\neq\mathbf{0}$, with $1\leq s<r\leq n$,
  $s\leq t\leq n$, then $s\leq k\leq t$, and at most one of the
  inequalities is strict. 
\label{lemma:indexinequalities}
\end{lemma}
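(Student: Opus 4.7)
The plan is to unwind the definition directly using the explicit formulas from Lemma~\ref{lemma:explicitformulas}: every element of $\varphi_k(V)$ is a linear combination of the images $\varphi_k(v_{ji})$, so if we can show that each such image, when expressed in the preferred basis, involves only basis vectors $w_{rst}$ with $s=k$ or $t=k$, the conclusion follows immediately by linearity.

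Concretely, I would write $\mathbf{w}=\sum_{1\leq i<j\leq n}\lambda_{ji}\varphi_k(v_{ji})$ and split the sum according to whether $k\geq i$ or $k<i$. In the first case, Lemma~\ref{lemma:explicitformulas} gives $\varphi_k(v_{ji})=w_{jik}$; here the second and third indices are $i$ and $k$, and since $k\geq i$, this preferred basis vector has its third index equal to~$k$. In the second case, $\varphi_k(v_{ji})=w_{jki}-w_{ikj}$; one checks that both triples $(j,k,i)$ and $(i,k,j)$ satisfy the preferred basis conventions (in each we have $k<i<j$, so the first two entries are a strictly increasing pair and the third is at least the second). Both of these preferred basis vectors have their middle index equal to~$k$.

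Thus every preferred basis vector $w_{rst}$ that appears in the expansion of $\mathbf{w}$ satisfies $s=k\leq t$ or $s\leq k=t$. In particular, if $\pi_{rst}(\mathbf{w})\neq\mathbf{0}$, then $s\leq k\leq t$ and at least one of these inequalities is an equality, which is exactly the statement that at most one of the inequalities $s\leq k$, $k\leq t$ is strict.

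The only real care needed is the bookkeeping in the second case, to confirm that $w_{jki}$ and $w_{ikj}$ really are preferred basis vectors (not requiring further rewriting) and to correctly read off which indices play the roles of $s$ and $t$; once that is in place the argument is just a direct inspection of the two formulas from Lemma~\ref{lemma:explicitformulas}, with no deeper obstacle.
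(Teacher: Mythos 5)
Your proposal is correct and follows essentially the same route as the paper: both reduce to the spanning set $\varphi_k(v_{ji})$, invoke the explicit formulas of Lemma~\ref{lemma:explicitformulas}, and observe that in the case $k\geq i$ one gets $s=i\leq k=t$, while in the case $k<i$ both resulting preferred basis vectors have middle index $s=k\leq t$. The index bookkeeping you flag checks out exactly as you describe, so there is nothing to add.
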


\begin{proof} It is enough to prove the result for $\mathbf{w}$ an
  element of a basis of $\varphi_k(V)$.
 Such a basis is given by the vectors
  $w_{jik}$ with $1\leq i<j\leq n$, $i\leq k\leq n$, and the vectors
  $w_{jki}-w_{ikj}$ with $1\leq i<j\leq n$ and $1\leq
  k<i$. Considering these basis vectors, we see that the first class
 has $r=j$, $s=i$, $t=k$, so $s\leq k=t$.
 The second class of vectors will
 yield either $r=j$, $s=k$, $t=i$, with $s=k<t$; or else $r=i$, $s=k$,
 $t=j$, with $s=k<t$. This proves the lemma.
\end{proof}

\begin{lemma} Let $i,j$ be integers, $1\leq i<j\leq n$, and $r$ an
  integer such that $1\leq r\leq n$. For $\mathbf{v}\in V$,
  $\pi_{jij}(\varphi_r(\mathbf{v}))\neq\mathbf{0}$ if and only if
  $\pi_{ji}(\mathbf{v})\neq \mathbf{0}$ and $r=j$. Likewise,
  $\pi_{jii}(\varphi_r(\mathbf{v}))\neq\mathbf{0}$ if and only if
  $\pi_{ji}(\mathbf{v})\neq\mathbf{0}$ and $r=i$.
\label{lemma:doubleindex}
\end{lemma}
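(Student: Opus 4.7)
The plan is to expand $\mathbf{v}$ in the preferred basis and apply Lemma~\ref{lemma:explicitformulas} term by term, reading off the coefficient of $w_{jij}$ (respectively $w_{jii}$) in $\varphi_r(\mathbf{v})$. Write $\mathbf{v}=\sum_{1\leq a<b\leq n}\alpha_{ba}v_{ba}$, so that $\pi_{ji}(\mathbf{v})\neq\mathbf{0}$ is equivalent to $\alpha_{ji}\neq 0$. Then $\varphi_r(\mathbf{v}) = \sum_{a<b}\alpha_{ba}\varphi_r(v_{ba})$, and by Lemma~\ref{lemma:explicitformulas} each $\varphi_r(v_{ba})$ equals either the single preferred basis vector $w_{bar}$ (when $r\geq a$) or the difference $w_{bra}-w_{arb}$ (when $r<a$). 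In the latter case one checks from $r<a<b$ that both $w_{bra}$ and $w_{arb}$ are genuine preferred basis vectors of $W$, so in all cases $\varphi_r(\mathbf{v})$ is a linear combination of distinct preferred basis vectors with coefficients drawn from the $\alpha_{ba}$. Consequently there is no cancellation, and we need only identify which triples $(a,b,r)$ produce the specific target $w_{jij}$ or $w_{jii}$.

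For the target $w_{jij}$: in the branch $r\geq a$, the equation $w_{bar}=w_{jij}$ forces $b=j$, $a=i$, $r=j$, and the constraint $r\geq a$ is satisfied; this contributes $\alpha_{ji}$ exactly when $r=j$. In the branch $r<a$, the equation $w_{bra}=w_{jij}$ would force $b=a=j$ (contradicting $a<b$), and $w_{arb}=w_{jij}$ would force $a=b=j$ (same contradiction), so this branch contributes nothing. Hence $\pi_{jij}(\varphi_r(\mathbf{v}))$ equals $\alpha_{ji}w_{jij}$ when $r=j$ and $\mathbf{0}$ otherwise, giving the first equivalence.

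For the target $w_{jii}$: in the branch $r\geq a$, the equation $w_{bar}=w_{jii}$ forces $b=j$, $a=i$, $r=i$, and $r\geq a$ holds with equality; this contributes $\alpha_{ji}$ exactly when $r=i$. In the branch $r<a$, the equation $w_{bra}=w_{jii}$ forces $r=a=i$, contradicting $r<a$, while $w_{arb}=w_{jii}$ forces $a=j$ and $b=i$, contradicting $a<b$ (since $i<j$). Hence $\pi_{jii}(\varphi_r(\mathbf{v}))$ equals $\alpha_{ji}w_{jii}$ when $r=i$ and $\mathbf{0}$ otherwise, giving the second equivalence.

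There is no real obstacle here; the argument is essentially bookkeeping with Lemma~\ref{lemma:explicitformulas}. The only subtle point is the verification that in the $r<a$ branch both summands $w_{bra}$ and $w_{arb}$ really are preferred basis vectors (so that linear independence prevents hidden cancellations), which follows immediately from the chain of inequalities $r<a<b$.
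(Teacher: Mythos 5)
Your proof is correct and follows essentially the same route as the paper's: the paper simply observes (in compressed form) that $w_{jij}$ appears in the image of $\varphi_r$ only when $r=j$ and the argument has nontrivial $\pi_{ji}$-projection, which is exactly the case analysis you carry out explicitly via Lemma~\ref{lemma:explicitformulas}. Your version just writes out the bookkeeping in full detail, including the (correct) check that no cancellation can occur among preferred basis vectors.
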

\begin{proof} The vectors $w_{jij}$ occurs in the image of a
  $\varphi_r$ exactly when $r=j$ and it is applied a vector with
  nontrivial $\pi_{ji}$ projection.
   Thus, if $\pi_{jij}(\mathbf{v})\neq\mathbf{0}$ then
  $\pi_{ji}(\mathbf{v})\neq\mathbf{0}$. The converse is immediate, and
  the case of $\pi_{jii}$ is settled in the same manner. 
\end{proof}

\begin{lemma} Fix $i,j$, $1\leq i<j\leq n$. If
  $\pi_{ji}(X)=\{\mathbf{0}\}$, then
  $\pi_{ji}(X^{**})=\{\mathbf{0}\}$.
\label{lemma:trivialproj}
\end{lemma}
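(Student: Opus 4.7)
The plan is to take an arbitrary $\mathbf{v}\in X^{**}$ and show that $\pi_{ji}(\mathbf{v})=\mathbf{0}$. The key observation is that Lemma~\ref{lemma:doubleindex} makes the projection $\pi_{jij}$ into $W$ a faithful detector of the $v_{ji}$-coordinate of $V$, but only via $\varphi_j$: no other $\varphi_r$ can ever contribute to the $w_{jij}$-direction, and when $r=j$, the contribution is nonzero precisely when $\pi_{ji}$ is. So $\pi_{jij}\circ\varphi_j$ essentially ``is'' $\pi_{ji}$ up to identifying $\langle v_{ji}\rangle$ with $\langle w_{jij}\rangle$.

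First I would unwind the hypothesis $\mathbf{v}\in X^{**}=\bigcap_{\mathbf{u}\in U}\varphi_{\mathbf{u}}^{-1}(X^*)$ for the specific choice $\mathbf{u}=u_j$, obtaining $\varphi_j(\mathbf{v})\in X^*$. By the definition of $X^*$, the fact that $\varphi_1,\ldots,\varphi_n$ span $\{\varphi_{\mathbf{u}}\,|\,\mathbf{u}\in U\}$, and linearity to collect terms, I can write
\[
\varphi_j(\mathbf{v}) \;=\; \sum_{r=1}^{n}\varphi_r(\mathbf{x}_r)
\]
for suitable $\mathbf{x}_r\in X$.

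Next I would apply $\pi_{jij}$ to both sides. By Lemma~\ref{lemma:doubleindex}, each summand $\pi_{jij}(\varphi_r(\mathbf{x}_r))$ on the right is zero unless both $r=j$ \emph{and} $\pi_{ji}(\mathbf{x}_r)\neq\mathbf{0}$; but the hypothesis $\pi_{ji}(X)=\{\mathbf{0}\}$ forces $\pi_{ji}(\mathbf{x}_r)=\mathbf{0}$ for every $r$, so the right-hand side vanishes term by term. Thus $\pi_{jij}(\varphi_j(\mathbf{v}))=\mathbf{0}$, and applying Lemma~\ref{lemma:doubleindex} in the reverse direction (with $r=j$) yields $\pi_{ji}(\mathbf{v})=\mathbf{0}$, as required.

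The argument is essentially bookkeeping once Lemma~\ref{lemma:doubleindex} is in hand, so I do not anticipate a genuine obstacle; the only point that warrants a moment of care is the step where one rewrites an arbitrary element of $X^*$ in the form $\sum_r \varphi_r(\mathbf{x}_r)$ with $\mathbf{x}_r\in X$, which uses $\mathbb{F}_p$-linearity of the $\varphi_{\mathbf{u}}$ in $\mathbf{u}$ together with the fact that the $\varphi_i$ span the family.
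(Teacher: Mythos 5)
Your proof is correct and is essentially the same argument as the paper's: the paper detects a nonzero $\pi_{ji}$-component via the coordinate $w_{jii}$ and the map $\varphi_i$ (the second half of Lemma~\ref{lemma:doubleindex}), while you use the symmetric detector $w_{jij}$ and $\varphi_j$ (the first half); both reduce to the observation that no other $\varphi_r$ can contribute to that coordinate, so $X^*$ has trivial projection there and any $\mathbf{v}$ with $\pi_{ji}(\mathbf{v})\neq\mathbf{0}$ is excluded from $X^{**}$.
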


\begin{proof} Since $\pi_{ji}(X)=\{\mathbf{0}\}$, it follows that
  $\pi_{jii}(X^*)=\{\mathbf{0}\}$ by
  Lemma~\ref{lemma:indexinequalities}. Therefore, if $\mathbf{v}\in V$
  has $\pi_{ji}(\mathbf{v})\neq \mathbf{0}$ then
  $\varphi_{i}(\mathbf{v})\notin X^*$, hence $\mathbf{v}\notin
  X^{**}$. Thus, $\pi_{ji}(X^{**})=\{\mathbf{0}\}$, as claimed.
\end{proof}

These lemmas suffice to establish a result of
Ellis~\cite{ellis}*{Prop.~9}, which appears as
Corollary~\ref{cor:elliscor} below.

\begin{theorem} If $X$ is a coordinate subspace relative to a basis
  for $U$ (that is, there is a basis $u_1,\ldots,u_n$ such that $X$ is
  generated by a subset of $\{v_{ji} \,|\, 1\leq i<j\leq n\}$), then
  $X$ is closed.
\label{thm:coordclosed}
\end{theorem}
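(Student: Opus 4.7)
The plan is to reduce the statement directly to Lemma \ref{lemma:trivialproj} by exploiting the fact that ``coordinate subspace'' means $X$ is cut out by the vanishing of certain coordinate projections $\pi_{ji}$.

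Fix a basis $u_1,\ldots,u_n$ of $U$ with respect to which $X$ is a coordinate subspace, so there is a subset $S \subseteq \{(j,i) : 1\leq i<j\leq n\}$ such that $X = \mathrm{span}\{v_{ji} : (j,i)\in S\}$. Since $X \subseteq X^{**}$ always holds (Theorem \ref{th:closureop}), the goal is the reverse inclusion. First I would observe that for every index pair $(j,i)$ with $1\leq i<j\leq n$ and $(j,i)\notin S$, the projection $\pi_{ji}$ annihilates every basis vector of $X$, hence $\pi_{ji}(X)=\{\mathbf{0}\}$.

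Next, apply Lemma \ref{lemma:trivialproj}: the hypothesis $\pi_{ji}(X)=\{\mathbf{0}\}$ forces $\pi_{ji}(X^{**})=\{\mathbf{0}\}$ as well, for each $(j,i)\notin S$. Now take any $\mathbf{v}\in X^{**}$ and write it in the preferred basis as
\[ \mathbf{v} = \sum_{1\leq i<j\leq n} \alpha_{ji}\, v_{ji}. \]
Applying $\pi_{ji}$ for each $(j,i)\notin S$ yields $\alpha_{ji}=0$, so the only surviving terms are indexed by $S$. Hence $\mathbf{v}\in X$, which gives $X^{**}\subseteq X$ and completes the proof.

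There is no real obstacle here: the whole argument is that coordinate subspaces are precisely the subspaces defined by the vanishing of a collection of $\pi_{ji}$'s, and Lemma \ref{lemma:trivialproj} says exactly that each such vanishing condition is preserved under the double-star operator. The only thing to be mildly careful about is making sure the coordinate basis for $V$ used in defining ``coordinate subspace'' is the preferred basis $\{v_{ji}\}$ coming from the chosen basis of $U$, which is built into the hypothesis.
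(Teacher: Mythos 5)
Your proposal is correct and is essentially identical to the paper's own proof: both reduce the claim to Lemma~\ref{lemma:trivialproj} by noting that $\pi_{ji}(X)=\{\mathbf{0}\}$ for every pair $(j,i)$ not indexing a generator of $X$, whence $X^{**}\subseteq X$. The paper states this more tersely, but the argument is the same.
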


\begin{proof} Suppose $S\subseteq\{ v_{ji} \,|\, 1\leq i<j\leq n\}$ is
  such that $X=\langle S\rangle$. By the previous Lemma, we have that
  $X^{**}\subseteq \langle S\rangle$; therefore, 
$\langle S\rangle = X \subseteq X^{**}\subseteq \langle S\rangle =
  X$,
and so $X=X^{**}$. 
\end{proof}

\begin{corollary}[\cite{ellis}*{Prop.~9}] Let $G$ be a group of class two
  and exponent~$p$, and let $x_1,\ldots,x_n$ be elements of $G$ that
  project onto a basis for $G/Z(G)$. If the nontrivial commutators of
  the form $[x_j,x_i]$, $1\leq i<j\leq n$, are distinct and form a
  basis for $[G,G]$, then $G$ is capable.
\label{cor:elliscor}
\end{corollary}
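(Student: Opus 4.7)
The plan is to recognize the corollary as an instance of Theorem~\ref{thm:coordclosed}, after recasting via the setup preceding Theorem~\ref{th:capasclosure}. The slight subtlety is that the hypothesis gives generators projecting to a basis of $G/Z(G)$, whereas the canonical witness demands a basis of $G^{\rm ab}$. Since $[G,G]\subseteq Z(G)$ and $G$ has exponent~$p$, I would choose $x_{n+1},\ldots,x_m\in Z(G)$ whose images form a basis of the $\mathbb{F}_p$-vector space $Z(G)/[G,G]$; then $x_1,\ldots,x_m$ project onto a basis of $G^{\rm ab}$, and the new generators are central, so $[x_j,x_i]=e$ whenever $j>n$.

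Next, I would invoke the machinery: let $F$ be the $3$-nilpotent product of $m$ cyclic groups of order~$p$ generated by $\tilde{x}_1,\ldots,\tilde{x}_m$, let $\psi\colon F\to G$ be induced by $\tilde{x}_i\mapsto x_i$, and let $X\subseteq C=\langle [\tilde{x}_j,\tilde{x}_i]:1\leq i<j\leq m\rangle$ satisfy $\ker\psi=X\oplus F_3$. Identifying $C$ with $V(m)$ via $v_{ji}\leftrightarrow[\tilde{x}_j,\tilde{x}_i]$, a vector $\sum a_{ji}v_{ji}$ lies in~$X$ exactly when $\prod[x_j,x_i]^{a_{ji}}=e$ in~$G$.

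The key step is to show this forces $a_{ji}=0$ for every pair with $[x_j,x_i]\neq e$: commutators with $j>n$ vanish because $x_j$ is central, while commutators with $i<j\leq n$ are either trivial or lie in the prescribed basis of $[G,G]$, so linear independence of that basis kills all coefficients of nontrivial terms. Hence $X=\langle v_{ji}:[x_j,x_i]=e\rangle$, a coordinate subspace of $V(m)$ with respect to the preferred basis. Applying Theorem~\ref{thm:coordclosed} shows $X$ is closed, and Theorem~\ref{th:capasclosure} concludes that $G$ is capable. The main (mild) obstacle is purely formal: the bookkeeping that bridges the two generating-set conventions and checks that the additional central generators only enlarge $X$ along new coordinate directions.
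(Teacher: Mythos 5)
Your proposal is correct and follows essentially the same route as the paper, which simply observes that such a $G$ corresponds to a coordinate subspace of $V$ and invokes Theorem~\ref{thm:coordclosed}. The only difference is that you explicitly carry out the bookkeeping (extending $x_1,\ldots,x_n$ by central elements to get a basis of $G^{\rm ab}$, and checking that the resulting $X$ is spanned by the $v_{ji}$ with $[x_j,x_i]=e$) that the paper leaves implicit; this is a correct and harmless elaboration, not a different argument.
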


\begin{proof} Such a $G$ corresponds to an $X$ that is a coordinate
  subspace of~$V$, so capability follows from
  Theorem~\ref{thm:coordclosed}. 
\end{proof} 

\subsection*{The big, the small, and the mixed.}\label{subsect:bigandsmall}

The following definition and proposition will be needed below.

\begin{definition}
Let $n$ be an integer greater than~$1$, and $i$ an integer,
$1\leq i\leq n$. We define
$\Pi_i\colon V\to \langle v_{i,1},\ldots,
v_{i,i-1},v_{i+1,i},\ldots,v_{n,i}\rangle$ to be the canonical
projection.
\end{definition}

\begin{prop} Let $n>1$ and $i$ be an integer, $1\leq i\leq n$. Let
  $W_i$ be the subspace of $W$ spanned by the basis vectors $w_{rst}$,
  $1\leq s<r\leq n$, $s\leq t\leq n$, such that exactly one of $r$,
  $s$, and~$t$ is equal to~$i$. If $X$ is a subspace of $V$ such that
  $\Pi_i(X)=\{\mathbf{0}\}$, then $X^*\cap W_i=\varphi_i(X)$ and $X$
  is closed.
\label{prop:oneindexabsent}
\end{prop}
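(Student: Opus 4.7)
My plan is to decompose both $V$ and $W$ into coordinate subspaces according to how many of the indices coincide with~$i$, and then trace the effect of each $\varphi_j$ on this grading. For $k=0,1,2$, let $W_i^{(k)}$ be the span of those preferred basis vectors $w_{rst}$ with exactly $k$ of $r,s,t$ equal to~$i$; the $k=3$ piece is trivial because the basis forces $r>s$, so $W = W_i^{(0)} \oplus W_i^{(1)} \oplus W_i^{(2)}$, and by definition $W_i = W_i^{(1)}$. Similarly, write $V = V_i^{(0)} \oplus V_i^{(1)}$, where $V_i^{(0)}$ is the span of the $v_{rs}$ with $r,s\neq i$ and $V_i^{(1)}$ is the image of~$\Pi_i$. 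The hypothesis $\Pi_i(X)=\{\mathbf{0}\}$ reads $X\subseteq V_i^{(0)}$.

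The key computational step, and the one I expect to be the main source of error, is a case analysis based on Lemma~\ref{lemma:explicitformulas} establishing that $\varphi_i$ raises the grading by one while $\varphi_j$ (for $j\neq i$) preserves it: namely, $\varphi_i(V_i^{(0)})\subseteq W_i^{(1)}$, $\varphi_j(V_i^{(0)})\subseteq W_i^{(0)}$ for $j\neq i$, $\varphi_i(V_i^{(1)})\subseteq W_i^{(2)}$, and $\varphi_j(V_i^{(1)})\subseteq W_i^{(1)}$ for $j\neq i$. The delicate point is the second clause of the lemma, where $\varphi_k(v_{ji})$ splits as $w_{jki}-w_{ikj}$: one must check in each of the four inclusions that neither summand gains or loses a factor of~$i$ in an unintended slot.

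With these inclusions in hand, the first assertion is immediate. Since $X\subseteq V_i^{(0)}$, we have $X^* = \varphi_i(X) + \sum_{j\neq i}\varphi_j(X) \subseteq W_i^{(1)} + W_i^{(0)}$, and intersecting with $W_i = W_i^{(1)}$ kills the $W_i^{(0)}$ contribution by the directness of the decomposition, leaving exactly $\varphi_i(X)$.

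For closedness, take $\mathbf{v}\in X^{**}$ and decompose $\mathbf{v} = \mathbf{v}' + \mathbf{v}''$ with $\mathbf{v}'\in V_i^{(0)}$ and $\mathbf{v}''\in V_i^{(1)}$. The $W_i^{(2)}$ component of $\varphi_i(\mathbf{v})$ is precisely $\varphi_i(\mathbf{v}'')$; since $X^*\subseteq W_i^{(0)}\oplus W_i^{(1)}$ and $\varphi_i(\mathbf{v})\in X^*$, we conclude $\varphi_i(\mathbf{v}'') = \mathbf{0}$, and injectivity of~$\varphi_i$ then forces $\mathbf{v}''=\mathbf{0}$. Hence $\mathbf{v}=\mathbf{v}'\in V_i^{(0)}$, so $\varphi_i(\mathbf{v})\in W_i\cap X^* = \varphi_i(X)$ by the first part, and injectivity of~$\varphi_i$ once more yields $\mathbf{v}\in X$. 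Thus $X^{**} = X$.
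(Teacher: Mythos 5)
Your proof is correct and follows essentially the same route as the paper's: both rest on sorting the preferred basis vectors of $W$ by how many of their indices equal $i$, observing that $X^*$ lives in the pieces with at most one such index, and then using injectivity of $\varphi_i$ twice. The only cosmetic difference is that you reprove the fact $\Pi_i(X^{**})=\{\mathbf{0}\}$ inline via your grading, where the paper simply cites Lemma~\ref{lemma:trivialproj}.
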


\begin{proof} That $\varphi_i(X)$ is contained in $W_i$ follows
  because $\Pi_i(X)$ is trivial. 
  Since the subspace
  $\langle \varphi_j(X)\,|\,j\neq
  i\rangle$ is contained in the subspace spanned by basis vectors
  $w_{rst}$ in which none of $r,s,t$ are equal to~$i$, we have
$X^* = \varphi_i(X) \oplus \langle \varphi_j(X)\,|\, j\neq
  i\rangle$
and the equality of intersection follows.
  To show $X$ is closed, let
  $\mathbf{v}\in X^{**}$. By Lemma~\ref{lemma:trivialproj}, we know
  that $\Pi_i(\mathbf{v})=\mathbf{0}$, and so $\varphi_i(\mathbf{v})$
  lies in $X^*\cap W_i=\varphi_i(X)$. Since $\varphi_i$ is
  one-to-one, we deduce that $\mathbf{v}\in X$. Thus, $X$ is closed.
\end{proof}

Fix a basis $u_1,\ldots,u_n$ for~$U$. Given $r$, $1\leq r<n$, we
can divide these basis vectors into ``small'' and ``large'', according
to whether their indices are less than or equal to $r$, or strictly larger
than~$r$, respectively. From this, we obtain a similar partition of the
corresponding basis vectors $v_{ji}$, $1\leq i<j\leq n$ of~$V$, and
$w_{jik}$, $1\leq i<j\leq n$, $i\leq k\leq n$ for~$W$. Namely, we
write $V = V_{s} \oplus V_{m} \oplus V_{\ell}$, $W=W_s\oplus W_{ms}
\oplus W_{m\ell} \oplus W_{\ell}$, where:
\begin{eqnarray*}
V_s & = & \Bigl\langle v_{ji}\,\Bigm|\, 1\leq i<j\leq r\Bigr\rangle,\\
V_m & = & \Bigl\langle v_{ji}\,\Bigm|\, 1\leq i\leq r< j\leq n
\Bigr\rangle,\\
V_{\ell} & = & \Bigl \langle v_{ji}\,\Bigm|\, r<i<j\leq
n\Bigr\rangle,\\
W_s & = & \Bigl \langle w_{jik}\,\Bigm|\, 1\leq i<j\leq r, i\leq
k\leq r\Bigr\rangle,\\
W_{ms} & = & \Bigl\langle w_{jik}\,\Bigm|\, 1\leq i<j\leq n, i\leq
k\leq n, \mbox{\small\ either $j\leq r$ or $k\leq r$, but not both}\Bigr\rangle,\\
W_{m\ell} & = & \Bigl\langle w_{jik}\,\Bigm|\, 
1\leq i\leq r < j,k \leq n\Bigr\rangle,\\
W_{\ell} & = & \Bigl\langle w_{jik}\,\Bigm|\, r<i<j\leq n, i\leq
k\leq n\Bigr\rangle.
\end{eqnarray*}
We refer informally to $V_s$ as the ``small part'' of $V$, and its
elements as ``small vectors;'' $V_{\ell}$ is the
``large part'' and contains the ``large vectors;'' and $V_{m}$ will
be called the ``mixed part'' while its elements will be refered to as
``mixed vectors.'' A similar informal convention will be followed
with~$W$, calling $W_s$ the ``small part,'' $W_{\ell}$ the ``large
part,'' $W_{ms}$ the ``mixed-small part,'' and $W_{m\ell}$ the
``mixed-large part'' of~$W$.

\begin{lemma} Notation as in the previous paragraph. If $n>1$ and $r$
  is an integer, $1\leq r < n$, then:
\begin{itemize}
\item[(i)] $V_s^* \subseteq W_{s}\oplus W_{ms}$.
\item[(ii)] $V_{\ell}^* \subseteq W_{m\ell}\oplus W_{\ell}$.
\item[(iii)] $V_m^* = W_{ms}\oplus W_{m\ell}$.
\end{itemize}
\label{lemma:imsofmixed}
\end{lemma}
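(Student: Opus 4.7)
The plan is to verify each inclusion by directly computing $\varphi_k(v_{ji})$ on a basis of the relevant subspace of $V$, using the explicit formulas from Lemma~\ref{lemma:explicitformulas}, and then classifying each resulting $w$-basis vector by marking each of its indices as ``small'' ($\leq r$) or ``large'' ($>r$). The partition of $W$ into $W_s, W_{ms}, W_{m\ell}, W_\ell$ is precisely a classification of its preferred basis by these small/large patterns, so this bookkeeping settles membership in each case.

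For (i), take $v_{ji}\in V_s$, so $i<j\leq r$. If $k\geq i$ then $\varphi_k(v_{ji})=w_{jik}$ with $i,j\leq r$, landing in $W_s$ when $k\leq r$ and in $W_{ms}$ when $k>r$. If $k<i$ then $\varphi_k(v_{ji})=w_{jki}-w_{ikj}$, and all indices $i,j,k$ are $\leq r$, so both terms lie in $W_s$. For (ii) I run the same dichotomy on $v_{ji}\in V_\ell$, where $r<i<j$: if $k\geq i$ then $k>r$ and $w_{jik}\in W_\ell$; if $k<i$ then $k$ may be small or large, but in either case both $w_{jki}$ and $w_{ikj}$ have their $r$- and $t$-indices $>r$, and their $s$-index equal to $k$, so they lie in $W_\ell$ when $k>r$ and in $W_{m\ell}$ when $k\leq r$.

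For the ``$\subseteq$'' half of (iii), take $v_{ji}\in V_m$, so $i\leq r<j$, and do the same case analysis: $\varphi_k(v_{ji})$ always has $i$-index small and $j$-index large, so every term is either in $W_{ms}$ or in $W_{m\ell}$ depending on whether the remaining index is small or large. For the reverse inclusion I must realize every basis vector $w_{jik}$ of $W_{ms}\oplus W_{m\ell}$ as an element of $V_m^*$. If $j>r$ (which covers all of $W_{m\ell}$ and one subcase of $W_{ms}$), then $v_{ji}\in V_m$ and, since $k\geq i$, $w_{jik}=\varphi_k(v_{ji})\in V_m^*$.

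The one genuinely delicate case, and the place where the defining relations of $W$ must be used, is the subcase of $W_{ms}$ with $j\leq r<k$; here $v_{ji}\in V_s$ and so $\varphi_k(v_{ji})$ is not a priori in $V_m^*$. I would resolve this by invoking the Jacobi-type relation built into $W$: applying Lemma~\ref{lemma:explicitformulas} to the vectors $v_{ki},v_{kj}\in V_m$ (note $i<j\leq r<k$) gives $\varphi_j(v_{ki})=w_{kij}$ (since $j\geq i$) and $\varphi_i(v_{kj})=w_{kij}-w_{jik}$ (since $i<j$), and subtracting yields
\[
w_{jik}=\varphi_j(v_{ki})-\varphi_i(v_{kj})\in V_m^*.
\]
This is the step I expect to be the main obstacle, since it is the only place where one must leave the straightforward ``apply $\varphi_k$ to the obvious generator'' pattern and exploit the relations defining $W$; once it is in hand, combining it with the easy cases gives $W_{ms}\oplus W_{m\ell}\subseteq V_m^*$ and hence the equality in (iii).
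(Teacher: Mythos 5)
Your proposal is correct and takes essentially the same approach as the paper: the small/large index bookkeeping disposes of (i), (ii), and the forward inclusion of (iii), and the only nontrivial step is realizing $w_{jik}$ with $j\leq r<k$ inside $V_m^*$. Your identity $w_{jik}=\varphi_j(v_{ki})-\varphi_i(v_{kj})$ is exactly the paper's two-step argument (first $w_{kij}=\varphi_j(v_{ki})\in V_m^*$, then $\varphi_i(v_{kj})=w_{kij}-w_{jik}\in V_m^*$) collapsed into a single line.
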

\begin{proof}
Note that the indices involved in the image of $\varphi_k(v_{ji})$ are
$i$, $j$, and~$k$. Thus, if both $i$ and~$j$ are small (resp.~large),
then all images are either small or mixed-small (resp.~mixed large or
large); and if $i$ is small and $j$ is large, then all images are
mixed. This proves (i) and~(ii), and also proves that $V_m^*$ is
contained in $W_{ms}\oplus W_{m\ell}$. To finish the proof of (iii),
suppose that $w_{jik}$ is one of the generators of $W_{ms}\oplus
W_{m\ell}$, as described above. Note that we must have $i\leq r$ in
either case. Then $w_{jik}=\varphi_k(v_{ji})$. If
$j>r$, then $v_{ji}\in V_{m}$, so $w_{jik}\in V_m^*$. If, on the other
hand, $j\leq r$, then we must have $k>r$ since $w_{jik}$ is either
mixed-small or mixed-large. Then we know that $w_{kij}\in V_m^*$ by
the immediately preceding argument. Also, $v_{kj}\in V_m$, hence
$\varphi_i(v_{kj})=w_{kij}-w_{jik}\in V_m^*$. Since $w_{kij}\in
V_m^*$, we deduce that $w_{jik}\in V_m^*$ as well, and this finishes
the proof of~(iii). 
\end{proof}

In the following theorem, ${\rm cl}_s(X_s)$ is meant to stand for the
``small closure of~$X_s$''; that is, the
$\{\varphi_{i}\}_{i=1}^r$-closure of~$X_s$; likewise, ${\rm
    cl}_{\ell}(X_{\ell})$ is the ``large closure of~$X_{\ell}$.''

\begin{theorem} Let $n>1$, and let $r$ be an integer, $1\leq r< n$,
  as above. Suppose that $X_s$ is a subspace of $V_s$, and $X_{\ell}$
  is a subspace of $V_{\ell}$. Then:
\begin{itemize}
\item[(i)] $(X_s\oplus X_{\ell})^* = X_s^* \oplus X_{\ell}^*$.
\item[(ii)] $(X_s\oplus V_m \oplus X_{\ell})^* = \langle
  \varphi_i(X_s)\,|\, 1\leq i\leq r\rangle \oplus W_{ms}\oplus
  W_{m\ell}\oplus \langle \varphi_i(X_{\ell})\,|\, r<i\leq n\rangle$. 
\item[(iii)] $X_s\oplus X_{\ell}$ is closed.
\item[(iv)] If ${\rm cl}_s(X_s)$ is the
  $\{\varphi_i\}_{i=1}^r$-closure of $X_s$ and ${\rm
  cl}_{\ell}(X_{\ell})$ is the $\{\varphi_i\}_{i=r+1}^n$-closure of
  $X_{\ell}$, then
$(X_s\oplus V_m \oplus X_{\ell})^{**} = {\rm cl}_s(X_s) \oplus V_m
  \oplus {\rm cl}_{\ell}(X_{\ell})$.
In particular, the subspace $X_s\oplus V_m\oplus X_{\ell}$ is closed if and only if
  $X_s$ is $\{\varphi_i\}_{i=1}^r$-closed and $X_{\ell}$ is
  $\{\varphi_i\}_{i=r+1}^n$-closed.
\end{itemize}
\label{th:bigandsmall}
\end{theorem}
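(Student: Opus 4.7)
The plan is to treat the four parts in the order (i), (ii), (iv), (iii), since (iv) builds directly on the formula in (ii) while (iii) needs an independent ingredient. Throughout I will exploit the finer decomposition of $W$ into $W_s\oplus W_{ms}\oplus W_{m\ell}\oplus W_\ell$ together with Lemma~\ref{lemma:imsofmixed} and the explicit formulas of Lemma~\ref{lemma:explicitformulas}.

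For (i) and (ii), Lemma~\ref{lemma:sumsforstar} already yields $(X_s\oplus X_\ell)^*=X_s^*+X_\ell^*$ and $(X_s\oplus V_m\oplus X_\ell)^*=X_s^*+V_m^*+X_\ell^*$. For (i), Lemma~\ref{lemma:imsofmixed}(i),(ii) confine $X_s^*\subseteq W_s\oplus W_{ms}$ and $X_\ell^*\subseteq W_{m\ell}\oplus W_\ell$, which meet trivially, upgrading the sum to a direct sum. For (ii), I further split $X_s^*=\langle\varphi_i(X_s)\,|\,i\leq r\rangle+\langle\varphi_i(X_s)\,|\,i>r\rangle$; by Lemma~\ref{lemma:explicitformulas} the first piece lies in $W_s$ and the second in $W_{ms}$. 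Symmetrically, $X_\ell^*=\langle\varphi_i(X_\ell)\,|\,i\leq r\rangle+\langle\varphi_i(X_\ell)\,|\,i>r\rangle$, with the two pieces in $W_{m\ell}$ and $W_\ell$. Lemma~\ref{lemma:imsofmixed}(iii) gives $V_m^*=W_{ms}\oplus W_{m\ell}$, which absorbs the two ``mixed'' summands and leaves exactly the stated expression.

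For (iv), I take $Y=(X_s\oplus V_m\oplus X_\ell)^*$ from (ii) and compute $Y^*$. The key observation is that for each fixed $k$, the three pieces $\varphi_k(\mathbf{v}_s)$, $\varphi_k(\mathbf{v}_m)$, $\varphi_k(\mathbf{v}_\ell)$ land in three \emph{distinct} summands among $W_s,W_{ms},W_{m\ell},W_\ell$: for $k\leq r$ they go to $W_s$, $W_{ms}$, $W_{m\ell}$ respectively, and for $k>r$ they go to $W_{ms}$, $W_{m\ell}$, $W_\ell$ respectively. Projecting the condition $\varphi_k(\mathbf{v})\in Y$ onto each direct summand of $W$ then separates it into independent conditions on $\mathbf{v}_s$, $\mathbf{v}_m$, $\mathbf{v}_\ell$. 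Because $W_{ms}\oplus W_{m\ell}\subseteq Y$, all conditions on $\mathbf{v}_m$, as well as the conditions on $\mathbf{v}_s$ from $k>r$ and on $\mathbf{v}_\ell$ from $k\leq r$, are vacuous; what remains is precisely $\mathbf{v}_s\in{\rm cl}_s(X_s)$ and $\mathbf{v}_\ell\in{\rm cl}_\ell(X_\ell)$. The ``if and only if'' clause of (iv) is then immediate from this equality.

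For (iii), I first apply Lemma~\ref{lemma:trivialproj} to each coordinate $v_{ji}$ with $i\leq r<j$; since $X_s\oplus X_\ell$ has trivial $\pi_{ji}$-projection there, so does $(X_s\oplus X_\ell)^{**}$. Thus every $\mathbf{v}\in(X_s\oplus X_\ell)^{**}$ has zero $V_m$-component and decomposes as $\mathbf{v}_s+\mathbf{v}_\ell$. I then pick any single $k>r$ (available since $r<n$): $\varphi_k$ sends $V_s$ injectively into the ``third-index-$k$'' sub-block of $W_{ms}$, and different values of $k>r$ index disjoint sub-blocks, so $X_s^*\cap\varphi_k(V_s)=\varphi_k(X_s)$, and injectivity yields $\mathbf{v}_s\in X_s$. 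A symmetric argument with any $k\leq r$ (available since $r\geq 1$), using that $\varphi_k(V_\ell)\subseteq W_{m\ell}$ is partitioned by the second index into disjoint sub-blocks for different $k\leq r$, forces $\mathbf{v}_\ell\in X_\ell$. The main obstacle throughout is the bookkeeping of where each image $\varphi_k(v_{ji})$ sits inside the fourfold decomposition of $W$ and, within the mixed parts, which sub-block it occupies; once this is laid out via Lemma~\ref{lemma:explicitformulas}, the proofs reduce to disjointness of direct summands and injectivity of the $\varphi_k$.
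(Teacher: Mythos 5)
Your proposal is correct and follows essentially the same route as the paper: the same decomposition $W=W_s\oplus W_{ms}\oplus W_{m\ell}\oplus W_{\ell}$, Lemmas~\ref{lemma:sumsforstar}, \ref{lemma:imsofmixed}, and~\ref{lemma:trivialproj}, and the observation of which summand each $\varphi_k$ sends each block of $V$ into. The only differences are organizational: in (iii) you inline the injectivity argument that the paper delegates to Proposition~\ref{prop:oneindexabsent}, and in (iv) you compute $Y^*$ in one pass by projecting onto the four summands rather than proving the two inclusions separately, but the underlying ideas coincide.
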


\begin{proof} Part (i) follows from Lemma~\ref{lemma:sumsforstar} and
  from Lemma~\ref{lemma:imsofmixed}(i) and~(ii). 

To prove (ii), note that by Lemmas~\ref{lemma:sumsforstar}
  and~\ref{lemma:imsofmixed}, we have:
\begin{eqnarray*}
(X_s\oplus V_m\oplus X_{\ell})^* &=& X_s^* + V_m^* + X_{\ell}^*\\
& = & \langle \varphi_i(X_s)\,|\, 1\leq i\leq n\rangle + W_{ms} +
  W_{m\ell} + \langle \varphi_i(X_{\ell})\,|\, 1\leq i\leq n\rangle\\
& = & \langle \varphi_i(X_s)\,|\, 1\leq i\leq r\rangle 
  + W_{ms} + W_{m\ell} + \langle
  \varphi_i(X_{\ell})\,|\, 1 \leq i \leq r\rangle.
\end{eqnarray*}
Now simply observe that the first summand is contained in $W_s$ and
the last in $W_{\ell}$ to deduce that the sum is direct.

Moving on to~(iii), by Lemma~\ref{lemma:trivialproj}, we know that
  $(X_s\oplus X_{\ell})^{**}\subseteq V_s\oplus V_{\ell}$. Let
  $\mathbf{v}_s+\mathbf{v}_{\ell}$ be an element of $(X_s\oplus
  X_{\ell})^{**}$, with $\mathbf{v}_s$ a small vector, and
  $\mathbf{v}_{\ell}$ a large vector. Then
  for each $i$, $\varphi_i(\mathbf{v}_s + \mathbf{v}_{\ell})\in
  X_s^*\oplus X_{\ell}^*$. Thus, we must have
  $\varphi_i(\mathbf{v}_s)\in X_s^*$ and
  $\varphi_i(\mathbf{v}_{\ell})\in X_{\ell}^*$ for each $i$, so
  $\mathbf{v}_s\in X_s^{**}$ and $\mathbf{v}_{\ell}\in
  X_{\ell}^{**}$. Thus, $(X_s\oplus X_{\ell})^{**} \subseteq
  X_{s}^{**}\oplus X_{\ell}^{**}$, and the reverse inclusion follows
  because the closure operator is isotonic. It is then enough to show
  that each of $X_s$ and $X_{\ell}$ are closed, and since
  $\Pi_1(X_{\ell})=\Pi_n(X_{s})=\{\mathbf{0}\}$, this follows from
  Proposition~\ref{prop:oneindexabsent}. 

Finally, for (iv), note that if $j>r$, then $\varphi_j(V_s)\subseteq
W_{sm}\subseteq V_m^*$, so ${\rm cl}_s(X_s)$ is contained in the
closure; similarly, ${\rm cl}_{\ell}(X_{\ell})$ is contained in the
closure, so we always have ${\rm cl}_s(X_s)\oplus V_m\oplus {\rm
  cl}_{\ell}(X_{\ell})\subseteq (X_s\oplus V_m\oplus X_{\ell})^{**}$. 

Let $\mathbf{v}=\mathbf{v}_s+ \mathbf{v}_m+\mathbf{v}_{\ell}\in
  (X_s\oplus V_m\oplus X_{\ell})^{**}$, with $\mathbf{v}_s\in V_s$,
  $\mathbf{v}_{\ell}\in V_{\ell}$, and $\mathbf{v}_m\in V_{m}$. Since
  $V_m$ is contained in the closure, $\mathbf{v}$ is in the closure if
  and only if $\mathbf{v}_s+\mathbf{v}_{\ell}$ is in the closure. We
  further claim that $\mathbf{v_s}+\mathbf{v}_{\ell}$ is in the closure if and only if
  each of $\mathbf{v}_s$ and~$\mathbf{v}_{\ell}$ are in the
  closure. One implication is immediate. For the converse, suppose
  that $\mathbf{v}_s+\mathbf{v}_{\ell}$ is in the closure, and $i\leq
  r$. Then by (ii) we have:
\[\varphi_i(\mathbf{v}_s) + \varphi_i(\mathbf{v}_{\ell})\in \langle \varphi_j(X_s)\,|\, j\leq
  r\rangle \oplus W_{ms} \oplus W_{m\ell} \oplus
  \langle\varphi_j(X_{\ell})\,|\, r<j\leq n\rangle.\]
In particular, $\varphi_i(\mathbf{v}_s)\in\langle
  \varphi_j(X_s)\,|\,1\leq j\leq r\rangle$. Since $V_s$ is contained
  in $\varphi_{j}^{-1}(W_{ms})$ for all $j>r$, we conclude that
  $\mathbf{v}_s$ lies in the closure of $X_s\oplus V_{m}\oplus
  X_{\ell}$, and hence so does $\mathbf{v}_{\ell}$. This proves the
  claim.

Finally, observe as above that $\mathbf{v}_s$ lies in the closure if
and only if $\varphi_i(\mathbf{v}_s)$ lies in $\langle\varphi_j(X_s)\,|\,
1\leq j\leq r\rangle$ for $i=1,\ldots,r$, if and only if $\mathbf{v}_s$
lies in ${\rm cl}_s(X_s)$; and similarly that
$\mathbf{v}_{\ell}$ lies in the closure if and only if
it lies in ${\rm cl}_{\ell}(X_{\ell})$. 
Thus, the closure of $X_s\oplus V_m\oplus X_{\ell}$ is equal to
${\rm cl}_s(X_s)\oplus V_m\oplus {\rm cl}_{\ell}(X_{\ell})$. This
proves the theorem.
\end{proof}

The theorem gives the following two interesting corollaries:

\begin{corollary} Let $G_1$ and $G_2$ be any two nontrivial groups of class at
  most two and exponent an odd prime~$p$. Then $G=G_1\amalg^{{\germ
  N}_2} G_2$ is capable.
\label{cor:coprodalways}
\end{corollary}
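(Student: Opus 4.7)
The plan is to reduce to Theorem~\ref{th:bigandsmall}(iii) by showing that the relation subspace $X$ for $G=G_1\amalg^{\germ N_2} G_2$ splits as $X_s\oplus X_\ell$ with no mixed part.

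First, I choose generators compatibly with the splitting. Pick $g_1,\ldots,g_r\in G_1$ projecting to a basis of $G_1^{\rm ab}$, and $g_{r+1},\ldots,g_n\in G_2$ projecting to a basis of $G_2^{\rm ab}$. Viewing $G_1,G_2$ inside $G$ via the canonical injections into the coproduct, the elements $g_1,\ldots,g_n$ project to a basis of $G^{\rm ab}=G_1^{\rm ab}\oplus G_2^{\rm ab}$. Since both $G_1$ and $G_2$ are nontrivial, $r\geq 1$ and $n-r\geq 1$, so $1\leq r<n$; in particular $G$ is noncyclic, and the framework of Theorem~\ref{th:canonicalwitness} and Theorem~\ref{th:capasclosure} applies. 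Let $F$ be the $3$-nilpotent product of $n$ cyclic groups of order $p$, with $\psi\colon F\to G$ induced by $x_i\mapsto g_i$, and write $N=\ker\psi=X\oplus F_3$ as in Corollary~\ref{cor:simplecondition}.

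Next, I identify $X$ explicitly. Since $G\in\germ N_2$, the map $\psi$ factors through $F/F_3$, which is the $2$-nilpotent product of the $n$ cyclic groups of order $p$; by the universal property of the coproduct in $\germ N_2$, the kernel of $F/F_3\to G$ is generated (as a normal subgroup, hence as a subspace of $V$) by the relations defining $G_1$ as a quotient of $\langle x_1,\ldots,x_r\rangle$ in $F/F_3$, together with the relations defining $G_2$ as a quotient of $\langle x_{r+1},\ldots,x_n\rangle$ in $F/F_3$. Relations of the first kind involve only commutators $[x_j,x_i]$ with $1\leq i<j\leq r$, hence lie in $V_s$; those of the second kind involve only commutators with $r<i<j\leq n$, hence lie in $V_\ell$. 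No further relations arise, because there are no defining relations in $G_1\amalg^{\germ N_2}G_2$ involving both a generator of $G_1$ and a generator of $G_2$ beyond what the class-two condition already imposes. Thus $X=X_s\oplus X_\ell$ with $X_s\subseteq V_s$ and $X_\ell\subseteq V_\ell$.

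Finally, I apply Theorem~\ref{th:bigandsmall}(iii): any subspace of the form $X_s\oplus X_\ell$ with $X_s\subseteq V_s$ and $X_\ell\subseteq V_\ell$ is $\{\varphi_{\mathbf{u}}\mid\mathbf{u}\in U\}$-closed. Hence $X$ is closed, and Theorem~\ref{th:capasclosure} yields that $G$ is capable.

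The main obstacle is the middle step: one must verify cleanly that the kernel $X$ truly has no mixed component, i.e., that the natural map from the $2$-nilpotent product of the generator-groups into $G$ is injective on the mixed commutators. This is essentially the content of the universal property of the coproduct in $\germ N_2$, combined with the basic-commutator normal form from Struik's theorem that was invoked right after the definition of the $k$-nilpotent product, which guarantees that the mixed commutators $[x_j,x_i]$ with $i\leq r<j$ form a linearly independent set in $F/F_3$ and survive to $G$.
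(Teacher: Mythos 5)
Your proof is correct and takes essentially the same route as the paper: identify the relation subspace of $G_1\amalg^{\germ N_2}G_2$ as $X_s\oplus X_\ell$ with no mixed component, then invoke Theorem~\ref{th:bigandsmall}(iii). The paper simply asserts the splitting of $X$; your middle step supplies the justification (via the universal property of the coproduct and centrality of the relations) that the paper leaves implicit.
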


\begin{proof} If $G_1$ is minimally $r$-generated, and $G_2$ is
  minimally $s$-generated, then $G$ is
  minimally $n=r+s$ generated. If we number the generators of $G_1$ as
  $g_1,\ldots,g_r$, and those of $G_2$ as $g_{r+1},\ldots,g_n$, then
  the subspace of $V$ corresponding to $G$ will be of the form
  $X_s\oplus X_{\ell}$, where $X_s\subseteq V_s$, $X_{\ell}\subseteq
  V_{\ell}$; namely, $X_s$ corresponds to $G_1$, and $X_{\ell}$
  corresponds to $G_2$. By Theorem~\ref{th:bigandsmall}(iii), this
  subspace is always closed. 
\end{proof}

\begin{corollary} Let $G_1$ and $G_2$ be two finite $p$-groups of
  class at most two and exponent~$p$. Then $G_1\oplus G_2$ is capable
  if and only if each $G_i$ is either nontrivial cyclic or capable.
\label{cor:directsum}
\end{corollary}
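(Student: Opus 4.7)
The plan is to identify the subspace $X\subseteq V$ that corresponds to $G_1\oplus G_2$ and then apply Theorem~\ref{th:bigandsmall}(iv). I would first reduce to the case where both $G_i$ are nontrivial, since if (say) $G_1$ is trivial, then $G_1\oplus G_2\cong G_2$ and the assertion collapses. Let $r,s\geq 1$ be the minimal numbers of generators of $G_1$ and $G_2$, set $n=r+s$, and choose $g_1,\ldots,g_r\in G_1$ and $g_{r+1},\ldots,g_n\in G_2$ that project onto bases of $G_1^{\rm ab}$ and $G_2^{\rm ab}$; their images in $G_1\oplus G_2$ then project onto a basis of $(G_1\oplus G_2)^{\rm ab}$.

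Next I would form the $3$-nilpotent product $F$ on $n$ generators and identify $C$ with $V(n)$ as in Theorem~\ref{th:capasclosure}. Because the generators of $G_1$ commute with those of $G_2$ inside $G_1\oplus G_2$, every commutator $[x_j,x_i]$ with $i\leq r<j$ maps to the identity, so the entire mixed part $V_m$ lies in the kernel. The commutators with both indices at most~$r$ (respectively both greater than~$r$) are computed inside $G_1$ (resp.\ $G_2$), so the kernel meets $V_s$ in exactly the subspace $X_s$ one obtains from applying Theorem~\ref{th:capasclosure} to $G_1$ alone, and meets $V_\ell$ in the corresponding $X_\ell$ for $G_2$. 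Thus $X=X_s\oplus V_m\oplus X_\ell$.

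By Theorem~\ref{th:capasclosure}, $G_1\oplus G_2$ is capable if and only if $X$ is closed; by Theorem~\ref{th:bigandsmall}(iv) this holds if and only if $X_s$ is $\{\varphi_i\}_{i=1}^r$-closed and $X_\ell$ is $\{\varphi_i\}_{i=r+1}^n$-closed. The explicit formulas of Lemma~\ref{lemma:explicitformulas} show that the small closure operator on $V_s$ coincides with the closure operator of the set-up for $G_1$ on $r$ generators (no index larger than $r$ ever intervenes), and symmetrically for the large side. Hence when $r\geq 2$, closedness of $X_s$ is equivalent, by a second application of Theorem~\ref{th:capasclosure}, to capability of $G_1$; when $r=1$, we have $V_s=\{\mathbf{0}\}$, so $X_s$ is trivially closed and $G_1$ is itself nontrivial cyclic of order~$p$. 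Either possibility matches the clause ``$G_1$ is nontrivial cyclic or capable,'' and the symmetric analysis handles~$G_2$; combining yields the biconditional.

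The delicate step will be the edge cases $r=1$ or $s=1$, in which the ambient set-up for a single factor collapses to the zero space, so ``$G_i$ is capable'' is not directly expressible through the closure operator. This is precisely what forces the ``nontrivial cyclic'' alternative into the statement alongside ``capable.''
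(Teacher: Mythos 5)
Your proposal is correct and follows essentially the same route as the paper: identify the subspace corresponding to $G_1\oplus G_2$ as $X_s\oplus V_m\oplus X_{\ell}$ and apply Theorem~\ref{th:bigandsmall}(iv), with the cyclic (one-generator) factors handled by noting the closure condition is vacuous there. Your extra care in checking that the ``small'' closure operator agrees with the closure operator of the $r$-generator set-up, and in treating the $r=1$ edge case explicitly, only makes explicit what the paper's terser proof leaves implicit.
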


\begin{proof} Proceeding as above, note that the subspace of $V$
  corresponding to $G_1\oplus G_2$ is equal to $X_s\oplus V_m\oplus
  X_{\ell}$, so by Theorem~\ref{th:bigandsmall}(iv), this subspace is
  closed if and only if $X_s$ is $\{\varphi_i\}_{i=1}^r$ closed and
  $X_{\ell}$ is $\{\varphi_i\}_{i={r+1}}^n$-closed. For noncyclic
  $G_i$ this is equivalent to being capable, while for cyclic $G_i$
  the closure conditions are trivially met.
\end{proof}

In turn, this yields the following important consequences:

\begin{theorem} Let $G$ be a $p$-group of class at most two and
  exponent $p$. Then $G\oplus C_p$ is capable if and only if $G$ is
  cyclic of order~$p$ or capable.
\label{th:cancelcentralsummand}
\end{theorem}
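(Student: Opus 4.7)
The plan is to derive this theorem as an immediate consequence of Corollary~\ref{cor:directsum}, which handles the capability of direct sums in exactly the form we need. I would invoke that corollary with $G_1 = G$ and $G_2 = C_p$; both hypotheses (finite $p$-group of class at most two and exponent $p$) are clearly satisfied, since $C_p$ is abelian of exponent~$p$.

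The key observation is that $C_p$ itself is nontrivial cyclic, so the disjunction ``nontrivial cyclic or capable'' is automatically satisfied by the summand $G_2$. Applying Corollary~\ref{cor:directsum} then collapses the biconditional to a single condition on $G$: that $G$ be nontrivial cyclic or capable. Since $G$ has exponent~$p$, the only nontrivial cyclic possibility is $G \cong C_p$, and so ``nontrivial cyclic'' translates here to ``cyclic of order~$p$,'' matching the statement.

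There is no real obstacle to work through, as all the heavy lifting was already done in Theorem~\ref{th:bigandsmall} and Corollary~\ref{cor:directsum}. The only thing worth pointing out in the write-up is the identification ``nontrivial cyclic of exponent $p$'' $=$ ``cyclic of order $p$,'' after which the theorem follows in one line from the corollary.
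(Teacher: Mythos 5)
Your proposal is correct and is exactly the paper's route: the paper states Theorem~\ref{th:cancelcentralsummand} as an immediate consequence of Corollary~\ref{cor:directsum} (taking $G_2=C_p$, whose condition is automatically met since it is nontrivial cyclic), with the same observation that for exponent~$p$ "nontrivial cyclic" means "cyclic of order~$p$."
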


\begin{corollary} Let $G$ be a $p$-group of class exactly two and
  exponent~$p$. If we write $G=K\oplus C_p^r$, where $r\geq 0$ is an integer
  and $K$ is a group of class two satisfying $Z(K)=[K,K]$, then $G$ is
  capable if and only if $K$ is capable.
\label{cor:generalcancelcentralsummand}
\end{corollary}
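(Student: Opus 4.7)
The plan is to deduce this corollary from Theorem~\ref{th:cancelcentralsummand} by a straightforward induction on the integer~$r$, once we observe that the hypothesis ``$G$ of class exactly two'' forces $K$ to be noncyclic.

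First, I would record the following basic observation: since $C_p^r$ is abelian and the class of a direct product is the maximum of the classes of the factors, the assumption that $G=K\oplus C_p^r$ has class exactly~$2$ implies $K$ itself has class exactly~$2$. In particular $K$ is nonabelian, so $|K|\geq p^3$, and hence for every $s\geq 0$ the group $K\oplus C_p^s$ is a group of class two and exponent~$p$ of order at least~$p^3$; in particular, it is never cyclic of order~$p$.

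I would then proceed by induction on~$r$. The base case $r=0$ is immediate: $G=K$ and there is nothing to prove. For the inductive step with $r\geq 1$, write
\[
    G \;=\; K\oplus C_p^r \;=\; \bigl(K\oplus C_p^{r-1}\bigr)\oplus C_p.
\]
The group $K\oplus C_p^{r-1}$ is of class at most two and exponent~$p$, so Theorem~\ref{th:cancelcentralsummand} applies and tells us that $G$ is capable if and only if $K\oplus C_p^{r-1}$ is either cyclic of order~$p$ or capable. By the preliminary observation, $K\oplus C_p^{r-1}$ is not cyclic of order~$p$, so the dichotomy collapses: $G$ is capable if and only if $K\oplus C_p^{r-1}$ is capable. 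The inductive hypothesis (applied to $K\oplus C_p^{r-1}$, which still satisfies the hypotheses of the corollary since $K$ is unchanged) then gives that $K\oplus C_p^{r-1}$ is capable if and only if $K$ is capable, completing the induction.

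There is no real obstacle here beyond bookkeeping; the entire content of the corollary lies in Theorem~\ref{th:cancelcentralsummand}, and the only subtlety is making sure the ``cyclic of order~$p$'' alternative in that theorem is always ruled out, which is exactly what the hypothesis $Z(K)=[K,K]$ on a class-two group ensures (via nonabelianity of~$K$).
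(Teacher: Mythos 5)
Your proof is correct, and it is essentially the argument the paper intends: the corollary is stated without proof immediately after Theorem~\ref{th:cancelcentralsummand} precisely because it follows by iterating that theorem, with the ``cyclic of order~$p$'' alternative ruled out since $Z(K)=[K,K]$ on a class-exactly-two group forces $K$ to be nonabelian. Your induction on $r$ makes that iteration explicit and handles the one subtlety correctly.
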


Note that any group of class exactly two and exponent~$p$ can be
written in the form specified by this corollary. 

\subsection*{Amalgamated direct products and amalgamated coproducts.}

We saw in Corollary~\ref{cor:coprodalways} that if we take two
nontrivial groups of class two and exponent~$p$, then their coproduct
(in this variety) will always be capable, while the capability of a
direct sum depends on the factors. 

We will now deal with two similar constructions, the direct product
with amalgamation and the coproduct with amalgamation. The first
construction includes central products (see for
example~\cite{leedgreen}*{Section~2.2}) but is more general.

\begin{definition} Let $G$ and $K$ be two groups, and let $H$ be a
  subgroup of $Z(G)$. Let $\phi\colon H\to Z(K)$ be an
  embedding. The \textit{amalgamated direct product of $G$ and $K$
  (along $\phi$)} is the group $G\times_{\phi} K$ given by
\[ G\times_{\phi} K = \frac{G\times K}{\{ (h,\phi(h)^{-1})\,|\,
  h\in H\}}.\]
\end{definition}

The maps sending $g\mapsto \overline{(g,e)}$ and
$k\mapsto\overline{(e,k)}$ embed copies of $G$
and of~$K$ into $G\times_{\phi} K$, respectively, and the
intersection of these images is exactly
$H$ (identified with $\phi(H)$).
When $H=Z(G)$ and $\phi$ is an
isomorphism, the construction is called the \textit{central product}
of~$G$ and~$K$
in~\cite{leedgreen}, where it is denoted by $G\circ K$. All extra-special
$p$-groups other than those of order $p^3$ may be constructed as
central products of smaller extra-special groups.

The following result was inspired by doing an automated brute force
search for non-closed subspaces $X$ of dimensions seven and~eight when
$n=5$. It was performed with the computer algebra system GAP~\cite{GAP}. 
I was able to find many examples, and by examining them was
led to the result below.
The statement of the linear algebra theorem is
somewhat complicated, but it leads to a straightforward
group-theoretic corollary: if $G$ and $K$ are groups of class two and
exponent~$p$, $H$ is a nontrivial subgroup of $[G,G]$, and $\phi$
embeds $H$ into $[K,K]$, then the amalgamated direct product
$G\times_{\phi}K$ is not capable.

\begin{theorem} Let $n>3$, and let $r$ be an integer, $2\leq r \leq 
  n-2$. Let $X_s$ and~$X_{\ell}$ be
  subspaces of $V_s$ and~$V_{\ell}$, respectively, and let $H$ be a
  nontrivial subspace of $V_s$ such that $H\cap X_s = \{\mathbf{0}\}$. Let
  $\phi\colon H\to V_{\ell}$ be an embedding such that
  $\phi(H)\cap X_{\ell}=\{\mathbf{0}\}$. Finally, let $X$ be the
  subspace
$X = X_s \oplus X_{\ell} \oplus V_m \oplus \{
  h-\phi(h)\,|\, h\in H\}$. 
Then the closure of $X^{**}$ is
the direct sum of the $\{\varphi_i\}_{i=1}^r$-closure of $X_s\oplus H$,
the $\{\varphi_i\}_{i=r+1}^n$-closure of $X_{\ell}\oplus \phi(H)$, and
$V_m$. In particular, $X$ is not closed.
\label{th:centralamalgnotclosed}
\end{theorem}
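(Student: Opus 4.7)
The plan is to compute $X^*$ explicitly by exploiting the decomposition $W = W_s \oplus W_{ms} \oplus W_{m\ell} \oplus W_\ell$, and then to read off $X^{**}$ by testing membership component by component, in the spirit of the proof of Theorem \ref{th:bigandsmall}. The observation that makes everything work is that $V_m \subseteq X$ forces $V_m^* = W_{ms} \oplus W_{m\ell}$ to lie in $X^*$, and this ``mixed slab'' will absorb the off-diagonal part of each $\varphi_i(h - \phi(h))$; consequently, even though $h$ and $\phi(h)$ appear in $X$ only coupled through the diagonal, they will contribute to $X^*$ as if present separately.

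First I would use Lemma \ref{lemma:imsofmixed} to locate the image of each generator of $X$ under each $\varphi_i$ in one of the four slabs of $W$, distinguishing the cases $i \leq r$ and $i > r$. A short check then shows that for $i \leq r$ we have $\varphi_i(\phi(h)) \in W_{m\ell} \subseteq X^*$, forcing $\varphi_i(h) \in X^*$ individually, and dually for $i > r$ we have $\varphi_i(h) \in W_{ms} \subseteq X^*$, forcing $\varphi_i(\phi(h)) \in X^*$ individually. Collecting contributions in each of the four independent slabs should yield
\[ X^* = \bigl\langle \varphi_i(X_s \oplus H) : 1 \leq i \leq r\bigr\rangle \;\oplus\; W_{ms} \;\oplus\; W_{m\ell} \;\oplus\; \bigl\langle \varphi_i(X_\ell \oplus \phi(H)) : r < i \leq n\bigr\rangle. \]

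Next I would compute $X^{**}$. Since $V_m \subseteq X \subseteq X^{**}$, any $\mathbf{v} \in V$ lies in $X^{**}$ if and only if its $V_s \oplus V_\ell$ component does; write that component as $\mathbf{v}_s + \mathbf{v}_\ell$. Using Lemma \ref{lemma:imsofmixed} once more, the conditions $\varphi_i(\mathbf{v}) \in X^*$ split cleanly across the slabs: for $i \leq r$ they reduce to $\varphi_i(\mathbf{v}_s)$ lying in the $W_s$-part of $X^*$, which amounts to $\mathbf{v}_s \in \mathrm{cl}_s(X_s \oplus H)$; for $i > r$ they reduce symmetrically to $\mathbf{v}_\ell \in \mathrm{cl}_\ell(X_\ell \oplus \phi(H))$. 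Putting this together yields the claimed formula for $X^{**}$. Finally, to see $X$ is not closed, pick any nonzero $h \in H$: clearly $h \in \mathrm{cl}_s(X_s \oplus H) \subseteq X^{**}$, while a direct computation using $H \cap X_s = \{\mathbf{0}\}$, $\phi(H) \cap X_\ell = \{\mathbf{0}\}$, and injectivity of $\phi$ shows that $h$ admits no decomposition of the form $x_s + x_\ell + v_m + (h' - \phi(h'))$ required to lie in $X$. The hard part will be the slab bookkeeping in the $X^*$ computation; once that is set up correctly, the rest is formal.
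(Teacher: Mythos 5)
Your proposal is correct and follows essentially the same route as the paper: the crucial step in both is that $V_m^{*}=W_{ms}\oplus W_{m\ell}\subseteq X^{*}$ absorbs the off-diagonal term in $\varphi_i(h)=\varphi_i(h-\phi(h))+\varphi_i(\phi(h))$, so that $X^{*}$ coincides with $(X_s\oplus H\oplus V_m\oplus X_{\ell}\oplus\phi(H))^{*}$, after which the closure is read off exactly as in Theorem~\ref{th:bigandsmall}(iv). The paper phrases this as ``$H^{*}\subseteq X^{*}$, hence $H\subseteq X^{**}$'' rather than writing out the full slab decomposition of $X^{*}$, but the content is identical.
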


\begin{proof} Note that by Lemma~\ref{lemma:imsofmixed}(iii), we have
  $W_{ms}\oplus W_{m\ell}=V_m^*\subseteq X^*$. Next, note that
  $X\cap H=X\cap \varphi(H) = \{\mathbf{0}\}$.

We claim that $H^*\subseteq X^*$, and therefore that
$H\subseteq H^{**}\subset X^{**}$. 
Indeed, let $h\in H$, and let $k$ be an integer, $1\leq k\leq n$. If
$k\leq r$, then $\varphi_k(\phi(h))\in W_{m\ell}$ (since $\phi(h)\in
V_{\ell}$), so 
$\varphi_k(h) = \varphi_k(h - \phi(h)) + \varphi_k(\phi(h))\in
X^*+W_{m\ell}=X^*$.
And if $r<k\leq n$, then $\varphi_k(h)\in W_{ms}\subseteq X^*$. Thus,
$\varphi_k(h)\in X^*$ for $k=1,\ldots,n$, hence $h\in X^{**}$. This
proves that $H^*\subseteq X^*$, hence $H\subseteq H^{**}\subseteq
X^{**}$. 

Thus, the closure of $X$ contains $X_s\oplus H \oplus X_{\ell} \oplus
\phi(H)\oplus V_m$. The description of the closure of~$X$ now follows
as in the proof of Theorem~\ref{th:bigandsmall}(iv). We conclude that
$X$ is not closed, because $H$ is nontrivial, $H\cap X=\{\mathbf{0}\}$,
yet $H\subseteq X^{**}$. 
\end{proof}

\begin{corollary}
Let $G_1$ and $G_2$ be two nonabelian groups of class two and
exponent~$p$, let $H$ be a subgroup of
$[G_1,G_1]$, and let $\phi\colon H\to [G_2,G_2]$ be an
embedding. If $G$ is the amalgamated direct product
$G=G_1\times_{\phi}G_2$, then $G$ is capable if and only if
$H=\{e\}$ and both $G_1$ and~$G_2$ are capable.
\label{cor:centralamalggroupnotclosed}
\end{corollary}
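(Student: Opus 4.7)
The plan is to translate the hypothesis into the linear-algebra framework from Section~\ref{sec:linalg} and then apply Theorem~\ref{th:centralamalgnotclosed} together with Corollary~\ref{cor:directsum}.

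\emph{Setup.} Because the amalgamation identifies elements lying inside the commutator subgroups, we have $G^{\rm ab} = G_1^{\rm ab}\oplus G_2^{\rm ab}$. Choose minimal generators $g_1,\ldots,g_r$ of $G_1$ and $g_{r+1},\ldots,g_n$ of $G_2$, so that together they minimally generate $G$. Since each $G_i$ is nonabelian of exponent $p$, neither is cyclic, hence $r\geq 2$ and $n-r\geq 2$, which ensures the hypotheses $n>3$ and $2\leq r\leq n-2$ of Theorem~\ref{th:centralamalgnotclosed} hold. Let $F$ be the $3$-nilpotent product of $n$ cyclic groups of order $p$ with generators $x_1,\ldots,x_n$, and let $X \subseteq V=V_s\oplus V_m\oplus V_\ell$ be the subspace from Corollary~\ref{cor:simplecondition} associated to $G$, using the split at~$r$.

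\emph{Description of $X$.} In $G$ the images of $G_1$ and $G_2$ commute, so $[g_j,g_i]=e$ for all $i\leq r<j$, giving $V_m\subseteq X$. Internal relations of $G_1$ and $G_2$ contribute subspaces $X_s\subseteq V_s$ and $X_\ell\subseteq V_\ell$ with $V_s/X_s\cong [G_1,G_1]$ and $V_\ell/X_\ell\cong [G_2,G_2]$. Lifting $H\leq [G_1,G_1]$ through the quotient yields a subspace $\tilde H\subseteq V_s$ with $\tilde H\cap X_s=\{\mathbf{0}\}$, and an induced embedding $\tilde\phi\colon\tilde H\to V_\ell$ whose image meets $X_\ell$ trivially. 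A dimension count based on
\[ \dim [G,G] = \dim [G_1,G_1] + \dim [G_2,G_2] - \dim H \]
then forces
\[ X = X_s \oplus X_\ell \oplus V_m \oplus \{ h - \tilde\phi(h)\,|\, h \in \tilde H\}, \]
which is precisely the form of subspace considered in Theorem~\ref{th:centralamalgnotclosed}.

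\emph{Conclusion and the main obstacle.} For the ``only if'' direction, $G$ is nonabelian (since $G_1\leq G$) hence noncyclic, so if $G$ is capable then $X$ is closed by Theorem~\ref{th:capasclosure}. Were $H$ nontrivial, Theorem~\ref{th:centralamalgnotclosed} would show $X$ is not closed, a contradiction. Thus $H=\{e\}$, which makes $G$ the ordinary direct sum $G_1\oplus G_2$, and Corollary~\ref{cor:directsum} then forces each $G_i$ to be capable (neither can be cyclic because both are nonabelian). The ``if'' direction is immediate: when $H=\{e\}$ and both factors are capable, $G=G_1\oplus G_2$ is capable by Corollary~\ref{cor:directsum}. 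The main obstacle is the bookkeeping in the second step: establishing that $X$ has exactly the claimed direct-sum form. The decisive ingredient is that both $G_1$ and $G_2$ embed in $G$, so that their defining relations pull back precisely to $X_s$ and $X_\ell$, and nontrivial elements of $H$ survive in $G_1$, giving $\tilde H\cap X_s=\{\mathbf{0}\}$. Once this is in place, Theorem~\ref{th:centralamalgnotclosed} and Corollary~\ref{cor:directsum} combine to close out both directions.
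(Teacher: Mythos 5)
Your proposal is correct and follows essentially the same route as the paper: translate $G_1\times_\phi G_2$ into the subspace $X_s\oplus X_\ell\oplus V_m\oplus\{h-\phi(h)\}$, then invoke Theorem~\ref{th:centralamalgnotclosed} when $H\neq\{e\}$ and Corollary~\ref{cor:directsum} when $H=\{e\}$. The only difference is that you spell out the bookkeeping (the direct-sum decomposition of $G^{\rm ab}$, the dimension count, and the choice of lift $\tilde H$ with $\tilde H\cap X_s=\{\mathbf{0}\}$) which the paper compresses into a single sentence.
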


\begin{proof} Let $r$ be the rank of~$G_1^{\rm ab}$, $s$ the rank of
  $G_2^{\rm ab}$, and $n=r+s$. Since $G_1$ and $G_2$ are nonabelian,
  we must have $2\leq r \leq n-2$. The subspace $X$ corresponding to
  $G_1\times G_2$ is of the form $X_s\oplus V_m \oplus X_{\ell}$, with
  $X_s$ and $X_{\ell}$ determined by $G_1$ and~$G_2$,
  respectively. Abusing notation, the subgroup $H$ can be made to
  correspond to a subspace $H$ of~$V_s$ with $H\cap X_s=\mathbf{0}$,
  and $\phi$ induces a linear transformation $\phi\colon H\to
  V_{\ell}$ which can also be chosen to have $\phi(H)\cap
  X_{\ell}=\{\mathbf{0}\}$. The subspace of $V$ corresponding to
  $G_1\times_{\mathbf{\phi}}G_2$ is then equal to $X = X_s \oplus
  X_{\ell} \oplus V_m \oplus \{ h-\varphi(h)\,|\, h\in H\}$.  If
  $H=\{\mathbf{0}\}$, then we are in the situation of
  Corollary~\ref{cor:directsum}. And if $H\neq\{\mathbf{0}\}$, then
  $X$ is not closed by Theorem~\ref{th:centralamalgnotclosed}. This
  proves the result.
\end{proof}

The following is of course well-known, and can be proven using other methods:

\begin{corollary} Let $G$ be an extra-special $p$-group. Then $G$ is
  capable if and only if it is of order $p^3$ and exponent~$p$.
\label{cor:extraspecialcase}
\end{corollary}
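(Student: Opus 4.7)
The plan is to split on exponent, since for odd $p$ the extra-special $p$-groups of order $p^{2n+1}$ come in two isomorphism types: one of exponent $p$ and one of exponent $p^2$. The exponent $p$ case is directly accessible to the machinery of this paper, while the exponent $p^2$ case lies outside the exponent-$p$ scope and has to be invoked from the classical literature.

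For the positive direction, take the Heisenberg group $G$ of order $p^3$ and exponent $p$. It is minimally $2$-generated by some $g_1,g_2$, with $[G,G]=\langle[g_2,g_1]\rangle$ of order $p$. Thus the single nontrivial basic commutator on $\{g_1,g_2\}$ is distinct from the identity and forms a basis for $[G,G]$, so Corollary~\ref{cor:elliscor} gives capability at once.

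For the negative direction in the exponent $p$ case, suppose $G$ is extra-special of exponent $p$ and order $p^{2n+1}$ with $n\ge 2$. It is standard that $G$ can then be decomposed as a central product $G = G_1\circ G_2$ of two nontrivial extra-special subgroups $G_1,G_2$ of exponent $p$ (for instance, split off one Heisenberg factor of order $p^3$ and let $G_2$ be the remaining extra-special subgroup of order $p^{2n-1}$). In the notation of this section, $G = G_1\times_{\phi}G_2$ where $\phi$ identifies $Z(G_1)=[G_1,G_1]$ with $Z(G_2)=[G_2,G_2]$; the amalgamated subgroup $H=Z(G_1)$ is nontrivial and lies in $[G_1,G_1]$, so Corollary~\ref{cor:centralamalggroupnotclosed} shows that $G$ is not capable.

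Finally, when $G$ is extra-special of exponent $p^2$ (still $p$ odd), the hypothesis of exponent $p$ underlying the present framework is no longer satisfied. However, this case is classical and well-known to produce non-capable groups (by, for example, the epicenter computation of Beyl--Felgner--Schmid, or by direct inspection of the nonabelian tensor square as in~\cite{ellis}); one simply quotes such a result. The main substantive step is the central-product decomposition needed to reduce the exponent $p$, order $\ge p^5$ case to Corollary~\ref{cor:centralamalggroupnotclosed}; everything else is either a direct application of earlier corollaries or an appeal to a known fact from outside the paper.
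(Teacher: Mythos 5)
Your proof is correct and follows essentially the same route as the paper: the exponent-$p^2$ case is dispatched by citation to outside results (the paper uses its author's earlier theorem on capability of groups with a generator of order $p^2$), the order $p^3$ case is an immediate application of an earlier corollary (the paper uses Corollary~\ref{cor:coprodalways} via the coproduct description where you use Corollary~\ref{cor:elliscor}; both are one-liners), and the main step --- decomposing an extra-special group of exponent $p$ and order $p^{2n+1}$, $n\geq 2$, as a central product and invoking Corollary~\ref{cor:centralamalggroupnotclosed} --- is identical.
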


\begin{proof} If $G$ is not of exponent~$p$, then it is generated by
  elements of order~$p$ and one element of order $p^2$ (see for example
  \cite{leedgreen}*{Theorem 2.2.10}) and therefore
  is not capable by~\cite{capable}*{Theorem~3.12}. So we may assume $G$ is of
  exponent~$p$.  If $G$ is of order $p^{2n+1}$ with $n>1$, then it is
  isomorphic to a direct product with amalgamation of the
  extra-special $p$-group of order $p^3$ and exponent~$p$, and the
  extra-special $p$-group of order $p^{2n-1}$ and exponent~$p$,
  identifying their commutator subgroups; as such, it is not capable
  by Corollary~\ref{cor:centralamalggroupnotclosed} above. The
  extra-special group of order $p^3$ and exponent~$p$ is closed the
  coproduct of two cyclic groups of order~$p$, and thus is capable by
  Corollary~\ref{cor:coprodalways}. 
\end{proof}

We move now to the case of the coproduct with amalgamation.

\begin{definition}
Let $G$ and $K$ be two groups of class at most two and
exponent~$p$. Let $H$ be a subgroup of~$[G,G]$, and let $\phi\colon
H\to[K,K]$ be an embedding. The \textit{amalgamated coproduct of $G$
  and $K$ (along $\phi$)} is the group
$G\amalg_{\phi}^{\mathfrak{N}_2} K$ given by:
\[ G \amalg_{\phi}^{\mathfrak{N}_2} K = \frac{G\amalg^{\mathfrak{N}_2}
  K}{\{h\phi(h)^{-1}\,|\, h\in H\}}.\]
\end{definition}

Note that the elements $h$ and~$\phi(h)^{-1}$ are central, so the
subset given above is in fact a normal subgroup. Again, it is easy to

In
general, if $G,K\in\mathfrak{N}_2$, $H$ is an arbitrary subgroup of~$G$, and
$\phi\colon H\to K$ an embedding, then the coproduct with amalgamation
$G\amalg_\phi^{\mathfrak{N}_2} K$ may or may not contain copies of~$G$
and~$K$; and even if it does contain copies of~$G$ and~$K$, their
intersection may be strictly larger than~$H$. There are necessary and
sufficient conditions for each of the situations, given in
\cites{amalgone,amalgtwo,amalgams}. When $G$ and~$K$ are of
exponent~$p$ and the identified subgroups are contained in the
corresponding commutator subgroups, however, 
$G\amalg_{\phi}^{\mathfrak{N}_2} K$ always contains copies of $G$ and~$K$,
and these copies intersect exactly at~$H$.

As before, the statement of the linear algebra result is somewhat
complex; the group-theoretic version is unfortunately not as simple as
it was in the case above, so we will present instead an easy-to-state
consequence. 

\begin{theorem} Let $n>3$ and let $r$ be an integer, $2\leq r\leq
  n-2$.   Let $X_s$ and $X_{\ell}$ be
  subspaces of $V_s$ and~$V_{\ell}$, respectively, and let $H$ be a
  subspace of~$V_s$ such that $H\cap X_s=\{\mathbf{0}\}$. Let
  $\phi\colon H\to V_{\ell}$ be an embedding such that $\phi(H)\cap
  X_{\ell}=\{\mathbf{0}\}$. Finally, let $X$ be the subspace of~$V$
  given by
$X = X_s\oplus X_{\ell} \oplus \{ h - \phi(h)\,|\, h\in
  H\}$.
If ${\rm cl}_s(X_s)$ is the $\{\varphi_i\}_{i=1}^r$-closure of $X_s$ and
${\rm cl}_{\ell}(X_{\ell})$ is the $\{\varphi_i\}_{i=r+1}^n$-closure of $X_{\ell}$,
then the closure of $X$ is given by:
\[ X^{**} = X \oplus \bigl\{ h \in H\,\bigm|\, h\in {\rm cl}_s(X_s)\mbox{\ and\
}\phi(h)\in {\rm cl}_{\ell}(X_{\ell})\bigr\}.\]
In particular, $X$ is closed if and only if
\[\bigl\{ h \in H\,\bigm|\, h\in {\rm cl}_s(X_s)\mbox{\ and\ }\phi(h)\in
{\rm cl}_{\ell}(X_{\ell})\bigr\} = \{\mathbf{0}\}.\]
\label{thm:closureamalgcoprod}
\end{theorem}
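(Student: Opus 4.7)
My plan is to prove both containments $X \oplus H_0 \subseteq X^{**}$ and $X^{**} \subseteq X + H_0$ separately, where I abbreviate $H_0 = \bigl\{h \in H : h \in {\rm cl}_s(X_s)\mbox{ and }\phi(h) \in {\rm cl}_\ell(X_\ell)\bigr\}$. For the forward inclusion, I would first note that $X \cap H_0 = \{\mathbf{0}\}$: any element of $X$ lying in $H \subseteq V_s$ has vanishing $V_\ell$-component, which together with $X_\ell \cap \phi(H) = \{\mathbf{0}\}$ and $X_s \cap H = \{\mathbf{0}\}$ forces it to be zero. Then, fixing $h_0 \in H_0$, I would verify $\varphi_i(h_0) \in X^*$ for each $i$: when $i \leq r$, the vector $\varphi_i(h_0)$ lies in $W_s$, and the hypothesis $h_0 \in {\rm cl}_s(X_s)$ expresses it as $\sum_{j \leq r} \varphi_j(x_s^{(j)}) \in X^*$; when $i > r$, I would split $\varphi_i(h_0) = \varphi_i(h_0 - \phi(h_0)) + \varphi_i(\phi(h_0))$, the first summand lying in $X^*$ because $h_0 - \phi(h_0) \in X$ and the second, which lies in $W_\ell$, being expressible as $\sum_{j > r} \varphi_j(x_\ell^{(j)}) \in X^*$ via $\phi(h_0) \in {\rm cl}_\ell(X_\ell)$.

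For the reverse inclusion, I would set $Y_s = X_s + H$ and $Y_\ell = X_\ell + \phi(H)$, noting that every generator of $X$ lies in $Y_s + Y_\ell$ (since $h - \phi(h) \in H + \phi(H)$), so $X \subseteq Y_s \oplus Y_\ell$. By Theorem~\ref{th:bigandsmall}(iii), $Y_s \oplus Y_\ell$ is closed, whence $X^{**} \subseteq Y_s \oplus Y_\ell$. Given $\mathbf{v} \in X^{**}$, I would write $\mathbf{v} = x_s + h + y_\ell + \phi(h')$ uniquely with $x_s \in X_s$, $y_\ell \in X_\ell$, $h, h' \in H$, and then subtract $x_s + y_\ell - (h' - \phi(h')) \in X$ from $\mathbf{v}$ to leave $h^* := h + h' \in H \cap X^{**}$. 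The remaining task is to prove that $h^* \in H_0$.

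The main obstacle is establishing the auxiliary identity $X^* \cap W_s = \sum_{j \leq r} \varphi_j(X_s)$ (and its mirror $X^* \cap W_\ell = \sum_{j > r} \varphi_j(X_\ell)$). I would prove it by first noting that each generator $\varphi_j(x)$ of $X^*$ lies in $W_s \oplus W_{m\ell}$ when $j \leq r$ and in $W_{ms} \oplus W_\ell$ when $j > r$, giving a direct-sum decomposition $X^* = A \oplus B$. A generic element of $A$ has $W_s$-part $\sum_{j \leq r} \varphi_j(x_s^{(j)} + h^{(j)})$ and $W_{m\ell}$-part $\sum_{j \leq r} \varphi_j(x_\ell^{(j)} - \phi(h^{(j)}))$, so $A \cap W_s$ corresponds to the vanishing of the latter. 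The technical heart of the argument will be the injectivity of the summation map $(y^{(j)})_{j \leq r} \mapsto \sum_{j \leq r} \varphi_j(y^{(j)})$ from $\bigoplus_{j \leq r} V_\ell$ into $W_{m\ell}$: by Lemma~\ref{lemma:explicitformulas}, for $v_{J'i'} \in V_\ell$ and $j \leq r$ one has $\varphi_j(v_{J'i'}) = w_{J'ji'} - w_{i'jJ'}$, so the basis vectors of $W_{m\ell}$ appearing in the image all carry $j$ as their (unique) small middle index; hence images from distinct $j$'s occupy disjoint basis coordinates. This injectivity, combined with $X_\ell \cap \phi(H) = \{\mathbf{0}\}$, will force $x_\ell^{(j)} = h^{(j)} = \mathbf{0}$, giving the identity. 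Applying it to $h^*$ yields $h^* \in {\rm cl}_s(X_s)$, and applying the mirror identity to $\phi(h^*) = h^* - (h^* - \phi(h^*)) \in X^{**}$ yields $\phi(h^*) \in {\rm cl}_\ell(X_\ell)$, so $h^* \in H_0$, completing the proof.
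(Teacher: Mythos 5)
Your proposal is correct and follows essentially the same route as the paper: both directions hinge on the decomposition $W=W_s\oplus W_{ms}\oplus W_{m\ell}\oplus W_{\ell}$ and on the fact that for distinct indices $j$ the images $\varphi_j(\cdot)$ occupy disjoint sets of basis coordinates in the mixed parts, which is exactly the paper's ``only term involving the index $j$'' elimination. The only difference is organizational: you package that elimination as the standalone identity $X^*\cap W_s=\sum_{j\leq r}\varphi_j(X_s)$ (and its mirror), whereas the paper runs the same coordinate argument inline on the four component equations.
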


\begin{proof} Note that $X\subset (X_s\oplus H) \oplus (X_{\ell}\oplus
  \phi(H))$; the latter subspace is closed by
  Theorem~\ref{th:bigandsmall}(iii), so it contains $X^{**}$.
  Thus, to describe the closure of~$X$ it is enough
  to determine exactly which $h\in H$ lie in the closure.

Suppose that $h\in H\cap X^{**}$. Then $\varphi_i(h)\in X^*$ for
$i=1,\ldots,n$; fix $i\leq r$. Then we know that there exist
$x_1,\ldots,x_n\in X_s$, $y_1,\ldots,y_n\in X_{\ell}$, and
$h_1,\ldots,h_n\in H$ such that
\[
\varphi_i(h) = \varphi_1(x_1+y_1+h_1-\phi(h_1)) + \cdots +
\varphi_n(x_n+y_n+h_n-\phi(h_n)).\]
By looking at the $W_s$, $W_{ms}$, $W_{m\ell}$, and~$W_{\ell}$
components, we deduce that:
\begin{eqnarray*}
\varphi_i(h) & = & \varphi_1(x_1+h_1) + \cdots + \varphi_r(x_r+h_r),\\
\mathbf{0} & = & \varphi_{r+1}(x_{r+1}+h_{r+1}) + \cdots +\varphi_n(x_n+h_n),\\
\mathbf{0}& = & \varphi_1(y_1-\phi(h_1))+\cdots + \varphi_r(y_r -\phi(h_r)),\\
\mathbf{0} & = & \varphi_{r+1}(y_{r+1}-\phi(h_{r+1}))+\cdots+\varphi_n(y_n-\phi(h_n)). 
\end{eqnarray*}
Now, $\varphi_{r+1}(x_{r+1}+h_{r+1})$ is the only term in the
expression that lies in $W_{ms}$ and involves generators $w_{jik}$
with one of $j$ or~$k$ (in fact, $k$) equal to $r+1$. Thus, we must
have $\varphi_{r+1}(x_{r+1}+h_{r+1})=\mathbf{0}$, which in turn gives
$x_{r+1}=h_{r+1}=\phi(h_{r+1})=\mathbf{0}$, since
$\varphi_{r+1}$ and $\phi$ are embeddings and $H\cap
X_s=\mathbf{0}$. Similarly, we deduce that
$x_{r+1}=x_{r+2}=\cdots = x_{n} = h_{r+1} = h_{r+2}=\cdots = h_n =
\mathbf{0}$.
We are then left with
$\varphi_{r+1}(y_{r+1})+\cdots+\varphi_n(y_n)=\mathbf{0}$ as the only
equation involving $y_{r+1},\ldots,y_n$, and so we may also assume
$y_{r+1}=\cdots=y_n=\mathbf{0}$. 

Consider now $\varphi_1(y_1-\phi(h_1))+\cdots +
\varphi_r(y_r-\phi(h_r))$. Again, $\varphi_1(y_1-\phi(h_1))$ is the
only term in the expression that lies in $W_{m\ell}$ and involves
generators $w_{jik}$ with $i=1$. Thus, we must have
$\varphi_1(y_1-\phi(h_1))=\mathbf{0}$, and as above we deduce from
this that $y_1=\phi(h_1)=\mathbf{0}$ since $\varphi_1$ and $\phi$
are embeddings and $\phi(H)\cap X_{\ell}=\mathbf{0}$. Similarly, we
obtain
$y_1=y_2 = \cdots = y_r = h_1=\cdots = h_r = \mathbf{0}$.
And so we obtain $\varphi_i(h) = \varphi_1(x_1)+\cdots
+\varphi_r(x_r)$ for some vectors $x_1,\ldots,x_r\in X_s$. That is, if
$h\in H\cap X^{**}$, then $h$ is ${\rm cl}_s(X_s)$.

A symmetric argument, considering $\varphi_i(\phi(h))$ with $i>r$
yields that if $\varphi(h)$ lies in $X^{**}$, then $\varphi(h)$
must lie in ${\rm cl}_{\ell}(X_{\ell})$. Since
$h-\phi(h)\in X^{**}$ for all $h\in H$, we obtain that a necessary
condition for $h\in H$ to lie in the closure is that $h\in {\rm
  cl}_s(X_s)$ and $\phi(h)\in{\rm cl}_{\ell}(X_{\ell})$.
The theorem will be
proven if we can show that this condition is also sufficient. 

Suppose that $h\in H\cap\, {\rm cl}_{s}(X_s)$ is 
such that $\phi(h)$ lies
in ${\rm cl}_{\ell}(X_{\ell})$. Then each of
$\varphi_1(h),\ldots,\varphi_r(h)$,
$\varphi_{r+1}(\phi(h)),\ldots,\varphi_n(\phi(h))$ lie in $X^*$.
Since $\varphi_i(h - \phi(h))\in X^*$ for all~$i$, we deduce that
$\varphi_i(h)\in X^*$ for all~$i$, so $h\in X^{**}$.
This proves that the condition given is also sufficient, and so
proves the theorem.
\end{proof}

\begin{corollary} Let $G$ and $K$ be two nonabelian groups of class
  two and exponent~$p$. Let $H$ be a nontrivial subgroup of $[G,G]$,
  and let $\phi\colon H\to [K,K]$ be an embedding. If either $G$ or
  $K$ are capable, then so is the amalgamated coproduct
  $G\amalg_\phi^{\mathfrak{N}_2} K$. 
\label{cor:sufficientamalgcoprod}
\end{corollary}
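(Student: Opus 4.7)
The plan is to reduce the corollary to an application of Theorem \ref{thm:closureamalgcoprod}, by translating the amalgamated coproduct into the appropriate subspace of $V$. Since $G$ and $K$ are nonabelian, their abelianisations have ranks $r \geq 2$ and $s \geq 2$; I would choose generators $g_1,\ldots,g_r$ of $G$ and $g_{r+1},\ldots,g_n$ of $K$ (with $n = r+s$) projecting to bases of $G^{\mathrm{ab}}$ and $K^{\mathrm{ab}}$ respectively, so that $G$ corresponds to a subspace $X_s\subseteq V_s$ and $K$ corresponds to a subspace $X_{\ell}\subseteq V_{\ell}$. The subgroup $H\subseteq [G,G]\cong V_s/X_s$ may be lifted to a subspace (still written $H$) of $V_s$ with $H\cap X_s=\{\mathbf{0}\}$; similarly $\phi(H)\subseteq [K,K]$ lifts to a subspace of $V_{\ell}$ meeting $X_{\ell}$ trivially, and $\phi$ becomes a linear embedding of $H$ into $V_{\ell}$.

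Since the elements $h$ and $\phi(h)$ are central in the $\mathfrak{N}_2$-coproduct $G\amalg^{\mathfrak{N}_2} K$, the extra relations $h = \phi(h)$ imposed by the amalgamation correspond exactly to enlarging the kernel by the subspace $\{h-\phi(h)\,|\, h\in H\}$. Thus the subspace of $V$ corresponding to $G\amalg_\phi^{\mathfrak{N}_2} K$ is precisely
\[ X = X_s \oplus X_{\ell} \oplus \{h-\phi(h)\,|\, h\in H\},\]
which is the subspace considered in Theorem \ref{thm:closureamalgcoprod}. By that theorem, $X$ is closed if and only if no nonzero $h\in H$ satisfies both $h\in \mathrm{cl}_s(X_s)$ and $\phi(h)\in \mathrm{cl}_{\ell}(X_{\ell})$.

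Without loss of generality, assume $G$ is capable (the other case is symmetric). By Theorem \ref{th:capasclosure}, the subspace $X_s$ is $\{\varphi_i\}_{i=1}^r$-closed, so $\mathrm{cl}_s(X_s) = X_s$. If $h\in H$ satisfies $h\in \mathrm{cl}_s(X_s) = X_s$, then $h\in H\cap X_s = \{\mathbf{0}\}$, so $h = \mathbf{0}$. The condition of Theorem \ref{thm:closureamalgcoprod} is vacuously satisfied, $X$ is closed, and therefore $G\amalg_\phi^{\mathfrak{N}_2} K$ is capable.

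The only real subtlety, and the ``main obstacle'' insofar as there is one, is verifying that the amalgamation translates cleanly to the subspace $X$ described above, and in particular that the lifts of $H$ and $\phi(H)$ can be chosen to meet $X_s$ and $X_{\ell}$ trivially; but this is a routine consequence of the fact that $H\subseteq [G,G]$ corresponds to a subspace of the quotient $V_s/X_s$ (and similarly for $\phi(H)$), so that any choice of lift will do.
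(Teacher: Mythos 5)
Your proof is correct and is essentially the derivation the paper intends: the corollary is stated as an immediate consequence of Theorem~\ref{thm:closureamalgcoprod}, and your argument (translate the amalgamated coproduct to $X=X_s\oplus X_{\ell}\oplus\{h-\phi(h)\}$, then note that capability of one factor forces ${\rm cl}_s(X_s)=X_s$, so the obstruction set is contained in $H\cap X_s=\{\mathbf{0}\}$) is exactly that consequence spelled out. The bookkeeping you flag as the only subtlety — choosing lifts of $H$ and $\phi(H)$ meeting $X_s$ and $X_{\ell}$ trivially, and checking $r,s\geq 2$ so the theorem's hypotheses hold — is handled correctly.
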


\begin{remark} It is perhaps interesting to note that
  when we passed from coproducts and direct products to their
  amalgamated counterparts, a kind of reversal took place. The
  coproduct of two nontrivial groups in our class is always
  capable, while the capability of the direct product depends on the
  capability of the two factors. However, when we amalgamate
  nontrivial subgroups of the commutators, then the amalgamated direct
  product which is \textit{never} capable, while it is in the
  amalgamated coproduct that capability depends on the capability of
  the two groups (and the precise choice of~$H$).
\end{remark}

\section{Dimension Counting.}\label{sec:dimcounting}

In this section we will establish a sufficient condition for
the capability of a $p$-group $G$ of exponent~$p$ and class at most
two that depends only on the ranks of $G/Z(G)$ and $[G,G]$. The idea
is the following: given a subspace $X$ of~$V$, we will find a lower
bound for the dimension of~$X^*$ in terms of $n$ and the
dimension of~$X$. If all subspaces $X'$ of $V$ that properly contain~$X$
yield subspaces $X'^*$ of dimension strictly larger
than $\dim(X^*)$, then it will follow that $X$ must be closed since
$X^* = (X^{**})^*$. In order to establish these bounds, we will
consider the images $\varphi_1(X)$,
$\varphi_2(X),\ldots,\varphi_n(X)$; since each $\varphi_i$ is
one-to-one, the dimension of $X^*$ will depend on how much ``overlap''
there can be among these subspaces of~$W$. 

\begin{lemma} Fix $n>1$, and let $i$ and~$j$ be integers, $1\leq
  i<j\leq n$. Then $\varphi_i(V)\cap\varphi_j(V)=\{\mathbf{0}\}$.
\end{lemma}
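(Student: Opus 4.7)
The plan is to combine Lemma~\ref{lemma:indexinequalities} with the explicit formulas of Lemma~\ref{lemma:explicitformulas}. First, suppose $\mathbf{w}\in\varphi_i(V)\cap\varphi_j(V)$ has $\pi_{rst}(\mathbf{w})\neq\mathbf{0}$. Applying Lemma~\ref{lemma:indexinequalities} on both sides gives $s\leq i\leq t$ and $s\leq j\leq t$, each with at most one strict inequality. Combined, $s\leq i<j\leq t$. Since $t\geq j>i$ forces $i<t$ strictly, the $\varphi_i$ condition forces $s=i$; symmetrically, $s\leq i<j$ forces $s<j$, and the $\varphi_j$ condition gives $t=j$. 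Hence $\mathbf{w}$ lies in $\mathrm{span}\{w_{rij}:i<r\leq n\}$.

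It remains to show $\varphi_i(V)\cap\mathrm{span}\{w_{rij}:i<r\leq n\}=\{\mathbf{0}\}$. Write $\mathbf{w}=\varphi_i(\mathbf{v})$ with $\mathbf{v}=\sum_{a<b}c_{ba}v_{ba}$, and split $\mathbf{v}=\mathbf{v}_s+\mathbf{v}_\ell$ according to whether $a\leq i$ or $a>i$. By Lemma~\ref{lemma:explicitformulas}, $\varphi_i(\mathbf{v}_s)$ is a linear combination of distinct basis vectors $w_{bai}$ whose third index equals $i\neq j$, so its support is disjoint from $\mathrm{span}\{w_{rij}\}$, forcing $\varphi_i(\mathbf{v}_s)=\mathbf{0}$, and hence $\mathbf{v}_s=\mathbf{0}$ by injectivity of $\varphi_i$.

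For the large part, $\varphi_i(\mathbf{v}_\ell)=\sum_{i<a<b}c_{ba}(w_{bia}-w_{aib})$, and the basis vectors $w_{bia}$, $w_{aib}$ (ranging over $i<a<b$) are pairwise distinct. Among them, $w_{bia}$ lies in $\mathrm{span}\{w_{rij}\}$ only when $a=j$, and $w_{aib}$ only when $b=j$. A case analysis on $(b,a)$ then finishes: if $a\neq j$ and $b\neq j$, both companion basis vectors are forbidden, forcing $c_{ba}=0$; if $a=j$ (so $b>j$), the ``unwanted'' companion $w_{jib}$ is forbidden, forcing $c_{bj}=0$; if $b=j$ (so $i<a<j$), then $w_{jia}$ is forbidden, forcing $c_{ja}=0$. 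Every $c_{ba}$ thus vanishes, so $\mathbf{v}=\mathbf{0}$ and $\mathbf{w}=\mathbf{0}$. The main obstacle is the bookkeeping in this final step: confirming that the vectors $w_{jib}$ and $w_{jia}$ introduced by the identity $\varphi_i(v_{ba})=w_{bia}-w_{aib}$ (for $a>i$) cannot be cancelled by any other contribution, which reduces to elementary comparisons of index tuples.
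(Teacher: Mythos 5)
Your proof is correct and uses essentially the same tools as the paper's: Lemma~\ref{lemma:indexinequalities} together with the explicit formulas for $\varphi_k$ on basis vectors, with all the index comparisons checking out. The paper's version is organized slightly differently---it picks a nonzero coordinate $\pi_{sr}(\mathbf{v})$ of the preimage and derives a contradiction by applying the index inequalities to $\varphi_j(V)$ in the two cases $r\le i$ and $r>i$---but the underlying bookkeeping is the same, and your two-step variant (first pinning $\mathbf{w}$ into $\mathrm{span}\{w_{rij}\mid i<r\le n\}$, then verifying that $\varphi_i(V)$ meets that span trivially) is sound.
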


\begin{proof} Let $\varphi_i(\mathbf{v})\in \varphi_j(V)$, and assume
  that $\pi_{sr}(\mathbf{v})\neq\mathbf{0}$, $1\leq r<s\leq n$. If
  $r\leq i$, then $\pi_{sri}(\varphi_i(\mathbf{v}))\neq\mathbf{0}$,
  and since $\varphi_i(\mathbf{v})\in\varphi_j(V)$,
  Lemma~\ref{lemma:indexinequalities} implies
  $r\leq j\leq i$, contradicting the choice of $i$ and~$j$. 
  If $i<r$, then $\pi_{sir}(\varphi_i(\mathbf{v}))\neq
  \mathbf{0}$. By Lemma~\ref{lemma:indexinequalities}, we must have $i<
  j=r$. We also have $\pi_{ris}(\varphi_i(\mathbf{v}))\neq\mathbf{0}$,
  and since $\varphi_i(\mathbf{v})\in\varphi_j(V)$, this time we
  deduce $i<j=s$. But then we have $j=r=s$, and this is
  impossible. This contradiction arises from assuming
  $\pi_{sr}(\mathbf{v})\neq\mathbf{0}$ for some $1\leq r<s\leq n$,
  hence $\mathbf{v}=\mathbf{0}$.
\end{proof}

\begin{lemma} Fix $n>1$ and $r\leq n$. Let $i_1,\ldots,i_r$ be
 pairwise distinct integers, $1\leq i_1,\ldots, i_r\leq n$. Then
 $\varphi_{i_1}^{-1}\bigl(\bigl\langle
 \varphi_{i_2}(V),\ldots,\varphi_{i_r}(V)\bigr\rangle\bigr)$ is of
 dimension $\binom{r-1}{2}$, with basis given by the vectors $v_{ab}$,
 with $a,b\in \{i_2,\ldots,i_r\}$, $b<a$. In particular, the
 intersection
 $\varphi_{i_1}(V)\cap\bigl\langle\varphi_{i_2}(V),\ldots,\varphi_{i_r}(V)\bigr\rangle$
 has a basis made up of vectors of the form $w_{abi_1}$ with
 $a,b\in\{i_2,\ldots,i_r\}$, $b<a$ and $b<i_1$; and vectors of the
 form $w_{ai_1b}-w_{bi_1a}$, with $a,b\in\{i_2,\ldots,i_r\}$,
 $i_1<b<a$.
\label{lemma:pullback}
\end{lemma}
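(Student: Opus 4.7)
The plan is to pin down the preimage $P = \varphi_{i_1}^{-1}\bigl(\bigl\langle\varphi_{i_2}(V),\ldots,\varphi_{i_r}(V)\bigr\rangle\bigr)$; since $\varphi_{i_1}$ is one-to-one, the dimension count and the explicit basis of the intersection $\varphi_{i_1}(V)\cap\langle\varphi_{i_2}(V),\ldots,\varphi_{i_r}(V)\rangle$ will then follow at once by applying $\varphi_{i_1}$ to a basis of $P$ and invoking Lemma~\ref{lemma:explicitformulas}. I will prove $P = \bigl\langle v_{ab}\,\bigm|\, a,b\in\{i_2,\ldots,i_r\},\ b<a\bigr\rangle$ by two inclusions.

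For the inclusion $\supseteq$, fix $a,b\in\{i_2,\ldots,i_r\}$ with $b<a$. The Jacobi-type relation built into the definition of $W$, applied to the triple $(u_a,u_b,u_{i_1})$, gives
\[ (u_a\wedge u_b)\otimes u_{i_1} + (u_b\wedge u_{i_1})\otimes u_a + (u_{i_1}\wedge u_a)\otimes u_b \;=\;\mathbf{0}\]
in $W$. Rewriting each wedge as $\pm v_{\bullet\bullet}$ according to the position of $i_1$ relative to $a$ and $b$ (three cases: $i_1<b$, $b<i_1<a$, or $a<i_1$), this identity translates into an explicit expression $\varphi_{i_1}(v_{ab}) = \pm\varphi_a(v_{\ast}) \pm \varphi_b(v_{\ast})$. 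Since $a,b\in\{i_2,\ldots,i_r\}$, the right-hand side lies in the span, so $v_{ab}\in P$.

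For the reverse inclusion, suppose $\mathbf{v}=\sum_{b<a}c_{ab}v_{ab}$ satisfies $\varphi_{i_1}(\mathbf{v})\in\langle\varphi_{i_j}(V)\rangle_{j\geq 2}$; I claim $c_{ab}=0$ whenever $\{a,b\}\not\subseteq\{i_2,\ldots,i_r\}$. The guiding observation is that Lemma~\ref{lemma:indexinequalities} forces any basis vector $w_{rst}$ that appears with nonzero coefficient in an element of $\varphi_k(V)$ to satisfy $k\in\{s,t\}$; hence $w_{rst}$ can appear in the span only if $\{s,t\}\cap\{i_2,\ldots,i_r\}\neq\emptyset$. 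For each forbidden $(a,b)$, I use Lemma~\ref{lemma:explicitformulas} to single out a specific summand of $\varphi_{i_1}(v_{ab})$ whose $(s,t)$ violates this condition---namely $w_{i_1 b i_1}$ when $a=i_1$, $w_{a i_1 i_1}$ when $b=i_1$, and otherwise one of $w_{abi_1}$, $w_{a i_1 b}$, or $w_{b i_1 a}$---and verify directly that no other $v_{a'b'}$ contributes to this basis vector under $\varphi_{i_1}$, so its coefficient in $\varphi_{i_1}(\mathbf{v})$ equals $\pm c_{ab}$ and must vanish.

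The main obstacle is the delicate sub-case in which one of $a,b$ is good while the other is bad and the natural candidate is $w_{abi_1}$: this vector \emph{can} occur in the span, because it appears (with coefficient $\pm 1$) in a type-(B) basis vector of $\varphi_b(V)$ of the form $w_{xby}-w_{ybx}$, paired with a partner $w_{i_1 b\bullet}$. The argument must then be routed through the partner: a further application of Lemma~\ref{lemma:explicitformulas} shows that no $v_{a'b'}$ contributes to $w_{i_1 b\bullet}$ under $\varphi_{i_1}$ (every attempted match forces $b=i_1$ or $b^{\ast}=i_1$, both impossible), so $\varphi_{i_1}(\mathbf{v})$ has coefficient $0$ on $w_{i_1 b\bullet}$; this in turn forces the coefficient of the type-(B) vector in the expansion to be $0$, and therefore $c_{ab}=0$. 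Once $P$ has been identified, $\dim P=\binom{r-1}{2}$ is immediate, and applying $\varphi_{i_1}$ via Lemma~\ref{lemma:explicitformulas} produces the two families described in the statement, namely $w_{abi_1}$ when $b<i_1$ and $w_{a i_1 b}-w_{b i_1 a}$ when $i_1<b$.
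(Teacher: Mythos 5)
Your argument is correct, and both inclusions are established by the same two ideas the paper uses: the Jacobi relation defining $W$ gives $\varphi_{i_1}(v_{ab})\in\langle\varphi_a(V),\varphi_b(V)\rangle$ for one containment, and the index constraints of Lemma~\ref{lemma:indexinequalities} give the other. The one move the paper makes that you do not is to invoke Proposition~\ref{prop:symmetry} at the outset and assume $i_1=1$. With that normalization every $\varphi_{i_1}(v_{ab})$ with $b>1$ equals $w_{a1b}-w_{b1a}$, so the two surviving components directly name $a$ and $b$, and each is dispatched by a one-line application of Lemma~\ref{lemma:indexinequalities}; the case $b=1$ is killed outright since $w_{a11}$ forces $k=1$. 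This collapses your entire case analysis on the position of $i_1$ relative to $a$ and $b$ --- in particular the delicate ``partner'' sub-case (where $b<i_1$, $b\in\{i_2,\ldots,i_r\}$, $a\notin\{i_1,\ldots,i_r\}$, and $w_{abi_1}$ genuinely can occur in the span via $\varphi_b$) simply never arises, because for $i_1=1$ no pair has its smaller index below $i_1$. Your treatment of that sub-case is nonetheless sound: $w_{i_1ba}$ cannot occur in $\varphi_{i_1}(V)$, among the spanning maps only $\varphi_b$ reaches it, and there it is always tied (with opposite sign, via the single vector $v_{ai_1}$ or $v_{i_1a}$) to $w_{abi_1}$, which forces $c_{ab}=0$. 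One small caution: for the sub-case $a=i_1$ with $b$ among $i_2,\ldots,i_r$, your blanket criterion ``$\{s,t\}\cap\{i_2,\ldots,i_r\}=\emptyset$'' does not apply to $w_{i_1bi_1}$ (here $s=b$ is allowed); you need Lemma~\ref{lemma:doubleindex} (or the explicit formulas) to see that $w_{jij}$ only arises from $\varphi_j$, so this vector still cannot lie in the span.
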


\begin{proof} By Proposition~\ref{prop:symmetry}, it is enough to
  consider the case where $i_1=1$.  Let $A$ denote the
  pullback described in the statement.

Given $a,b\in\{i_2,\ldots,i_r\}$, $a>b$, we have $v_{ab}\in A$:
\[ \varphi_{i_1}(v_{ab}) = w_{ai_1b}-w_{bi_1a} =
\varphi_b(v_{ai_1})-\varphi_a(v_{bi_1})\in \langle
\varphi_{i_2}(V),\ldots,\varphi_{i_r}(V)\rangle.\]
Conversely, let $\mathbf{v}\in A$, and let $a,b$ be integers, $1\leq
b<a\leq n$, such that $\pi_{ab}(\mathbf{v})\neq \mathbf{0}$. We can
write
\[ \varphi_{i_1}(\mathbf{v}) =
\varphi_{i_2}(\mathbf{v}_2)+\cdots+\varphi_{i_r}(\mathbf{v}_r).\]
Since $i_1=1$,
$\pi_{a1b}(\varphi_{i_1}(\mathbf{v}))=-\pi_{b1a}(\varphi_{i_1}(\mathbf{v}))\neq
\mathbf{0}$, and therefore we must have
$\pi_{a1b}(\varphi_{i_j}(\mathbf{v}_j))\neq\mathbf{0}$ for some $j\geq
2$. This implies $1\leq i_j\leq b$, with at most one inequality strict
by Lemma~\ref{lemma:indexinequalities}. Since $1=i_1\neq i_j$, we have
$i_j=b$. Considering $\pi_{b1a}$ instead,
we deduce that $a=i_k$ for some $k\geq 2$, so
$a,b\in\{i_2,\ldots,i_r\}$. 
Therefore, $A\subseteq\langle v_{ab}\,|\,
a,b\in\{i_2,\ldots,i_r\}, a>b\rangle$. This proves equality. 

Since the vectors described are linearly independent, they form a
basis. Mapping them via $\varphi_{i_1}$, which is one-to-one, proves
the final clause. 
\end{proof}

\begin{corollary}
Let $n>1$, $r\leq n$, and let $1\leq i_1< i_2 <\cdots < i_r\leq n$ be
integers. Then
\[ \dim\left(\langle
\varphi_{i_1}(V),\ldots,\varphi_{i_r}(V)\rangle\right) = r\binom{n}{2}
- \binom{r}{3}.\]
\end{corollary}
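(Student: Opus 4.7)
The plan is a straightforward induction on $r$, combining the dimension formula for sums of subspaces with the intersection count already provided by Lemma~\ref{lemma:pullback}.

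For the base case $r=1$, the previous lemma (stating each $\varphi_{i}$ is one-to-one) gives $\dim(\varphi_{i_1}(V)) = \binom{n}{2}$, matching $1\cdot\binom{n}{2} - \binom{1}{3} = \binom{n}{2}$. For the inductive step, assume the formula holds for any $r-1$ pairwise distinct indices, so that
\[ \dim\bigl(\langle \varphi_{i_2}(V),\ldots,\varphi_{i_r}(V)\rangle\bigr) = (r-1)\binom{n}{2} - \binom{r-1}{3}.\]
Applying the standard identity $\dim(A+B) = \dim(A) + \dim(B) - \dim(A\cap B)$ with $A = \varphi_{i_1}(V)$ and $B = \langle\varphi_{i_2}(V),\ldots,\varphi_{i_r}(V)\rangle$, and using Lemma~\ref{lemma:pullback} (which tells us exactly that $A\cap B$ has dimension $\binom{r-1}{2}$, since $\varphi_{i_1}$ is one-to-one and the pullback has that dimension), I would obtain
\[ \dim\bigl(\langle \varphi_{i_1}(V),\ldots,\varphi_{i_r}(V)\rangle\bigr) = \binom{n}{2} + (r-1)\binom{n}{2} - \binom{r-1}{3} - \binom{r-1}{2}. \]

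The proof then finishes by invoking Pascal's rule $\binom{r-1}{3} + \binom{r-1}{2} = \binom{r}{3}$, yielding the required value $r\binom{n}{2} - \binom{r}{3}$. There is no real obstacle: the ordering hypothesis $i_1 < i_2 < \cdots < i_r$ is only cosmetic since Lemma~\ref{lemma:pullback} applies to any family of pairwise distinct indices (and indeed, by Proposition~\ref{prop:symmetry}, the dimension in question depends only on the family as a set). The whole argument is a one-line inductive computation once Lemma~\ref{lemma:pullback} is in hand.
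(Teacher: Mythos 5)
Your proposal is correct and is essentially the paper's own argument: the paper computes $\dim(Y)$ by the telescoping sum $r\binom{n}{2}-\sum_{k=2}^{r}\binom{k-1}{2}=r\binom{n}{2}-\binom{r}{3}$, where each intersection dimension $\binom{k-1}{2}$ comes from Lemma~\ref{lemma:pullback}, and your induction on $r$ with Pascal's rule is just that same telescoping computation phrased recursively. Nothing is missing.
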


\begin{proof} For simplicitly, let
  $Y=\langle\varphi_{i_1}(V),\ldots,\varphi_{i_r}(V)\rangle$. We have:
\begin{eqnarray*}
\dim(Y) & = & \left(\sum_{k=1}^r \dim(\varphi_{i_k}(V))\right) -
\left(\sum_{k=2}^{r}\dim\Bigl( \varphi_{i_{k}}(V)\cap \bigr\langle
\varphi_{i_{1}}(V),\ldots,\varphi_{i_{k-1}}(V)\bigr\rangle \Bigr)\right)\\
& = & r\binom{n}{2} - \left(\sum_{k=2}^{r}\binom{k-1}{2}\right) = r\binom{n}{2}-\binom{r}{3},
\end{eqnarray*}
as claimed.
\end{proof}

\begin{definition} Fix $n>1$. We define $\Phi\colon V^n\to W$ to be
\[ \Phi(\mathbf{v}_1,\ldots,\mathbf{v}_n) =
\varphi_1(\mathbf{v}_1)+\cdots + \varphi_n(\mathbf{v}_n).\]
If there is danger of ambiguity, we use $\Phi_n$ to denote the map
associated to the spaces corresponding to the particular choice
of~$n$.
\label{defn:defofPhi}
\end{definition}

Note that if $X$ is a subspace of $V$, then $\Phi(X^n) = X^*$. 

\begin{prop} The kernel of $\Phi$ is of dimension
  $\binom{n}{3}$. A basis for ${\rm ker}(\Phi)$ can be determined as
  follows: each choice of integers $a,b,c$, $1\leq a<b<c\leq n$,
  gives an element $(\mathbf{v}_1,\ldots,\mathbf{v}_n)\in V^n$ of the
  basis, with:
\[ \mathbf{v}_i = \left\{ \begin{array}{ll}
v_{cb}&\mbox{if $i=a$,}\\
-v_{ca}&\mbox{if $i=b$,}\\
v_{ba}&\mbox{if $i=c$,}\\
\mathbf{0}&\mbox{otherwise.}
\end{array}\right.\]
\label{prop:kerofPhi}
\end{prop}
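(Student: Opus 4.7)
The plan is to identify $V^n$ with $V \otimes U$ via the isomorphism $(\mathbf{v}_1, \ldots, \mathbf{v}_n) \mapsto \sum_{i=1}^n \mathbf{v}_i \otimes u_i$. Under this identification, $\Phi$ becomes precisely the canonical surjection $V \otimes U \to (V \otimes U)/J = W$, because $\varphi_i(\mathbf{v}) = \overline{\mathbf{v} \otimes u_i}$ by definition. Consequently $\ker(\Phi)$ is carried onto $J$, so I need only compute $\dim J$ and exhibit an explicit basis for it.

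For the dimension, Definition~\ref{defn:defofUVW} asserts $\dim W = 2\binom{n+1}{3}$, and $\dim(V \otimes U) = n\binom{n}{2}$. An elementary computation yields
\[ n\binom{n}{2} - 2\binom{n+1}{3} = \binom{n}{3},\]
so $\dim \ker(\Phi) = \binom{n}{3}$.

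For each triple $a < b < c$ I will exhibit a kernel element produced by the Jacobi relator: applying the defining relation of $J$ from Definition~\ref{defn:defofUVW} with $\mathbf{a} = u_c$, $\mathbf{b} = u_b$, $\mathbf{c} = u_a$ yields
\[ v_{cb} \otimes u_a - v_{ca} \otimes u_b + v_{ba} \otimes u_c \in J,\]
which under the identification of $V^n$ with $V \otimes U$ corresponds exactly to the element described in the statement (with $\mathbf{v}_a = v_{cb}$, $\mathbf{v}_b = -v_{ca}$, $\mathbf{v}_c = v_{ba}$, and all other entries zero). This produces $\binom{n}{3}$ elements of $\ker(\Phi)$.

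For linear independence, I observe that the coordinate $v_{ba} \otimes u_c$ (with $a < b < c$) occurs in the expansion of only the candidate basis vector indexed by $\{a,b,c\}$: if such a tensor appeared in the expansion for another triple $\{a',b',c'\}$ via one of the summands $v_{c'b'} \otimes u_{a'}$, $v_{c'a'} \otimes u_{b'}$, or $v_{b'a'} \otimes u_{c'}$, a short case analysis using $a' < b' < c'$ and $a < b < c$ forces $\{a',b',c'\} = \{a,b,c\}$. Projecting onto these coordinates separates the candidates, giving linear independence; since there are $\binom{n}{3}$ of them in a space of dimension $\binom{n}{3}$, they form a basis. The main obstacle is purely the bookkeeping needed to match sign and index conventions in passing between $V^n$ and $V \otimes U$; once that identification is in place, the result is essentially the definition of $W$ as the quotient of $V \otimes U$ by the space spanned by Jacobi relators indexed by unordered triples from $\{u_1,\ldots,u_n\}$.
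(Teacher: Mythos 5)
Your proof is correct, and while it shares the paper's overall skeleton --- compute $\dim\ker(\Phi)=n\binom{n}{2}-2\binom{n+1}{3}=\binom{n}{3}$ from the asserted dimension of $W$, then exhibit $\binom{n}{3}$ linearly independent kernel elements --- the way you produce and verify those elements is genuinely different. The paper checks membership in $\ker(\Phi)$ by direct computation in the preferred basis of $W$, expanding $\varphi_a(v_{cb})+\varphi_b(-v_{ca})+\varphi_c(v_{ba})=w_{cab}-w_{bac}-w_{cab}+w_{bac}=\mathbf{0}$; you instead observe that under the isomorphism $V^n\cong V\otimes U$ sending $(\mathbf{v}_1,\ldots,\mathbf{v}_n)$ to $\sum\mathbf{v}_i\otimes u_i$, the map $\Phi$ is exactly the canonical projection onto $(V\otimes U)/J=W$, so that $\ker(\Phi)$ is $J$ itself and the candidate elements are literally the Jacobi relators on basis triples. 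This buys you the surjectivity of $\Phi$ for free (the paper invokes it without comment) and makes kernel membership definitional rather than computational. Your linear independence argument --- each sorted triple $a<b<c$ owns the coordinate $v_{ba}\otimes u_c$ exclusively --- is essentially the paper's argument (which inspects the $i$th component of a vanishing linear combination and notes that each $v_{sr}$ occurs there at most once), just organized by coordinates of $V\otimes U$ rather than by components of $V^n$. Both proofs ultimately rest on the dimension formula $\dim W=2\binom{n+1}{3}$ stated in Definition~\ref{defn:defofUVW}, so you are not assuming anything the paper does not.
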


\begin{proof}
Denote the element corresponding to $a<b<c$ by
$\mathbf{v}_{(abc)}$. Note that $\mathbf{v}_{(abc)}$ is in ${\rm
ker}(\Phi)$:
\[ \Phi(\mathbf{v}_{(abc)}) = \varphi_a(v_{cb}) +
\varphi_b(-v_{ca})+\varphi_c(v_{ba}) = w_{cab}-w_{bac} - w_{cab} +
w_{bac} = \mathbf{0}.\]
Since $\Phi$ is surjective, $\dim(W) = n\dim(V) - \dim({\rm
  ker}(\Phi))$, hence
\[ \dim({\rm ker}(\Phi)) = n\binom{n}{2} -
2\binom{n+1}{3}=\binom{n}{3},\]
so the proposition will be established in full if we prove that the
elements $\mathbf{v}_{(abc)}$ of $V^n$ are linearly independent.

Let $\sum\beta_{abc}\mathbf{v}_{abc} = (\mathbf{0},\ldots,\mathbf{0})$ be a linear
combination equal to~zero. If we look at the $i$th coordinate of these
$n$-tuples, we have:
\[ \sum_{1\leq r<s<i\leq n}\beta_{rsi}v_{sr} - \sum_{1\leq r<i<s\leq
  n}\beta_{ris}v_{sr} + \sum_{1\leq i<r<s\leq n}\beta_{irs}v_{sr} =
  \mathbf{0}.\]
Each basis vector $v_{sr}$ occurs only once. Thus, if $i\in\{a,b,c\}$,
  then $\beta_{abc}=0$. This holds for each choice of $i$,
 hence $\beta_{abc}=0$ for all choices of $a,b,c$. This proves the
  $\mathbf{v}_{(abc)}$ are linearly independent.
\end{proof}

\begin{theorem}
Let $(\mathbf{v}_1,\ldots,\mathbf{v}_n)\in\ker(\Phi)$. Write
\[ \mathbf{v}_k = \sum_{1\leq i<j\leq n} \!\!\!\!\alpha_{ji}^{(k)}v_{ji},\]
\begin{itemize}
\item[(i)] If $i=k$ or $j=k$, then $\alpha_{ji}^{(k)}=0$; i.e.,
  $\Pi_k(\mathbf{v}_k) = \mathbf{0}$.
\item[(ii)] If $1\leq a<b<c\leq n$, then $\alpha_{ba}^{(c)} =
  \alpha_{cb}^{(a)} = -\alpha_{ca}^{(b)}$.
\item[(iii)] Fix $i,j$, $1\leq i<j\leq n$. Then
\begin{eqnarray*}
\Pi_{i}(\mathbf{v}_{j})& = &
\sum_{r=1}^{i-1}\left(-\alpha_{jr}^{(i)}\right)v_{ir}    +
 \sum_{r=i+1}^{j-1} \alpha_{jr}^{(i)}v_{ri} +
\sum_{r=j+1}^n \left(-\alpha_{rj}^{(i)}\right)v_{ri},\\
\Pi_{j}(\mathbf{v}_i) & = &
\sum_{r=1}^{i-1}\left(-\alpha_{ir}^{(j)}\right)v_{jr} +
\sum_{r=i+1}^{j-1}\alpha_{ri}^{(j)}v_{jr} +
\sum_{r=j+1}^n\left(-\alpha_{ri}^{(j)}\right)v_{rj}.
\end{eqnarray*}
\end{itemize}
\label{th:descriptionkernelPhi}
\end{theorem}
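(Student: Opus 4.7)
The plan is to read off every coefficient $\alpha_{ji}^{(k)}$ directly from the basis of $\ker(\Phi)$ produced in Proposition~\ref{prop:kerofPhi}. Every element of the kernel is uniquely a linear combination $\sum_{1\leq a<b<c\leq n}\beta_{abc}\mathbf{v}_{(abc)}$, and the element $\mathbf{v}_{(abc)}$ has a nonzero entry only in coordinates $a$, $b$, and $c$, where it equals $v_{cb}$, $-v_{ca}$, and $v_{ba}$ respectively. Summing these contributions and extracting the $k$-th coordinate yields
\[
\mathbf{v}_k \;=\; \sum_{k<b<c}\beta_{kbc}\,v_{cb} \;-\; \sum_{a<k<c}\beta_{akc}\,v_{ca} \;+\; \sum_{a<b<k}\beta_{abk}\,v_{ba}.
\]
Comparing with $\mathbf{v}_k=\sum_{i<j}\alpha_{ji}^{(k)}v_{ji}$, each basis vector $v_{ji}$ receives a contribution from exactly one of the three sums (depending on whether $k<i<j$, $i<k<j$, or $i<j<k$), and receives no contribution at all when $k\in\{i,j\}$. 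This proves~(i) and gives the identification
\[
\alpha_{ji}^{(k)} \;=\; \begin{cases} \beta_{kij} & \text{if } k<i<j,\\ -\beta_{ikj} & \text{if } i<k<j,\\ \beta_{ijk} & \text{if } i<j<k.\end{cases}
\]

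For~(ii), I substitute the three triples $(k,i,j)=(c,a,b)$, $(a,b,c)$, and $(b,a,c)$ into this formula; they fall into the third, first, and second cases respectively, giving $\alpha_{ba}^{(c)}=\beta_{abc}$, $\alpha_{cb}^{(a)}=\beta_{abc}$, and $\alpha_{ca}^{(b)}=-\beta_{abc}$, as desired.

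Part~(iii) is then pure bookkeeping. Since $\Pi_i$ retains only those basis vectors of $V$ that have an index equal to $i$,
\[
\Pi_i(\mathbf{v}_j) \;=\; \sum_{r<i}\alpha_{ir}^{(j)}\,v_{ir} \;+\; \sum_{\substack{r>i\\ r\neq j}}\alpha_{ri}^{(j)}\,v_{ri},
\]
where the missing term $r=j$ vanishes by~(i). I split the second sum at $r=j$ and rewrite each $\alpha^{(j)}$ in terms of $\alpha^{(i)}$ via~(ii): taking $(a,b,c)=(r,i,j)$ when $r<i$ gives $\alpha_{ir}^{(j)}=-\alpha_{jr}^{(i)}$; taking $(a,b,c)=(i,r,j)$ when $i<r<j$ gives $\alpha_{ri}^{(j)}=\alpha_{jr}^{(i)}$; and taking $(a,b,c)=(i,j,r)$ when $r>j$ gives $\alpha_{ri}^{(j)}=-\alpha_{rj}^{(i)}$. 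Substituting produces the three stated summations, and the formula for $\Pi_j(\mathbf{v}_i)$ follows from the same computation with $i$ and $j$ interchanged.

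No step is conceptually difficult; the only potential obstacle is keeping the subscript orderings and signs straight when applying the symmetries of~(ii), since both the sign and which entry of $v$ is ``small'' depend on where $r$ sits relative to $i$ and~$j$.
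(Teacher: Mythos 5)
Your proof is correct, and the bookkeeping in parts (i) and (iii) checks out against the statement. The route differs from the paper's mainly in part (ii): you parametrize an arbitrary kernel element by the basis $\{\mathbf{v}_{(abc)}\}$ of Proposition~\ref{prop:kerofPhi} and read off the dictionary $\alpha_{ji}^{(k)}\leftrightarrow\pm\beta_{abc}$, from which (i) and (ii) drop out simultaneously; the paper instead proves (i) by checking it on those basis elements and extending by linearity, but obtains (ii) directly from the equation $\Phi(\mathbf{v})=\mathbf{0}$, by computing the $\pi_{bac}$- and $\pi_{cab}$-projections of $\varphi_1(\mathbf{v}_1)+\cdots+\varphi_n(\mathbf{v}_n)$ via Lemma~\ref{lemma:explicitformulas} and setting them to zero. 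Your version buys an explicit and reusable identification of the coefficients $\alpha_{ji}^{(k)}$ with the coordinates $\beta_{abc}$ of the kernel element (arguably more information than the theorem asserts), at the cost of leaning on the completeness of the basis in Proposition~\ref{prop:kerofPhi}, i.e., on the dimension count there; the paper's derivation of (ii) is independent of that completeness, needing only the membership $(\mathbf{v}_1,\ldots,\mathbf{v}_n)\in\ker(\Phi)$ and the explicit formulas for the $\varphi_k$. Since part (i) in the paper also invokes the basis, the theorem as a whole depends on Proposition~\ref{prop:kerofPhi} either way, so this is a stylistic rather than a logical difference. Part (iii) is the same calculation in both arguments.
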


\begin{proof} 
Part (i) holds for the basis elements described in
Proposition~\ref{prop:kerofPhi}, hence holds for all vectors in the
kernel. For part (ii), note that if $1\leq a<b<c\leq n$, then
\begin{eqnarray*}
\pi_{bac}\bigl(\varphi_1(\mathbf{v}_1) + \cdots
+\varphi_{n}(\mathbf{v}_n)\bigr) &=&
\left(\alpha_{ba}^{(c)}-\alpha_{cb}^{(a)}\right)w_{bac},\\
\pi_{cab}\bigl(\varphi_1(\mathbf{v}_1) + \cdots +
\varphi_n(\mathbf{v}_n)\bigr) & = & \left(\alpha_{ca}^{(b)} +
\alpha_{cb}^{(a)}\right)w_{cab}.
\end{eqnarray*}
Since both are equal to zero, we deduce that
$\alpha_{ba}^{(c)}=\alpha_{cb}^{(a)}$ and
$\alpha_{ca}^{(b)}=-\alpha_{cb}^{(a)}$. Finally, for (iii), we know
that
$\Pi_i(\mathbf{v}_i) = \Pi_j(\mathbf{v}_j)=\mathbf{0}$ from (i), so we
can write:
\begin{eqnarray*}
\Pi_i(\mathbf{v}_j) & = & \sum_{r=1}^{i-1}\alpha_{ir}^{(j)}v_{ir} +
\sum_{r=i+1}^{j-1}\!\!\alpha_{ri}^{(j)}v_{ri} +
\sum_{r=j+1}^{n}\!\!\!\alpha_{ri}^{(j)}v_{ri},\\
\Pi_j(\mathbf{v}_i) & = & \sum_{r=1}^{i-1}\alpha_{jr}^{(i)}v_{jr} +
\sum_{r=i+1}^{j-1}\!\!\alpha_{jr}^{(i)}v_{jr} + \sum_{r=j+1}^n
\!\!\!\alpha_{rj}^{(i)}v_{rj},
\end{eqnarray*}
and applying (ii) gives the desired identities.
\end{proof}

\begin{corollary} Let $\mathbf{v}\in\ker(\Phi)$. If
  $\Pi_j(\mathbf{v}_i)=\mathbf{0}$, then
  $\Pi_i(\mathbf{v}_j)=\mathbf{0}$. In particular, if
  $\mathbf{v}_i=\mathbf{0}$, then $\Pi_i(\mathbf{v}_j)=\mathbf{0}$ for
  all~$j$.
\label{cor:swaptogetzero}
\end{corollary}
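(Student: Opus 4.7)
The plan is to reduce everything to a direct comparison of coefficients using the explicit formulas provided by Theorem~\ref{th:descriptionkernelPhi}. First, the case $i=j$ is automatic from part~(i), so I may assume $i\neq j$; by the symmetry of the hypothesis and conclusion under swapping $i$ and $j$, I may further assume $i<j$. Writing $\mathbf{v}_k = \sum_{1\leq s<r\leq n}\alpha_{rs}^{(k)}v_{rs}$ as in the statement, I would expand both $\Pi_j(\mathbf{v}_i)$ and $\Pi_i(\mathbf{v}_j)$ using the formulas in part~(iii), so that vanishing of either projection is equivalent to the vanishing of a specific list of coefficients $\alpha^{(j)}_{??}$ (for the first projection) or $\alpha^{(i)}_{??}$ (for the second).

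The crux is that part~(ii) of Theorem~\ref{th:descriptionkernelPhi} says that the three scalars $\alpha_{ba}^{(c)}$, $\alpha_{cb}^{(a)}$, $-\alpha_{ca}^{(b)}$ coincide whenever $a<b<c$. Applied with the ordered triple drawn from $\{i,j,r\}$ in each of the three ranges $r<i$, $i<r<j$, and $j<r$, this relation converts every coefficient appearing in $\Pi_j(\mathbf{v}_i)$ into $\pm$ a corresponding coefficient appearing in $\Pi_i(\mathbf{v}_j)$, namely:
\begin{align*}
\alpha_{ir}^{(j)} &= -\alpha_{jr}^{(i)} \quad (r<i), \\
\alpha_{ri}^{(j)} &= \alpha_{jr}^{(i)} \quad (i<r<j), \\
\alpha_{ri}^{(j)} &= -\alpha_{rj}^{(i)} \quad (j<r).
\end{align*}
Thus the set of coefficients whose vanishing characterises $\Pi_j(\mathbf{v}_i)=\mathbf{0}$ is, up to signs, exactly the set characterising $\Pi_i(\mathbf{v}_j)=\mathbf{0}$.

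Consequently, if $\Pi_j(\mathbf{v}_i)=\mathbf{0}$, then every coefficient on the right of the formula for $\Pi_i(\mathbf{v}_j)$ also vanishes, giving $\Pi_i(\mathbf{v}_j)=\mathbf{0}$. For the ``in particular'' clause, if $\mathbf{v}_i=\mathbf{0}$ then trivially $\Pi_j(\mathbf{v}_i)=\mathbf{0}$ for every $j$, and the first assertion yields $\Pi_i(\mathbf{v}_j)=\mathbf{0}$ for all $j$. The only real work is the bookkeeping of which triple $(a,b,c)$ to feed into part~(ii) in each index range, so there is no genuine obstacle beyond careful matching of subscripts and signs.
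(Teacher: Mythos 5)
Your proposal is correct and follows essentially the same route as the paper: both arguments reduce the claim to the explicit coefficient formulas of Theorem~\ref{th:descriptionkernelPhi}. The only cosmetic difference is that the paper reads the hypothesis $\Pi_j(\mathbf{v}_i)=\mathbf{0}$ directly as the vanishing of the coefficients $\alpha_{jr}^{(i)}$, $\alpha_{rj}^{(i)}$ and then applies part~(iii) once, whereas you apply part~(iii) to both projections and mediate through part~(ii) — which merely re-derives the content of (iii).
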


\begin{proof} The second assertion follows immediately from the
  first. The first assertion is trivial if $i=j$; for $i\neq j$, then
  $\alpha_{jr}^{(i)}=0$ for all $r<j$ and $\alpha_{rj}^{(i)}=0$ for $j<r$, so by
  Theorem~\ref{th:descriptionkernelPhi}(iii) it follows that
  $\Pi_i(\mathbf{v}_j)=\mathbf{0}$. 
\end{proof}

\begin{corollary} Let $\mathbf{v}\in\ker(\Phi)$, $\mathbf{v}\neq
  (\mathbf{0},\ldots,\mathbf{0})$. If
$\mathbf{v} = (\mathbf{v}_1,\ldots,\mathbf{v}_n)$
then the dimension of $\langle
\mathbf{v}_1,\ldots,\mathbf{v}_n\rangle$ is at least $3$.
\label{cor:atleastthree}
\end{corollary}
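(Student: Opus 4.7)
The plan is to locate a single triple of indices $a<b<c$ for which the three components $\mathbf{v}_a,\mathbf{v}_b,\mathbf{v}_c$ are already linearly independent. The idea is to read off their independence from a $3\times 3$ matrix of coefficients assembled from the data in Theorem~\ref{th:descriptionkernelPhi}: parts~(i) and~(ii) of that theorem will conspire to make this matrix diagonal with nonzero entries.

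Since $\mathbf{v}\neq(\mathbf{0},\ldots,\mathbf{0})$, some coefficient $\alpha_{ji}^{(k)}$ in the expansion of some $\mathbf{v}_k$ is nonzero. Part~(i) of Theorem~\ref{th:descriptionkernelPhi} forces $k\notin\{i,j\}$, so the set $\{i,j,k\}$ has three distinct elements, which we sort as $a<b<c$. Part~(ii) then guarantees that
\[ \alpha_{cb}^{(a)},\qquad \alpha_{ca}^{(b)}=-\alpha_{cb}^{(a)},\qquad \alpha_{ba}^{(c)}=\alpha_{cb}^{(a)}\]
are all nonzero. These three scalars will be the witnesses.

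Now consider the $3\times 3$ matrix $M$ whose rows list the coefficients of the basis vectors $v_{cb}$, $v_{ca}$, $v_{ba}$ in $\mathbf{v}_a$, $\mathbf{v}_b$, $\mathbf{v}_c$, respectively. The diagonal entries are precisely the three scalars displayed above, and so nonzero. Each off-diagonal entry is of the form $\alpha_{ts}^{(\ell)}$ in which the row-index $\ell$ lies in $\{s,t\}$ (since the columns exhaust the three $2$-element subsets of $\{a,b,c\}$ while the rows exhaust the three singletons), and Theorem~\ref{th:descriptionkernelPhi}(i) kills every such coefficient. Thus $M$ is diagonal and invertible, so its rows are linearly independent; equivalently, the projections of $\mathbf{v}_a,\mathbf{v}_b,\mathbf{v}_c$ onto $\langle v_{cb},v_{ca},v_{ba}\rangle$ are linearly independent, and hence so are $\mathbf{v}_a,\mathbf{v}_b,\mathbf{v}_c$ themselves. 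This gives $\dim\langle\mathbf{v}_1,\ldots,\mathbf{v}_n\rangle\geq 3$.

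There is no serious obstacle here: Theorem~\ref{th:descriptionkernelPhi} has already done all the structural work, and what remains is a small diagonalization observation. The only point needing care is verifying that once the indices $\{i,j,k\}$ are sorted into $a<b<c$, the bookkeeping of which basis vector pairs with which row-index matches up correctly with parts~(i) and~(ii); both the vanishing of the off-diagonal entries and the nonvanishing of the diagonal ones are then immediate from the hypothesis that $\alpha_{cb}^{(a)}\neq 0$.
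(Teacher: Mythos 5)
Your proof is correct and follows essentially the same route as the paper's: both select a triple $a<b<c$ whose associated coefficient is nonzero and then show that $\mathbf{v}_a,\mathbf{v}_b,\mathbf{v}_c$ project onto $\langle v_{cb},v_{ca},v_{ba}\rangle$ in a diagonal, invertible pattern (the paper phrases this via the projections $\pi_{cb},\pi_{ca},\pi_{ba}$ and the vanishing of $\Pi_a(\mathbf{v}_a)$, etc., rather than via an explicit matrix). The only cosmetic difference is that the paper locates the triple through a nonzero coefficient $\beta_{abc}$ in the basis expansion of $\ker(\Phi)$, whereas you extract it directly from a nonzero $\alpha_{ji}^{(k)}$ using Theorem~\ref{th:descriptionkernelPhi}(i)--(ii); these are equivalent.
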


\begin{proof} Write
\[ \mathbf{v} = \!\!\!\!\!\sum_{1\leq a<b<c\leq n}\!\!\!\!\!\!\!
\beta_{abc}\mathbf{v}_{(abc)}.\]
Fix $a,b,c$ such that $1\leq a<b<c\leq n$, $\beta_{abc}\neq 0$. We
claim that $\mathbf{v}_a$, $\mathbf{v}_b$, and $\mathbf{v}_c$ are
linearly independent. Indeed, note that
$\Pi_a(\mathbf{v}_a)=\Pi_b(\mathbf{v}_b)=\Pi_c(\mathbf{v}_c) =
\mathbf{0}$, and $\pi_{cb}(\mathbf{v}_a) \neq\mathbf{0}$. Therefore,
if $\alpha_a\mathbf{v}_a + \alpha_b\mathbf{v}_b + \alpha_c\mathbf{v}_c
= \mathbf{0}$, then we must have $\alpha_a=0$. A symmetric argument
looking at $\pi_{ca}$ shows that $\alpha_b=0$, and considering
$\pi_{ba}$ shows that $\alpha_c=0$.
\end{proof}

\begin{corollary}[Prop.~4.6 in~\cite{capablep}]
Fix $n>1$, and let $X$ be a subspace of~$V$. If $\dim(X)=1$, then
$\dim(X^*)=n$; if $\dim(X)=2$, then $\dim(X^*)=2n$.
\label{cor:maincountingarg}
\end{corollary}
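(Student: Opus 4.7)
The plan is to exploit the surjection $\Phi \colon V^n \to W$ restricted to $X^n$, whose image is exactly $X^*$ by the observation following Definition~\ref{defn:defofPhi}. Since each $\varphi_i$ is one-to-one, $\dim(X^n) = n\dim(X)$, and $\dim(X^*) = n\dim(X) - \dim(\ker(\Phi) \cap X^n)$. So in each case I just need to show that $X^n$ meets $\ker(\Phi)$ trivially.

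For $\dim(X)=1$, pick $\mathbf{v}$ spanning~$X$. A nonzero element of $\ker(\Phi) \cap X^n$ would have the form $(\alpha_1\mathbf{v},\ldots,\alpha_n\mathbf{v})$ with not all $\alpha_i$ zero, and by Corollary~\ref{cor:atleastthree} its components would have to span a subspace of dimension at least~$3$. But those components lie in the one-dimensional space $X$, a contradiction. Hence $\ker(\Phi) \cap X^n = \{\mathbf{0}\}$ and $\dim(X^*) = n$. For $\dim(X) = 2$ the argument is identical in spirit: any nonzero $(\mathbf{v}_1,\ldots,\mathbf{v}_n) \in \ker(\Phi) \cap X^n$ would force $\langle \mathbf{v}_1,\ldots,\mathbf{v}_n\rangle \subseteq X$ to have dimension at least~$3$, again contradicting $\dim(X) = 2$. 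Thus $\dim(X^*) = 2n$.

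There is no real obstacle here: the corollary is a direct consequence of Corollary~\ref{cor:atleastthree}, which supplies exactly the rigidity needed to rule out relations among the $\varphi_i(X)$ when $X$ is of small dimension. The only thing to be careful about is making sure the dimension count $\dim(X^*) = n\dim(X) - \dim(\ker(\Phi)\cap X^n)$ is justified, which follows from the injectivity of the $\varphi_i$ together with the universal property of the direct sum $X^n$ mapping to $X^*$ via $\Phi$.
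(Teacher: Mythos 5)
Your proof is correct and is essentially the paper's own argument: both reduce the claim to the rank–nullity identity $\dim(X^*) = n\dim(X) - \dim(X^n\cap\ker(\Phi))$ and then invoke Corollary~\ref{cor:atleastthree} to conclude that $X^n\cap\ker(\Phi)$ is trivial when $\dim(X)\leq 2$ (the paper phrases this as the contrapositive, but the content is identical). One small remark: the identity $\dim(X^*)=n\dim(X)-\dim(\ker(\Phi)\cap X^n)$ is just rank–nullity applied to $\Phi|_{X^n}$ and does not actually require injectivity of the individual $\varphi_i$.
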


\begin{proof} We prove the contrapositive.
Since $\dim(X^*) = n\dim(X) - \dim(X^n\cap\ker(\Phi))$, if
$\dim(X^*)<n\dim(X)$, then $X^n\cap\ker(\Phi)\neq\{\mathbf{0}\}$.

Let $\mathbf{v}=(\mathbf{v}_1,\ldots,\mathbf{v}_n)\in
X^n\cap\ker(\Phi)$, $\mathbf{v}\neq \mathbf{0}$. Then $\mathbf{v}_i\in
X$ for $i=1,\ldots,n$, so by Corollary~\ref{cor:atleastthree},
$\dim(X)\geq 3$, as claimed.
\end{proof}

We now proceed along the lines outlined at the beginning of the
section. We first formalize the observation made there.

\begin{prop}
Let $X<V$. Assume that for all subspaces $Y$ of\/ $V$, if\/ $Y$
properly contains $X$
then $Y^*$ properly contains $X^*$. Then $X=X^{**}$.
\label{prop:strictlylarger}
\end{prop}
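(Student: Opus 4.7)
The plan is to argue by contradiction, leveraging the identity $(X^{**})^{*} = X^{*}$ established in Theorem~\ref{th:closureop}. Since the closure operator is increasing, we always have $X \subseteq X^{**}$, so the only thing to rule out is strict containment.

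Suppose, for the sake of contradiction, that $X \subsetneq X^{**}$. Then $Y := X^{**}$ is a subspace of $V$ that properly contains $X$. By the hypothesis of the proposition, this would force $Y^{*}$ to properly contain $X^{*}$. But Theorem~\ref{th:closureop} tells us that $(X^{**})^{*} = X^{*}$, i.e.\ $Y^{*} = X^{*}$, which contradicts the strict inclusion. Hence $X = X^{**}$.

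There is no real obstacle here: the entire content of the proposition is a direct consequence of the already-proven equality $(X^{**})^{*} = X^{*}$ from Theorem~\ref{th:closureop}. The usefulness of the statement, of course, lies in its contrapositive form as a recipe for verifying closedness by a dimension count: to show $X$ is closed, it suffices to show that every proper superspace $Y$ of $X$ strictly enlarges the image $X^{*}$, which is exactly the dimension-counting strategy outlined at the start of the section.
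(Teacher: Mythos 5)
Your proof is correct and is essentially identical to the paper's: both apply the hypothesis to $Y=X^{**}$ and derive a contradiction from the identity $(X^{**})^*=X^*$ of Theorem~\ref{th:closureop}.
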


\begin{proof} If $X^{**}$ properly contains $X$, then $X^{***}$ would
  properly contain $X^*$. But $X^{***}=X^*$, a contradiction.
\end{proof}

We are therefore searching for a function $f(k,n)$, defined for $k$
with $1\leq k\leq \binom{n}{2}$, such that for all
$X<V(n)$, if $\dim(X)=k$ then $\dim(X^*)\geq nk - f(k,n)$. 
This is given by:
\[ f(k,n) = \max\bigl\{ \dim(X^n\cap\ker(\Phi_n))\,\bigm|\,
X<V, \dim(X)=k\}.\] Our objective in this section is to find an
expression for $f(k,n)$ in terms of~$k$ and~$n$; in fact, it turns out that
the value is independent of $n$. 
The main workhorse in our calculations will
Lemma~\ref{lemma:basicoverlapbound} below.  The idea is to find
$\dim(X^n\cap\ker(\Phi_n))$ by examining the ``partial
intersections''; namely, the intersections of the form
\[ \Bigl\langle
(\mathbf{0},\ldots\mathbf{0},\mathbf{v}_i,
\mathbf{v}_{i+1},\ldots,\mathbf{v}_n) \,\Bigm|\, \mathbf{v}_j\in
X\Bigr\rangle \bigcap \ker(\Phi_n),\]
as $i$ ranges from $1$ to $n-2$ (when $i=n-1$ or $i=n$, the
intersection is trivial by Corollary~\ref{cor:atleastthree}). For a
fixed $i$, we can consider the
subspace of $X$ consisting of all vectors $\mathbf{v}_i$ which can be
``completed'' to an element of $\ker(\Phi)$ by taking and $n$-tuple
with $i-1$ copies of $\mathbf{0}$, followed by $\mathbf{v}_i$,
followed by some vectors in~$X$; this is the same as
considering the pullbacks
$ X \cap
\varphi_i^{-1}\left(\langle\varphi_{i+1}(X),\ldots,\varphi_{n}(X)\rangle\right)$.
It is easy to verify that the sum of the dimensions of these pullbacks
is equal to the dimension of $X^n\cap\ker(\Phi_n)$.  We will first use
the dimension of these pullbacks to establish a lower bound for the
dimension of~$X$; then we will turn around and use these calculations
to give an upper bound for the dimension of the pullbacks in terms of
the dimension of~$X$.

Making the bounds as precise as possible, however, requires one to
keep track of a lot of information; this in turn requires the use of
multiple indices and subindices in the proof, for which I apologize in
advance. To illustrate the ideas and help the reader navigate through
the proof, we will first present an illustration. This is not an example in
the sense of a specific $X$, but rather a run-through the main part of
the analysis we will perform below, but with specific values for some
of the indices and some of the variables to make it more concrete.

\begin{example} Set $n=6$, and let $X$ be a subspace of~$V$. We will
be interested in bounding above the dimension of $Z_i$ in terms of
$\dim(X)$, where
\[Z_i = X \cap \varphi_i^{-1}\Bigl(\langle
\varphi_{i+1}(X),\ldots,\varphi_6(X)\rangle\Bigr);\]
i.e., $Z_i$ consists of all $\mathbf{v}\in X$ for which there exist
$\mathbf{v}_{i+1},\ldots,\mathbf{v}_6$ in~$X$ such that
\[
(\mathbf{0},\ldots,\mathbf{0},\mathbf{v},\mathbf{v}_{i+1},\ldots,\mathbf{v}_6)
  \in X^6\cap\ker(\Phi_6).\] To do this, we will obtain a lower bound
  for $\dim(X)$ in terms of $\dim(Z_i)$.  To further fix ideas, set
  $i=2$.  Note that by Theorem~\ref{th:descriptionkernelPhi}(i)
  and~(ii), we must have $\Pi_1(Z_2)=\Pi_2(Z_2)=\mathbf{0}$. 
  Order all pairs $(j,i)$
  lexicographically from right to left, so $(j,i)<(b,a)$ if and only
  if $i<a$, or $i=a$ and $j<b$. 
Doing row reduction, we can find a basis $\mathbf{v}_{1,2}$,
$\mathbf{v}_{2,2},\ldots,\mathbf{v}_{k,2}$
for $Z_2$ (the second index refers to the fact that
these vectors are in the second component of an element of
$\ker(\Phi_6)$), satisfying that the ``leading pair'' (smallest nonzero
component) of each is strictly smaller than that of its successors,
and all other vectors have zero component for that pair. For example,
suppose that $\dim(Z_2)=4$, and that the basis has the form:
\begin{alignat*}{2}
\mathbf{v}_{1,2} & =  v_{43} + \alpha_1 v_{53} + \alpha_2
v_{64},&\qquad
\mathbf{v}_{3,2} & =  v_{54} + \gamma v_{64},\\
\mathbf{v}_{2,2} & =  v_{63} + \beta v_{64},&
\mathbf{v}_{4,2} & =  v_{65},
\end{alignat*}
for some coefficients $\alpha_1,\alpha_2,\beta,\gamma\in\mathbb{F}_p$.
We know there exist vectors $\mathbf{v}_{i,3}$, $\mathbf{v}_{i,4}$,
$\mathbf{v}_{i,5}$, $\mathbf{v}_{i,6}$ such that
$
(\mathbf{0},\mathbf{v}_{i,2},\mathbf{v}_{i,3},\mathbf{v}_{i,4},\mathbf{v}_{i,5},\mathbf{v}_{i,6})\in
X^6\cap\ker(\Phi_6)$ for $i=1,2,3,4$. Naturally, $X$ contains all
twenty vectors, but there will normally be some linear
dependencies between them: some may even be equal to~$\mathbf{0}$. We
want to extract, in some systematic manner, a subset that we can
guarantee is linearly independent. First let us consider the information we can
obtain about
these vectors from our knowledge of the vectors~$\mathbf{v}_{i,2}$.

Since
$(\mathbf{0},\mathbf{v}_{i,2},\mathbf{v}_{i,3},\mathbf{v}_{i,4},\mathbf{v}_{i,5},\mathbf{v}_{i,6})$
lies in $\ker(\Phi)$, we can use
Theorem~\ref{th:descriptionkernelPhi}(iii) to describe the
$\Pi_i$-image of each
vector $\mathbf{v}_{i,j}$, where $i\leq 2$ and $j>2$.
The $\Pi_1$-image must be
trivial, and for the $\Pi_2$ image we obtain
the following:
\begin{alignat*}{2}
\Pi_2(\mathbf{v}_{1,3}) & = v_{42} + \alpha_1 v_{52},&{\qquad}
\Pi_2(\mathbf{v}_{2,3}) & = v_{62},\\
\Pi_2(\mathbf{v}_{1,4}) & = -v_{32} + \alpha_2 v_{62},&
\Pi_2(\mathbf{v}_{2,4}) & = \beta v_{62},\\
\Pi_2(\mathbf{v}_{1,5}) & = -\alpha_1 v_{32},&
\Pi_2(\mathbf{v}_{2,5}) & = \mathbf{0},\\
\Pi_2(\mathbf{v}_{1,6}) & = -\alpha_2 v_{42}.&
\Pi_2(\mathbf{v}_{2,6}) & = -v_{32} - \beta v_{42}.\\
\\
\Pi_2(\mathbf{v}_{3,3}) & = \mathbf{0},&\qquad
\Pi_2(\mathbf{v}_{4,3}) & = \mathbf{0},\\
\Pi_2(\mathbf{v}_{3,4}) & = v_{52} + \gamma v_{62},&
\Pi_2(\mathbf{v}_{4,4}) & = \mathbf{0},\\
\Pi_2(\mathbf{v}_{3,5}) & = - v_{42}, &
\Pi_2(\mathbf{v}_{4,5}) & = v_{62},\\
\Pi_2(\mathbf{v}_{3,6}) & = -\gamma v_{42}.&
\Pi_2(\mathbf{v}_{4,6}) & = -v_{52}.
\end{alignat*}
One way to obtain these without too much confusion is as follows: to
find $\Pi_2\left(\mathbf{v}_{j,k}\right)$, go through the expression
for $\mathbf{v}_{j,2}$ replacing all indices $k$ by $2$, remembering
that
$v_{ab}=-v_{ba}$. Any $v_{ba}$ in which neither $a$ nor~$b$ are equal
to $k$ are simply removed.

To extract systematically a set of linearly independent vectors, we
proceed in the following manner: consider all the pairs which are
leading components of the basis vectors $\mathbf{v}_{i,2}$; in this
case, $(4,3)$, $(6,3)$, $(5,4)$, and~$(6,5)$.  The individual indices
that occur are $3$, $4$, $5$, and $6$. For each of them, we identify
the smallest pair in which it occurs. Thus, $3$ first occurs in pair
number one, as does $4$. The index $5$ first occurs in pair number
three, and $6$ first occurs in pair number two.

Since the first pair in which $3$ appears is the \textit{first} pair
(corresponding to the first basis vectors $\mathbf{v}_{1,2}$,
$(4,3)$, where it is paired with~$4$, we will select the vector
$\mathbf{v}_{1,4}$; this vector has first nontrivial component $(3,2)$.
The next index is~$4$, again in the
\textit{first} pair, paired with~$3$; so this time we select
$\mathbf{v}_{1,3}$. This has nontrivial $(4,2)$ copmonent, and trivial
$(j,i)$ component for all $(j,i)<(4,2)$. 

The next index is~$5$, which first occurs in the third pair
(corresponding to $\mathbf{v}_{3,2}$) paired with~$4$. So we select
$\mathbf{v}_{3,4}$, a vector with trivial $(j,i)$ component for all
$(j,i)<(5,2)$, and nontrivial $(5,2)$ component. Next we go to the
index~$6$, that first occurs in second pair together with~$3$; so we
select the vector $\mathbf{v}_{2,3}$, a vector with nontrivial $(6,2)$
component, and trivial $(j,i)$ component for all $(j,i)<(6,2)$.

In summary, we want to consider our original basis vectors
$\mathbf{v}_{1,2}$, $\mathbf{v}_{2,2}$, $\mathbf{v}_{2,3}$,
and~$\mathbf{v}_{2,4}$, plus the vectors we have selected based on the
location of the indices, to wit the vectors $\mathbf{v}_{1,4},
\mathbf{v}_{1,3}, \mathbf{v}_{3,4}, \mathbf{v}_{2,3}$ corresponding,
respectively, to the indices $3$, $4$, $5$, and~$6$. The choices we
have made ensure that the $\Pi_2$-images of these latter four vectors
are linearly independent, and so the vectors themselves must be
linearly independent.  Since $\Pi_2(Z_2)=\mathbf{0}$, the full
collection of eight vectors is linearly independent, and so we can
conclude that~$X$ must have dimension at least $8$.

What is more, note that none of the four vectors $\mathbf{v}_{1,4}$,
$\mathbf{v}_{1,3}$, $\mathbf{v}_{3,4}$, and~$\mathbf{v}_{2,3}$ will
occur in a similar analysis involving $Z_3$ (or more generally $Z_i$
with $i>2$): when performing a similar analysis, all vectors will have
trivial $\Pi_i$-image when $i<3$, and these vectors have nontrivial
$\Pi_2$-image.  Note as well that the number of indices, in this case
$4$, must satisfy $\dim(Z_2)\leq\binom{4}{2}$, since we need to be
able to obtain at least $\dim(Z_2)$ pairs out of the indices that
occur.

Thus we have seen that if $\dim(Z_2)=4$, then $\dim(X)\geq 8$. If we
move on to $Z_3$, we will obtain new vectors that must lie in~$X$;
while the vectors in the basis for $Z_2$ may again occur in that
analysis, the vectors $\mathbf{v}_{1,4}$, $\mathbf{v}_{1,3}$,
$\mathbf{v}_{3,4}$, and~$\mathbf{v}_{2,3}$ will not, and so by keeping
track of them we can give an even better lower bound for $\dim(X)$.~$\Box$
\end{example}

What ensures that this process will work the way we want is how
we choose the vectors of the basis and the vectors that ``correspond''
to each index. The former count towards the value of
$\dim(X^n\cap\ker(\Phi_n))$, while the latter may be removed from
consideration when we move on to $Z_{i+1}$.
This is all done in generality in the proof
of the following promised lemma:

\begin{lemma} Fix $n>1$, and let $X$ be a subspace of $V$. For each $i$, $1\leq i\leq
  n$, let
\[Z_i = X \cap \varphi_i^{-1}\Bigl(\langle
\varphi_{i+1}(X),\ldots,\varphi_n(X)\rangle\Bigr);\]
i.e., $Z_i$ consists of all $\mathbf{v}\in X$ for which there exist
$\mathbf{v}_{i+1},\ldots,\mathbf{v}_n$ in~$X$ such that
\[
(\mathbf{0},\ldots,\mathbf{0},\mathbf{v},\mathbf{v}_{i+1},\ldots,\mathbf{v}_n)
  \in X^n\cap\ker(\Phi).\]
If
$\dim\bigl(X\cap\langle v_{sr}\,|\,i\leq r<s\leq
  n\rangle\bigr)=d_i$ and
$\dim(Z_i)=r_i$, then $r_i\leq \binom{d_i-r_i}{2}$. Morevoer, if
$s_i$ is the smallest positive integer such that
$r_i\leq\binom{s_i}{2}$, then we must have $d_{i+1}\leq d_i-s_i$.
\label{lemma:basicoverlapbound}
\end{lemma}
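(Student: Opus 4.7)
The plan is to combine a row-echelon basis of $Z_i$ with the completions of its elements into kernel tuples to exhibit enough linearly independent vectors inside the $d_i$-dimensional space $X\cap\langle v_{sr}\,|\,i\leq r<s\leq n\rangle$. First I would invoke Corollary~\ref{cor:swaptogetzero}: applied to any kernel tuple $(\mathbf{0},\ldots,\mathbf{0},\mathbf{v},\mathbf{v}_{i+1},\ldots,\mathbf{v}_n)\in X^n\cap\ker(\Phi)$ witnessing $\mathbf{v}\in Z_i$, the vanishing of the first $i-1$ entries forces $\Pi_j(\mathbf{v}_k)=\mathbf{0}$ for all $j<i$ and all $k$, which together with Theorem~\ref{th:descriptionkernelPhi}(i) places $\mathbf{v}$ in $\langle v_{sr}\,|\,i<r<s\leq n\rangle$ and each $\mathbf{v}_{i+1},\ldots,\mathbf{v}_n$ in $X\cap\langle v_{sr}\,|\,i\leq r<s\leq n\rangle$.

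Next I would order the pairs $(s,r)$ with $i<r<s\leq n$ lexicographically (smaller $r$ first, then smaller $s$) and row-reduce to obtain a basis $\mathbf{v}_{1,i},\ldots,\mathbf{v}_{r_i,i}$ of $Z_i$ with pairwise distinct leading pairs $(s_{(k)},r_{(k)})$, such that the coefficients in $\mathbf{v}_{k,i}$ of $(s_{(k')},r_{(k')})$ for $k'\neq k$ all vanish. Setting $I=\bigcup_k\{s_{(k)},r_{(k)}\}$ and $m=|I|$, the $r_i$ leading pairs are distinct $2$-subsets of $I$, so $r_i\leq\binom{m}{2}$ automatically.

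For each $k$, completing $\mathbf{v}_{k,i}$ into a kernel tuple produces vectors $\mathbf{v}_{k,i+1},\ldots,\mathbf{v}_{k,n}\in X$. Using Theorem~\ref{th:descriptionkernelPhi}(iii) to express $\Pi_i(\mathbf{v}_{k,j})$ via the coefficients of $\mathbf{v}_{k,i}$, I would identify its leading term (in the induced order on $\{v_{r,i}\,|\,r>i,\ r\neq j\}$): whenever $j\in\{s_{(k)},r_{(k)}\}$ and $t$ denotes the other element of that leading pair, this leading term should be $\pm v_{t,i}$ with coefficient equal to the nonzero leading coefficient of $\mathbf{v}_{k,i}$. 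For each $t\in I$ let $k(t)$ be the smallest $k$ with $t\in\{s_{(k)},r_{(k)}\}$ and $j(t)$ the other element of that pair; then the $m$ vectors $\Pi_i(\mathbf{v}_{k(t),j(t)})$ have pairwise distinct leading terms and hence are linearly independent. Combining these with the basis of $Z_i$ (on which $\Pi_i$ vanishes), I obtain $r_i+m$ linearly independent vectors inside the $d_i$-dimensional space, so $m\leq d_i-r_i$ and monotonicity of $\binom{\cdot}{2}$ on nonnegative arguments yields $r_i\leq\binom{m}{2}\leq\binom{d_i-r_i}{2}$.

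For the second statement I would consider the projection $\pi\colon\langle v_{sr}\,|\,i\leq r<s\leq n\rangle\to\langle v_{s,i}\,|\,i<s\leq n\rangle$ that discards all coordinates with $r>i$; its kernel restricted to $X$ is precisely $X\cap\langle v_{sr}\,|\,i+1\leq r<s\leq n\rangle$, of dimension $d_{i+1}$. Since each $\mathbf{v}_{k(t),j(t)}$ lies in $\langle v_{sr}\,|\,r\geq i\rangle$, the maps $\pi$ and $\Pi_i$ agree on them, so the image of $\pi$ contains the $m$ independent vectors $\Pi_i(\mathbf{v}_{k(t),j(t)})$, giving $d_i-d_{i+1}\geq m\geq s_i$ and hence $d_{i+1}\leq d_i-s_i$. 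The main obstacle will be the leading-term calculation: Theorem~\ref{th:descriptionkernelPhi}(iii) has three sums, and one must carefully split into the cases $j=r_{(k)}$ (where the leading contribution comes from the third sum $-\alpha_{rj}^{(i)}v_{ri}$ at $r=s_{(k)}$) and $j=s_{(k)}$ (where it comes from the middle sum $\alpha_{jr}^{(i)}v_{ri}$ at $r=r_{(k)}$), using the reduced row-echelon form of $\mathbf{v}_{k,i}$ to guarantee that all smaller-indexed terms in these sums vanish.
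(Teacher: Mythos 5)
Your proposal is correct and follows essentially the same route as the paper's proof: a right-to-left lexicographic row reduction of a basis of $Z_i$, selection of one completion vector $\mathbf{v}_{k(t),j(t)}$ for each index $t$ occurring in a leading pair, and a triangularity argument on the $\Pi_i$-images to produce $r_i+|I|$ linearly independent vectors inside the $d_i$-dimensional space. Your leading-term computation and the rank--nullity phrasing of the bound $d_{i+1}\leq d_i-|I|$ are just cleaner packagings of the estimates the paper carries out explicitly.
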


\begin{proof} Fix $i_0$, $1\leq i_0\leq n$. For simplicity, write
  $r=r_{i_0}$.
By Theorem~\ref{th:descriptionkernelPhi}, if $\mathbf{v}\in Z_{i_0}$
then
$\Pi_i(\mathbf{v})=\mathbf{0}$ for all $i\leq i_0$.

Let $\mathbf{v}_{1i_0},\ldots,\mathbf{v}_{ri_0}$ be a basis for
 $Z_{i_0}$. We will modify it as follows:

Order all pairs $(j,i)$, $i_0<i<j\leq n$ by letting $(j,i)<(b,a)$ if
 and only if $i<a$ or $i=a$ and $j<b$ (lexicographically from right to
 left).  Let $(j_1,i_1)$ be the smallest pair for which
 $\pi_{j_1i_1}(\mathbf{v}_{ki_0})\neq\mathbf{0}$ for some $k$, $1\leq
 k\leq r$. Reordering if necessary we may assume $k=1$. Replacing
 $\mathbf{v}_{1i_0}$ with a scalar multiple of itself and adding
 adequate multiples to the remaining $\mathbf{v}_{ki_0}$ if necessary
 we may also assume that
\[ \pi_{j_1i_1}\left(\mathbf{v}_{ki_0}\right) =
 \left\{\begin{array}{ll}
v_{j_1i_1}&\mbox{if $k=1$;}\\
\mathbf{0}&\mbox{if $k\neq 1$.}
\end{array}\right.\]

Let $(j_2,i_2)$ be the smallest pair for which
$\pi_{j_2i_2}(\mathbf{v}_{ki_0})\neq\mathbf{0}$ for some $k$, $2\leq
k\leq r$. Again we may assume $k=2$, and that
\[\pi_{j_2i_2}\left(\mathbf{v}_{ki_0}\right) =
\left\{\begin{array}{ll}
v_{j_2i_2}&\mbox{if $k=2$;}\\
\mathbf{0}&\mbox{if $k\neq 2$.}
\end{array}\right.\]

Proceeding in the same way for $k=3,\ldots,r$, we obtain an ordered
list of pairs
$(j_1,i_1)<(j_2,i_2)<\ldots<(j_r,i_r)$ and a basis
$\mathbf{v}_{1i_0},\ldots,\mathbf{v}_{ri_0}$ such that
\[ \pi_{j_{\ell}i_{\ell}}\left(\mathbf{v}_{ki_0}\right) =
  \left\{\begin{array}{ll}
v_{j_{\ell}i_{\ell}}&\mbox{if $\ell=k$,}\\
\mathbf{0}&\mbox{if $\ell\neq k$;}
\end{array}\right.\]
and such that
$\pi_{ba}\left(\mathbf{v}_{ki_0}\right) = \mathbf{0}\quad\mbox{for
  all $(b,a)<(j_k,i_k)$}$.
Write $\displaystyle \mathbf{v}_{ki_0} =\!\!\!\!\! \sum_{i_0<i<j\leq
  n}
\!\!\!\!\!\alpha_{ji}^{(k,i_0)}v_{ji}$.  From the above we have:
\[ \alpha_{ji}^{(k,i_0)} = \left\{ \begin{array}{ll}
1 &\mbox{if $(j,i)=(j_k,i_k)$,}\\
0 &\mbox{if $(j,i)<(j_k,i_k)$.}
\end{array}\right.\]
For $k=1,\ldots,r$ and $i=i_0+1,\ldots,n$, let $\mathbf{v}_{ki}$ be
vectors in~$X$ such that
\[
\left(\mathbf{0},\ldots,\mathbf{0},\mathbf{v}_{ki_0},\mathbf{v}_{ki_0+1},\ldots,\mathbf{v}_{kn}\right)\in
\ker(\Phi)\cap X^n.\]
By Theorem~\ref{th:descriptionkernelPhi}(iii) we have
\begin{equation*}
\Pi_{i_0}\left(\mathbf{v}_{kj}\right) = \sum_{m=i_0+1}^{j-1}
\!\!\!\alpha_{jm}^{(k,i_0)}v_{mi_0} - \sum_{m=j+1}^n
\!\!\!\alpha_{mj}^{(k,i_0)}v_{mi_0}.
\end{equation*}
For simplicity, set $\alpha_{ji}^{(k,i_0)} = -\alpha_{ij}^{(k,i_0)}$,
and $\alpha_{jj}^{(k,i_0)}=0$;
then we can rewrite the above expression as:
\begin{equation}
\Pi_{i_0}\left(\mathbf{v}_{kj}\right) = \sum_{m=i_0+1}^{n}
\!\!\!\alpha_{jm}^{(k,i_0)}v_{mi_0}.
\label{eq:generalformula}
\end{equation}

Let $s$ be the cardinality of the set $\{i_1,j_1,\ldots,i_r,j_r\}$;
that is, $s$ is the number of distinct indices that occur in the list
$(j_1,i_1),\ldots,(j_r,i_r)$. Note that $r\leq \binom{s}{2}$. Let
$a_1<a_2<\cdots < a_s$ be the list of these distinct indices. For each
$\ell$ with $1\leq \ell\leq s$, let $(j_{k(\ell)},i_{k(\ell)})$ be the
smallest pair among $(j_1,i_1),\ldots,(j_r,i_r)$ that has
$a_{\ell}\in\{ i_{k(\ell)}, j_{k(\ell)}\}$. If
$a_{\ell}=i_{k(\ell)}$, let $b_{\ell} = j_{k(\ell)}$; if
$a_{\ell}=j_{k(\ell)}$, let $b_{\ell}=i_{k(\ell)}$. Consider the
following list of vectors from~$X$:
\[\mathbf{v}_{1i_0},\mathbf{v}_{2i_0},\ldots,
\mathbf{v}_{ri_0},\mathbf{v}_{k(1)b_1},
\mathbf{v}_{k(2)b_2},\ldots,\mathbf{v}_{k(s)b_s}.\]
Note that all of these vectors lie in $X\cap\langle v_{ji} \,|\,
i_0\leq i<j\leq n\rangle$.
We will show that these vectors are linearly independent.
Since $\mathbf{v}_{1i_0},\ldots,\mathbf{v}_{ri_0}$ are
linearly independent and $\Pi_{i_0}(\mathbf{v}_{ki_0})=\mathbf{0}$ for
$k=1,\ldots,r$, it suffices to show that
$\Pi_{i_0}(\mathbf{v}_{k(1)b_1}),\ldots,\Pi_{i_0}(\mathbf{v}_{k(s)b_s})$
are linearly independent.

First, from $(\ref{eq:generalformula})$ we have
$ \pi_{a_{\ell}i_0}\left(\mathbf{v}_{k(m)b_m}\right) =
\alpha_{b_ma_{\ell}}^{(k(m),i_0)}$. We claim that if $\ell<m$, then
$\alpha_{b_m a_{\ell}}^{(k(m),i_0)}=0$. By construction, this claim
will follow if we can show that either $a_{\ell}=b_m$, or else the
pair made up of $b_m$ and $a_{\ell}$ is strictly smaller than the pair
made up of $a_m$ and $b_m$ (which is equal to $(j_{k(m)},i_{k(m)})$);
the claim will then follow because $\alpha_{ba}^{(k,i_0)}=0$ whenever
$(b,a)<(j_k,i_k)$.  Indeed, we know that $a_{\ell}<a_m$. If
$a_m=i_{k(m)}$ and $b_m=j_{k(m)}$, then replacing $a_m$ in the pair
$(b_m,a_m)$ with something smaller (namely $a_{\ell}$) gives a smaller
pair: $(b_m,a_{\ell})<(b_m,a_m)$. If, on the other hand, we have
$a_m=j_{k(m)}$ and $b_m=i_{k(m)}$, then if $a_{\ell}>b_m$ we have
$(a_{\ell},b_m)<(a_m,b_m)$, and if $a_{\ell}<b_m$ then we also have
$(b_m,a_{\ell})<(a_m,b_m)$. The only remaining possibility is
$a_{\ell}=b_m$, which is of course no trouble.

Thus, we conclude that
$\alpha_{b_ma_{\ell}}^{k(m),i_0}=0$ whenever $\ell<m$. To
see that the vectors
$\Pi_2(\mathbf{v}_{k(1)b_1}),\ldots,\Pi_2(\mathbf{v}_{k(s)b_s})$ are
linearly independent, note that
\[ \pi_{a_{\ell}i_0}\left(\mathbf{v}_{k(m)b_m}\right) =
\alpha_{b_ma_{\ell}}^{(k(m),i_0)} = \left\{\begin{array}{ll}
\mathbf{0}&\mbox{if $\ell<m$,}\\
v_{b_{\ell}a_{\ell}}&\mbox{if $m=\ell$.}
\end{array}\right.\]
Therefore, if $\beta_1\Pi_{i_0}(\mathbf{v}_{k(1)b_1}) + \cdots +
\beta_s\Pi_{i_0}(\mathbf{v}_{k(s)b_s})=\mathbf{0}$, then $\beta_1=0$
since
the only vector with nontrivial $(a_1,i_0)$-component is
$\Pi_{i_0}(\mathbf{v}_{k(1)b_1})$. Hence $\beta_2=0$, because the only
remaining vector with nontrivial $(a_2,i_0)$-component is
$\Pi_{i_0}(\mathbf{v}_{k(2)b_2})$; and continuing this way we conclude
$\beta_j=0$ for all $j$. So
the vectors are indeed linearly independent. Thus we have established
that
\[
\mathbf{v}_{1i_0},\mathbf{v}_{2i_0},\ldots,
\mathbf{v}_{ri_0},\mathbf{v}_{k(1)b_1}, \mathbf{v}_{k(2)b_2},\ldots,\mathbf{v}_{k(s)b_s}\] 
is a collection of linearly independent vectors in~$X\cap\langle
v_{sr}\,|\,i_0\leq r<s\leq n\rangle$.

Thus we conclude that $d_{i_0}\geq r+s$. Since $r\leq
\binom{s}{2}$, it follows that
\[r\leq \binom{s}{2} \leq \binom{d_{i_0}-r}{2},\]
as claimed.

To complete the proof, it only remains to establish the upper bound on
$d_{i_0+1}$. We have
$d_{i_0} = d_{i_0+1} + \dim\bigl( \langle v_{ji}\,|\, i_0\leq
i<j\leq n\rangle\cap\{\mathbf{v}\in X\,|\, \Pi_{i_0}(\mathbf{v})\neq
\mathbf{0}\}\bigr)$.
Since the vectors $\mathbf{v}_{k(1)b_1},\ldots,\mathbf{v}_{k(s)b_s}$
are linearly
independent, have nontrivial $\Pi_{i_0}$ projection, and lie in
$X \cap \bigl\langle
v_{ji}\,\bigm|\,i_0\leq i<j\leq n\bigr\rangle$,
we have $d_{i_0} \geq d_{i_0+1}+s$. Moreover, since $r\leq
\binom{s}{2}$, we also have $s_{i_0}\leq s$; therefore,
$d_{i_0+1} \leq d_{i_0}-s \leq d_{i_0}-s_{i_0}$,
as desired.
\end{proof}

Note that $Z_{n-1}$ and $Z_n$ are always trivial.

\begin{definition} Let $d$ be a nonnegative integer. We define $r(d)$
  to be the largest integer such that $r(d)\leq d$ and $r(d)\leq
  \binom{d-r(d)}{2}$.
\end{definition}

\begin{theorem}
Fix $n>1$ and let $X<V$. Fix $i_0$, $1\leq i_0\leq n-2$, and let
\[ Z_{i_0} = X\cap \varphi_{i_0}^{-1}\left(\Bigl\langle
\varphi_{i_0+1}(X),\ldots,\varphi_n(X)\Bigr\rangle\right).\]
If $\dim(X\cap \langle v_{ji}\,|\, i_0\leq i<j\leq n\rangle) = d$,
then $\dim(Z_{i_0})\leq r(d)$.
Equivalently,
\begin{equation}
\dim(Z_{i_0})\leq d -
\left\lceil\frac{\sqrt{8d+1}-1}{2}\right\rceil
\label{eq:maxoverlap}
\end{equation}
where $\lceil x\rceil$ is the smallest integer greater than or equal
to $x$.
\label{th:overlaplevelktermsoft}
\end{theorem}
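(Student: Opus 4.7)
The proof essentially follows by direct appeal to Lemma~\ref{lemma:basicoverlapbound}, together with a short calculation to produce the closed form involving $\lceil \cdot \rceil$.

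First I would verify the containment $Z_{i_0} \subseteq X \cap \langle v_{ji}\mid i_0\leq i<j\leq n\rangle$. By Theorem~\ref{th:descriptionkernelPhi}(i), any vector $\mathbf{v}\in Z_{i_0}$ satisfies $\Pi_i(\mathbf{v}) = \mathbf{0}$ for all $i\leq i_0$, which means $\mathbf{v}$ has no basis components $v_{ba}$ in which either index is at most $i_0$. Thus $Z_{i_0}$ sits inside the subspace of dimension~$d$, giving $\dim(Z_{i_0})\leq d$ for free.

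Next, applying Lemma~\ref{lemma:basicoverlapbound} with $i=i_0$ (and hence with $d_i = d$ and $r_i = \dim(Z_{i_0})$) yields the key inequality $\dim(Z_{i_0}) \leq \binom{d-\dim(Z_{i_0})}{2}$. Combined with the trivial bound $\dim(Z_{i_0})\leq d$, this shows that $\dim(Z_{i_0})$ is an integer satisfying both conditions in the definition of $r(d)$, so by the \emph{maximality} in that definition we obtain $\dim(Z_{i_0})\leq r(d)$.

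Finally, I would rewrite $r(d)$ in closed form. Set $s = d - r(d)$. The condition $r(d)\leq\binom{d-r(d)}{2}$ translates to $d-s \leq s(s-1)/2$, equivalently $s^2 + s - 2d \geq 0$, i.e. $s \geq (\sqrt{8d+1}-1)/2$. Since we want $s$ as small as possible (to make $r(d)=d-s$ as large as possible) subject to being a nonnegative integer, we must take
\[
s = \left\lceil\frac{\sqrt{8d+1}-1}{2}\right\rceil,
\]
which gives $r(d) = d - \lceil (\sqrt{8d+1}-1)/2\rceil$, yielding formula~(\ref{eq:maxoverlap}). There is no serious obstacle here; all the substantive work has already been done in Lemma~\ref{lemma:basicoverlapbound}, and what remains is bookkeeping plus a quadratic solve.
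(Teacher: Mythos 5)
Your proof is correct and follows essentially the same route as the paper's: the substantive inequality $\dim(Z_{i_0})\leq\binom{d-\dim(Z_{i_0})}{2}$ is precisely Lemma~\ref{lemma:basicoverlapbound} applied at $i=i_0$, and the rest is the maximality in the definition of $r(d)$ together with solving the quadratic $s^2+s-2d\geq 0$ for $s=d-r(d)$. (A tiny citation point: for $i<i_0$ the vanishing $\Pi_i(\mathbf{v})=\mathbf{0}$ follows from Corollary~\ref{cor:swaptogetzero} rather than from Theorem~\ref{th:descriptionkernelPhi}(i) alone, but this only affects the trivial bound $\dim(Z_{i_0})\leq d$, which you rightly note is needed to invoke the definition of $r(d)$.)
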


\begin{proof} Let $\dim(Z_{i_0})=r$. By
  Lemma~\ref{lemma:basicoverlapbound}, $r\leq\binom{d-r}{2}$, so
  $r\leq r(d)$, as claimed. From $r(d)\leq \binom{d-r(d)}{2}$ we
  easily obtain $(\ref{eq:maxoverlap})$.
\end{proof}

We have two other ways of describing the function $r(d)$, which will
prove useful below:

\begin{corollary}
Let $d$ be a positive integer. Then $r(d)$ is the number of
nontriangular numbers strictly less than $d$.
 Equivalently, if we write
$d = \binom{t}{2} + s$, with $0< s \leq t$,
then $r(d) = \binom{t-1}{2} + (s-1)$.
\label{cor:overlapleveltriangular}
\end{corollary}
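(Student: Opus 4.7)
The plan is to prove the second (explicit-formula) description of $r(d)$ directly from the definition, and then deduce the first (triangular-number-counting) description as an easy consequence.

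First I would note that every positive integer $d$ has a unique expression as $d = \binom{t}{2} + s$ with $0 < s \leq t$: take $t$ to be the unique integer with $\binom{t}{2} < d \leq \binom{t+1}{2}$, and set $s = d - \binom{t}{2}$. Now set $r^* = \binom{t-1}{2} + (s-1)$. A direct computation gives $d - r^* = \binom{t}{2} - \binom{t-1}{2} + 1 = t$, so $\binom{d-r^*}{2} = \binom{t}{2}$. The inequality $s \leq t$ then yields $r^* = \binom{t-1}{2} + (s-1) \leq \binom{t-1}{2} + (t-1) = \binom{t}{2} = \binom{d-r^*}{2}$, and obviously $r^* \leq d$. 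Hence $r^*$ satisfies the conditions in the definition of $r(d)$, so $r(d) \geq r^*$.

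Next I would show $r(d) \leq r^*$ by checking that $r^*+1$ fails the defining inequality. Indeed $d - (r^*+1) = t-1$, so $\binom{d-(r^*+1)}{2} = \binom{t-1}{2}$, while $r^*+1 = \binom{t-1}{2} + s \geq \binom{t-1}{2} + 1 > \binom{t-1}{2}$. Since the function $r \mapsto \binom{d-r}{2}$ is non-increasing on $\{0,1,\ldots,d\}$ while the identity function is strictly increasing, no $r > r^*$ can satisfy $r \leq \binom{d-r}{2}$. This establishes the second formula, $r(d) = \binom{t-1}{2} + (s-1)$.

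Finally, for the triangular-number description, I would simply observe that $r(d) = \binom{t-1}{2} + (s-1) = d - t$. The integers in $\{0,1,\ldots,d-1\}$ form a set of size $d$, and the triangular numbers (i.e., the values $\binom{k}{2}$ for $k \geq 1$) lying strictly below $d$ are precisely $\binom{1}{2}, \binom{2}{2}, \ldots, \binom{t}{2}$, of which there are $t$, thanks to the inequalities $\binom{t}{2} < d \leq \binom{t+1}{2}$. Subtracting gives $d - t = r(d)$ nontriangular numbers strictly less than $d$, completing the proof.

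I do not expect a serious obstacle here: the whole argument is a manipulation of the inequalities $0 < s \leq t$ together with the identity $\binom{t}{2} - \binom{t-1}{2} = t-1$. The only small care is in making the uniqueness of the $(t,s)$ decomposition explicit and in observing the monotonicity of $\binom{d-r}{2}$ in $r$, which together force $r^*$ to be the \emph{largest} admissible $r$.
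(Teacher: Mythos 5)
Your proposal is correct, and it takes a genuinely different route from the paper. The paper proceeds by induction on $d$: it first establishes the recurrence $r(d+1)=r(d)+1$ when $d$ is not triangular and $r(d+1)=r(d)$ when $d$ is triangular (after showing that $r(d)=\binom{d-r(d)}{2}$ holds exactly at triangular $d$), which immediately gives the counting description, and only then reads off the closed formula. You instead verify the closed formula directly against the definition: you check that $r^*=\binom{t-1}{2}+(s-1)$ satisfies $r^*\leq d$ and $r^*\leq\binom{d-r^*}{2}$ (using $d-r^*=t$ and $s\leq t$), that $r^*+1$ fails, and that the monotonicity of $r\mapsto\binom{d-r}{2}$ on $\{0,\ldots,d\}$ against the strictly increasing identity rules out every larger candidate, so $r(d)=r^*=d-t$; the triangular-number count then follows since there are exactly $t$ triangular numbers $\binom{1}{2},\ldots,\binom{t}{2}$ below $d$. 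Your argument is shorter and avoids induction entirely; the paper's recurrence, on the other hand, makes the ``number of nontriangular numbers'' description (and the link to the integer sequence cited in the subsequent remark) more transparent and yields the monotonicity of $r$ as a byproduct. Both proofs are complete; your attention to the uniqueness of the decomposition $d=\binom{t}{2}+s$ and to why failure at $r^*+1$ propagates to all larger $r$ covers the only points where a direct argument could go astray.
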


\begin{proof} Since $r(d)\leq \binom{d-r(d)}{2} \leq
  \binom{(d+1)-r(d)}{2}$, it follows that $r(d+1)\geq r(d)$.
We also have
\[ r(d)+2 > r(d)+1 > \binom{d-(r(d)+1)}{2} =
\binom{(d+1)-(r(d)+2)}{2},\]
so $r(d+1)<r(d)+2$.
If $r(d)<\binom{d-r(d)}{2}$, then
\[ r(d)+1\leq \binom{d-r(d)}{2} = \binom{(d+1)-(r(d)+1)}{2},\]
so $r(d+1)\geq r(d)+1$ and in this case we have $r(d+1)=r(d)+1$. If
$r(d)=\binom{d-r(d)}{2}$, then $r(d)+1>\binom{(d+1)-(r(d)+1)}{2}$,
hence $r(d+1)<r(d)+1$ and we conclude that $r(d+1)=r(d)$.  In summary,
we have:
\[ r(d+1) = \left\{\begin{array}{ll}
r(d)+1&\mbox{if $r(d)<\binom{d-r(d)}{2}$,}\\
r(d)&\mbox{if $r(d)=\binom{d-r(d)}{2}$.}
\end{array}\right.\]
We claim that $r(d)=\binom{d-r(d)}{2}$ if and only if $d$ is a
triangular number: when $d=\binom{t+1}{2}$ for some $t\geq 0$, we have
\[\binom{t}{2} = \binom{\binom{t+1}{2}-\binom{t}{2}}{2} =
\binom{d-\binom{t}{2}}{2},\]
so $r(d) = \binom{t}{2} = \binom{d-r(d)}{2}$. Conversely,
if $r(d)=\binom{d-r(d)}{2}$, then solving for $d$ we obtain $d =
\binom{d-r(d)+1}{2}$, proving that $d$ is a triangular number.
Therefore, we have:
\[ r(d+1) = \left\{\begin{array}{ll}
r(d) + 1&\mbox{if $d$ is not a triangular number,}\\
r(d) &\mbox{if $d$ is a triangular number.}
\end{array}\right.\]
Since $r(1)=0$, we conclude that $r(d)$ is the number of nontriangular
numbers strictly smaller than~$d$, as claimed. To establish the
formula, note that the value of $r$ at $\binom{t}{2}$ is
$\binom{t-1}{2}$, and therefore $r\left(\binom{t}{2}+s\right) =
\binom{t-1}{2}+(s-1)$ for $0<s<t$, since there are exactly $s-1$ more
nontriangular numbers strictly less than $\binom{t}{2}+s$ than there
are strictly less than $\binom{t}{2}$. And
$\binom{t}{2}+t=\binom{t+1}{2}$, so we also get equality when $s=t$.
\end{proof}

\begin{remark} These alternate descriptions can also be obtained by
  examining sequence A083920 in~\cite{onlineintseq}; for example,
  compare the closed formula there with $(\ref{eq:maxoverlap})$. I
  first realized these alternate descriptions hold by calculating the
  first few values of $r(d)$ directly, and then
  consulting~\cite{onlineintseq}.
\end{remark}

We can now obtain an upper bound for $\sum\dim(Z_k)$ in terms of
$\dim(X)$, which in turn gives a lower bound for $\dim(X^*)$ in terms
of~$\dim(X)$.

\begin{definition} For $n>0$ and integer $m$, $0\leq m\leq \binom{n}{2}$, we let $f(m,n)$ denote the largest
  possible value of $\sum\dim(Z_k)$ for a subspace $X$ of $V$ with
  $\dim(X)=m$; equivalently,
\[ f(m,n) = \max\Bigl\{ \dim\bigl(X^n\cap \ker(\Phi_n)\bigr)\,\Bigm|\,
  X<V(n),\
  \dim(X)=m\Bigr\}.\]
\end{definition}

\begin{remark} As we will see below, the value of $f(m,n)$ does not
  depend on $n$; meaning that if $m\leq \binom{n}{2}$ and $n\leq N$, then
  $f(m,n)=f(m,N)$. It is easy to verify that $f(m,n)\leq f(m,N)$: if
  $X$ is a subspace of $V(n)$ of dimension~$m$, we can also consider
  it as a subspace of $V(N)$. If the dimension of $X^*$ with respect
  to $\{\varphi_i\}_{i=1}^{n}$ is $nm-r$, then the dimension of $X^*$
  with respect to $\{\varphi_i\}_{i=1}^{N}$ is $Nm-r$; so we have 
\[\dim(X^n\cap {\rm ker}(\Phi_n)) = \dim(X^N\cap {\rm ker}(\Phi_N)).\]
Intuitively, the reason the reverse
inequality also holds is that the largest value of $f(m,n)$ occurs
when the vectors in $Z_i$ use fewer indices rather than more. Because
more indices means a larger value of $s$ in the proof of
Lemma~\ref{lemma:basicoverlapbound}, 
which means more vectors are ``taken out of circulation'' for
$Z_{i+1}$, which gives a smaller possible value for $X\cap\langle
v_{rs}\,|\, i<r<s<n\rangle$. So the ``best'' strategy for larger
intersection with $\ker(\Phi_n)$ is to keep $X$ confined to as small a
number of indices as possible.  The proof
below will formalize this intuition, and show that indeed the value of
$f$ depends only on~$m$.
\label{rem:largerndoesnotmatter}
\end{remark}

\begin{theorem} Let $m>0$, and write $m = \binom{T}{2}+s$, $0\leq
  s\leq T$. If $m\leq\binom{n}{2}$, then
\[ f(m,n) =\binom{T}{3} + \binom{s}{2}.\]
\label{th:maxtotaloverlappossible}
\end{theorem}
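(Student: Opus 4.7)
The plan is to prove the two inequalities separately. For the lower bound, I exhibit an explicit extremal subspace; for the upper bound, I use strong induction on $m$ with a subsidiary reduction on $n$ to handle the case where $X$ does not use all indices.

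Writing $m = \binom{T}{2} + s$ with $0 < s \leq T$ (the boundary case $s = 0$ is just the previous representation $\binom{T-1}{2}+(T-1)$ and reduces to $X_0 = V(T)$), I would set
\[
X_0 \;=\; \Bigl\langle v_{ji} \,\Bigm|\, 1\leq i<j\leq T\Bigr\rangle \;\oplus\; \Bigl\langle v_{T+1,k} \,\Bigm|\, 1\leq k\leq s\Bigr\rangle,
\]
noting that $m \leq \binom{n}{2}$ forces $n\geq T+1$. A direct count gives $d_{i_0} := \dim(X_0 \cap \langle v_{sr} : i_0 \leq r < s\leq n\rangle) = \binom{T-i_0+1}{2} + \max(0, s-i_0+1)$. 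For each $i_0 \leq T-2$ I would show that $Z_{i_0}(X_0)$ contains the $\binom{T-i_0}{2}$ vectors $v_{ab}$ with $i_0 < b < a \leq T$ (by the argument of Lemma~\ref{lemma:pullback} applied inside $V(T)$) together with the $\max(0,s-i_0)$ vectors $v_{T+1,k}$ for $i_0 < k \leq s$ (using the identity $\varphi_{i_0}(v_{T+1,k}) = \varphi_k(v_{T+1,i_0}) - \varphi_{T+1}(v_{k,i_0})$, whose right-hand terms both belong to $\langle \varphi_j(X_0)\,|\, j>i_0\rangle$ because $v_{T+1,i_0}, v_{k,i_0} \in X_0$). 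Since $\dim Z_{i_0} \leq r(d_{i_0})$ by Theorem~\ref{th:overlaplevelktermsoft} and these contributions already attain $r(d_{i_0})$, the hockey-stick identity yields $\sum_{i_0}\dim Z_{i_0}(X_0) = \binom{T}{3}+\binom{s}{2}$.

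For the upper bound, take $X \subseteq V(n)$ with $\dim X = m$. If some $\Pi_j(X) = 0$, I would permute indices via Proposition~\ref{prop:symmetry} to arrange $j = n$; since $\varphi_n(V(n-1))$ lands in the part of $W(n)$ spanned by basis vectors involving the index $n$ (linearly independent of $\langle \varphi_i(V(n-1)) : i<n\rangle$), the value $\dim(X^n \cap \ker\Phi_n)$ equals $\dim(X^{n-1}\cap \ker \Phi_{n-1})$, so a subsidiary induction on $n$ reduces to the case $\Pi_j(X) \neq 0$ for all $j$. In particular $\Pi_1(X) \neq 0$, so $d_2 < m$. The key structural observation, following from Corollary~\ref{cor:swaptogetzero}, is that any kernel element with vanishing first coordinate has \emph{all} coordinates in $V' = \langle v_{sr} : 2 \leq r < s\leq n\rangle$; hence $Z_i(X) \subseteq Z_i(X_2)$ for $i\geq 2$, where $X_2 = X \cap V'$, which gives $\sum_{i\geq 2}\dim Z_i \leq f(d_2, n-1)$.

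Combining the bounds $r_1 \leq r(m)$ (Theorem~\ref{th:overlaplevelktermsoft}) and $d_2 \leq m - s_1$, with $s_1$ the smallest positive integer satisfying $r_1 \leq \binom{s_1}{2}$ (Lemma~\ref{lemma:basicoverlapbound}), and applying the inductive hypothesis at $d_2 < m$, the theorem reduces to verifying
\[
r_1 + \binom{T_2}{3}+\binom{s_2}{2} \;\leq\; \binom{T}{3}+\binom{s}{2}
\qquad\text{where}\quad m - s_1 = \binom{T_2}{2}+s_2.
\]
I would handle this by cases on $s_1$ relative to $s$ and $T$: when $s_1 \leq s$ the claim collapses via $\binom{a+b}{2} = \binom{a}{2}+\binom{b}{2}+ab$ to $r_1 \leq \binom{s_1}{2}$; when $s < s_1 \leq T-1$ it reduces to the elementary inequality $(s_1 - s)(T-1-s_1) \geq 0$; when $s_1 = T$ the constraint is exactly $r_1 \leq r(m)$ and gives equality; and when $s_1 > T$ monotonicity of $f$ in its first argument (immediate from $X \subseteq X' \Rightarrow X^n\cap\ker\Phi \subseteq X'^n\cap\ker\Phi$) reduces us to the case $s_1 = T$. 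The main obstacle is bookkeeping through the boundary cases $s \in \{0, T\}$, where the decomposition of $m - s_1$ shifts its $T$-value; each is an elementary verification using Pascal's identity.
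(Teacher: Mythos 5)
Your proposal is correct and follows essentially the same route as the paper: the same extremal subspace $\langle v_{ji}\,|\,i<j\leq T\rangle\oplus\langle v_{T+1,k}\,|\,k\leq s\rangle$ for the lower bound, and for the upper bound the same induction on $m$ driven by Lemma~\ref{lemma:basicoverlapbound}, reducing to the identical combinatorial inequality $\binom{\ell}{2}+f(m-\ell)\leq\binom{T}{3}+\binom{s}{2}$ with the same case split on $\ell=s_1$ relative to $s$ and $T$. The only differences are cosmetic: you count the kernel contributions $\sum\dim Z_{i_0}$ directly instead of computing $\dim(X_0^*)$, and you normalize to $\Pi_j(X)\neq 0$ for all $j$ via a reduction on $n$ (the paper's Remark~\ref{rem:largerndoesnotmatter}) where the paper instead starts the recursion at the smallest index with $Z_i$ nontrivial.
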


\begin{remark} Although there is some ambiguity in the expression
  for~$m$, since $\binom{T}{2} + T = \binom{T+1}{2}$, note that the
  values $\binom{T}{3}+\binom{T}{2}$ and $\binom{T+1}{3}+\binom{0}{2}$
  are equal, so the given value of $f(m)$ is well-defined.
\end{remark}

\begin{proof} By replacing $\binom{T+1}{2}$ with $\binom{T}{2}+T$
  if necessary, we may assume $s>0$. Note that we must have $T<n$ in
  this situation.  First we show that $f(m,n)\geq
  \binom{T}{3}+\binom{s}{2}$.

Let $X$ be the $m$-dimensional coordinate subspace of $V(n)$ generated
by all $v_{ji}$ with $1\leq i<j\leq T$, and the vectors
$v_{T+1,1},\ldots,v_{T+1,s}$. Then $X^*$ is the coordinate subspace of
$W(n)$ generated by all vectors of the form $w_{jik}$ with $1\leq
i<j\leq T$, $i\leq k\leq n$; plus the vectors of the form
$w_{T+1,i,k}$ with $1\leq i\leq s$, $i\leq k\leq n$. There are
$2\binom{T+1}{2}+(n-T)\binom{n}{2}$ vectors of the first kind, and 
\[ n + (n-1) + (n-2) + \cdots + n-(s-1) = sn - \binom{s}{2}\]
of the second kind. Thus $\dim(X^*) = 2\binom{T+1}{2} +
(n-T)\binom{T}{2}+sn - \binom{s}{2}$;  and we have:
\begin{eqnarray*}
n\dim(X) - \dim(X^*)& = & T\binom{T}{2} - 2\binom{T+1}{3} + \binom{s}{2}\\
& = & (T-2)\binom{T}{2} - 2\binom{T}{3} + \binom{s}{2}\\
& = & \binom{T}{3} + \binom{s}{2}.
\end{eqnarray*}
Therefore, $f(m,n)\geq \binom{T}{3}+\binom{s}{2}$. 

For the reverse inequality, we will apply induction. Assume the
for any $X'$ space of $V(n)$ with $\dim(X')<m$. 
Write $m=\binom{T}{2}+s$ with $0<s\leq T$, and $T<n$, and let
$X$ be a subspace of $V$ of dimension~$m$.
We want to show that $\sum
 \dim(Z_i)$ is bounded above by $\binom{T}{3}+\binom{s}{2}$. If all
 $Z_i$ are trivial, this follows. Otherwise, assume $i$ is the
 smallest index with nontrivial $Z_i$, and that $\dim(Z_i)=k>0$. Then
 $k\leq r(m)$, and if $\ell$ is the smallest positive integer such
 that $k\leq
 \binom{\ell}{2}$ then
\[ \dim\bigl(X \cap \langle v_{sr}\,|\, i<r<s\leq n\rangle\bigr)\leq
 m-\ell.\]
So the sum of the dimensions of the $Z_j$ with $j>i$ is at most
$f(m-\ell,n)$; that is, the sum over all $k$ is bounded:
\[\sum\dim(Z_k) \leq k+ f(m-\ell,n).\]
We want to show that $k+f(m-\ell,n) \leq \binom{T}{3}+\binom{s}{2}$ for
all $k$ and $\ell$ that satisfy the relevant conditions.
It is easy to show that for $m=1,2,3,4$, and~$5$, all values of the
form $k+f(m-\ell,n)$, $k\leq r(m)$ and $\ell$ as above are less than or
equal to $\binom{T}{3}+\binom{s}{2}$. 

If $\ell = T = m-r(m)$, then since $k\leq r(m)$ we have
\begin{eqnarray*}
k+f(m-\ell,n) &\leq &r(m) + f(r(m),n)\\
 &=& \binom{T-1}{2} + (s-1) +
f\left(\binom{T-1}{2}+(s-1),n\right)\\
&=& \binom{T-1}{2} + (s-1) + \binom{T-1}{3} + \binom{s-1}{2}\\
& = & \binom{T}{3} + \binom{s}{2};
\end{eqnarray*}
If $\ell<T$, since $k\leq \binom{\ell}{2}$, it is enough to 
to show that for $1<\ell<T$,
\[ \binom{\ell}{2} + f(m-\ell,n) \leq \binom{T}{3} + \binom{s}{2}.\]
If $2\leq \ell \leq s$, then:
\begin{eqnarray*}
\binom{\ell}{2} + f(m-\ell,n) & = & \binom{\ell}{2} +
f\left(\binom{T}{2} + (s-\ell),n\right)\\
& = & \binom{\ell}{2} + \binom{T}{3} + \binom{s-\ell}{2}\\
&\leq&\binom{T}{3} + \binom{s}{2}.
\end{eqnarray*}
The last inequality follows since $\binom{\ell}{2}+\binom{s-\ell}{2}$
is the number of two element subsets of $\{1,\ldots,s\}$, where either
both elements are less than or equal to $\ell$, or both strictly
larger than $\ell$.

If $s<\ell<T$, then write $\ell = s+a$, $a>0$. We then have
\[ m- \ell = \binom{T}{2} + s - (s+a) = \binom{T-1}{2} + (T-1-a),\]
so
\[ \binom{\ell}{2} + f(m-\ell,n) = \binom{\ell}{2} + \binom{T-1}{3} +
\binom{T-1-a}{2}.\]
Since $\ell+1-T\leq 0$ and $a>0$, we must have
\[ 6a(s+a+1-T)\leq 0.\]
Rewriting and introducing suitable terms we have:
\[ 6as +3a^2 - 3a - 3T^2 + 9T - 6 + 3T^2 - 9T - 6aT + 9a + 3a^2 +
6\leq 0\]
In turn, this can be rewritten as
\[ 6as + 3a^2 - 3a - 3(T-1)(T-2) + 3(T-a-1)(T-a-2)\leq 0.\]
This gives:
\[ 3(s^2 + 2as + a^2 - s - a) - 3(T-1)(T-2) + 3(T-a-1)(T-a-2)\leq
3(s^2 - s),\]
and so
\[ 3((s+a)^2 - (s+a)) -3(T-1)(T-2) + 3(T-a-1)(T-a-2) \leq 3(s^2-s).\]
Substituting $\ell$ for $s+a$ and adding $T(T-1)(T-2)$ to both sides
we have
\[ 3(\ell^2-\ell) + (T-3)(T-2)(T-1) + 3(T-a-1)(T-a-2) \leq T(T-1)(T-2)
+ 3(s^2-s);\]
dividing through by $6$ yields the desired inequality:
\[\binom{\ell}{2}+f(m-\ell,n) \leq \binom{\ell}{2} + \binom{T-1}{3} + \binom{T-1-a}{2} \leq
\binom{T}{3} + \binom{s}{2}.
\]

We therefore conclude that $f(m,n)\leq \binom{T}{3}+\binom{s}{2}$, which
completes the proof. Note that indeed, the value of $n$ is not
relevant to the value of $f(m,n)$, so long as $n$ is large enough to
satisfy $m\leq \binom{n}{2}$.
\end{proof}

Since the value of $f(m,n)$ does not depend on $n$, we will drop the
second argument and simly call this function $f(m)$.

\begin{theorem}
Fix $n>1$ and let $X$ be a subspace of $V$. Write $\dim(X) =
\binom{T}{2}+s$, $0\leq s\leq T$. Then
\[ n\dim(X) - \binom{T}{3} - \binom{s}{2} \leq \dim(X^*) \leq
\min\left\{n\dim(X),\ 2\binom{n+1}{3}\right\}.\]
\label{th:upperandlowerbounds}
\end{theorem}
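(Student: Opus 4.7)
The plan is to assemble the two bounds directly from machinery already in place. Observe that by Definition~\ref{defn:defofPhi} and the remark following it, $X^* = \Phi(X^n)$, so by the rank-nullity theorem applied to the restriction $\Phi|_{X^n}\colon X^n \to W$ we have
$$\dim(X^*) = \dim(X^n) - \dim(X^n \cap \ker(\Phi)) = n\dim(X) - \dim(X^n \cap \ker(\Phi)).$$

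For the lower bound, I invoke Theorem~\ref{th:maxtotaloverlappossible}: writing $\dim(X) = \binom{T}{2} + s$ with $0 \leq s \leq T$, we have $\dim(X^n\cap\ker(\Phi)) \leq f(\dim(X)) = \binom{T}{3} + \binom{s}{2}$ by the definition of $f$. Substituting into the displayed identity yields $\dim(X^*) \geq n\dim(X) - \binom{T}{3} - \binom{s}{2}$, which is the desired lower bound.

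For the upper bound, the inequality $\dim(X^*) \leq n\dim(X)$ is immediate from $X^* = \Phi(X^n)$ since $\Phi$ is linear and $\dim(X^n) = n\dim(X)$. The inequality $\dim(X^*) \leq 2\binom{n+1}{3}$ follows because $X^*$ is a subspace of $W$ and, by Definition~\ref{defn:defofUVW}, $\dim(W) = 2\binom{n+1}{3}$. Taking the minimum gives the stated upper bound.

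There is no real obstacle here: the statement is essentially a packaging of the main counting result of the section (Theorem~\ref{th:maxtotaloverlappossible}) together with the trivial observations that $\Phi|_{X^n}$ has image $X^*$ and that $X^*$ sits inside $W$. The only point to double-check is that the ambiguity in writing $\dim(X) = \binom{T}{2} + s$ when $s = T$ (which also equals $\binom{T+1}{2} + 0$) does not affect the lower bound; this is the same well-definedness already verified in the remark following Theorem~\ref{th:maxtotaloverlappossible}, since $\binom{T}{3}+\binom{T}{2} = \binom{T+1}{3} + \binom{0}{2}$.
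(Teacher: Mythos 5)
Your proof is correct and follows essentially the same route as the paper: the paper's own proof simply cites $\dim(X^*)\geq n\dim(X)-f(\dim(X))$ together with the formula for $f$ from Theorem~\ref{th:maxtotaloverlappossible}, and declares the upper bound immediate. You have merely spelled out the rank–nullity step and the well-definedness check, both of which are already implicit in the surrounding text.
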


\begin{proof} The lower bound follows from
$\dim(X^*)\geq n\dim(X) - f(\dim(X))$, and the upper  bound is
  immediate.
\end{proof}

\begin{corollary} Fix $n>1$ and let $X$ be a subspace of $V$ with
  $\dim(X)=m$. If $\dim(X^*) = nm - k$ and $n+k > f(m+1)$, then $X$ is
  closed.
\label{cor:Xpreciselysmallenough}
\end{corollary}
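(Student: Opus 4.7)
The plan is to invoke Proposition~\ref{prop:strictlylarger}: it suffices to show that every subspace $Y$ of $V$ that properly contains $X$ satisfies $\dim(Y^*)>\dim(X^*)$. A simple reduction lets me assume $\dim(Y)=m+1$: given any proper extension $Y\supsetneq X$, pick $\mathbf{v}\in Y\setminus X$, set $Y'=X+\langle\mathbf{v}\rangle$, and observe $X^*\subseteq Y'^*\subseteq Y^*$, so it is enough to verify strict containment for $Y'$.

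With $\dim(Y)=m+1$, I would apply the lower bound of Theorem~\ref{th:upperandlowerbounds} to get
\[
\dim(Y^*)\geq n(m+1)-f(m+1)=nm+\bigl(n-f(m+1)\bigr).
\]
The hypothesis $n+k>f(m+1)$ rearranges (in integers) to $n-f(m+1)\geq 1-k$, so
\[
\dim(Y^*)\geq nm+1-k>nm-k=\dim(X^*).
\]
Since $X^*\subseteq Y^*$, this forces strict containment $Y^*\supsetneq X^*$. Proposition~\ref{prop:strictlylarger} then yields $X=X^{**}$, i.e., $X$ is closed.

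There is no real obstacle here: once the combinatorial bound $f(m+1)$ on the ``overlap'' is in hand from Theorem~\ref{th:maxtotaloverlappossible} and the lower bound $\dim(Y^*)\geq n\dim(Y)-f(\dim(Y))$ from Theorem~\ref{th:upperandlowerbounds}, the statement reduces to a one-line arithmetic comparison combined with the reduction-to-codimension-one extension just described. The only care needed is to make the strict inequality explicit (replacing $>$ by $\geq 1+\cdots$ in the integer setting) before quoting Proposition~\ref{prop:strictlylarger}.
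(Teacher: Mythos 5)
Your proof is correct and follows essentially the same route as the paper's: the paper likewise bounds $\dim(Y^*)\geq n(m+1)-f(m+1)$ for any $(m+1)$-dimensional $Y$, computes $\dim(Y^*)-\dim(X^*)\geq n+k-f(m+1)>0$, and invokes Proposition~\ref{prop:strictlylarger}. Your explicit reduction of an arbitrary proper extension to a codimension-one one is left implicit in the paper but is the right justification.
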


\begin{proof} Suppose $X$ is as in the statement, and let $Y$ be any
  subspace of~$V$ of dimension $m+1$. From the definition
  of $f$ we know that
\[ \dim(Y^*) \geq n(m+1) - f(m+1),\]
so $\dim(Y^*)-\dim(X^*) \geq n+k - f(m+1)>0$.
Therefore every $Y$ strictly larger than $X$ must have
$\dim(X^*)<\dim(Y^*)$, which shows that $X$ is closed by
Proposition~\ref{prop:strictlylarger}.
\end{proof}

\begin{corollary} Fix $n>1$ and let $X$ be a subspace of $V$ with
  $\dim(X)=m$. Write $m = \binom{T}{2} + s$, $0\leq s< T$. If
  $\binom{T}{3}+\binom{s+1}{2}<n$, then $X$ is closed.
\label{cor:Xsmallenough}
\end{corollary}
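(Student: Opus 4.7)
The plan is to deduce this immediately from Corollary~\ref{cor:Xpreciselysmallenough}, once the bookkeeping is done correctly. Corollary~\ref{cor:Xpreciselysmallenough} asserts that if $\dim(X)=m$ and $\dim(X^*)=nm-k$ (so $k\ge 0$) with $n+k>f(m+1)$, then $X$ is closed. Since we always have $k=nm-\dim(X^*)\ge 0$ (because $X^*$ is spanned by the $n$ images $\varphi_i(X)$, each of dimension at most $m$), the inequality $n+k>f(m+1)$ holds automatically as soon as $n>f(m+1)$, regardless of $X$ itself.

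Thus the only real task is to verify that the hypothesis $\binom{T}{3}+\binom{s+1}{2}<n$ is precisely $f(m+1)<n$. Writing $m=\binom{T}{2}+s$ with $0\le s<T$, we have $m+1=\binom{T}{2}+(s+1)$ with $0<s+1\le T$, which is a valid presentation of $m+1$ of the form required by Theorem~\ref{th:maxtotaloverlappossible}. That theorem then yields $f(m+1)=\binom{T}{3}+\binom{s+1}{2}$, matching the hypothesis exactly.

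Combining these two observations: the hypothesis gives $n>f(m+1)$, hence $n+k\ge n>f(m+1)$ for every admissible $k$, and Corollary~\ref{cor:Xpreciselysmallenough} then forces $X$ to be closed.

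There is no genuine obstacle here; the substantive work was done in Theorem~\ref{th:maxtotaloverlappossible} (the explicit value of $f$) and Corollary~\ref{cor:Xpreciselysmallenough} (the closure criterion via $f(m+1)$). The only subtlety is making sure the decomposition $m+1=\binom{T}{2}+(s+1)$ stays within the range $0<s+1\le T$ permitted by the formula for $f$, which is exactly why the hypothesis requires $s<T$ (rather than $s\le T$); if we allowed $s=T$ then $m+1=\binom{T+1}{2}$ and one would have to rewrite it as $\binom{T+1}{2}+0$ before applying Theorem~\ref{th:maxtotaloverlappossible}.
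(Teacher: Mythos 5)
Your proof is correct and is essentially the paper's own argument: the paper likewise deduces the corollary directly from Corollary~\ref{cor:Xpreciselysmallenough} together with the formula $f(m+1)=\binom{T}{3}+\binom{s+1}{2}$ from Theorem~\ref{th:maxtotaloverlappossible}. Your additional remarks (that $k\geq 0$ always, and that the condition $s<T$ keeps $m+1=\binom{T}{2}+(s+1)$ in the admissible range) are exactly the bookkeeping the paper leaves implicit.
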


\begin{proof} This follows from the previous corollary and the formula
  for $f(m+1)$ in Theorem~\ref{th:maxtotaloverlappossible}.
\end{proof}

For reference, Table~\ref{table:valuesoff} contains the values of
$f(m)$, $3\leq m \leq 50$. Note that $f(1)=f(2)=0$ by
Corollary~\ref{cor:maincountingarg}. The sequence of values of~$f(m)$
appears as sequence A111138 in~\cite{onlineintseq}. 

\begin{table}[t]
\begin{tabular}{||c|c|||c|c|||c|c||}
\hline
$\quad m\quad$&$\quad f(m)\quad$&$\quad m\quad$&$\quad
f(m)\quad$&$\quad m\quad$&$\quad f(m)\quad$\\
\hline\hline
$3$&$1$&$19$&$26$&$35$&$77$\\
\hline
$4$&$1$&$20$&$30$&$36$&$84$\\
\hline
$5$&$2$&$21$&$35$&$37$&$84$\\
\hline
$6$&$4$&$22$&$35$&$38$&$85$\\
\hline
$7$&$4$&$23$&$36$&$39$&$87$ \\
\hline
$8$&$5$&$24$&$38$&$40$&$90$ \\
\hline
$9$&$7$&$25$&$41$&$41$&$94$\\
\hline
$10$&$10$&$26$&$45$&$42$&$99$\\
\hline
$11$&$10$&$27$&$50$&$43$&$105$\\
\hline
$12$&$11$&$28$&$56$&$44$&$112$\\
\hline
$13$&$13$&$29$&$56$&$45$&$120$\\
\hline
$14$&$16$&$30$&$57$&$46$&$120$\\
\hline
$15$&$20$&$31$&$59$&$47$&$121$\\
\hline
$16$&$20$&$32$&$62$&$48$&$123$\\
\hline
$17$&$21$&$33$&$66$&$49$&$126$\\
\hline
$18$&$23$&$34$&$71$&$50$&$130$\\
\hline
\end{tabular}
\caption{Explicit values of $f(m)$, $3\leq m\leq 50$}
\label{table:valuesoff}
\end{table}

Translating back into group theory, we obtain the following:

\begin{theorem} Let $G$ be a group of class at most two and exponent
  $p$, where $p$ is an odd prime. Let ${\rm rank}(G^{\rm ab}) = n$,
  and let ${\rm
  rank}([G,G])=m$. If $f\bigl(\binom{n}{2}-m+1\bigr)<n$, where $f(k)$
  is the
  function in Theorem~\ref{th:maxtotaloverlappossible}, then $G$ is
  capable.
\label{th:largeenoughcomm}
\end{theorem}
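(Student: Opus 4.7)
The plan is to translate the rank hypothesis about $G$ into a hypothesis on the dimension of the subspace $X \subseteq V$ that the framework of Section~\ref{sec:linalg} associates to $G$, and then apply the closure test from this section directly.

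First I would pick generators $g_1,\ldots,g_n$ of $G$ projecting onto a basis of $G^{\rm ab}$, form the canonical witness of Theorem~\ref{th:canonicalwitness}, and let $X \subseteq V$ be the subspace corresponding to $\ker(\psi)\cap C$, as in the paragraph before Theorem~\ref{th:capasclosure}. By that theorem, $G$ is capable if and only if $X$ is $\{\varphi_{\mathbf{u}}\}_{\mathbf{u}\in U}$-closed. Since $V$ has dimension $\binom{n}{2}$ and the image of $C$ in $G$ is $[G,G]$ of rank $m$, we have $\dim(X)=\binom{n}{2}-m$; in particular $\dim(X)+1=\binom{n}{2}-m+1$, which is precisely the argument of $f$ appearing in the hypothesis.

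Next, I would apply Corollary~\ref{cor:Xpreciselysmallenough}. Write $\dim(X^{*})=n\dim(X)-k$; since $\dim(X^{*})\leq n\dim(X)$ we have $k\geq 0$. The hypothesis $f\bigl(\binom{n}{2}-m+1\bigr)<n$ then reads $f(\dim(X)+1)<n\leq n+k$, which is exactly the inequality required by Corollary~\ref{cor:Xpreciselysmallenough}. That corollary therefore yields $X=X^{**}$, and Theorem~\ref{th:capasclosure} gives that $G$ is capable.

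There is no real obstacle: the argument is a direct bookkeeping reduction to the already established closure criterion. The conceptual content is already packaged in the earlier results: Theorem~\ref{th:maxtotaloverlappossible} determines $f$, Theorem~\ref{th:overlaplevelktermsoft} and Lemma~\ref{lemma:basicoverlapbound} control the overlap dimensions, and Proposition~\ref{prop:strictlylarger} converts strict monotonicity of $\dim(Y^{*})$ along proper overspaces $Y\supsetneq X$ into closure of $X$. The only thing to verify in writing is that the indexing shift matches ($\dim(X)+1 = \binom{n}{2}-m+1$) and that $k\geq 0$ makes the quantitative hypothesis of Corollary~\ref{cor:Xpreciselysmallenough} automatic once $f(\dim(X)+1)<n$.
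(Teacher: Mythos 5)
Your proposal is correct and follows essentially the same route as the paper: the paper computes $\dim(X)=\binom{n}{2}-m$ and cites Corollary~\ref{cor:Xsmallenough}, which is itself just Corollary~\ref{cor:Xpreciselysmallenough} combined with the formula for $f$, so invoking Corollary~\ref{cor:Xpreciselysmallenough} directly with $k\geq 0$ as you do is the same argument with one intermediate step unpacked.
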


\begin{proof} The subspace $X$ of $V(n)$ corresponding to $G$ has
  dimension $\binom{n}{2}-m$; so the result follows directly from
  Corollary~\ref{cor:Xsmallenough}.
\end{proof}

\section{A Necessary Condition.}\label{sec:neccond}

In this section, we use our set-up to give a proof of a slight
strengthening of the necessary condition proven by Heineken and
Nikolova in~\cite{heinnikolova}.  The proof is essentially that given
in ~\cite{heinnikolova} ``translated'' into our notation and
set-up. We do gain two improvements on their result: a necessary
condition for equality to hold, and a weakening of the hypothesis by
dropping an assumption. In~\cite{heinnikolova} the authors assume
throughout that the capable group $G$ they investigate satisfies the
condition $Z(G)=[G,G]$, and so their result is restricted to that
situation. We will be able to obtain their result with this assumption
dropped.

The object of this section is to prove that if $G$ is capable, of
class at most two and exponent~$p$, and $[G,G]$ is of order $p^k$, then
$G/Z(G)$ is of order at most $p^{2k+\binom{k}{2}}$. 

It is interesting to note that while the results from the previous
sections, leading to sufficient conditions, have focused on the
closure operator on the subspaces of~$V$, the proof here will proceed
by placing considerable emphasis on the interior operator on the
subspaces of~$W$. I do not know if this is simple happenstance, or if
we can indeed expect that considerations of the interior operator
on~$W$ will generally point towards necessary conditions while the
closure operator on~$V$ will give sufficient ones.

In addition to the linear transformations $\varphi_{\mathbf u}$,
an important role in the proof is 
played by elements $g\in Z(G)$ which have nontrivial image in $G^{\rm
ab}$. In order to account for these elements in our setting, we will
use another family of linear transformations which we introduce now:

\begin{definition} Let $n>1$. We embed $U$ into $\mathcal{L}(U,V)$
  as follows:
  given $\mathbf{u}\in U$, we define
  $\psi_{\mathbf{u}}(\mathbf{a})=\mathbf{a}\wedge \mathbf{u}$ for all
  $\mathbf{a}\in U$. If $u_1,\ldots,u_n$ is a given basis for $U$ and
  $i$ is an integer, $1\leq i\leq n$, then we let $\psi_i$ denote the
  transformation $\psi_{u_i}$. Note that for any
  $\mathbf{a},\mathbf{b}\in U$, $\psi_{\mathbf{a}}(\mathbf{b}) =
  -\psi_{\mathbf{b}}(\mathbf{a})$. 
\end{definition}

Fix an isomorphism between $G^{\rm ab}$ and~$U$. Let $g\in G$ be an
element whose image in $G^{\rm ab}$ is nontrivial, and let
$\mathbf{u}_g$ be the corresponding element of~$U$.
Then $g\in Z(G)$ if and only if 
$\psi_{\mathbf{u}_g}(U)$ is contained in $X$.
Note also that for any $\mathbf{u}\in
U$, $\psi_{\mathbf{u}}(U)=\langle \mathbf{u}\rangle^* = {\rm
span}(\psi_1(\mathbf{u}),\ldots,\psi_n(\mathbf{u}))$. This is how we
will use the maps above to address the central elements of~$G$ that
are not in~$[G,G]$

An explicit description of the maps $\psi$ in
terms of a basis for $U$ is easy:

\begin{lemma}
Fix $n>1$, let $u_1,\ldots,u_n$ be a basis for $U$, and let
$v_{ji}$ be the corresponding basis for $V$.
For all integers $i$ and~$j$, $1\leq i,j\leq n$, the image of $u_j$
under $\psi_i$ in terms of the basis $v_{ji}$ is given by:
\[ \psi_i(u_j) = \left\{\begin{array}{ll}
v_{ji}&\mbox{if $i<j$,}\\
\mathbf{0}&\mbox{if $i=j$,}\\
-v_{ij} & \mbox{if $i>j$.}
\end{array}\right.\]
\label{lemma:explicitformulaforpsi}
\end{lemma}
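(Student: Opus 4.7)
The statement is essentially a direct unpacking of the definition of $\psi_i$ combined with the standard alternating/antisymmetry properties of the wedge product, so I expect the proof to be a one-line calculation split into three cases.

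My plan is as follows. By definition, $\psi_i(u_j) = \psi_{u_i}(u_j) = u_j \wedge u_i$, which is an element of $V = U \wedge U$. To express this in the preferred basis $\{v_{ba} \mid 1 \leq a < b \leq n\}$ where $v_{ba} = u_b \wedge u_i$ (for $a < b$), I split into three cases according to the relative order of $i$ and $j$.

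First, if $i < j$, then the pair $(j,i)$ already satisfies the convention $a < b$, so $u_j \wedge u_i$ is by definition the basis vector $v_{ji}$. Second, if $i = j$, then $u_j \wedge u_i = u_i \wedge u_i = \mathbf{0}$ by the alternating property of the wedge product. Third, if $i > j$, then the antisymmetry $u_j \wedge u_i = -(u_i \wedge u_j)$ together with the basis convention gives $u_j \wedge u_i = -v_{ij}$, since now $j < i$ places the pair $(i,j)$ in standard order.

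There is no real obstacle here; the only thing to be careful about is matching the indexing convention of $v_{ji}$ (large index first, small index second), which the cases above handle correctly. The entire proof amounts to a direct appeal to the definition of $\psi$ together with the two defining identities of the exterior square, with no induction or auxiliary construction needed.
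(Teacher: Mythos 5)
Your computation is correct and is exactly the intended argument: the paper states this lemma without proof precisely because it follows immediately from $\psi_i(u_j)=u_j\wedge u_i$, the convention $v_{ji}=u_j\wedge u_i$ for $i<j$, and the alternating property of the wedge. (The only blemish is the typo ``$v_{ba}=u_b\wedge u_i$'' where you mean $u_b\wedge u_a$.)
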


Let $Y$ be a subspace of~$W$. If we let $X=Y^{*}$ then $X$ is closed
by Theorem~\ref{th:interiorop}; moreover, any closed subspace $X$ of~$V$ can
be realized this way, by letting $Y=X^{*}$. Given such an $X$ and $Y$,
we define two subsets of~$U$ as follows:
\begin{eqnarray*}
Z & = & \bigl\{ \mathbf{u} \in U\,\bigm|\,
\psi_{\mathbf{u}}(U)\subseteq X\bigr\},\\
C & = & \bigl\{ \mathbf{u} \in U\,\bigm|\,
\varphi_{\mathbf{u}}(V)\subseteq Y\bigr\}.
\end{eqnarray*}
Let $F$ be the $3$-nilpotent product of $n$ cyclic groups of
order~$p$, and let $U$, $V$, $W$ correspond to $F^{\rm ab}$, $\langle
[x_j,x_i]\,|\, 1\leq i<j\leq n\rangle$,
and~$F_3$ respectively, as in Section~\ref{sec:linalg}, Let $N$ correspond to~$X$, and
$G=F/(X\oplus F_3)$, $H=F/Y$. Then $G$ is capable (since $X$ is
closed), $H$ is a witness for the capability of~$G$, and it is not
hard to see that $Z$ will correspond to the image of $Z_2(H)$ in
$H^{\rm ab}$ (this is the same as the image of $Z(G)$ in $G^{\rm ab}$,
i.e., those elements that are central in $G$ but do not come from
commutators), while $C$ will correspond to the image of the
centralizer $C([H,H])$ in $H^{\rm ab}$. These two sets (in fact,
subspaces as we will prove below) play a key role in our analysis.

\begin{lemma}
Let $Y$ be a subspace of~$W$, and let $X=Y^{*}$. If
\[Z =  \bigl\{ \mathbf{u}\in U\,\bigm|\,
\psi_{\mathbf{u}}(U)\subseteq X\bigr\}\qquad\mbox{and}\qquad 
C  =  \bigl\{ \mathbf{u}\in U\,\bigm|\,
\varphi_{\mathbf{u}}(V)\subseteq Y\bigr\},\]
then both $Z$ and $C$ are subspaces of~$U$, and $Z\subseteq C$.
\label{lemma:ZandCsubspaces}
\end{lemma}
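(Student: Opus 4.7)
The plan is to handle each of the three claims in turn, with the first two being formal linearity arguments and the substantive content concentrated in the third.

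For $Z$ being a subspace, I would just check closure under linear combinations using the observation that the map $\mathbf{u}\mapsto\psi_{\mathbf{u}}$ is linear: for $\mathbf{u},\mathbf{u}'\in Z$ and scalars $\alpha,\beta$, one has $\psi_{\alpha\mathbf{u}+\beta\mathbf{u}'}(\mathbf{a})=\mathbf{a}\wedge(\alpha\mathbf{u}+\beta\mathbf{u}')=\alpha\psi_{\mathbf{u}}(\mathbf{a})+\beta\psi_{\mathbf{u}'}(\mathbf{a})$, so $\psi_{\alpha\mathbf{u}+\beta\mathbf{u}'}(U)\subseteq\psi_{\mathbf{u}}(U)+\psi_{\mathbf{u}'}(U)\subseteq X$. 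The proof that $C$ is a subspace is identical with $\varphi$ in place of $\psi$ and $V,Y$ in place of $U,X$; the key fact is just bilinearity of the tensor product descending to $W$.

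For the inclusion $Z\subseteq C$, fix $\mathbf{u}\in Z$; I want to show $\varphi_{\mathbf{u}}(\mathbf{v})\in Y$ for every $\mathbf{v}\in V$. By linearity of $\varphi_{\mathbf{u}}$, it suffices to treat $\mathbf{v}=\mathbf{a}\wedge\mathbf{b}$ for $\mathbf{a},\mathbf{b}\in U$. The key step is to invoke the defining relation of $J$ from Definition~\ref{defn:defofUVW}: specializing the Jacobi-type generator with $\mathbf{c}=\mathbf{u}$ gives, in $W$,
\[
\overline{(\mathbf{a}\wedge\mathbf{b})\otimes\mathbf{u}}
= -\overline{(\mathbf{b}\wedge\mathbf{u})\otimes\mathbf{a}}
- \overline{(\mathbf{u}\wedge\mathbf{a})\otimes\mathbf{b}}
= \varphi_{\mathbf{b}}(\mathbf{a}\wedge\mathbf{u})
+ \varphi_{\mathbf{a}}(\mathbf{b}\wedge\mathbf{u}),
\]
after rewriting $-(\mathbf{u}\wedge\mathbf{a})=\mathbf{a}\wedge\mathbf{u}$ and $-(\mathbf{b}\wedge\mathbf{u})$ into the $\varphi$-notation.

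Now the hypothesis $\mathbf{u}\in Z$ tells me that $\psi_{\mathbf{u}}(U)\subseteq X=Y^{*}$, so both $\mathbf{a}\wedge\mathbf{u}=\psi_{\mathbf{u}}(\mathbf{a})$ and $\mathbf{b}\wedge\mathbf{u}=\psi_{\mathbf{u}}(\mathbf{b})$ lie in $Y^{*}$. By the definition of $Y^{*}$ as $\bigcap_{i}\varphi_i^{-1}(Y)$, applying any $\varphi_{\mathbf{c}}$ to a vector in $Y^{*}$ lands in $Y$; in particular $\varphi_{\mathbf{b}}(\mathbf{a}\wedge\mathbf{u})\in Y$ and $\varphi_{\mathbf{a}}(\mathbf{b}\wedge\mathbf{u})\in Y$. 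The displayed identity then gives $\varphi_{\mathbf{u}}(\mathbf{a}\wedge\mathbf{b})\in Y$, as required. I don't see any real obstacle here; the only subtlety is to apply the Jacobi relation from $J$ in exactly the right form so that the two pieces one is left with are precisely $\varphi$-images of vectors known to lie in $X=Y^{*}$.
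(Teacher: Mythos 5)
Your proof is correct and follows essentially the same route as the paper: the subspace claims come from linearity of $\mathbf{u}\mapsto\psi_{\mathbf{u}}$ and $\mathbf{u}\mapsto\varphi_{\mathbf{u}}$, and the inclusion $Z\subseteq C$ rests on the Jacobi relation defining $J$ together with the fact that $X=Y^{*}$ maps into $Y$ under every $\varphi_{\mathbf{c}}$ (the paper just writes this out on a basis adapted to $\mathbf{z}$). The only blemish is a harmless sign slip in your displayed identity: the right-hand side should read $\varphi_{\mathbf{b}}(\mathbf{a}\wedge\mathbf{u})-\varphi_{\mathbf{a}}(\mathbf{b}\wedge\mathbf{u})$, which does not affect membership in the subspace $Y$.
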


\begin{proof}
The map $\mathbf{u}\mapsto\psi_{\mathbf{u}}$ is a linear embedding from
$U$ to $\mathcal{L}(U,V)$. The canonical projection $V\to V/X$ induces a map
$U\to\mathcal{L}(U,V/X)$. The kernel of this map is $Z$, so~$Z$ is a subspace.

Similarly, the kernel of the composite map
$U\to\mathcal{L}(V,W)\to\mathcal{L}(V,W/Y)$, given by composing the
embedding $\mathbf{u}\mapsto\varphi_{\mathbf{u}}$ with the map induced
by the canonical projection $W\to W/Y$ has kernel $C$, so $C$ is a
subspace.

To prove that $Z\subseteq C$, let $\mathbf{z}\in Z$. If
$\mathbf{z}=\mathbf{0}$, then trivially $\mathbf{z}\in C$. If
$\mathbf{z}\neq \mathbf{0}$, then complete it to a basis
$\mathbf{z}=u_1,\ldots,u_n$ of $U$, and let $v_{ji}$, $w_{jik}$ be the
corresponding prefered bases for $V$ and~$W$. Since $\mathbf{z}=u_1\in
Z$, it follows that $v_{j1}\in X$ for $j=2,\ldots,n$. We want to show
that $\varphi_1(v_{ji})\in Y$ for all
$i,j$, $1\leq i<j\leq n$. If $i=1$, then $v_{ji}\in X$, so
$\varphi_{\mathbf{u}}(v_{ji})\in Y$ for all $\mathbf{u}\in U$ and
there is nothing to do. If $i>1$, then
$\varphi_1(v_{ji}) = w_{j1i} - w_{i1j} =
\varphi_i(v_{j1})-\varphi_j(v_{i1})$.
Since $v_{j1},v_{i1}\in X$, we have that both $\varphi_i(v_{j1})$ and
$\varphi_j(v_{i1})$ lie in~$Y$, hence $\varphi_1(v_{ji})\in Y$. This
proves that $\mathbf{z}\in C$, as claimed. 
\end{proof}

We continue by stating some results on the interactions between the maps
$\psi_{\mathbf{u}}$ and the maps $\varphi_{\mathbf{u'}}$.

\begin{lemma}
For $\mathbf{a}$, $\mathbf{b}$, $\mathbf{c}\in U$, 
$\varphi_{\mathbf{a}}(\psi_{\mathbf{b}}(\mathbf{c})) +
      \varphi_{\mathbf{c}}(\psi_{\mathbf{a}}(\mathbf{b})) +
      \varphi_{\mathbf{b}}(\psi_{\mathbf{c}}(\mathbf{a})) =
      \mathbf{0}$.
\label{lemma:jacobi}
\end{lemma}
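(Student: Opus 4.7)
The plan is to simply unfold the definitions of $\varphi$ and $\psi$ and observe that the resulting expression, when lifted back to $V\otimes U$, is precisely one of the defining relations of $W$.

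First I would compute each term. By definition of $\psi_{\mathbf{b}}$, we have $\psi_{\mathbf{b}}(\mathbf{c})=\mathbf{c}\wedge\mathbf{b}$, so
\[\varphi_{\mathbf{a}}(\psi_{\mathbf{b}}(\mathbf{c}))=\overline{(\mathbf{c}\wedge\mathbf{b})\otimes\mathbf{a}},\]
and similarly $\varphi_{\mathbf{c}}(\psi_{\mathbf{a}}(\mathbf{b}))=\overline{(\mathbf{b}\wedge\mathbf{a})\otimes\mathbf{c}}$ and $\varphi_{\mathbf{b}}(\psi_{\mathbf{c}}(\mathbf{a}))=\overline{(\mathbf{a}\wedge\mathbf{c})\otimes\mathbf{b}}$.

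Next, summing the three terms and using linearity of the quotient map $V\otimes U\to W$, the sum in question is the image in $W$ of
\[(\mathbf{c}\wedge\mathbf{b})\otimes\mathbf{a}+(\mathbf{b}\wedge\mathbf{a})\otimes\mathbf{c}+(\mathbf{a}\wedge\mathbf{c})\otimes\mathbf{b}.\]
Recall from Definition~\ref{defn:defofUVW} that $J$ is spanned by vectors of the form $(\mathbf{x}\wedge\mathbf{y})\otimes\mathbf{z}+(\mathbf{y}\wedge\mathbf{z})\otimes\mathbf{x}+(\mathbf{z}\wedge\mathbf{x})\otimes\mathbf{y}$. Applying this with $\mathbf{x}=\mathbf{c}$, $\mathbf{y}=\mathbf{b}$, $\mathbf{z}=\mathbf{a}$ reproduces exactly the element above, so it lies in $J$ and its image in $W=(V\otimes U)/J$ is $\mathbf{0}$.

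There is no real obstacle here; the lemma is an immediate restatement of the Jacobi-type relation already built into the definition of $W$. The only thing to double-check is the bookkeeping on which arguments go where in the triple sum, which is why I would write out all three terms explicitly before invoking the relation defining $J$.
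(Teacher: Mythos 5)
Your proof is correct and is essentially identical to the paper's: both unfold $\varphi$ and $\psi$ to get $\overline{(\mathbf{c}\wedge\mathbf{b})\otimes\mathbf{a}}+\overline{(\mathbf{b}\wedge\mathbf{a})\otimes\mathbf{c}}+\overline{(\mathbf{a}\wedge\mathbf{c})\otimes\mathbf{b}}$ and observe that this is one of the defining generators of $J$, hence zero in $W$. Your substitution $\mathbf{x}=\mathbf{c}$, $\mathbf{y}=\mathbf{b}$, $\mathbf{z}=\mathbf{a}$ checks out.
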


\begin{proof} This is simply the Jacobi identity. Evaluating the left
      hand side, we obtain
\[ \overline{(\mathbf{c}\wedge \mathbf{b})\otimes \mathbf{a}} +
\overline{(\mathbf{b}\wedge\mathbf{a})\otimes \mathbf{c}} +
\overline{(\mathbf{a}\wedge\mathbf{c})\otimes\mathbf{b}},\]
where $\overline{(r\wedge s)\otimes t}$ represents the image of
$r\wedge s\otimes t$ in $(V\otimes U)/J$ (see
Definition~\ref{defn:defofUVW}). But since this element is one of the
generators of the subspace $J$, the left hand side is trivial in~$W$,
as claimed.
\end{proof}

\begin{lemma}
Let $Y$ be a subspace of~$W$, and let $X=Y^*$. Let 
\[ C  = \bigl\{\mathbf{u}\in U\,\bigm|\,
\varphi_{\mathbf{u}}(V)\subseteq Y\bigr\}.\]
If $\mathbf{c}\in C$, then for all $\mathbf{a},\mathbf{b}\in U$,
\[ \varphi_{\mathbf{b}}(\psi_\mathbf{c}(\mathbf{a})) \equiv
\varphi_{\mathbf{a}}(\psi_{\mathbf{c}}(\mathbf{b})) \pmod{Y}.\]
\label{lemma:swapwithCinbetween}
\end{lemma}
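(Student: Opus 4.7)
The plan is to combine the Jacobi-type identity of Lemma~\ref{lemma:jacobi} with the defining property of~$C$ and the antisymmetry of~$\psi$. By Lemma~\ref{lemma:jacobi} applied to the triple $(\mathbf{a},\mathbf{b},\mathbf{c})$,
\[ \varphi_{\mathbf{a}}(\psi_{\mathbf{b}}(\mathbf{c})) + \varphi_{\mathbf{c}}(\psi_{\mathbf{a}}(\mathbf{b})) + \varphi_{\mathbf{b}}(\psi_{\mathbf{c}}(\mathbf{a})) = \mathbf{0}.\]
Since $\mathbf{c}\in C$, by definition $\varphi_{\mathbf{c}}(V)\subseteq Y$, so the middle summand $\varphi_{\mathbf{c}}(\psi_{\mathbf{a}}(\mathbf{b}))$ lies in~$Y$. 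Reducing modulo~$Y$ gives
\[ \varphi_{\mathbf{b}}(\psi_{\mathbf{c}}(\mathbf{a})) \equiv -\varphi_{\mathbf{a}}(\psi_{\mathbf{b}}(\mathbf{c})) \pmod{Y}.\]

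Next I would use the antisymmetry built into the definition of~$\psi$: since $\psi_{\mathbf{b}}(\mathbf{c}) = \mathbf{c}\wedge\mathbf{b} = -\mathbf{b}\wedge\mathbf{c} = -\psi_{\mathbf{c}}(\mathbf{b})$, and $\varphi_{\mathbf{a}}$ is linear,
\[ -\varphi_{\mathbf{a}}(\psi_{\mathbf{b}}(\mathbf{c})) = \varphi_{\mathbf{a}}(\psi_{\mathbf{c}}(\mathbf{b})).\]
Combining the two displayed equations yields the desired congruence $\varphi_{\mathbf{b}}(\psi_{\mathbf{c}}(\mathbf{a})) \equiv \varphi_{\mathbf{a}}(\psi_{\mathbf{c}}(\mathbf{b})) \pmod{Y}$.

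There is no serious obstacle here; the identity is essentially a two-line consequence of Jacobi together with the hypothesis on~$\mathbf{c}$, and the only thing to be careful about is that the antisymmetry is being applied to the inner $\psi$, not to~$\varphi$.
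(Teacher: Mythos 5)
Your proof is correct and follows exactly the paper's own argument: apply the Jacobi identity of Lemma~\ref{lemma:jacobi}, use $\mathbf{c}\in C$ to discard the term $\varphi_{\mathbf{c}}(\psi_{\mathbf{a}}(\mathbf{b}))$ modulo~$Y$, and then use the antisymmetry $\psi_{\mathbf{b}}(\mathbf{c})=-\psi_{\mathbf{c}}(\mathbf{b})$ together with linearity of $\varphi_{\mathbf{a}}$. Nothing is missing.
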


\begin{proof} From Lemma~\ref{lemma:jacobi}, we know that
$\varphi_{\mathbf{a}}(\psi_{\mathbf{b}}(\mathbf{c})) +
  \varphi_{\mathbf{b}}(\psi_{\mathbf{c}}(\mathbf{a})) =
  -\varphi_{\mathbf{c}}(\psi_{\mathbf{a}}(\mathbf{b}))$. Since
  $\mathbf{c}\in C$, we must have
  $\varphi_{\mathbf{c}}(\psi_{\mathbf{a}}(\mathbf{b}))\in
  Y$. Therefore,
\begin{alignat*}{2}
\varphi_{\mathbf{b}}(\psi_{\mathbf{c}}(\mathbf{a})) &\equiv
-\varphi_{\mathbf{a}}(\psi_{\mathbf{b}}(\mathbf{c})) &&\pmod{Y}\\
&\equiv
\varphi_{\mathbf{a}}(-\psi_{\mathbf{b}}(\mathbf{c})) &&\pmod{Y}\\
&\equiv \varphi_{\mathbf{a}}(\psi_{\mathbf{c}}(\mathbf{b}))
&&\pmod{Y}.
\end{alignat*}
This proves the lemma. 
\end{proof}

\begin{lemma}
Let $Y$ be a subspace of $W$, $X=Y^{*}$, and let
\[ C = \bigl\{ \mathbf{u}\in U\,\bigm|\,
\varphi_{\mathbf{u}}(V)\subseteq Y\bigr\}.\]
If $\codim_W(Y)=1$, then $\codim_V(X) = \codim_U(C)$.
\label{lemma:equalcodims}
\end{lemma}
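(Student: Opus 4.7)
The plan is to realise both $\codim_V(X)$ and $\codim_U(C)$ as the dimension of the same subspace of $V^*$ (the dual space of $V$), and then conclude by standard linear duality.

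First I would use the hypothesis $\codim_W(Y)=1$ to identify $W/Y$ with the base field $\mathbb{F}_p$, so that $\mathcal{L}(V,W/Y)$ becomes canonically isomorphic to $V^*$. The assignment $\mathbf{u}\mapsto\varphi_{\mathbf{u}}$ is a linear embedding of $U$ into $\mathcal{L}(V,W)$, and composing with the projection $W\to W/Y$ yields a linear map $\Theta\colon U\to V^*$ sending $\mathbf{u}$ to the functional $\mathbf{v}\mapsto\overline{\varphi_{\mathbf{u}}(\mathbf{v})}$. By the very definition of $C$, the kernel of $\Theta$ is exactly $C$, so $\Theta$ induces an injection $\overline{\Theta}\colon U/C\hookrightarrow V^*$. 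Let $S=\Theta(U)\subseteq V^*$; then $\dim S=\codim_U(C)$.

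Next I would show that $X$ is precisely the annihilator $\mathrm{Ann}(S)\subseteq V$ of the subspace $S$. By definition, $\mathbf{v}\in X=Y^{*}$ iff $\varphi_{\mathbf{u}}(\mathbf{v})\in Y$ for every $\mathbf{u}\in U$, which is the same as saying $\Theta(\mathbf{u})(\mathbf{v})=0$ for every $\mathbf{u}\in U$, i.e.\ every functional in $S$ vanishes at $\mathbf{v}$. Since $V$ is finite-dimensional (of dimension $\binom{n}{2}$), the standard annihilator formula gives $\codim_V(X)=\codim_V(\mathrm{Ann}(S))=\dim S$, and combining with the previous paragraph yields $\codim_V(X)=\dim S=\codim_U(C)$.

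There is no real obstacle: the argument is pure linear algebra once the identification $W/Y\cong\mathbb{F}_p$ is made. The only care required is to verify that $\Theta$ is well-defined and linear (immediate from linearity of $\mathbf{u}\mapsto\varphi_{\mathbf{u}}$ and of the projection $W\to W/Y$), and that $X$ coincides with the annihilator of the image rather than of the full map (which follows because a functional vanishes on $\mathbf{v}$ iff every representative of its class does). The hypothesis $\codim_W(Y)=1$ is used only to make $W/Y$ one-dimensional so that $\mathcal{L}(V,W/Y)$ is literally $V^*$; without it one would get instead an inequality relating the codimensions.
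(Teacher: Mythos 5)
Your argument is correct, and it is genuinely different from the paper's. You package the whole statement as a single duality computation: the one-dimensionality of $W/Y$ turns $\mathbf{u}\mapsto\varphi_{\mathbf{u}}$ into a linear map $\Theta\colon U\to V^*$ with kernel exactly $C$, the subspace $X=Y^*$ is the annihilator in $V$ of the image $S=\Theta(U)$, and the standard formula $\codim_V(\mathrm{Ann}(S))=\dim S$ (valid since $V$ is finite-dimensional) gives the equality in one stroke. The paper instead proves the two inequalities separately: $\codim_U(C)\leq\codim_V(X)$ by factoring $U\to\mathcal{L}(V,W/Y)$ through $\mathcal{L}(V/X,W/Y)$, and the reverse inequality by an explicit triangular construction of vectors $\mathbf{v}_1,\ldots,\mathbf{v}_k\in V$ and $\mathbf{u}_1,\ldots,\mathbf{u}_k\in U$ with $\varphi_{\mathbf{u}_i}(\mathbf{v}_i)\equiv\mathbf{w}$ and $\varphi_{\mathbf{u}_i}(\mathbf{v}_j)\equiv\mathbf{0}\pmod{Y}$ for $i<j$. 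Your route is shorter and more conceptual, but note what the paper's constructive proof buys: the dual bases it produces are recorded as Corollary~\ref{cor:baseswhencodimone} and are used essentially in Lemma~\ref{lemma:basisforinduction} and in the inductive step of Theorem~\ref{theorem:heinnikneccondition}, so the explicit construction is not dispensable for the paper even though it is for this lemma. If you adopted your proof, you would still need to extract those bases separately (e.g., by choosing a basis of $S$ and lifting, then running the same triangularization).
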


\begin{proof}
The map $C\to \mathcal{L}(V,W/Y)$ factors through
$\mathcal{L}(V/X,W/Y)$, and the kernel of the induced map
$U\to\mathcal{L}(V/X,W/Y)$ is~$C$. Therefore,
\begin{eqnarray*}
\codim_U(C) = \dim(U/C) &\leq& \dim(\mathcal{L}(V/X,W/Y)) =
\dim(V/X)\dim(W/Y)\\
& = &\codim_U(X)\codim_W(Y) = \codim_U(X),
\end{eqnarray*}
proving one inequality. 

To prove the reverse inequality, let $\mathbf{w}\in W\backslash Y$,
$\codim_V(X)=k$, and pick elements
$\mathbf{v}_1,\ldots,\mathbf{v}_k$ of~$V$ whose images in
the quotient $V/X$ form a basis. Since $\mathbf{v}_1\notin X$, there
exists $\mathbf{u}_1\in U$ such that
$\varphi_{\mathbf{u}_1}(\mathbf{v}_1)\notin Y$. Note that
$\mathbf{u}_1\notin C$. Adjusting $\mathbf{v}_1$ by a scalar if
necessary, and adding multiples of $\mathbf{v}_1$ to
$\mathbf{v}_2,\ldots,\mathbf{v}_k$ if necessary, we may assume that
\[ \varphi_{\mathbf{u}_1}(\mathbf{v}_1)\equiv
\mathbf{w}\pmod{Y}\qquad\mbox{and}\qquad
\varphi_{\mathbf{u}_1}(\mathbf{v}_i)\equiv 0\pmod{Y}\quad\mbox{for
  $i>i$.}\]
Since $\mathbf{v}_2\notin X$, there exists $\mathbf{u}_2\in U$ such
that $\psi_{\mathbf{u}_2}(\mathbf{v}_2)\notin Y$. Multiplying
$\mathbf{v}_2$ by a scalar and adding multiples of $\mathbf{v}_2$ to
$\mathbf{v}_3,\ldots,\mathbf{v}_k$ if necessary, we may assume that
\[ \varphi_{\mathbf{u}_2}(\mathbf{v}_2) \equiv
\mathbf{w}\pmod{Y}\qquad\mbox{and}\qquad
\varphi_{\mathbf{u}_2}(\mathbf{v}_i)\equiv 0 \pmod{Y}\quad\mbox{for
  $i>2$.}\]
Proceeding in the same manner, we obtain elements
$\mathbf{u}_1,\ldots,\mathbf{u}_k\in U$ such that
$\varphi_{\mathbf{u}_i}(\mathbf{v}_i)\equiv \mathbf{w}\pmod{Y}$ for
$i=1,\ldots,k$, and $\varphi_{\mathbf{u}_i}(\mathbf{v}_j)\in Y$ for
$j>i$. Let $\overline{\varphi_{\mathbf{u}_1}},\ldots,
\overline{\varphi_{\mathbf{u}_k}}$ be the images of
$\mathbf{u}_1,\ldots,\mathbf{u}_k$ in $\mathcal{L}(V,W/Y)$. These
linear transformations are linearly independent, because if
$\alpha_1\overline{\varphi_{\mathbf{u}_1}}+\cdots +
\alpha_k\overline{\varphi_{\mathbf{u}_k}}$ is the zero transformation,
then evaluating at $\mathbf{v}_k$ we deduce that $\alpha_k=0$; then
evaluating at $\mathbf{v}_{k-1}$ we obtain $\alpha_{k-1}=0$; etc. Since the
images of $\mathbf{u}_1,\ldots,\mathbf{u}_k$ are linearly independent
under a map with kernel $C$, it follows that their images in $U/C$ are
also linearly independent, proving that $\codim_U(C)=\dim(U/C)\geq k$. This proves
the reverse inequality, and we are done. 
\end{proof}

From the proof above we also deduce the following technical corollary;
we will use it in argument below:

\begin{corollary}
Let $Y$ be a subspace of $W$, and let $X=Y^*$. Let
\[ C = \bigl\{ \mathbf{u}\in U\,\bigm|\,
\varphi_{\mathbf{u}}(V)\subseteq Y\bigr\},\]
and let $\mathbf{w}\in W\backslash Y$. If $\codim_W(Y)=1$ and
$\codim_V(X)=k$, then there exist
$\mathbf{v}_1,\ldots,\mathbf{v}_k\in V$ and
$\mathbf{u}_1,\ldots,\mathbf{u}_k\in U$ such that:
\begin{itemize}
\item[(i)] The images of $\mathbf{v}_1,\ldots,\mathbf{v}_k$ in $V/X$
  form a basis for $V/X$.
\item[(ii)] The images of $\mathbf{u}_1,\ldots,\mathbf{u}_k$ in $U/C$
  form a basis for $U/C$. In particular, $U=\langle
  \mathbf{u}_1,\ldots,\mathbf{u}_k\rangle\oplus C$. 
\item[(iii)] $\varphi_{\mathbf{u}_i}(\mathbf{v}_i)\equiv
  \mathbf{w}\pmod{Y}$ for $i=1,\ldots,k$.
\item[(iv)] $\varphi_{\mathbf{u}_i}(\mathbf{v}_j)\equiv
  \mathbf{0}\pmod{Y}$ for all $i,j$, $1\leq i<j\leq k$. 
\end{itemize}
\label{cor:baseswhencodimone}
\end{corollary}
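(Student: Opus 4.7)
The plan is to recognize that the construction in the proof of Lemma~\ref{lemma:equalcodims} already produces the desired elements, so what remains is essentially bookkeeping: verifying that the vectors built there satisfy all four stated conditions.

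I would start by taking the elements $\mathbf{v}_1,\ldots,\mathbf{v}_k$ and $\mathbf{u}_1,\ldots,\mathbf{u}_k$ constructed in the reverse-inequality half of that proof. By choice, the $\mathbf{v}_i$ have images forming a basis of $V/X$, which is (i). The iterative selection produces each $\mathbf{u}_i$ with $\varphi_{\mathbf{u}_i}(\mathbf{v}_i)\equiv\mathbf{w}\pmod Y$, and after adjusting $\mathbf{v}_{i+1},\ldots,\mathbf{v}_k$ by suitable multiples of $\mathbf{v}_i$, also $\varphi_{\mathbf{u}_i}(\mathbf{v}_j)\equiv\mathbf{0}\pmod Y$ for all $j>i$. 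This delivers (iii) and (iv) directly. I would pause to note that these adjustments at step $i$ do not spoil earlier conditions: when $\mathbf{v}_j$ (with $j>i$) is replaced by $\mathbf{v}_j+\lambda\mathbf{v}_i$, the relation $\varphi_{\mathbf{u}_{i'}}(\mathbf{v}_j)\equiv\mathbf{0}\pmod Y$ for $i'<i$ is preserved, because the condition $\varphi_{\mathbf{u}_{i'}}(\mathbf{v}_i)\equiv\mathbf{0}\pmod Y$ had already been enforced at step $i'$ (since $i>i'$); and the vectors $\mathbf{v}_{i'}$ themselves are never touched at later steps.

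For (ii), I would reuse the linear-independence argument from the proof of Lemma~\ref{lemma:equalcodims}: if $\sum\alpha_i\overline{\varphi_{\mathbf{u}_i}}=0$ in $\mathcal{L}(V,W/Y)$, evaluating at $\mathbf{v}_k$ kills all terms except the one with $\alpha_k$ (by (iii) and (iv)), forcing $\alpha_k=0$; evaluation at $\mathbf{v}_{k-1}$ then gives $\alpha_{k-1}=0$, and so on. Since the map $U\to\mathcal{L}(V,W/Y)$ factors through $U/C$, the cosets $\mathbf{u}_i+C$ are linearly independent in $U/C$. But Lemma~\ref{lemma:equalcodims} has just told us that $\dim(U/C)=\codim_V(X)=k$, so $k$ linearly independent cosets automatically form a basis. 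The ``in particular'' clause $U=\langle\mathbf{u}_1,\ldots,\mathbf{u}_k\rangle\oplus C$ then follows by lifting a basis of $U/C$ through the quotient.

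There is no genuine obstacle here: the corollary is really a re-indexing of data already exhibited in the proof of Lemma~\ref{lemma:equalcodims}, packaged for reuse in the sequel. The only small point to be careful about is that condition (iv) is stated only for $i<j$ (rather than $i\neq j$), which is exactly what the ``for all $j>i$'' clause in the construction produces; no symmetry claim is being made about pairs $i>j$.
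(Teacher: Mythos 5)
Your proposal is correct and matches the paper exactly: the paper offers no separate proof, stating only that the corollary is deduced "from the proof above" of Lemma~\ref{lemma:equalcodims}, whose reverse-inequality argument constructs precisely these $\mathbf{v}_i$ and $\mathbf{u}_i$. Your added check that the step-$i$ adjustments preserve the conditions enforced at earlier steps is a worthwhile detail the paper leaves implicit.
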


\begin{lemma}
Let $Y$ be a subspace of $W$, and let $X=Y^*$. Let
\[
Z  =  \bigl\{ \mathbf{u}\in U\,\bigm|\, \psi_{\mathbf{u}}(U)\subseteq
X\bigr\}\quad\mbox{and}\quad 
C  =  \bigl\{ \mathbf{u}\in U\,\bigm|\,
\varphi_{\mathbf{u}}(V)\subseteq Y\bigr\}.\]
Let $\mathbf{u}_1,\ldots,\mathbf{u}_k\in U$ be elements such that
$U=\langle \mathbf{u}_1,\ldots,\mathbf{u}_k\rangle+C$. If
$\mathbf{c}\in C$ is such that $\psi_{\mathbf{c}}(\mathbf{u}_i)\in X$
for $i=1,\ldots,k$, then $\mathbf{c}\in Z$. 
\label{lemma:fromCtoZ}
\end{lemma}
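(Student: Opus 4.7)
The plan is to unpack the definition of $Z$: I need to show $\psi_{\mathbf{c}}(\mathbf{u}) \in X$ for every $\mathbf{u} \in U$. Using the hypothesis $U = \langle \mathbf{u}_1,\ldots,\mathbf{u}_k\rangle + C$, I can write any $\mathbf{u}\in U$ as $\mathbf{u} = \sum_i \alpha_i \mathbf{u}_i + \mathbf{c}'$ with $\mathbf{c}'\in C$, so that
\[ \psi_{\mathbf{c}}(\mathbf{u}) = \sum_i \alpha_i \psi_{\mathbf{c}}(\mathbf{u}_i) + \psi_{\mathbf{c}}(\mathbf{c}'). \]
The first sum already lies in $X$ by hypothesis, so the whole task reduces to showing that $\psi_{\mathbf{c}}(\mathbf{c}')\in X$ for every $\mathbf{c}'\in C$.

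Since $X = Y^*$, showing $\psi_{\mathbf{c}}(\mathbf{c}')\in X$ is equivalent to showing that $\varphi_{\mathbf{a}}(\psi_{\mathbf{c}}(\mathbf{c}'))\in Y$ for every $\mathbf{a}\in U$. Here is where the work gets done: because $\mathbf{c}\in C$, Lemma~\ref{lemma:swapwithCinbetween} (the Jacobi-identity consequence) lets me swap the ``outside'' argument $\mathbf{a}$ with the ``inside'' argument $\mathbf{c}'$, modulo~$Y$:
\[ \varphi_{\mathbf{a}}(\psi_{\mathbf{c}}(\mathbf{c}')) \equiv \varphi_{\mathbf{c}'}(\psi_{\mathbf{c}}(\mathbf{a})) \pmod{Y}. \]
But now $\mathbf{c}'\in C$, so $\varphi_{\mathbf{c}'}(V)\subseteq Y$, and in particular $\varphi_{\mathbf{c}'}(\psi_{\mathbf{c}}(\mathbf{a}))\in Y$. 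Hence $\varphi_{\mathbf{a}}(\psi_{\mathbf{c}}(\mathbf{c}'))\in Y$, as required. This gives $\psi_{\mathbf{c}}(\mathbf{c}')\in X$, completing the argument.

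There is essentially no obstacle here: the proof is just a direct combination of the linearity of $\psi_{\mathbf{c}}$, the decomposition of $U$ supplied by the hypothesis, and one application of Lemma~\ref{lemma:swapwithCinbetween}. The only conceptual point worth highlighting is \emph{why} the hypothesis only demands $\psi_{\mathbf{c}}(\mathbf{u}_i)\in X$ on a transversal of $C$ rather than on all of $U$: the ``$C$-part'' is handled automatically by the swap lemma because $\mathbf{c}\in C$ and the swap lands us with an outside $\varphi_{\mathbf{c}'}$ that kills everything modulo~$Y$.
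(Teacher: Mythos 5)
Your proof is correct. It uses the same two ingredients as the paper's proof --- the decomposition $U=\langle \mathbf{u}_1,\ldots,\mathbf{u}_k\rangle+C$ and Lemma~\ref{lemma:swapwithCinbetween} --- but deploys the decomposition on the opposite side. You split the \emph{argument} of $\psi_{\mathbf{c}}$, handling the transversal part directly from the hypothesis and the $C$-part via one swap that lands an outer $\varphi_{\mathbf{c}'}$ with $\mathbf{c}'\in C$. The paper instead splits the \emph{test functionals}: it first observes that membership of $\mathbf{v}$ in $X=Y^*$ only needs to be checked against $\varphi_{\mathbf{u}_1},\ldots,\varphi_{\mathbf{u}_k}$ (since $\varphi$ applied to anything in $C$ already lands in $Y$), and then swaps in the other direction, turning $\varphi_{\mathbf{u}_i}(\psi_{\mathbf{c}}(\mathbf{u}))$ into $\varphi_{\mathbf{u}}(\psi_{\mathbf{c}}(\mathbf{u}_i))$, which lies in $Y$ because $\psi_{\mathbf{c}}(\mathbf{u}_i)\in X$ by hypothesis. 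The two arguments are dual and of equal length; neither is more general than the other, and your closing remark correctly identifies why the hypothesis only needs to be imposed on a transversal of $C$.
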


\begin{proof} To prove that $\mathbf{v}\in V$ lies in $X$, it is
  enough to show that $\varphi_{\mathbf{u}_i}(\mathbf{v})\in Y$ for
  $i=1,\ldots,k$; this follows since $U$ is spanned by the vectors
  $\mathbf{u}_1,\ldots,\mathbf{u}_k$ and~$C$, the latter 
  always mapping into~$Y$, and 
$X = \bigl\{ \mathbf{v}\in V\,\bigm|\,
  \varphi_{\mathbf{u}}(\mathbf{v})\in Y\mbox{\ for all $\mathbf{u}\in
  U$}\bigr\}$. Thus, to prove that $\mathbf{c}\in Z$, it is enough to
  show that for every $\mathbf{u}\in U$ and $i=1,\ldots,k$,
  $\varphi_{\mathbf{u}_k}(\psi_{\mathbf{c}}(\mathbf{u}))\in Y$.
Since $\mathbf{c}\in C$, we know from
  Lemma~\ref{lemma:swapwithCinbetween} that
\[ \varphi_{\mathbf{u}_k}(\psi_{\mathbf{c}}(\mathbf{u}))\equiv
  \varphi_{\mathbf{u}}(\psi_{\mathbf{c}}(\mathbf{u}_k))\pmod{Y}.\]
By assumption, $\psi_{\mathbf{c}}(\mathbf{u}_k)\in X$, and therefore
$\varphi_{\mathbf{u}}(\psi_{\mathbf{c}}(\mathbf{u}_k)) \in Y$. Thus,
$\varphi_{\mathbf{u}_k}(\psi_{\mathbf{c}}(\mathbf{u}))$ lies in $Y$, and we
are done.
\end{proof}

The following counting argument will be needed a few times, and will be
the key tool used to establish the upper bounds.

\begin{lemma}
Let $A$ and $B$ be vector spaces over the same field, $\dim(A)=n$ and
$\dim(B)=1$. Let $\mathbf{a}_1,\ldots,\mathbf{a}_n$ be a basis
for~$A$, and let $\mathbf{b}\in B$ be a nonzero vector. Let
$f_1,\ldots,f_n\in\mathcal{L}(A,B)$ be linear transformations such
that
\[
f_i(\mathbf{a}_i) = \mathbf{b} \quad \mbox{for $i=1,\ldots,n$};
\qquad\mbox{and}\qquad 
f_i(\mathbf{a}_j) = \mathbf{0} \quad \mbox{if $1\leq i<j\leq n$}.
\]
Then the dimension of the subspace
\[ \Bigl\{ \bigl(\mathbf{v}_1,\ldots,\mathbf{v}_n\bigr)\in A^n\,\Bigm|\,
f_i(\mathbf{v}_j)=f_j(\mathbf{v}_i),\quad 1\leq i,j\leq n\Bigr\}\]
is $n+\binom{n}{2}$.
\label{lemma:countingargument}
\end{lemma}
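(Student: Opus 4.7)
The plan is to reduce the problem to a familiar dimension count for symmetric matrices. First I would identify $B$ with the base field $k$ via the isomorphism sending $\mathbf{b}\mapsto 1$, so each $f_i$ becomes a linear functional on $A$. Using the basis $\mathbf{a}_1,\ldots,\mathbf{a}_n$, this data assembles into an $n\times n$ matrix $M=(M_{ik})$ with $M_{ik}=f_i(\mathbf{a}_k)$. The hypotheses $f_i(\mathbf{a}_i)=\mathbf{b}$ and $f_i(\mathbf{a}_j)=\mathbf{0}$ for $i<j$ translate into $M_{ii}=1$ and $M_{ik}=0$ for $k>i$, so $M$ is lower triangular with $1$'s on the diagonal, and hence invertible.

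Next I would encode an $n$-tuple $(\mathbf{v}_1,\ldots,\mathbf{v}_n)\in A^n$ by the $n\times n$ matrix $X=(x_{kj})$ whose $j$-th column gives the coordinates of $\mathbf{v}_j$ in the chosen basis; that is, $\mathbf{v}_j=\sum_k x_{kj}\mathbf{a}_k$. A direct calculation yields
\[
f_i(\mathbf{v}_j)=\sum_k M_{ik}\,x_{kj}=(MX)_{ij},
\]
so the defining condition $f_i(\mathbf{v}_j)=f_j(\mathbf{v}_i)$ for all $i,j$ is equivalent to the single matrix condition that $MX$ be symmetric.

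Since $M$ is invertible, the linear map $X\mapsto MX$ is an automorphism of the space of all $n\times n$ matrices over $k$. Restricting it identifies the subspace in question with the space $\mathrm{Sym}_n(k)$ of symmetric $n\times n$ matrices. The standard count $\dim\mathrm{Sym}_n(k)=\binom{n+1}{2}=n+\binom{n}{2}$ then gives the result. I do not expect a significant obstacle: the only point that needs a bit of care is aligning the row/column conventions so that the condition $f_i(\mathbf{v}_j)=f_j(\mathbf{v}_i)$ matches the symmetry of $MX$ rather than some twisted variant, but this is a routine bookkeeping check once the matrix $M$ and the coordinate matrix $X$ are set up as above.
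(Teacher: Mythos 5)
Your proof is correct. It takes a genuinely different (and cleaner) route than the paper's. The paper argues by a sequential degrees-of-freedom count: $\mathbf{v}_1$ is chosen freely ($n$ parameters), then the symmetry conditions together with the triangularity of the $f_i$ force the first coordinate of $\mathbf{v}_2$, the first two coordinates of $\mathbf{v}_3$, and so on, yielding $n+(n-1)+\cdots+1=n+\binom{n}{2}$ free parameters in total — essentially a forward-substitution argument. You instead package the data into the matrix equation ``$MX$ is symmetric,'' observe that the hypotheses make $M$ unipotent lower triangular and hence invertible, and conclude that the solution space is the preimage $M^{-1}\,\mathrm{Sym}_n(k)$ under the automorphism $X\mapsto MX$, so its dimension is $\dim\mathrm{Sym}_n(k)=\binom{n+1}{2}=n+\binom{n}{2}$. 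Both arguments exploit the triangularity of the $f_i$ on the given basis, but where the paper uses it to solve the system step by step, you use it only to certify invertibility of $M$; your version is shorter, makes the isomorphism type of the solution space transparent, and (modulo the bookkeeping check you flag, which does work out: $f_i(\mathbf{v}_j)=(MX)_{ij}$ with your conventions) requires no induction or iterative bookkeeping.
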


\begin{proof}
Express $\mathbf{v}_i$ in terms of the basis for~$A$, 
$\mathbf{v}_i = \alpha_{i1}\mathbf{a}_1 + \cdots +
\alpha_{in}\mathbf{a}_n$.
We have $n$ degrees of freedom for choosing $\mathbf{v}_1$. Once
$\mathbf{v}_1$ is fixed, we must have
\[ \alpha_{21}\mathbf{b} = f_1(\mathbf{v}_2) = f_2(\mathbf{v}_1),\]
which fixes the value of $\alpha_{21}$, leaving $n-1$ degrees of
freedom for choosing $\mathbf{v}_2$. Once both $\mathbf{v}_1$ and
$\mathbf{v}_2$ are fixed, $\mathbf{v}_3$ must satisfy
\begin{alignat*}{2}
\alpha_{31}\mathbf{b} &= f_1(\mathbf{v}_3) &&=f_3(\mathbf{v}_1),\\
\alpha_{31}f_2(\mathbf{a}_1) + \alpha_{32}\mathbf{b} &=
f_2(\mathbf{v}_3) && = f_3(\mathbf{v}_2).
\end{alignat*}
The first equation completely determines $\alpha_{31}$, which together
with the second equation completely determines $\alpha_{32}$, leaving
$n-2$ degrees of freedom for choosing $\mathbf{v}_3$.

Continuing in this manner we have $n-3$ degrees of freedom for
$\mathbf{v}_4$, $n-4$ for $\mathbf{v}_5$, and so on, until we have one
degree of freedom left for $\mathbf{v}_n$. In total, we have 
\[ n + (n-1) + (n-2) + \cdots + 2 + 1 = n + \binom{n}{2}\]
degrees of freedom in choosing the $n$-tuple; this proves that the
subspace in question has dimension $n + \binom{n}{2}$, as claimed.
\end{proof}

Our proof that we can bound $\dim(U/Z)$ in terms of $\dim(V/X)$ will
proceed by induction on $\codim_W(Y)$. The basis of the induction is
contained in the following lemma:

\begin{lemma}[cf. \cite{heinnikolova}*{Lemma~1}]
Let $Y$ be a subspace of $W$ of codimension one. Let $X=Y^*$, and
let
$Z = \bigl\{ \mathbf{u}\in U\,\bigm|\, \psi_{\mathbf{u}}(U)\subseteq
X\bigr\}$.
If $\codim_V(X) = k$, then $\dim(U/Z)\leq 2k+\binom{k}{2}$. 
\label{lemma:basisforinduction}
\end{lemma}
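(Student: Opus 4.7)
The plan is to bound $\dim(U/Z)$ by splitting it as $\dim(U/C)+\dim(C/Z)$, handling each piece separately. Since $\codim_W(Y)=1$, Lemma~\ref{lemma:equalcodims} immediately gives $\dim(U/C)=\codim_V(X)=k$, so it suffices to prove $\dim(C/Z)\leq k+\binom{k}{2}$.

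To control $\dim(C/Z)$, I would invoke Corollary~\ref{cor:baseswhencodimone} to fix a vector $\mathbf{w}\in W\setminus Y$ together with $\mathbf{v}_1,\ldots,\mathbf{v}_k\in V$ and $\mathbf{u}_1,\ldots,\mathbf{u}_k\in U$ whose images form bases of $V/X$ and $U/C$ respectively, and which satisfy the triangular incidence relations $\varphi_{\mathbf{u}_i}(\mathbf{v}_i)\equiv \mathbf{w}\pmod{Y}$ and $\varphi_{\mathbf{u}_i}(\mathbf{v}_j)\equiv \mathbf{0}\pmod{Y}$ for $i<j$. I then define a linear map
\[ \Psi\colon C \longrightarrow (V/X)^k,\qquad \mathbf{c}\longmapsto \bigl(\psi_{\mathbf{c}}(\mathbf{u}_1)+X,\ldots,\psi_{\mathbf{c}}(\mathbf{u}_k)+X\bigr).\]
Since $U=\langle \mathbf{u}_1,\ldots,\mathbf{u}_k\rangle + C$, Lemma~\ref{lemma:fromCtoZ} identifies $\ker(\Psi)$ with $Z$, so $\dim(C/Z)=\dim\Psi(C)$.

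The key step is to show that $\Psi(C)$ is confined to a distinguished subspace of $(V/X)^k$. For $\mathbf{c}\in C$, Lemma~\ref{lemma:swapwithCinbetween} gives the symmetry
\[ \varphi_{\mathbf{u}_j}\bigl(\psi_{\mathbf{c}}(\mathbf{u}_i)\bigr) \equiv \varphi_{\mathbf{u}_i}\bigl(\psi_{\mathbf{c}}(\mathbf{u}_j)\bigr)\pmod{Y}\]
for all $i,j$. Writing $\overline{\varphi}_{\mathbf{u}_i}\colon V/X\to W/Y$ for the induced maps, this says that the tuple $\Psi(\mathbf{c})$ lies in
\[ T=\Bigl\{(\overline{\mathbf{x}}_1,\ldots,\overline{\mathbf{x}}_k)\in (V/X)^k\,\Bigm|\, \overline{\varphi}_{\mathbf{u}_i}(\overline{\mathbf{x}}_j)=\overline{\varphi}_{\mathbf{u}_j}(\overline{\mathbf{x}}_i)\text{ for all }i,j\Bigr\}.\]

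Now the triangular incidence conditions from Corollary~\ref{cor:baseswhencodimone} are exactly the hypotheses of Lemma~\ref{lemma:countingargument}, applied with $A=V/X$, $B=W/Y$, $\mathbf{a}_i=\mathbf{v}_i+X$, $\mathbf{b}=\mathbf{w}+Y$, and $f_i=\overline{\varphi}_{\mathbf{u}_i}$. That lemma therefore yields $\dim T=k+\binom{k}{2}$, and so $\dim(C/Z)\leq \dim T=k+\binom{k}{2}$, giving the required inequality $\dim(U/Z)\leq 2k+\binom{k}{2}$. The main obstacle is really just recognizing the correct setup: once one chooses the bases supplied by Corollary~\ref{cor:baseswhencodimone}, the Jacobi-type identity of Lemma~\ref{lemma:swapwithCinbetween} matches the counting lemma perfectly, so no further work is needed.
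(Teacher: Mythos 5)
Your proposal is correct and follows essentially the same route as the paper: decompose $\dim(U/Z)=\dim(U/C)+\dim(C/Z)$, use Lemma~\ref{lemma:equalcodims} for the first summand, then embed $C/Z$ into $(V/X)^k$ via $\mathbf{c}\mapsto(\overline{\psi_{\mathbf{c}}(\mathbf{u}_1)},\ldots,\overline{\psi_{\mathbf{c}}(\mathbf{u}_k)})$ using Corollary~\ref{cor:baseswhencodimone} and Lemma~\ref{lemma:fromCtoZ}, and bound the image by combining Lemma~\ref{lemma:swapwithCinbetween} with Lemma~\ref{lemma:countingargument}. The only difference is expository: you make explicit the subspace $T$ and the instantiation $A=V/X$, $B=W/Y$, $f_i=\overline{\varphi}_{\mathbf{u}_i}$, which the paper leaves implicit.
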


\begin{proof} As before, let $C=\bigl\{\mathbf{u}\in U\,\bigm|\,
  \varphi_{\mathbf{u}}(V)\subseteq Y\bigr\}$. From
  Lemma~\ref{lemma:equalcodims} we know that $\dim(U/C)=k$, so we only
  need to prove that $\dim(C/Z)\leq k+\binom{k}{2}$.  Fix
  $\mathbf{w}\in W\backslash Y$, and let
  $\mathbf{v}_1,\ldots,\mathbf{v}_k$ and
  $\mathbf{u}_1,\ldots,\mathbf{u}_k$ be the vectors given by
  Corollary~\ref{cor:baseswhencodimone}. 

Consider the linear map $C\mapsto (V/X)^k$ defined by:
\[ \mathbf{c}\mapsto
\left(\overline{\psi_{\mathbf{c}}(\mathbf{u}_1)},\ldots,\overline{\psi_{\mathbf{c}}(\mathbf{u}_k)}\right),\]
where $\overline{\mathbf{v}}$ is the image of $\mathbf{v}\in V$ under
the canonical projection $V\to V/X$. By
Lemma~\ref{lemma:fromCtoZ}, the kernel of the map is $Z$, so we obtain
an embedding of $C/Z$ into $(V/X)^k$. Note also that the image of
$\mathbf{c}\in C$ is a vector
$(\overline{\psi_{\mathbf{c}}(\mathbf{u}_1)},\ldots,\overline{\psi_{\mathbf{c}}(\mathbf{u}_k)})$
that satisfies the congruence
$\varphi_{\mathbf{u}_j}(\psi_{\mathbf{c}}(\mathbf{u}_i)) \equiv
\varphi_{\mathbf{u}_i}(\psi_{\mathbf{c}}(\mathbf{u}_j))\pmod{Y}$ by
Lemma~\ref{lemma:swapwithCinbetween}. This is well defined, since
elements of $X$ always map into~$Y$ via any $\varphi_{\mathbf{u}}$. 

By Lemma~\ref{lemma:countingargument}, the image of~$\mathbf{c}$ lies in a subspace of
dimension $k+\binom{k}{2}$; therefore, $C/Z$ is of dimension at most $k+\binom{k}{2}$,
By Lemma~\ref{lemma:equalcodims}:
\[ \dim(U/Z) = \dim(U/C) + \dim(C/Z) \leq k+ k + \binom{k}{2} = 2k+\binom{k}{2},\]
proving the lemma.
\end{proof}

One final observation is needed:

\begin{lemma}
Let $Y$ be a subspace of $W$, and let $Y'$ be a
subspace of $W$ with the same interior as $Y$; that is, such that ${Y'}^{**}=Y^{**}$. Then 
${Y'}^*=Y^*$.
\label{lemma:onlyinteriormatters}
\end{lemma}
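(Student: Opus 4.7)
The plan is extremely short: this is essentially an immediate consequence of Theorem~\ref{th:interiorop}. That theorem asserts, among other things, the identity $(Y^{**})^* = Y^*$ for every subspace $Y$ of $W$. In other words, the operation $Y \mapsto Y^*$ factors through the interior operator $Y \mapsto Y^{**}$: it sees only the interior of $Y$, not $Y$ itself.

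Given this, I would simply observe that
\[
Y^* \;=\; (Y^{**})^* \;=\; (Y'^{**})^* \;=\; Y'^*,
\]
where the outer equalities come from applying Theorem~\ref{th:interiorop} to $Y$ and to $Y'$ respectively, and the middle equality is the hypothesis $Y^{**} = Y'^{**}$.

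There is no real obstacle: the content of the lemma is entirely absorbed into the already-proved identity $(Y^{**})^* = Y^*$. The only thing worth flagging is that one should cite Theorem~\ref{th:interiorop} rather than try to re-derive the fact from scratch via the definitions $Y^* = \bigcap_i \varphi_{\mathbf{u}}^{-1}(Y)$ and $Y^{**} = \mathrm{span}(\varphi_{\mathbf{u}}(Y^*))$, since unrolling the definitions would just reprove the theorem. I would write the proof as a single display of the three-term chain above, with a brief sentence naming the theorem invoked.
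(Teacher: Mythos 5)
Your proof is correct and is essentially identical to the paper's: both reduce the lemma to the identity $(Y^{**})^* = Y^*$ from Theorem~\ref{th:interiorop} and insert the hypothesis $Y^{**} = Y'^{**}$ in the middle of the resulting chain of equalities. Nothing to add.
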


\begin{proof} This follows from Theorem~\ref{th:interiorop}, since
${Y'}^* = {Y'}^{***} = Y^{***}=Y$.
\end{proof}

We can now prove the main result of this section:

\begin{theorem}[cf. \cite{heinnikolova}*{Theorem~1}]
Let $Y$ be a subspace of~$W$, let $X=Y^{*}$, and let
$Z = \bigl\{ \mathbf{u}\in U\,\bigm|\, \psi_{\mathbf{u}}(U)\subseteq
X\bigr\}$.
If $\codim_V(X)=k$, then $\dim(U/Z)\leq 2k+\binom{k}{2}$, and 
equality holds only if there exists a subspace $Y'$ of $W$ such that
$\codim_W(Y')\leq 1$ and $Y'^{**}=Y^{**}$. 
\label{theorem:heinnikneccondition}
\end{theorem}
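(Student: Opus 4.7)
My plan is to proceed by induction on $m = \codim_W(Y)$. By Lemma~\ref{lemma:onlyinteriormatters}, both $X = Y^{*}$ and $Z$ depend only on the interior $Y^{**}$, so I may assume throughout that $Y$ is open. The case $m = 0$ gives $Y = W$, $X = V$, $Z = U$, so $\dim(U/Z) = 0$, and the case $m = 1$ is exactly Lemma~\ref{lemma:basisforinduction}.

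For the inductive step $m \geq 2$, I would express $Y$ as an intersection $Y = Y_1 \cap Y_2$ with $\codim_W(Y_1) = 1$ and $\codim_W(Y_2) = m - 1$, so that Lemma~\ref{lemma:basisforinduction} handles $Y_1$ and the induction hypothesis handles $Y_2$. Concretely, fix a complement $W = Y \oplus E$ (so $\dim E = m$), pick a nonzero $\mathbf{e}_0 \in E$ and a hyperplane $E'$ of $E$ with $\mathbf{e}_0 \notin E'$, and set $Y_1 = Y \oplus E'$ and $Y_2 = Y \oplus \langle \mathbf{e}_0 \rangle$; one checks easily that $Y_1 \cap Y_2 = Y$. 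Defining $X_i = Y_i^{*}$ and $Z_i = \{\mathbf{u} \in U \mid \psi_{\mathbf{u}}(U) \subseteq X_i\}$, the definition of the interior operator together with $Y = Y_1 \cap Y_2$ gives $X = X_1 \cap X_2$, and hence $Z = Z_1 \cap Z_2$. Writing $k_i = \codim_V(X_i)$, Lemma~\ref{lemma:basisforinduction} and the inductive hypothesis yield
\[ \dim(U/Z) \leq \dim(U/Z_1) + \dim(U/Z_2) \leq 2(k_1 + k_2) + \binom{k_1 + k_2}{2} - k_1 k_2, \]
using the identity $\binom{k_1}{2} + \binom{k_2}{2} = \binom{k_1 + k_2}{2} - k_1 k_2$. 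Since each $Y_i$ is proper in $W$, the identity $V^{*} = W$ forces $X_i$ to be proper in $V$, so $k_i \geq 1$.

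The main obstacle is that in general one only has the subadditive estimate $k \leq k_1 + k_2$, not equality. The displayed bound reduces to the desired $2k + \binom{k}{2} - k_1 k_2$ precisely when $k_1 + k_2 = k$, equivalently $X_1 + X_2 = V$. I expect that the freedom in choosing $(\mathbf{e}_0, E')$ inside $E$ permits this to be arranged: the failure of additivity $X_1 + X_2 \subsetneq V$ cuts out a proper closed subvariety of the parameter space of such decompositions, so a generic choice of $(\mathbf{e}_0, E')$ should achieve $k_1 + k_2 = k$. Verifying this carefully, via a dimension count in $V/X$ and a close examination of how the closed subspace $X_i = Y_i^{*}$ depends linearly on the choice of $E'$ and $\mathbf{e}_0$, is the technical heart of the argument that I would need to work out in detail.

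Once additivity is secured, $k_1 k_2 \geq 1$ forces $\dim(U/Z) < 2k + \binom{k}{2}$ strictly whenever $m \geq 2$. Therefore equality in the bound can only occur when $\codim_W(Y^{**}) \leq 1$, in which case $Y' = Y^{**}$ itself serves as the required witness with $\codim_W(Y') \leq 1$ and $Y'^{**} = Y^{**}$.
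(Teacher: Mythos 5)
The gap you flag as ``the technical heart'' is genuine, and in fact the claim you hope to verify there is false. Take $n=2$ and $Y=\{\mathbf{0}\}$, so $V=\langle v_{21}\rangle$, $W=\langle w_{211},w_{212}\rangle$, $X=Y^{*}=\{\mathbf{0}\}$ and $k=1$, while $\codim_W(Y)=2$. For \emph{any} line $Y_1$ in $W$ one has $Y_1^{*}=\{\mathbf{0}\}$ (a nonzero $\alpha v_{21}$ has $\varphi_1,\varphi_2$-images spanning all of $W$), so every decomposition $Y=Y_1\cap Y_2$ yields $k_1=k_2=1$ and $k_1+k_2=2>k$; no choice of $(\mathbf{e}_0,E')$, generic or otherwise, achieves additivity. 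Your resulting bound $2(k_1+k_2)+\binom{k_1+k_2}{2}-k_1k_2=4$ exceeds the target $2k+\binom{k}{2}=2$, so the induction does not close. (Genericity is also a fragile tool here since everything lives over $\mathbb{F}_p$, where a proper closed subvariety can swallow all rational points.) The same example breaks your treatment of the equality clause: there $\dim(U/Z)=2=2k+\binom{k}{2}$ even though $\codim_W(Y^{**})=2$. The theorem only asserts the existence of \emph{some} $Y'$ with $\codim_W(Y')\leq 1$ and $Y'^{**}=Y^{**}$ (here $Y'=\langle w_{211}\rangle$ works); it does not assert $\codim_W(Y^{**})\leq 1$, and your opening normalization ``assume $Y$ is open'' goes in the wrong direction --- replacing $Y$ by $Y^{**}$ can only \emph{increase} $\codim_W(Y)$, whereas the useful reduction (the one the paper makes) replaces $Y$ by a \emph{larger} subspace with the same interior whenever one exists.

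The paper's inductive step is structured quite differently and avoids the additivity problem entirely. After reducing to the case where every $Y_2$ properly containing $Y$ has $Y_2^{*}$ properly containing $Y^{*}$, it chooses a single $Y_2\supseteq Y$ with $\dim(Y_2)=\dim(Y)+1$ and $X_2=Y_2^{*}$ of \emph{minimal} dimension, sets $\omega=\dim(X_2/X)$ with $0<\omega<k$, and then splits $\dim(U/Z)=\dim(U/Z_2)+\dim(Z_2/(Z_2\cap C))+\dim((Z_2\cap C)/Z)$. The first term is handled by induction, the second by introducing an auxiliary $Y_3$ and mapping into $\mathcal{L}(V/X_3,Y_2/Y)$, and the third by an embedding into $(X_2/X)^{\omega}$ controlled by Lemma~\ref{lemma:countingargument}; the three bounds combine to give strict inequality. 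If you want to salvage an inductive argument, you will need machinery of this kind --- in particular the subspace $C$ and the interplay between $\varphi$ and $\psi$ via Lemma~\ref{lemma:swapwithCinbetween} --- rather than a sum of two lower-codimension instances of the theorem.
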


\begin{proof} We proceed by induction on $r=\codim_W(Y)$. If
  $\codim_W(Y)=0$, then $X=V$, $Z=U$, and the result holds
  trivially. If $\codim_W(Y)=1$, then the result is
  Lemma~\ref{lemma:basisforinduction}; the final clause also holds,
  since the consequent is trivially true with $Y'=Y$. Assume then that
  $\codim_W(Y)\geq 2$.  As before, let
\[ C = \bigl\{\mathbf{u}\in U\,\bigm|\,
  \varphi_{\mathbf{u}}(V)\subseteq Y\bigr\}.\]

Suppose inductively that the result holds for any subspace $Y'$ of $W$
such that $\codim_W(Y')<\codim_W(Y)=r$. If there
exists $Y'$ with $\codim_W(Y')<\codim_W(Y)$ and $Y'^{**}=Y^{**}$, then we
can replace $Y$ with $Y'$. Note that since $Y'^{*}=Y^*$, 
the subspaces $Z$ of $U$ and
$X$ of~$V$ are not affected by change, so the result holds by
induction. We may therefore assume that if $Y_2$ is any subspace of $W$
that properly contains $Y$, then $Y^*$ is properly contained in
$Y_2^*$. To prove the result for $Y$, we will need to establish that the
strict inequality holds in this situation.

Among all $Y_2$ such that $Y\subseteq Y_2$, and $\dim(Y_2)=\dim(Y)+1$,
we pick one for which $X_2 = Y_2^*$ is of minimal dimension. Note that
$X$ is properly contained in $X_2$; if we let $\omega=\dim(X_2/X)$,
then $0<\omega<k$ and $\codim_V(X_2) = k-\omega$. Let
$Z_2 = \bigl\{ \mathbf{u}\in U\,\bigm|\,
\psi_{\mathbf{u}}(U)\subseteq X_2\bigr\}$;
again we have that $Z\subseteq Z_2$. By the induction hypothesis, we know that
$\dim(U/Z_2) \leq 2(k-\omega) + \binom{k-\omega}{2}$. We now want to
estimate the dimension of $Z_2/Z$. We will do this in two steps, first
by giving an upper bound for the dimension of $Z_2/(Z_2\cap C)$, and
then giving an upper bound for the dimension of $(Z_2\cap C)/Z$. 

Let $Y_3$ be any subspace of $W$ that contains $Y$,
$\dim(Y_3)=\dim(Y)+1$, and $Y_3\neq Y_2$. This is possible because
$\dim(Y)<\dim(W)-1$, so there are at least $p+1$ subspaces of dimension
one more than $\dim(Y)$. Let $X_3 = Y_3^*$; by choice of $Y_2$,
$\dim(X_3/X)\geq \omega$, and so $\dim(V/X_3)\leq k-\omega$. 

If $\mathbf{v}\in X_3$ and $\mathbf{u}\in Z_2$, then
$\varphi_{\mathbf{u}}(\mathbf{v})\in Y_2\cap Y_3=Y$; 
so the map $Z_2\mapsto
\mathcal{L}(V,Y_2/Y)$ defined by $\mathbf{u}\mapsto
\varphi_{\mathbf{u}}$ factors through $\mathcal{L}(V/X_3,Y_2/Y)$. The
kernel of this map is $Z_2\cap C$, and therefore
\[\dim(Z_2/(Z_2\cap C)) \leq \dim(\mathcal{L}(V/X_3,Y_2/Y)) =
\dim(V/X_3)\leq k-\omega.\]

Finally, we want an upper bound for $\dim((Z_2\cap
C)/Z)$. This is the difficult part of the induction. 

By Corollary~\ref{cor:baseswhencodimone}, we can select elements
$\mathbf{x}_1,\ldots,\mathbf{x}_{\omega}$ in $X_2$,
and $\mathbf{u}_1,\ldots,\mathbf{u}_{\omega}$ in $U$, such that the
$\mathbf{x}_i$ projecto onto a basis for $X_2/X$, and
\begin{alignat*}{3}
\varphi_{\mathbf{u}_i}(\mathbf{x}_i) &\equiv \mathbf{w} &&\pmod{Y}
&\qquad&1\leq i\leq \omega,\\
\varphi_{\mathbf{u}_i}(\mathbf{x}_j) &\equiv \mathbf{0} &&\pmod{Y}
&\qquad&1\leq i<j\leq \omega.
\end{alignat*}
Since the images of $\mathbf{u}_1,\ldots,\mathbf{u}_{\omega}$ are
linearly independent when we project to $U/C$, if we project to
$U/(Z_2\cap C)$ the images are also linearly independent. Extend this
list to
$\mathbf{u}_1,\ldots,\mathbf{u}_{\omega},\mathbf{u}_{\omega+1},\ldots,\mathbf{u}_s$
such that the projections form a basis for $U/(Z_2\cap C)$.

Fix $j>\omega$. Adding suitable multiples of $\mathbf{u}_{\omega}$ to
$\mathbf{u}_j$, we may assume that
$\varphi_{\mathbf{u}_j}(\mathbf{x}_{\omega})$ is in~$Y$. Then, adding
multiples of $\mathbf{u}_{\omega-1}$ to $\mathbf{u}_j$, which will not
change the value of $\varphi_{\mathbf{u}_j}(\mathbf{x}_{\omega})$
modulo~$Y$, we may also assume that
$\varphi_{\mathbf{u}_j}(\mathbf{x}_{\omega-1})\in Y$. Continuing in
this manner, adding multiples of
$\mathbf{u}_{\omega-2},\ldots,\mathbf{u}_1$, we may assume that
$\varphi_{\mathbf{u}_j}(\mathbf{x}_i)\in Y$ for $i=1,\ldots,\omega$;
repeating this for each $j>\omega$, we obtain:
\[ \varphi_{\mathbf{u}_j}(\mathbf{x}_i)\in Y\quad\mbox{for all $i,j$,
  $1\leq i\leq \omega < j\leq s$.}\]
Since $X_2 = \langle
\mathbf{x}_1,\ldots,\mathbf{x}_{\omega}\rangle+X$, we thus have that
for $j>\omega$, $\varphi_{\mathbf{u}_j}(X_2)\subseteq Y$. 

Given $\mathbf{u}\in Z_2\cap C$, consider
\[\left(\overline{\psi_{\mathbf{u}}(\mathbf{u}_1)},\ldots,
\overline{\psi_{\mathbf{u}}(\mathbf{u}_{\omega})},
\overline{\psi_{\mathbf{u}}(\mathbf{u}_{\omega+1})},\ldots,
\overline{\psi_{\mathbf{u}}(\mathbf{u}_s)}\right)\in (X_2/X)^s.\]
Since $\mathbf{u}\in C$, if $1\leq i<j\leq s$ then by
Lemma~\ref{lemma:swapwithCinbetween} the coordinates satisfy
\[\varphi_{\mathbf{u}_j}(\psi_\mathbf{u}(\mathbf{u}_i)) \equiv
\varphi_{\mathbf{u}_i}(\psi_{\mathbf{u}}(\mathbf{u}_j)) \pmod{Y};\]
the values are well defined modulo $X$, since $X=Y^*$. In addition,
$\mathbf{u}\in Z_2$, and so $\psi_{\mathbf{u}}(\mathbf{u}_i)\in X_2$;
therefore, if $j>\omega$, then
$\varphi_{\mathbf{u}_j}(\psi_{\mathbf{u}}(\mathbf{u}_i))\in
\varphi_{\mathbf{u}_j}(X_2)\subseteq Y$. In particular, if $j>\omega$,
then $\varphi_{\mathbf{u}_j}(\psi_{\mathbf{u}}(\mathbf{u}_i))\equiv 0
\pmod{Y}$ for all~$i$, and therefore
\[ \varphi_{\mathbf{u}_1}(\psi_{\mathbf{u}}(\mathbf{u}_j))\equiv
\cdots \equiv
\varphi_{\mathbf{u}_s}(\psi_{\mathbf{u}}(\mathbf{u}_j))\equiv 0
\pmod{Y}.\]
Since $U=\langle \mathbf{u}_1,\ldots,\mathbf{u}_s\rangle + C$, it
follows that if $j>\omega$ then $\varphi_{\mathbf{a}}(\psi_{\mathbf{u}}(\mathbf{u}_j))\in
Y$ for all $\mathbf{a}\in U$. Therefore
$\psi_{\mathbf{u}}(\mathbf{u}_j)\in X$ for all $j>\omega$. Thus, we
conclude that for all $\mathbf{u}\in Z_2\cap C$ and all $j>\omega$,
$\psi_{\mathbf{u}}(\mathbf{u}_j)\in X$. Therefore, in the $s$-tuple
\[\left(\overline{\psi_{\mathbf{u}}(\mathbf{u}_1)},\ldots,
\overline{\psi_{\mathbf{u}}(\mathbf{u}_{\omega})},
\overline{\psi_{\mathbf{u}}(\mathbf{u}_{\omega+1})},\ldots,
\overline{\psi_{\mathbf{u}}(\mathbf{u}_s)}\right)\in (X_2/X)^s\]
only the first $\omega$ components may be nontrivial. 

Consider then the linear map $Z_2\cap C \longmapsto (X_2/X)^s$ given
by
\[ \mathbf{u} \mapsto \left(\overline{\psi_{\mathbf{u}}(\mathbf{u}_1)},\ldots,
\overline{\psi_{\mathbf{u}}(\mathbf{u}_{\omega})},
\overline{\psi_{\mathbf{u}}(\mathbf{u}_{\omega+1})},\ldots,
\overline{\psi_{\mathbf{u}}(\mathbf{u}_s)}\right).\]
We claim that the kernel of this map is $Z$.

Certainly, $Z$ is contained in the kernel. Conversely, let $\mathbf{u}\in Z_2\cap
C$ be such that $\psi_{\mathbf{u}}(\mathbf{u}_j)\in X$ for
$j=1,\ldots,s$. Since $U=\langle
\mathbf{u}_1,\ldots,\mathbf{u}_s\rangle + (Z_2\cap C)$, to prove that
$\mathbf{u}\in Z$ it is enough to show that $\psi_{\mathbf{u}}(Z_2\cap
C)\subseteq X$. In turn, to establish this it is enough to show that
if $\mathbf{z}\in Z_2\cap C$, then for all $\mathbf{a}\in U$,
$\varphi_{\mathbf{a}}(\psi_{\mathbf{u}}(\mathbf{z}))\in Y$. Since
$\mathbf{u}\in Z_2\cap C\subseteq C$, we know that
$\varphi_{\mathbf{a}}(\psi_{\mathbf{u}}(\mathbf{z}))\equiv
\varphi_{\mathbf{z}}(\psi_{\mathbf{u}}(\mathbf{a}))\pmod{Y}$; and
since $\mathbf{z}\in Z_2\cap C\subseteq C$, we also have that
$\varphi_{\mathbf{z}}(\psi_{\mathbf{u}}(\mathbf{a}))\in Y$. Therefore,
$\varphi_{\mathbf{a}}(\psi_{\mathbf{u}}(\varphi_{\mathbf{z}}))\in Y$
for all $\mathbf{a}\in U$ and we conclude that
$\psi_{\mathbf{u}}(\mathbf{z})\in X$ as desired. Therefore
$\mathbf{u}\in Z$, proving the claim. Putting this claim together
with the observation that only the first $\omega$ components can be
nontrivial in any case, we conclude that we have an embedding 
\begin{eqnarray*}
(Z_2\cap C)/Z &\hookrightarrow & (X_2/X)^{\omega}\\
\mathbf{u} &\longmapsto&
\left(\overline{\psi_{\mathbf{u}}(\mathbf{u}_1)},\ldots,\overline{\psi_{\mathbf{u}}(\mathbf{u}_{\omega})}\right),
\end{eqnarray*}
and that the $\omega$-tuples in the image satisfy
\[ \varphi_{\mathbf{u}_j}(\psi_{\mathbf{u}}(\mathbf{u}_i)) \equiv
\varphi_{\mathbf{u}_i}(\psi_{\mathbf{u}}(\mathbf{u}_j))\pmod{Y}\qquad\mbox{for
  all $i,j$, $1\leq i<j\leq \omega$.}\]
Applying Lemma~\ref{lemma:countingargument}, we have that $(Z_2\cap
  C)/Z$ embeds into a subspace of dimension
  $\omega+\binom{\omega}{2}$, and therefore $\dim((Z_2\cap C)/Z)\leq \omega+\binom{\omega}{2}$.

Thus we have shown that:
\begin{eqnarray*}
\dim(U/Z) & = & \dim(U/Z_2) + \dim(Z_2/(Z_2\cap C)) + \dim((Z_2\cap
C)/Z),\\
\dim(U/Z_2) & \leq & 2(k-\omega) + \binom{k-\omega}{2},\\
\dim(Z_2/(Z_2\cap C)) & \leq & k-\omega,\\
\dim((Z_2\cap C)/Z) & \leq & \omega + \binom{\omega}{2}.
\end{eqnarray*}
Putting it all together, we have
(cf.~Theorem~1 in~\cite{heinnikolova}):
\begin{eqnarray*}
\dim(U/Z) &\leq & 3k - 2\omega +
\binom{k-\omega}{2}+\binom{\omega}{2}\\
& = & 3k - 2\omega + \frac{k^2-k}{2} + \omega(\omega-k)\\
& = & 2k + \binom{k}{2} + (k-\omega)(1-\omega) - \omega\\
& = & 2k + \binom{k}{2} - \Bigl( (\omega-1)(k-\omega-1) + 1\Bigr)\\
& < & 2k + \binom{k}{2}.
\end{eqnarray*}
The last inequality holds since $0<\omega<k$, and therefore both
$\omega-1$ and $k-\omega-1$ are nonnegative. 

This strict inequality finishes the inductive step, and proves the theorem.
\end{proof}

Translating back to groups we obtain the promised improvement on the
necessary condition of Heineken and Nikolova:

\begin{theorem}
Let $G$ be a $p$-group of class at most~$2$ and exponent~$p$. If $G$
is capable, and $[G,G]$ is of rank~$k$, then $G/Z(G)$ is of rank at
most $2k+\binom{k}{2}$. Moreover, equality holds only if there exists
a witness $H$ to capability such that $H_3$ is cyclic (possibly
trivial).
\label{th:strengthenedheinnik}
\end{theorem}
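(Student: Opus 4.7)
The plan is to translate the group-theoretic claim into the linear-algebra framework of Section~\ref{sec:linalg} and apply Theorem~\ref{theorem:heinnikneccondition}. I set $n={\rm rank}(G^{\rm ab})$, pick $g_1,\ldots,g_n\in G$ projecting to a basis of $G^{\rm ab}$, and let $F$ be the $3$-nilpotent product of $n$ cyclic groups of order $p$, with the standard identifications $U=F^{\rm ab}$, $V$ with the subgroup generated by the $[x_j,x_i]$, and $W=F_3$. Writing $\ker(\psi)=X\oplus F_3$ as in Corollary~\ref{cor:simplecondition}, capability of $G$ combined with Theorem~\ref{th:capasclosure} gives $X^{**}=X$; set $Y:=X^{*}$, so that $Y^{*}=X^{**}=X$ and $Y^{**}=Y$.

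Next I identify the two group-theoretic ranks with the corresponding linear-algebra quantities. Trivially ${\rm rank}([G,G])=\dim(V/X)=k$. For the other, since $G$ has class at most two $[G,G]\subseteq Z(G)$, so $G/Z(G)\cong G^{\rm ab}/(Z(G)/[G,G])$. A direct commutator computation using Proposition~\ref{prop:commident} shows that an element of $G$ whose image in $U$ is $\mathbf{u}$ is central precisely when $\psi_i(\mathbf{u})=\mathbf{u}\wedge\mathbf{u}_i$ lies in $X$ for every $i$, that is, when $\mathbf{u}\in Z=\{\mathbf{u}\in U\mid\psi_{\mathbf{u}}(U)\subseteq X\}$. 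Hence ${\rm rank}(G/Z(G))=\dim(U/Z)$, and the inequality $\dim(U/Z)\leq 2k+\binom{k}{2}$ of Theorem~\ref{theorem:heinnikneccondition} is exactly the main statement.

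For the ``moreover'' clause, equality forces the existence of a subspace $Y'$ of $W$ with $\codim_W(Y')\leq 1$ and $(Y')^{**}=Y^{**}$; by Lemma~\ref{lemma:onlyinteriormatters} we get $(Y')^{*}=Y^{*}=X$. Set $H:=F/Y'$. Since $Y'\subseteq W=F_3$, at once $H_3=F_3/Y'$ has dimension at most one, so $H_3$ is cyclic (possibly trivial) as required. The remaining and most delicate step is to verify that $H$ is actually a witness, i.e.\ $Z(H)=N/Y'$ with $N=\ker(\psi)$. I would write a general $h=gcf\in F$ with $g$ a word in the $x_i$ representing $\mathbf{u}\in U$, $c$ representing $\mathbf{v}\in V$, and $f\in F_3$, and expand $[h,x_i]$ via Proposition~\ref{prop:commident}: the $V$-component equals $\psi_i(\mathbf{u})$ while the $W$-component equals $\varphi_i(\mathbf{v})$ plus a term in $W$ depending only on $\mathbf{u}$ and $i$. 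Demanding $[h,F]\subseteq Y'\subseteq W$ first forces $\psi_i(\mathbf{u})=\mathbf{0}$ for every $i$, hence $\mathbf{u}=\mathbf{0}$; then $\varphi_i(\mathbf{v})\in Y'$ for every $i$ forces $\mathbf{v}\in(Y')^{*}=X$, so $h\in N$. Conversely, $h\in N$ immediately gives $[h,F]\subseteq X^{*}=Y=Y^{**}=(Y')^{**}\subseteq Y'$. Thus $Z(H)=N/Y'$ and $H/Z(H)\cong G$. The hard work is packaged in Theorem~\ref{theorem:heinnikneccondition}; the chief obstacle here is this final commutator bookkeeping showing that the codimension-one $Y'$ supplied by the theorem really yields a bona-fide witness with $H_3$ cyclic.
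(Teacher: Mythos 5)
Your proposal is correct and follows essentially the same route as the paper: identify $[G,G]\cong V/X$ and $G/Z(G)\cong U/Z$, apply Theorem~\ref{theorem:heinnikneccondition} to $Y=X^{*}$ (legitimate since $X$ closed gives $Y^{*}=X$), and take $H=F/Y'$ for the codimension-at-most-one $Y'$ in the equality case. The only difference is that you spell out the commutator verification that $Z(F/Y')=N/Y'$, a step the paper asserts without detail; your bookkeeping there is sound.
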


\begin{proof} If we fix an isomorphism $G^{\rm ab}\cong U$ and let $X$
  be the corresponding subspace of~$V$, then $[G,G]\cong V/X$ and
  $G/Z(G)\cong U/Z$, where
$Z = \bigl\{ \mathbf{u}\in U\,\bigm|\, \psi_{\mathbf{u}}(U)\subseteq
  X\bigr\}$.
Thus, the inequality by Theorem~\ref{theorem:heinnikneccondition}. For the
``moreover'' clause, note that if we let $H$ be $F/M$, where $F$ is the
$3$-nilpotent product of $n$ groups of order~$p$ (with $n$ the
rank of $G^{\rm ab}$), and $M$ is the subspace of $H_3$ corresponding
to any $Y$ with $Y^*=X$, then $H$ will be a witness fot the capability
of~$X$ (as we are assuming that $X$ is closed). By picking the $Y'$ of
codimension at most~$1$ guaranteed by the theorem, we obtain a witness
with the desired property. 
\end{proof}

\section{The $5$-generated case.}\label{sec:applications}

In this section we combine our results so far to characterise the
capable groups among the $5$-generated groups of class at most two and
exponent~$p$.  

One way to interpret Corollary~\ref{cor:Xsmallenough} is that if $G$
is of class exactly two and exponent~$p$, and the commutator subgroup
of~$G$ is ``large enough,'' then $G$ will be capable. On the other
hand, Theorem~\ref{th:strengthenedheinnik} says that if $G$ is
capable, of class exactly two and exponent~$p$, then the commutator
subgroup of~$G$ cannot be ``too small''. Put together, the results
seem to indicate that a group of class exactly two and prime exponent
will be capable if and only if it is ``nonabelian enough.''  The
characterisation below seems to reinforce this intuition.

\subsection*{The $4$-generated case.}
It is of course well known that a nontrivial cyclic group cannot be
capable. It has also also been long known that an extra-special
$p$-group is capable if and only if it is of order $p^3$ and
exponent~$p$. The
following result shows that, at least for $4$-generated groups in the
class we are considering, these are the \textit{only} exceptions to
capability.

\begin{theorem}
Let $p$ be a prime, and let $G$ be a $4$-generated group of class at most~$2$ and
exponent~$p$. Then $G$ is one and only one of the following:
\begin{itemize}
\item[(i)] Cyclic and nontrivial;
\item[(ii)] Extra-special of order $p^5$ and exponent~$p$;
\item[(iii)] Capable.
\end{itemize}
\label{th:fourgeneratedcase}
\end{theorem}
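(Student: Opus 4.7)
The plan is to classify $4$-generated groups of class at most two and exponent~$p$ by the pair $(n,m)$, where $n={\rm rank}(G^{\rm ab})\le 4$ and $m={\rm rank}([G,G])$, combining the direct-sum cancellation results with the sufficient condition of Theorem~\ref{th:largeenoughcomm} and the non-capability of higher-rank extra-special groups from Corollary~\ref{cor:extraspecialcase}.

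First I dispose of the abelian case $m=0$. If $n\le 1$ then $G$ is cyclic, falling in case~(i) (or trivially capable if $G$ is the trivial group). If $n\ge 2$, writing $G\cong C_p\oplus C_p^{n-1}$ and iterating Corollary~\ref{cor:directsum} shows $G$ is capable. In the remaining case, where $G$ has class exactly two, Corollary~\ref{cor:generalcancelcentralsummand} lets me reduce to proving capability of a direct summand $K$ with $Z(K)=[K,K]$; since $G$ is $4$-generated and $K$ is nonabelian, $K$ is $n'$-generated with $n'\ge 2$ and $n'+r\le 4$.

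Under the hypothesis $Z(K)=[K,K]$, the commutator pairing $K^{\rm ab}\times K^{\rm ab}\to [K,K]$ has trivial radical, so being alternating it has even rank equal to~$n'$. This rules out $n'=3$ immediately, and for $n'=2$ it forces $K$ to be extra-special of order $p^3$ and exponent~$p$, hence capable by Corollary~\ref{cor:coprodalways}. I am left with the genuinely new case $n'=4$, in which $r=0$ and $G=K$. If $m\ge 2$, Theorem~\ref{th:largeenoughcomm} applies: a direct look-up in Table~\ref{table:valuesoff} gives $f(7-m)<4$ for every $m\in\{2,\ldots,6\}$, so $G$ is capable. If $m=1$, the commutator pairing is a non-degenerate alternating $\mathbb{F}_p$-valued form on a $4$-dimensional space, which by the standard structure theorem for such forms pins $G$ down (up to isomorphism) to the extra-special group of order $p^5$ and exponent~$p$; Corollary~\ref{cor:extraspecialcase} then rules out capability, placing $G$ in case~(ii).

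The three cases are mutually exclusive: a nontrivial cyclic group cannot be capable (it would require a group $H$ with $H/Z(H)$ cyclic, forcing $H$ abelian and $G$ trivial), and extra-special of order $p^5$ is excluded from capability by Corollary~\ref{cor:extraspecialcase}. The delicate step is the bookkeeping that channels everything into the $n'=4$ sub-case via the cancellation corollaries; once that is arranged, the actual verification (the $f(7-m)$ inequality and the uniqueness of non-degenerate alternating $\mathbb{F}_p$-forms on a $4$-dimensional space) is routine.
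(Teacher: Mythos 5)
Your overall architecture (reduce via Corollary~\ref{cor:generalcancelcentralsummand} to a summand $K$ with $Z(K)=[K,K]$, then dispatch the cases by rank using Theorem~\ref{th:largeenoughcomm} and Corollary~\ref{cor:extraspecialcase}) is sound and close in spirit to the paper's, which instead stratifies directly by the minimal number of generators of $G$ and only invokes the cancellation theorem in the one residual case $n=4$, $m=1$, $Z(G)\neq[G,G]$. But there is a genuine error in your middle step. You claim that because the commutator pairing $K^{\rm ab}\times K^{\rm ab}\to[K,K]$ is alternating with trivial radical, $n'$ must be even, and you use this to ``rule out $n'=3$ immediately.'' That parity argument is valid only for \emph{scalar-valued} alternating forms, i.e.\ when ${\rm rank}([K,K])=1$. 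For $[K,K]$ of rank $\geq 2$ an alternating vector-valued pairing can perfectly well have trivial radical on an odd-dimensional space: the relatively free group of class two and exponent $p$ on three generators has $Z(K)=[K,K]$ of rank $3$ with $K^{\rm ab}$ of rank $3$, and so does its quotient by a single commutator (rank $2$ commutator subgroup). So the case $n'=3$ is nonempty and your proof, as written, never treats it.

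The gap is easily repaired with the tools you are already using: for $n'=3$ and $Z(K)=[K,K]$ one necessarily has $m\in\{2,3\}$ (here $m=1$ \emph{is} excluded by the parity argument, since the form is then scalar-valued), and Theorem~\ref{th:largeenoughcomm} gives $f\bigl(\binom{3}{2}-m+1\bigr)=f(2)=0<3$ or $f(1)=0<3$, so $K$ is capable. This is exactly how the paper handles all minimally $3$-generated groups. Two smaller points: the theorem is stated for an arbitrary prime $p$, and the linear-algebra machinery (and hence Corollary~\ref{cor:directsum}, Theorem~\ref{th:largeenoughcomm}) is developed only for odd $p$; you should note, as the paper does, that for $p=2$ exponent $2$ forces $G$ abelian so only the $m=0$ analysis is needed (with capability of $C_2^n$, $n\geq 2$, coming from Baer's classification rather than from the odd-$p$ corollaries). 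Your verification $f(7-m)<4$ for $m\geq 2$ and your identification of the $n'=4$, $m=1$ group as the extra-special group of order $p^5$ are both correct.
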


\begin{proof} Following the notation of
  Theorem~\ref{th:largeenoughcomm}, let $n$ be the rank of $G^{\rm
    ab}$, and let $m$ be the rank of $[G,G]$. 

The case of $p=2$ is trivial, since $G$ is abelian in this
case. Assume then $p>2$. The three categories are of course disjoint,
so we only need to show that any such~$G$ is one of the three. If $G$
is trivial, then it is capable. If $G$ is minimally $1$-generated,
then it is nontrivial cyclic.

For $G$ minimally $2$-generated, Theorem~\ref{th:largeenoughcomm}
shows that $G$ is capable: we have $n=2$ and $m=0$ or~$1$, and in either
case $f(\binom{2}{2}-m+1)<2$. For $G$ minimally $3$-generated, again
Theorem~\ref{th:largeenoughcomm} settles the problem: here we have
$m=0$, $1$, $2$, or~$3$, and $f(\binom{3}{2}-m+1)<3$ in all cases.

Consider then the case of $G$ minimally $4$-generated; $m$ must
satisfy $0\leq m\leq 6$. If $m\geq 2$, then $f(\binom{4}{2}-m+1)<4$,
so $G$ will be capable. If $m=0$, then $G\cong C_p^4$, which is
capable. Thus, the only case not covered is when $m=1$, i.e., the
commutator subgroup is cyclic. 

If $Z(G)\neq [G,G]$, then we apply Theorem~\ref{th:cancelcentralsummand}
and the $n=3$ case to
deduce that $G$ is capable. Finally, if $Z(G)=[G,G]$ then we apply
Theorem~\ref{th:strengthenedheinnik}: the group cannot be capable,
since $4> 2(1)+\binom{1}{2}$. Alternatively, $G$ is of order
$p^5$, exponent~$p$, and extra-special, and so we apply
Corollary~\ref{cor:extraspecialcase}. 
\end{proof}

\subsection*{The minimally $5$-generated case.}

We next consider the case of $n=5$. Here,
Theorem~\ref{th:largeenoughcomm} settles the cases $m\geq 4$; 
and the case $m=0$ is of course trivial. We can finish the characterisation
applying some easy group theory, and finally by applying non-trivial
work of Brahana~\cite{brahanalines} to obtain a very satisfying result
similar to Theorem~\ref{th:fourgeneratedcase}.

If $m=1$ then our group $G$ has cyclic commutator subgroup. We
cannot then have $Z(G)=[G,G]$ since $G^{\rm ab}$ is of order $p^5$,
and so the group $G$ will be either of the form $E\oplus C_p$, where
$E$ is extra-special of order $p^5$ and exponent~$p$ (hence $G$ is not
capable), or else of the form $K\oplus C_p^3$ where $K$ is the
nonabelian (extra-special) group of order $p^3$ and exponent~$p$ (and
so $G$ will be capable). 

To discuss the cases of $m=2$ and~$m=3$, recall that if $V$ is a
vector space and $k$ is an integer, $0\leq k\leq \dim(V)$, then the
Grassmannian $Gr(k,V)$ is the set of all $k$-dimensional subspaces
of~$V$. This set has a rich geometric structure, though we will only
touch on it briefly.

To solve the cases of $m=2$ and~$m=3$, by
Proposition~\ref{prop:symmetry} we only need to consider one
representative from each orbit of the action of ${\rm GL}(5,p)$ in
$Gr(7,V)$ (for the case $m=3$) and $Gr(8,V)$ (for the case
$m=2$). In~\cite{brahanalines}, Brahana shows that there are $6$
orbits in $Gr(2,V)$ and $22$ orbits in $Gr(3,V)$. By taking the
orthogonal complement of each subspace (relative to our prefered basis
$v_{ji}$, $1\leq i<j\leq n$, with $\langle v_{ji},v_{rs}\rangle = 1$
if $(j,i)=(r,s)$ and $0$ otherwise) we obtain a well-known duality
that shows that the number of orbits in $Gr(k,V)$ is the same as the
number of orbits in $Gr(\binom{n}{2}-k,V)$ (see for example the
paragraphs leading to \cite{bush}*{Theorem 1}; the argument there is
for $n=4$ and $k=6$, but it trivially generalizes); thus, we can take
the lists from \cite{brahanalines} and by taking orthogonal complements,
obtain a complete list of orbit representatives for the cases we are
interested in. It is then an easy matter to check which ones
correspond to closed subspaces and which do not.

There are six orbits of $8$-dimensional subspaces under the action
of~${\rm GL}(5,p)$: we give representatives of the orbits as orthogonal complements
to the representatives found under the heading \textit{``the lines
of~$S$''} in~\cite{brahanalines}*{p.~547}:
\begin{itemize}
\item [1.] The coordinate subspace $X_1=\langle v_{41}, v_{51}, v_{32},
  v_{42}, v_{52}, v_{43}, v_{53}, v_{54}\rangle$; this is closed
  by Theorem~\ref{thm:coordclosed}. Alternatively, note that $u_5$ is central
  in the corresponding~$G$, so we can apply
  Corollary~\ref{cor:generalcancelcentralsummand} to reduce 
  to the $n=4$, $\dim(X)=4$ case.

\item[2.] The coordinate subspace $X_2=\langle v_{31}, v_{41}, v_{51},
  v_{32}, v_{42}, v_{52}, v_{53}, v_{54}\rangle$; again, this is
  closed either by appplying Theorem~\ref{thm:coordclosed} or
  Corollary~\ref{cor:generalcancelcentralsummand}. 

\item [3.] The subspace $X_3 = \langle v_{21}-v_{43}, v_{31}, v_{41},
  v_{51}, v_{42}, v_{52}, v_{53}, v_{54}\rangle$.  Again, note that
  $\psi_5(U)$ is contained in $X_3$, so by
  Corollary~\ref{cor:generalcancelcentralsummand} we conclude that
  $X_3$ is closed.

\item[4.] The subspace $X_4= \langle v_{21}-v_{43}, rv_{31}-v_{42},
  v_{41}, v_{51}, v_{32}, v_{52}, v_{53}, v_{54}\rangle$, with $r$ not
  a square in $\mathbb{F}_p$. Since $\psi_5(U)\subseteq X_4$, we
  conclude as before that $X_4$ is closed.

\item[5.] The subspace $X_5 = \langle v_{21}-v_{43}, v_{31}, v_{41},
  v_{32}, v_{42}, v_{52}, v_{53}, v_{54}\rangle$. In this case, $X_5$
  is \textit{not} closed: it corresponds to the amalgamated direct
  product of two groups: a $2$-nilpotent product of two cyclic groups
  of order~$p$, generated by $g_3$ and~$g_4$; and the $2$-nilpotent
  product of a cyclic group of order $p$ generated by $g_1$ and the
  direct sum of two cyclic groups of order~$p$, generated by $g_2$
  and~$g_5$. We amalgamate along the subgroup generated by
  $[g_4,g_3]$, identifying it with
  $[g_2,g_1]$. Theorem~\ref{th:centralamalgnotclosed} shows $X_5$ is
  therefore not closed.

\item[6.] The subspace $X_6 = \langle v_{21}-v_{43}, v_{31}-v_{52},
  v_{41}, v_{51}, v_{32}, v_{42}, v_{53}, v_{54}\rangle$. This
  subspace is closed, as can be verified with a simple computation
  in~GAP. Alternatively, if $X_6$ were not closed then the closure
  would contain either $v_{21}$ or $v_{31}$, but it is not hard 
  to verify that neither $w_{213}$ nor $w_{312}$ lie in
  $X_6^*$.
\end{itemize}

Moving on to the $7$-dimensional spaces, we obtain representatives of
the orbits as orthogonal complements of the twenty-two \textit{planes
  of $S$} listed in \cite{brahanalines}*{pp. 547\ndash 548}. We
present them in the same order as Brahana. The first six orbits
correspond to groups~$G$ with $Z(G)\neq [G,G]$;
this allows us reduce the problem to
a subspace with $n=4$ and codimension~$3$, all of which are
necessarily closed as already noted. In all six cases, $u_5$
corresponds to a central element:

\begin{itemize}
\item[1.] The subspace $X_1=\langle v_{41}, v_{51}, v_{42}, v_{52},
  v_{43}, v_{53}, v_{54}\rangle$. 

\item[2.] The subspace $X_2 = \langle v_{51}, v_{32}, v_{42}, v_{52},
  v_{43}, v_{53}, v_{54}\rangle$. 

\item[3.] The subspace $X_3 = \langle v_{41}, v_{51}, v_{32}, v_{42},
  v_{52}, v_{53}, v_{54}\rangle$. 

\item[4.] The subspace $X_4 = \langle v_{21}-v_{43}, v_{51}, v_{32},
  v_{42}, v_{52}, v_{53}, v_{54}\rangle$. 

\item[5.] The subspace $X_5 = \langle v_{21}-v_{43}, v_{41}, v_{51},
  v_{32}, v_{52}, v_{53}, v_{54}\rangle$.

\item[6.] The subspace $X_6 = \langle v_{21}-v_{43}, rv_{31}-v_{42},
  v_{51}, v_{32}, v_{52}, v_{53}, v_{54}\rangle$, with $r$ not a
  square in $\mathbb{F}_p$. 
\end{itemize}

The next fifteen orbits correspond to subspaces that are closed; this
is easy to determine using GAP, and not hard to verify by hand as
well (either by applying one of our theorems, or by explicit
computation). We list them without comment and leave routine (though
often tedious) verification that they are indeed closed to the interested
reader:

\begin{itemize}
\item[7.] The subspace $X_7 = \langle v_{21}-v_{43} + rv_{53},
  v_{31}-v_{52}, v_{41}, v_{51}, v_{32}, v_{42}-v_{53},
  v_{54}\rangle$, where $x^3 + rx - 1$ is irreducible
  over~$\mathbb{F}_p$. 

\item[8.] The subspace $X_8 = \langle v_{21}-v_{43}, v_{31}-v_{52},
  v_{41}+rv_{32}, v_{51}, v_{42}, v_{53}, v_{54}\rangle$, with $r$ not
  a square in~$\mathbb{F}_p$.

\item[9.] The subspace $X_9 = \langle v_{21}-v_{43}, v_{31}-v_{52},
  v_{41}, v_{51}, v_{32}-v_{54}, v_{42}, v_{53}\rangle$.

\item[10.] The subspace $X_{10} = \langle v_{21}-v_{43},
  v_{31}-v_{52}, v_{41}, v_{51}, v_{42}, v_{53}, v_{54}\rangle$.

\item[11.] The subspace $X_{11}=\langle v_{21}-v_{43}, v_{31}-v_{52},
  v_{41}, v_{51}, v_{32}, v_{53}, v_{54}\rangle$.

\item[12.] The subspace $X_{12} = \langle v_{21}-v_{43},
  v_{31}-v_{52}, v_{51}, v_{32}, v_{42}, v_{53}, v_{54}\rangle$.

\item[13.] The subspace $X_{13} = \langle v_{21}-v_{43},
  v_{31}-v_{52}-rv_{42}, v_{41}, v_{51}, v_{32}, v_{53},
  v_{54}\rangle$, with $r$ not a square in~$\mathbb{F}_p$.

\item[14.] The subspace $X_{14} = \langle v_{21}-v_{43},
  v_{31}-v_{42}, v_{41}-rv_{51}, v_{32}, v_{42}, v_{53},
  v_{54}\rangle$, $r\neq 0$.

\item[15.] The subspace $X_{15} = \langle v_{21}-v_{43},
  v_{31}-v_{52}, v_{41}, v_{51}, v_{32}, v_{42}, v_{53}\rangle$. 

\item[16.] The subspace $X_{16} = \langle v_{31}-v_{52}, v_{41},
  v_{51}, v_{32}, v_{42}, v_{53}, v_{54}\rangle$.

\item[17.] The subspace $X_{17} = \langle v_{21}-v_{41}-v_{43},
  v_{31}-v_{52}, v_{51}, v_{32}, v_{42}, v_{53}, v_{54}\rangle$.

\item[18.] The subspace $X_{18} = \langle v_{21}-v_{31}-v_{43} +
  v_{52}, v_{41}, v_{51}, v_{32}, v_{42}, v_{53}, v_{54}\rangle$.

\item[19.] The subspace $X_{19} =\langle v_{31}, v_{41}, v_{51},
  v_{32}, v_{42}, v_{52}, v_{43}\rangle$. 

\item[20.] The subspace $X_{20} = \langle v_{21}-v_{43}, v_{31},
  v_{41}, v_{51}, v_{42}, v_{52}, v_{54}\rangle$.

\item[21.] The subspace $X_{21} = \langle v_{21}-v_{43}, v_{31},
  v_{41}, v_{51}, v_{32}, v_{42}, v_{54}\rangle$.
\end{itemize}

The twenty-second and final orbit corresponds to an amalgamated direct
product of the $2$-nilpotent product of two cyclic groups of
order~$p$, generated by $g_1$ and $g_2$, with the $2$-nilpotent
product of three cyclic groups of order~$p$, generated by $g_3$,
$g_4$, and $g_5$, amalgamating by identifying the commutator
$[g_2,g_1]$ with $[g_4,g_3]$. Thus, by Theorem~\ref{th:centralamalgnotclosed}
it gives the only nonclosed
subspace of dimension~$7$ when $n=5$ (up to the action of $GL(5,p)$):
\begin{itemize}
\item[22.] The subspace $X_{22} = \langle v_{21}-v_{43}, v_{31},
  v_{41}, v_{51}, v_{32}, v_{42}, v_{52}\rangle$. 
\end{itemize}

We then obtain:

\begin{theorem} Let $G$ be a minimally $5$-generated $p$-group of
  class at most two and exponent~$p$. Then $G$ is one and only one of
  the following:
\begin{itemize}
\item[(i)] Isomorphic to a direct product $E\times C_p$, where $E$ is
  the extra-special $p$-group of order $p^5$ and exponent~$p$;
\item[(ii)] Isomorphic to the amalgamated direct product
\[ \Bigl( \langle x_1\rangle \amalg^{\mathfrak{N}_2}\langle
x_2\rangle\Bigr) \times_{\phi} \Bigl(\langle
 x_3\rangle\amalg^{\mathfrak{N}_2} \langle x_4\rangle
 \amalg^{\mathfrak{N}_2} \langle x_5\rangle\Bigr),\]
with each $x_i$ of order~$p$, and $\phi([x_2,x_1])=[x_4,x_3]$; 
\item[(iii)] Isomorphic to the amalgamated direct product
\[ \Bigl(\langle x_1\rangle \amalg^{\mathfrak{N}_2} \langle x_2\rangle
\Bigr) \times_{\phi} \Bigl(\bigl(\langle
x_3\rangle\amalg^{\mathfrak{N}_2}\langle x_4\rangle \amalg^{\mathfrak{N}_2} 
\langle x_5\rangle\bigr)/\langle [x_5,x_4]\rangle \Bigr),\]
with each $x_i$ of order~$p$ and $\phi([x_2,x_1]) = [x_4,x_3]$; 
\item[(iv)] Capable.
\end{itemize}
\label{thm:classifminfivegen}
\end{theorem}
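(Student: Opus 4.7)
The plan is to partition according to $m = \mathrm{rank}([G,G])$ and reduce to a finite case analysis. With $n=5$, we have $0 \le m \le 10$, and Theorem~\ref{th:largeenoughcomm} establishes capability whenever $f\bigl(\binom{5}{2}-m+1\bigr) < 5$. Consulting Table~\ref{table:valuesoff}, this inequality is satisfied for every $m \ge 4$, so only the four values $m \in \{0,1,2,3\}$ require further attention.

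The low-$m$ cases fall quickly. For $m=0$ we have $G \cong C_p^5$, capable by Corollary~\ref{cor:directsum}. For $m=1$, if one had $Z(G)=[G,G]$ then Theorem~\ref{th:strengthenedheinnik} would force $5 \le 2\cdot 1 + \binom{1}{2} = 2$, absurd; hence $Z(G) \neq [G,G]$, and Corollary~\ref{cor:generalcancelcentralsummand} lets us write $G = K \oplus C_p^r$ with $r \ge 1$ and $Z(K)=[K,K]$. Since $[K,K]\cong[G,G]$ is cyclic of order $p$, $K$ is extra-special of exponent $p$; Corollary~\ref{cor:extraspecialcase} now gives that $K$ is capable precisely when it has order $p^3$, yielding either a capable $G$ or case~(i) (when $K$ has order $p^5$).

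For $m=2$ and $m=3$, by Proposition~\ref{prop:symmetry} the closure question for the corresponding subspace $X \subseteq V(5)$ depends only on its $\mathrm{GL}(5,p)$-orbit. Brahana~\cite{brahanalines} enumerates the orbits on $\mathrm{Gr}(2,V(5))$ and $\mathrm{Gr}(3,V(5))$, obtaining six and twenty-two orbits respectively. The nondegenerate symmetric pairing on $V(5)$ under which the preferred basis is self-dual transports these lists by orthogonal complementation to complete sets of orbit representatives in $\mathrm{Gr}(8,V(5))$ and $\mathrm{Gr}(7,V(5))$. For each of the resulting twenty-eight subspaces one tests closure using the accumulated tools: Theorem~\ref{thm:coordclosed} for coordinate subspaces; Corollary~\ref{cor:generalcancelcentralsummand} whenever some $u_i$ is central (i.e.\ $\psi_i(U) \subseteq X$); Corollary~\ref{cor:sufficientamalgcoprod} for amalgamated-coproduct constructions; and direct verification (easily automated in GAP) for the remainder.

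The punchline is that precisely two of the twenty-eight orbits fail to be closed, and both are detected by Theorem~\ref{th:centralamalgnotclosed}: each admits a decomposition $X = X_s \oplus X_\ell \oplus V_m \oplus \{h-\phi(h) : h \in H\}$ with $H \neq \{\mathbf{0}\}$. Translating the data $(X_s, X_\ell, H, \phi)$ back into group theory via the dictionary of Section~\ref{sec:linalg} yields the amalgamated direct products in~(iii) (from the non-closed orbit in $\mathrm{Gr}(8,V(5))$, where $m=2$) and~(ii) (from the non-closed orbit in $\mathrm{Gr}(7,V(5))$, where $m=3$). The main obstacle is bookkeeping rather than conceptual: twenty-eight orbit representatives must each be certified closed or non-closed, and the two non-closed ones must be matched with the specific amalgamated direct products (ii) and~(iii) by unwinding the relations that $X$ imposes on a canonical generating set. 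Every individual check reduces to a short application of one of the structural theorems cited above.
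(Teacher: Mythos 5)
Your proposal follows the paper's proof essentially step for step: Theorem~\ref{th:largeenoughcomm} disposes of $m\geq 4$, elementary structure theory handles $m\leq 1$, and for $m=2,3$ Brahana's orbit lists in $Gr(2,V)$ and $Gr(3,V)$ are transported by orthogonal complementation to $Gr(8,V)$ and $Gr(7,V)$, with exactly the two non-closed orbits detected by Theorem~\ref{th:centralamalgnotclosed} and identified with cases (ii) and (iii). One small logical slip in your $m=1$ case: Theorem~\ref{th:strengthenedheinnik} presupposes that $G$ is capable, so the inequality $5\leq 2$ only shows that a group with $Z(G)=[G,G]$ would fail to be capable, not that $Z(G)\neq[G,G]$; the correct (and elementary) reason is that $Z(G)=[G,G]$ with $[G,G]$ cyclic of order $p$ would make $G$ extra-special, hence $G^{\rm ab}$ of even rank, contradicting rank~$5$. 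This does not damage the rest of your argument, since the decomposition $G=K\oplus C_p^r$ with $Z(K)=[K,K]$ exists unconditionally and then forces $K$ to be extra-special of order $p^3$ or $p^5$.
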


If we recall that the extraspecial group of order $p^5$ and
exponent~$p$ is obtained by taking the central product 
of two nonabelian groups of order $p^3$ and exponent~$p$ (more precisely,
a central product) , we combine Theorems~\ref{th:fourgeneratedcase}
and~\ref{thm:classifminfivegen} into a single statement:

\begin{theorem}
Let $G$ be a $5$-generated group of class at most $2$ and
exponent~$p$. Then $G$ is one and only one of the following:
\begin{itemize}
\item[(i)] Nontrivial cyclic;
\item[(ii)] Isomorphic to an amalgamated direct product
  $G_1\times_{\phi} G_2$ of two nonabelian groups, amalgamating a
  nontrivial cyclic subgroup of the commutator subgroups.
\item[(iii)] Capable.
\end{itemize}
\label{thm:fullclassiffivegen}
\end{theorem}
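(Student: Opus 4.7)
The plan is to deduce Theorem~\ref{thm:fullclassiffivegen} essentially as a bookkeeping consequence of Theorems~\ref{th:fourgeneratedcase} and~\ref{thm:classifminfivegen}, together with the standard observation that an extra-special $p$-group of order $p^5$ is a central product of two copies of the nonabelian group of order $p^3$. The main issue is to recognize that each ``exceptional'' non-capable case appearing in the earlier classifications admits a presentation as an amalgamated direct product $G_1\times_{\phi}G_2$ with both factors nonabelian and $\phi$ identifying nontrivial cyclic subgroups of the commutators.

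First I would verify pairwise disjointness of the three classes. A nontrivial cyclic group is not capable, since $G\cong H/Z(H)$ with $G$ cyclic forces $H$ abelian, hence $G=\{e\}$; and a nontrivial cyclic group cannot be an amalgamated direct product of two nonabelian groups because the nonabelian factors embed in the product. Finally, Corollary~\ref{cor:centralamalggroupnotclosed} guarantees that any amalgamated direct product of nonabelian groups, amalgamating a \emph{nontrivial} subgroup of their commutators, fails to be capable.

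Next I would split on the minimal number of generators $r$ of~$G$. For $r\leq 4$, Theorem~\ref{th:fourgeneratedcase} applies, and the only case not already in~(i) or~(iii) is the extra-special group of order $p^5$ and exponent~$p$; this group is the central product of two copies of the Heisenberg group of order $p^3$ (identifying their commutator subgroups), and is therefore an amalgamated direct product of two nonabelian groups with cyclic amalgamated commutator, placing it in case~(ii). For $r=5$, Theorem~\ref{thm:classifminfivegen} applies: its cases (ii) and~(iii) are by definition amalgamated direct products of a rank-$2$ nonabelian Heisenberg factor with a rank-$3$ nonabelian factor, amalgamating a cyclic subgroup of the commutators, and thus already have the desired form. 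It remains only to rewrite case~(i), namely $E\times C_p$ with $E$ extra-special of order~$p^5$ and exponent~$p$. Writing $E=E_1\times_{\phi}E_2$ as a central product of two Heisenberg groups $E_i$ of order~$p^3$, I would set $G_1=E_1\times C_p$ and $G_2=E_2$. Both are nonabelian, $[G_1,G_1]=[E_1,E_1]$ is cyclic of order~$p$, and the same $\phi$ realizes $E\times C_p\cong G_1\times_{\phi}G_2$, putting this case into (ii) as well.

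This exhausts all the possibilities produced by the two prior classification theorems and matches each with exactly one of (i), (ii), (iii). There is no real obstacle: the only step that is not immediate is the reinterpretation of $E\times C_p$ (and of the order-$p^5$ extra-special group itself) as an amalgamated direct product with both factors nonabelian, which follows from the standard central product decomposition of extra-special $p$-groups of exponent~$p$.
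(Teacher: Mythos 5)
Your proposal is correct and follows the paper's own (implicit, one-sentence) argument: the paper derives this theorem by combining Theorems~\ref{th:fourgeneratedcase} and~\ref{thm:classifminfivegen} with the observation that the extra-special group of order $p^5$ and exponent~$p$ is a central product of two Heisenberg groups. Your writeup simply makes explicit the details the paper leaves to the reader, notably the absorption of the $C_p$ factor into one side of the central product in the $E\times C_p$ case and the disjointness of the three classes via Corollary~\ref{cor:centralamalggroupnotclosed}.
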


\subsection*{An alternative geometrical proof.}

The only part of the proof of Theorem~\ref{th:fourgeneratedcase} that
does not follow by applying Theorem~\ref{th:largeenoughcomm} is the
case of $n=4$ and $\dim(X)=5$. I would like to present an alternative
proof for this case. The reason for doing so is that a key step in the
proof is geometric rather than algebraic. This highlights what I
believe to be one of the potential strengths of the approach through
linear algebra, namely that by casting the problem in terms of linear
algebra we have an array of tools that can be brought to bear on the
problem, most particularly geometric tools whose application may not
be so easy to discern when the problem is presented in terms of
commutators. This can also be seen in \cite{brahanalines}, though it
will not be apparent in our presentation above. The geometric part of
the argument is due to David McKinnon.

Fix $n=4$. Given a vector $\mathbf{u}\in U$, $\mathbf{u}\neq
\mathbf{0}$, we obtain a subspace $\psi_{\mathbf{u}}(U)$ of $V$;
it is easy to
verify that this subspace is $3$-dimensional. Moreover, any nontrivial
scalar multiple of $\mathbf{u}$ will yield the same subspace.  Thus
we obtain a map from the one dimensional subspaces
of~$V$ (which form projective $3$-space over $\mathbb{F}_p$) to
$Gr(3,V)$; that is, a map $\Psi\colon\mathbb{P}^3\to
Gr(3,V)$. Explicitly, given
$[\alpha_1\colon\alpha_2\colon\alpha_3\colon\alpha_4]\in\mathbb{P}^3$,
we associate to it the subspace
$U\wedge(\alpha_1u_1+\alpha_2u_2+\alpha_3u_3+\alpha_4u_4)$. 

Turning now to $\ker(\Phi)$, where $\Phi$ is the map from
Definition~\ref{defn:defofPhi}, it is easy to verify that if
$\mathbf{p}\in\mathbb{P}^3$, $\mathbf{v}\in V$ is an arbitrary vector,
and $X=\langle\Psi(\mathbf{p}),\mathbf{v}\rangle$, then $X^4\cap{\rm
  ker}\Phi$ is trivial if and only if
$\mathbf{v}\in\Psi(\mathbf{p})$. 

Let $(\mathbf{v}_1,\mathbf{v}_2,\mathbf{v}_3,\mathbf{v}_4)$ be a
nontrivial element of ${\rm ker}(\Phi)$. The subspace of~$V$ spanned by
$\mathbf{v}_1,\mathbf{v}_2,\mathbf{v}_3,\mathbf{v}_4$ is exactly
$3$-dimensional. This gives a mapping from the one-dimensional
subspaces of ${\rm ker}(\Phi)$ to the $3$-dimensional subspaces of~$V$,
\[\Upsilon\colon Gr(1,{\rm ker}(\Phi))\to Gr(3,V).\] 
We can identify
$Gr(1,{\rm ker}(\Phi))$ with $\mathbb{P}^3$ (or to be more
precise, with $\mathbb{P}^{\binom{4}{3}-1}$): we have a bijection
between a basis for ${\rm ker}(\Phi)$ and the choice of triples from
$\{1,2,3,4\}$, so a point
$[\alpha_1\colon\alpha_2\colon\alpha_3\colon\alpha_4]$ can be
identified, for example, with the element $\alpha_1\mathbf{v}_{(234)}
+ \alpha_2\mathbf{v}_{(134)} + \alpha_3\mathbf{v}_{(124)} +
\alpha_4\mathbf{v}_{(123)}$ (using the notation from the proof of
Proposition~\ref{prop:kerofPhi}). Thus we have two maps with domain
$\mathbb{P}^3$ and codomain $Gr(3,V)$.

Consider a $5$-dimensional subspace $X$ of~$V$. From
Theorem~\ref{th:upperandlowerbounds}, we know that $\dim(X^*)=18$,
$\dim(X^*)=19$, or $\dim(X^*)=20$. Since the only subspace of $V$ that
properly contains~$X$ is $V$ itself, we deduce that $X$ is closed if and
only if $X^4\cap\ker(\Phi_4)$ is nontrivial; that is, a $5$-dimensional
subspace of~$V$ is closed if and only if there exists
$\mathbf{q}\in\mathbb{P}^3$ such that $\Upsilon(\mathbf{q})\subseteq
X$. As noted above, if $X$ 
contains $\Psi(\mathbf{p})$ for some $\mathbf{p}\in\mathbb{P}^3$, then
$X$ will be closed. The result we want is the converse:
that if $X$ is closed, then there exists $\mathbf{p}\in\mathbb{P}^3$
such that $\Psi(\mathbf{p})\subseteq X$. This result can be
established by considering the maps $\Psi$, $\Upsilon$, and using a
little algebraic geometry.

Suppose first we are working over the algebraic closure
$\overline{\mathbb{F}_p}$ of $\mathbb{F}_p$ (so we can do algebraic
geometry). The maps $\Psi\colon\mathbb{P}^3\to Gr(3,V)$ and
$\Upsilon\colon\mathbb{P}^3\to Gr(3,V)$ are both regular maps, since
they are defined everywhere and are locally (relative to the Zariski
topology) determined by rational functions on the coordinates. We
define two subsets of the algebraic variety $Gr(4,V)\times
\mathbb{P}^3$, namely:
\[
A  =  \bigl\{ (X,\mathbf{p})\,\bigm|, \Psi(\mathbf{p})\subseteq
X\bigr\},\qquad\mbox{and}\qquad
B  =  \bigl\{ (X,\mathbf{q})\,\bigm|\, \Upsilon(\mathbf{q})\subseteq
X\bigr\}.
\]
Since both $\Psi$ and $\Upsilon$ are regular, both $A$ and~$B$ are
closed subvarieties of $Gr(4,V)\times \mathbb{P}^3$. If we now
consider the projections,
\begin{alignat*}{2}
p_1 &\colon Gr(4,V)\times \mathbb{P}^3 &&\to Gr(4,V)\\
p_2 &\colon Gr(4,V)\times \mathbb{P}^3 &&\to \mathbb{P}^3,
\end{alignat*}
we obtain maps from each of $A$ and~$B$ into $Gr(4,V)$
and~$\mathbb{P}^3$, respectively. The maps to $\mathbb{P}^3$ are
surjections, and the fibers all have dimension $2$ because the fiber
over $\mathbf{p}$ (resp.~over $\mathbf{q}$) is the set of all
$4$-dimensional subspaces of~$V$ that contain the $3$-dimensional
space $\Psi(\mathbf{p})$ (resp.~$\Upsilon(\mathbf{q})$); this set is
isomorphic to the set of lines in the quotient space
$V/\Psi(\mathbf{p})$ (resp.~$V/\Upsilon(\mathbf{q})$), which in turn
is isomorphic to the projective plane $\mathbb{P}^2$, hence
$2$-dimensional.

The maps are also smooth, so we have smooth maps of fiber
dimension~$2$ over a smooth $3$-dimensional variety; this means that
both $A$ and $B$ are of dimension $3+2=5$.

Consider now the projections to $Gr(4,V)$. We know that $p_1(A)$ and
$p_1(B)$ are irreducible subvarieties of $Gr(4,V)$ of dimension at
most $5$, and that $p_1(A)$ is contained in $p_1(B)$ (to see this last
assertion, note that if $(X,\mathbf{p})\in A$, then $X^4\cap{\rm ker}(\Phi)$ is
nontrivial, so there exists $\mathbf{q}$ such that $(X,\mathbf{q})\in B$).
If we can show that $p_1(A)$ is of dimension
exactly~$5$, then the irreducibility of $B$ will imply that
$p_1(A)=p_1(B)$. To show that $p_1(A)$ is of dimension exactly~$5$ it
is enough to show that it is generically finite; for this it is, in
turn, enough
to show there is at least one $X\in Gr(4,V)$ such that $p_1^{-1}(X)$
is nonempty and finite. But in fact $p_1^{-1}(X)$ has at most one
element, for if $\mathbf{p}\neq\mathbf{q}$, then
$\langle\Psi(\mathbf{p}),\Psi(\mathbf{q})\rangle$ contains $U\wedge
U'$ with $U'$ of dimension~$2$ (spanned by the lines corresponding to
$\mathbf{p}$ and~$\mathbf{q}$); and this subspace is of
dimension~$5$. So the conclusion that $p_1(A)=p_1(B)$ holds over
$\overline{\mathbb{F}_p}$.  

Thus, if $\mathbf{q}\in\mathbb{P}^3$ and $X\in Gr(4,V)$ are such that
$(X,\mathbf{q})\in B$, then there exists $\mathbf{p}\in\mathbb{P}^3$
such that $(X,\mathbf{p})\in A$. We want to show that if $\mathbf{q}$
and $X$ are defined over $\mathbb{F}_p$, then $\mathbf{p}$ is also
defined over~$\mathbb{F}_p$. If we apply a Galois automorphism to the
varieties over $\overline{\mathbb{F}_p}$, both $X$ and $\mathbf{q}$
are fixed, and every conjugate of $\mathbf{p}$ will also satisfy the
conclusion; however, we know that if
$(X,\mathbf{p}),(X,\mathbf{p}')\in A$, then $\mathbf{p}=\mathbf{p}'$,
by the argument above, so we conclude that $\mathbf{p}$ is fixed by
all Galois automorphisms of $\overline{\mathbb{F}_p}$, proving it is
indeed defined over $\mathbb{F}_p$. 

This proves what we want: if $X'$ is a $5$-dimensional subspace
of~$V$, and if there exists $(X,\mathbf{p})\in A$ such that
$X\subseteq X'$, then $X'$ is closed. And if $X'$ is closed, then
there exists $(X,\mathbf{q})\in B$ with $X\subseteq X'$, and this
implies the existence of $\mathbf{p}\in \mathbb{P}^3$ with
$(X,\mathbf{p})\in A$. Thus, $X'$ is closed if and only if it contains
$\Psi(\mathbf{p})$ for some $\mathbf{p}\in \mathbb{P}^3$. In terms of
the groups, it says that a group $G$ of class two, exponent~$p$, with
$G^{\rm ab}$ of rank~$4$ and $[G,G]$ of order~$p$ is capable if and
only if $[G,G]\neq Z(G)$. That is, a $5$-dimensional subspace of~$V$
is closed if and only if the corresponding group is not extra-special.

\begin{remark} The proof that there exist $\mathbf{p}\in\mathbb{P}^3$
  such that $\Psi(\mathbf{p})\subseteq X$ if and only if there exists
  $\mathbf{q}\in\mathbb{P}^3$ such that $\Upsilon(\mathbf{q})\subseteq
  X$ can be done purely at an algebraic level; see for
  example~\cite{capablep2}. However, I find the geometric argument
  more satisfying.
\end{remark}

\section{Final remarks and questions.}\label{sec:finalsec}

The gap between our necessary and sufficient condition, unfortunately,
grows with $n$. Thus, when $n=4$ the necessary condition allows us to
discard the case $\dim(X)=5$ (when $X$ does not contain
$\Psi(\mathbf{u})$ for some nontrivial $\mathbf{u}\in U$), while the
sufficient condition handles the remaining cases with $\dim(X)\leq
4$. When we move to $n=5$, however,
Theorem~\ref{th:strengthenedheinnik} deals only with $\dim(X)=9$
(where we are reduced to the case $n=4$ as above), while
Corollary~\ref{cor:Xsmallenough} dispatches $\dim(X)\leq 6$, leaving
us to deal with the cases of dimension $7$ and~$8$. Our success above
was achieved thanks to the careful geometric analysis of
Brahana. With $n=6$, Theorem~\ref{th:strengthenedheinnik} would handle
$\dim(X)=14$ and $15$ (we can either reduce to a smaller~$n$, or else
the subspace is not closed), and Corollary~\ref{cor:Xsmallenough}
deals with $\dim(X)\leq 7$, leaving now six potential dimensions
open. As $n$ increases, the gap between our numerical necessary and
sufficient conditions continues to widen, making them less and less
useful.

Heineken proved that the necessary condition is sharp, in that there
are examples of capable groups in which the inequality from
Theorem~\ref{th:strengthenedheinnik} is an equality. We might likewise
wonder if we can sharpen the sufficient condition. There is some hope
this might be possible, since for example
Corollary~\ref{cor:Xpreciselysmallenough} considers all subspaces of
dimension strictly larger than~$X$, while
Proposition~\ref{prop:strictlylarger} only requires us to look at
those subspaces that properly contain~$X$. So we ask:

\begin{question}
Is the sufficient condition in Corollary~\ref{cor:Xsmallenough} sharp?
That is, is it true that for all $n>1$, if $m$ is the smallest integer
such that $0<m<\binom{n}{2}$ and $f(m+1)\geq n$, then there exists $X<V$ such that
$\dim(X)=m$ and $X\neq X^{**}$? 
\label{quest:sufficientsharp}
\end{question}

Note that if we can find a non-closed subspace $X<V(n)$ with
$\dim(X)=k$, then we can find non-closed subspaces $X'<V(n)$ with
$\dim(X')=r$ for any $r$ satisfying $k\leq r < \binom{n}{2}$:
enlarge $X$ by adding vectors from $X^{**}$ not in $X$ until we obtain
a subspace of codimension one  in its closure; and then continue by
adding vectors that do not lie in $X^{**}$ until we obtain a subspace
of codimension~$1$ in $V(n)$. So it is enough to ask about the
smallest value of $m$ with $\dim(X)=m$ and $X\neq X^{**}$. 

For $m\leq 5$, the answer to Question~\ref{quest:sufficientsharp} is
affirmative. 
Consider then $n=6$; by taking an amalgamated central product of the
$2$-nilpotent product of two cyclic groups of order~$p$ and the
$2$-nilpotent product of $4$ cyclic groups of order~$p$ we can find a
non-closed subspace of dimension $9$; the least $m$, however, for
which $f(m+1)\geq 6$ is $m=8$. So we ask:

\begin{question}
Is there a subspace $X$ of $V(6)$ with $\dim(X)=8$ and $X\neq X^{**}$?
\end{question}

I do not know the answer to this question yet; I have done a brute
force search using GAP and have found no examples yet. However, though
the search has considered over one hundred million subspaces, the total
number of eight dimensional subspaces of the fifteen dimensional
space~$V(6)$ is approximately $9.3\times 10^{26}$ if we work over
$\mathbb{F}_3$, so the negative results in this search are hardly significant.

In general, given $n$, taking an amalgamated central product of two
relatively free groups, one of rank~$2$ and one of rank $n-2$, and
identifying a subgroup of order~$p$ from each, yields a non-closed
subspace of dimension $2n-3$ (we need $2(n-2)$ relations to state the
generators from one relatively free group commute with those of the
other, and one relation to identify one nontrivial commutator from
each factor with each other). This is the smallest nonclosed subspace
we can obtain with amalgamated direct products, but it is not
necessarily the smallest non-closed. For example, with $n=8$, the
amalgamated direct product yields a non-closed $X$ of dimension~$13$;
but if we take the amalgamated coproduct of two extra-special groups
of order $p^5$ and exponent~$p$, identifying the commutator subgroups,
we obtain a non-closed~$X$ of dimension~$11$ (we will need $5$
relations to describe each of the extra-special groups, plus one
relation to identify the two commutator subgroups). This eleven
dimensional subspace still falls two short of the $9$-dimensional
example we would need for $n=8$ if Corollary~\ref{cor:Xsmallenough} is
indeed sharp.

\section*{Acknowledgements}

In addition to the theorems from \cite{brahanalines}, the work of
Brahana helped to clarify many notions with which I had been
playing; I thank Prof.~Mike Newman very much for bringing the work of Brahana to my
attention and other helpful references. I also thank Michael Bush for his
help. I especially thank David McKinnon for many stimulating
conversations, most of the geometry that appears in this work, and for
his help in finding a formula for the function~$f(m)$. Part of this
work was conducted while the author was on a brief visit to the
University of~Waterloo at the invitation of Prof.~McKinnon; I am very
grateful to him for the invitation, and to the Department of Pure
Mathematics and the University of Waterloo for the great hospitality I
received there. The work was begun while the author was at the
University of Montana, and finished at the University of Louisiana
in~Lafayette.

\section*{References}

\begin{biblist}
\bib{baconkappe}{article}{
  author={Bacon, Michael~R.},
  author={Kappe, Luise-Charlotte},
  title={On capable $p$-groups of nilpotency class two},
  date={2003},
  journal={Illinois J. Math.},
  number={1/2},
  volume={47},
  pages={49\ndash 62},
  review={\MR {2004j:20036}},
}

\bib{baer}{article}{
  author={Baer, Reinhold},
  title={Groups with preassigned central and central quotient group},
  date={1938},
  journal={Trans. Amer. Math. Soc.},
  volume={44},
  pages={387\ndash 412},
}

\bib{beyl}{article}{
  author={Beyl, F.~Rudolf},
  author={Felgner, Ulrich},
  author={Schmid, Peter},
  title={On groups occurring as central factor groups},
  date={1979},
  journal={J. Algebra},
  volume={61},
  pages={161\ndash 177},
  review={\MR {81i:20034}},
}

\bib{brahanalines}{article}{
  author={Brahana, H.~R.},
  title={Finite metabelian groups and the lines of a projective four-space},
  date={1951},
  journal={Amer.\ J.\ Math.},
  volume={73},
  number={3},
  pages={539\ndash 555},
  review={\MR {0042411 (13,104i)}},
}

\bib{brahanaplucker}{article}{
  author={Brahana, H.~R.},
  title={Finite metabelian groups and the {P}l\"{u}cker line-coordinates},
  journal={Amer.\ J.\ Math.},
  volume={62},
  year={1940},
  pages={365\ndash 379},
  review={\MR {0001549 (1,257c)}},
}

\bib{bush}{article}{
  author={Bush, Michael R.},
  author={Labute, John},
  title={Mild pro-$p$-groups with 4 generators},
  journal={J. Algebra},
  volume={308},
  date={2007},
  number={2},
  pages={828\ndash 839},
  issn={0021-8693},
  review={\MR {2295092}},
}

\bib{ellis}{article}{
  author={Ellis, Graham},
  title={On the capability of groups},
  date={1998},
  journal={Proc. Edinburgh Math. Soc. (2)},
  volume={41},
  number={3},
  pages={487\ndash 495},
  review={\MR {2000e:20053}},
}

\bib{GAP}{manual}{
  author={The GAP~Group},
  title={{GAP} -- {G}roups, {A}lgorithms, and {P}rogramming, {V}ersion 4.4.9},
  year={2006},
  note={\texttt {http://www.gap-system.org}},
}

\bib{golovinnilprods}{article}{
  author={Golovin, O.~N.},
  title={Nilpotent products of groups},
  date={1956},
  journal={Amer. Math. Soc. Transl. Ser. 2},
  number={2},
  pages={89\ndash 115},
  review={\MR {17:824a}},
}

\bib{hall}{book}{
  author={Hall, M.},
  title={The theory of groups},
  publisher={Mac Millan Company},
  date={1959},
  review={\MR {21:1996}},
}

\bib{hallsenior}{book}{
  author={Hall, M.},
  author={Senior, J.K.},
  title={The groups of order $2^n$ ($n\leq 6$)},
  publisher={MacMillan and Company},
  date={1964},
  review={\MR {29:\#5889}},
}

\bib{hallpgroups}{article}{
  author={Hall, P.},
  title={The classification of prime-power groups},
  date={1940},
  journal={J. Reine Angew. Math.},
  volume={182},
  pages={130\ndash 141},
  review={\MR {2,211b}},
}

\bib{heinnikolova}{article}{
  author={Heineken, Hermann},
  author={Nikolova, Daniela},
  title={Class two nilpotent capable groups},
  date={1996},
  journal={Bull. Austral. Math. Soc.},
  volume={54},
  number={2},
  pages={347\ndash 352},
  review={\MR {97m:20043}},
}

\bib{isaacs}{article}{
  author={Isaacs, I. M.},
  title={Derived subgroups and centers of capable groups},
  date={2001},
  journal={Proc. Amer. Math. Soc.},
  volume={129},
  number={10},
  pages={2853\ndash 2859},
  review={\MR {2002c:20035}},
}

\bib{leedgreen}{book}{
  author={Leedham-Green, L.C.},
  author={McKay, S.},
  title={The structure of groups of prime power order},
  series={London Mathematical Society Monographs, New Series},
  number={27},
  publisher={Oxford University Press},
  date={2002},
  review={\MR {2003f:20028}},
}

\bib{amalgams}{article}{
  author={Magidin, Arturo},
  title={Amalgams of nilpotent groups of class two},
  date={2004},
  journal={J. Algebra},
  volume={274},
  number={1},
  pages={1\ndash 63},
  review={\MR {2004m:20050}},
}

\bib{capable}{article}{
  author={Magidin, Arturo},
  title={Capability of nilpotent products of cyclic groups},
  journal={J. Group Theory},
  volume={8},
  number={4},
  year={2005},
  pages={431\ndash 452},
  review={\MR {2006c:20073}},
}

\bib{capablep}{article}{
  author={Magidin, Arturo},
  title={Capable groups of prime exponent and class two},
  eprint={arXiv:math.GR/0401423},
}

\bib{capablep2}{article}{
  author={Magidin, Arturo},
  title={Capable groups of prime exponent and class 2, II},
  eprint={arXiv:math.GR/0506578},
}

\bib{amalgone}{article}{
  author={Maier, Berthold~J.},
  title={Amalgame nilpotenter Gruppen der Klasse zwei},
  date={1985},
  journal={Publ. Math. Debrecen},
  volume={31},
  pages={57\ndash 70},
  review={\MR {85k:20117}},
}

\bib{amalgtwo}{article}{
  author={Maier, Berthold~J.},
  title={Amalgame nilpotenter Gruppen der Klasse zwei II},
  date={1986},
  journal={Publ. Math. Debrecen},
  volume={33},
  pages={43\ndash 52},
  review={\MR {87k:20050}},
}

\bib{onlineintseq}{unpublished}{
  author={Sloane, N.J.A.},
  title={On-line encyclopedia of integer sequences},
  date={2005},
  note={Published electronically at http://www.research.att.com/\~{}njas/sequences/},
}

\bib{struikone}{article}{
  author={Struik, Ruth~Rebekka},
  title={On nilpotent products of cyclic groups},
  date={1960},
  journal={Canad. J. Math.},
  volume={12},
  pages={447\ndash 462},
  review={\MR {22:\#11028}},
}

\end{biblist}

\end{document}